\newtheorem{theorem}{Theorem}[section]
\newtheorem{lemma}[theorem]{Lemma}
\newtheorem{proposition}[theorem]{Proposition}
\newtheorem{corollary}[theorem]{Corollary}
\newtheorem{definition}[theorem]{Definition}
\newtheorem{remark}[theorem]{Remark}
\newcommand{\bea}{\begin{eqnarray}}
\newcommand{\eea}{\end{eqnarray}}
\def\beaa{\begin{eqnarray*}}
\def\eeaa{\end{eqnarray*}}
\def\ba{\begin{array}}
\def\ea{\end{array}}
\def\be#1{\begin{equation} \label{#1}}
\def \eeq{\end{equation}}
\newcommand{\nn}{\nonumber}
\def\a{{\alpha}}
\def\al{{\alpha}}
\def\b{{\beta}}
\def\be{{\beta}}
\def\ga{\gamma}
\def\Ga{\Gamma}
\def\de{\delta}
\def\De{\Delta}
\def\ep{\epsilon}
\def\la{\lambda}
\def\La{\Lambda}
\def\si{\sigma}
\def\om{\omega}
\def\vphi{\varphi}
\def\varo{{\varrho}}
\def\th{\theta}
\def\ze{\zeta}
\def\nab{\nabla}
\def\qf{\frak{q}}
\def\pr{{\partial}}
\def\c{\cdot}
\def\AA{{math\cal A}}
\def\CC{{\mathcal C}}
\def\MM{{\mathcal M}}
\def\LL{{\mathcal L}}
\def\FF{{\mathcal F}}
\def\EE{{\mathcal E}}
\def\SS{{\mathcal S}}
\def\DD{{\mathcal D}}
\def\PP{{\mathcal P}}
\def\QQ{{\mathcal Q}}
\def\AA{{\mathcal A}}
\def\DDco{\overline{\DD}}
\def\dk{\frak{d}}
\def\C{{\bf C}}
\def\D{{\bf D}}
\def\M{{\bf M}}
\def\O{{\bf O}}
\def\R{{\bf R}}
\def\T{{\bf T}}
\def\Z{{\bf Z}}
\def\g{{\bf g}}
\def\CCC{{\Bbb C}}
\def\f12{{\frac 1 2}}
\def\dual{{\,\,^*}}
\def\div{{\mbox div\,}}
\def\curl{{\mbox curl\,}}
\def\lot{\mbox{ l.o.t.}}
\def\Lb{{\,\underline{l}}}
\def\Hb{\,\underline{H}}
\def\Lb{{\,\underline{L}}}
\def\Xho{\,^{(h)}X}
\def\Yh{\,^{(h)}Y}
\def\trch{{\mbox tr}\, \chi}
\def\chih{{\hat \chi}}
\def\chib{{\underline \chi}}
\def\chibh{{\underline{\chih}}}
\def\etab{{\underline \eta}}
\def\omb{{\underline{\om}}}
\def\bb{{\underline{\b}}}
\def\aa{{\underline{\a}}}
\def\xib{{\underline \xi}}
\def\Xib{\underline{\Xi}}
\def\Ab{\underline{A}}
\def\Bb{\underline{B}}
\def\Xb{\underline{X}}
\def\Xh{\widehat{X}}
\def\Xbh{\widehat{\Xb}}
\def\varoc{\check{\varrho}}
\def\lap{{\Delta}}
\def\tr{\mbox{tr}}
\def\atr{\,^{(a)}\mbox{tr}}
\def\trchb{{\tr \,\chib}}
\def\Div{\mbox{Div}}
\def\chia{{\,^{(a)}\chi}}
\def\chiba{{\,^{(a)}\chib}}
\def\Us{{\,^{(s)}U}}
\def\Ua{{\,^{(a)}U}}
\def\Uh{{\widehat U}}
\def\Vh{{\widehat V}}
\def\atrch{\atr\chi}
\def\atrchb{\atr\chib}
\def\hot{\widehat{\otimes}}
\def\rhod{\,\dual\rho}
\def\piX{\, ^{(X)}\pi}
\def\err{\mbox{Err}}
\def\ov{\overline}
\def\squared{\dot{\square}}
\newcommand{\hch}{\widehat{\chi}}
\def\f12{\frac 1 2}
\def\lab{\label}
\def\nabc{\,^{(c)}\nab}
\def\bsplit{\begin{split}}
\def\divc{\,^{(c)}\div}
\def\curlc{\,^{(c)}\curl}
\def\DDc{\,^{(c)} \DD}
\def\DDb{\ov{\DD}}
\def\DDbc{\ov{\DDc}}
\def\DDov{\ov{\DD}}
\def\DDs{ \, \DD \hspace{-2.4pt}\dual    \mkern-20mu /}
\def\DDd{ \, \DD \hspace{-2.4pt}    \mkern-8mu /}
\def\Ddot{\dot{\D}}
\DeclareFontFamily{U}{mathx}{\hyphenchar\font45}
\DeclareFontShape{U}{mathx}{m}{n}{
      <5> <6> <7> <8> <9> <10>
      <10.95> <12> <14.4> <17.28> <20.74> <24.88>
      mathx10
      }{}
\DeclareSymbolFont{mathx}{U}{mathx}{m}{n}
\DeclareMathAccent{\widecheck}{0}{mathx}{"71}
\begin{document}

\title{A general formalism for the stability of Kerr}
\author{Elena Giorgi, Sergiu Klainerman, J\'er\'emie Szeftel}

\maketitle

\begin{abstract}
The goal of this  paper is to  provide a  geometric   framework  for analyzing the uniform  decay properties of  solutions to the 
  Teukolsky equation in  the fully nonlinear setting of  perturbations of Kerr. 
 It contains  the first nonlinear version of the Chandrasekhar   transformation introduced  in the linearized setting in  \cite{D-H-R-Kerr} and \cite{Ma} with the intent to use it in our ongoing project to prove the full nonlinear stability of slowly rotating Kerr as solution to the  Einstein vacuum equations.
\end{abstract}

\tableofcontents


\section{Introduction}


The goal of this  paper is to  provide the geometric framework  for analyzing the uniform  decay properties of  solutions to the 
  Teukolsky equation in  the fully nonlinear setting of  perturbations of Kerr, as solutions to the  Einstein vacuum  equations 
  \bea
  \operatorname{Ric}(g)=0.
  \eea
    In the linearized setting, i.e. the  Teukolsky equation on a fixed  Kerr background with small angular momentum,  this  was achieved 
  independently  in  \cite{D-H-R-Kerr}  and \cite{Ma} by  a suitable version of the  so called  Chandrasekhar transformation. 
  We recall  that this transformation was  first  introduced   by Chandrasekhar  to pass from the Teukolsky  equation to  a  Regge-Wheeler 
equation  in  the context of   mode stability for the linearized gravity system  around Schwarzschild, see \cite{Chand}.   A  first physical version of that transformation  appeared in \cite{D-H-R}   and was used by the authors to derive  robust uniform decay  estimates 
for the Teukolsky equation in Schwarzschild. Its   nonlinear version   appears  in \cite{KS}  where it plays  an  important  role  in the proof of the  full nonlinear  stability of Schwarzschild  for axially symmetric polarized perturbations.  In this paper we derive  the first nonlinear version of the    transformations introduced   in  \cite{D-H-R-Kerr} and \cite{Ma} with the intent to use it in our ongoing project to prove the full nonlinear stability of slowly rotating Kerr as solution to the  Einstein vacuum equations.  Note also that the  result of  \cite{Ma}  was used in
\cite{Kerr-lin1} to provide a first proof of  linearized stability of Kerr. A different proof, based on generalized wave coordinates, was also given in \cite{Kerr-lin2}.


\subsection{General  comments on perturbations of Kerr}


A  proof of the nonlinear stability\footnote{We refer  here  to  a geometric  proof   similar in spirit 
to   that of the  recent  proof of the    nonlinear stability of Schwarzschild   under polarized perturbations \cite{KS}. Other approaches are also being pursued, most notably those based on a generalized notion of  wave coordinates.}  of Kerr, at least for  small  angular   momentum,      requires the following   major    ingredients.

\begin{enumerate}
\item   A  formalism  to derive   the  Teukolsky  and Regge-Wheeler type  equations  in the  nonlinear setting.
\item An analytic mechanism to derive  estimates  for these.

\item A dynamical mechanism  to  identify the  final mass and final angular momentum.

\item  A   dynamical mechanism for    finding the right gauge conditions   in which  the very notion of convergence to the final state  can be   formulated.

\item  A precisely formulated continuity argument   based on a grand bootstrap   scheme  which assigns to all geometric quantities  involved in the process   specific  decay rates,  which can   be   dynamically recovered  from the initial  conditions   by a long series  of estimates,   and thus ensure  convergence to a   final Kerr  state. 
\end{enumerate}

In \cite{KS-Kerr1} and \cite{KS-Kerr2},   the last two authors    have provided a   framework for  
 dealing  with  the  issue  4, by  constructing generalized notions of  generally covariant modulated (GCM)  spheres\footnote{Generalizing those   used in the  nonlinear stability of  Schwarzschild  in the polarized case, see \cite{KS}.} in the asymptotic region of a general perturbation of Kerr. In connection  to  the issue 3, it  is expected, as in \cite{KS},   that  the  Hawking mass  associated to  the  $2$-spheres of  a well adapted  spacetime foliation  will converge to the final mass. In  \cite{KS-Kerr2},   a definition of   angular momentum  for GCM spheres was also given with the expectation that  it will play a similar role in detecting the final angular momentum of a general perturbation of Kerr.  
 
 In this paper  we   deal with   the first issue, i.e. we  provide a geometric framework  in which 
 the Teukolsky and Regge-Wheeler  equations  can be   derived  for general, nonlinear,  \textit{realistic} perturbations  of Kerr.  We emphasize  the word  realistic  by which we mean 
 that the perturbations   have  decay properties consistent  with the bootstrap assumptions mentioned in   5.   We make these assumptions  here  by  adapting    those made in \cite{KS}.

 
\subsection{Teukolsky  equation  in linearized theory}


It has been known since the work of Teukolsky  \cite{Teuk} \cite{P-T}   that,  for  linearized  perturbations   of Kerr,  the   extreme components  of the curvature tensor  verify  decoupled wave equations in a fixed Kerr background.  As mentioned above,  a first  robust  derivation of decay estimates  for solutions of the  Teukolsky  equation in Schwarzschild was given in  \cite{D-H-R},  based on a physical space   variant of the   Chandrasekhar transformation\footnote{Which takes solutions of the Teukolsky equation into solutions of a Regge-Wheeler type     equation,  that can then be analyzed using a  variant of the vectorfield method based on  Morawetz  
 and $r^p$-weighted estimates.}. The method was recently  extended to  the Teukolsky equation in Kerr spacetimes $Kerr(a,m) $  with $|a|\ll m$  independently in \cite{Ma} and \cite{D-H-R-Kerr}.

Both   \cite{D-H-R-Kerr}  and \cite{Ma}    start from the    well known second order    Teukolsky equation
$\LL(\alpha^{[\pm2]}) =0$ 
in  a Kerr spacetime $Kerr(a, m)$. The complex  scalar $\alpha^{[\pm2]}$ (for spin $+2$ or spin $-2$)  is         defined,  in the context of the Newman-Penrose  (NP) formalism,  to be     one of the extreme curvature  components, relative  to   a principal    null  frame\footnote{We recall 
that principal null frames have the very  important property  of diagonalizing the   curvature tensor.}. The Teukolsky equation for $\alpha^{[\pm2]}$ is a decoupled wave equation  which is obtained as consequence of    the linearized   Einstein vacuum  equations. 
Both     \cite{D-H-R-Kerr}  and \cite{Ma}   transform $\alpha^{[\pm2]}$,    by an appropriate  generalized   Chandrasekhar  transformation,   into a    new complex scalar  $\Psi=Q( \alpha^{[\pm2]}) $, with $Q$ a second order operator (in the direction of the ingoing null direction for spin $+2$, outgoing for spin $-2$),  expressed relative to the standard Kerr coordinates.   The complex scalar $\Psi$  verifies  a       generalized Regge-Wheeler equation of the schematic form
\bea
\lab{Intro:Regge-Wheeler}
\square_{a,m} \Psi + i a\,  c(r, \th) \partial_t \Psi +V(r, \th) \Psi = a\,  L_{Q}(\alpha^{[\pm2]}) 
\eea
where $\square_{a,m}$ is the D'Alembertian operator associated to the Kerr metric $Kerr(a, m)$, $c(r, \th)$ is a function of $r$ and $\th$, $V(r, \th)$ is a    favorable potential and $L_{Q}(\alpha^{[\pm2]}) $ are lower order terms in the derivatives of $\alpha^{[\pm2]}$.  
Though  the complex scalar $\alpha^{[\pm2]}$   is defined using  a principal null  tetrad via the NP formalism, all calculations which take the  Teukolsky equation to \eqref{Intro:Regge-Wheeler} are done  in coordinates.   This approach  leads to serious difficulties   when one  tries to  extend the  calculations in the nonlinear setting where the precise structure of the nonlinear terms is essential. 

 In the  setting of polarized perturbations of Schwarzschild \cite{KS},         this calculation  was performed  using null   frames, which are both   adapted to   an integrable   foliation and   almost diagonalize the curvature tensor\footnote{i.e. such  frames diagonalize   the curvature tensor up to terms which are small with respect to the perturbation.   These frames are  thus small perturbations of standard null frames  in Schwarzschild   which are both principal  and  integrable.}.    One could thus  rely on  the geometric  formalism developed 
 in the context of the proof of the nonlinear  stability of Minkowski space \cite{Ch-Kl}.
  This latter  cannot  be straightforwardly applied from Minkowski to perturbations of Kerr.    To capture the almost  diagonalizable properties of  principal null frames  one has to give up on integrability 
     and thus, seemingly,  forced to work with a general Newman-Penrose (NP)   formalism, as introduced in  \cite{NP}.
      The NP formalism, however, presents its own set of difficulties.    Indeed  complex calculations, such those needed  to derive the nonlinear analogue of    \eqref{Intro:Regge-Wheeler},    depend    on higher  derivatives  of  all   connection coefficients of the NP  frame rather than only  those which are geometrically significant.  This could  seriously affect the structure of non-linear corrections.

  In our work we rely instead on a  tensorial approach, based on horizontal structures which closely mimics the calculations  done  in  integrable  settings while maintaining the important diagonalizable  properties of the  principal  directions.  
  This allows us to maintain, with minimal  changes, the  geometric formalism  of \cite{Ch-Kl}  widely used today  in mathematical GR. 
       We give a full account of it in this paper\footnote{Elements of the  horizontal structure  framework    were  also  shortly discussed   in  the appendix  to \cite{I-Kl}  as an alternative to the  NP formalism.    Relations between the two   formalisms  are   addressed  there as well.}  and  use it   to
  derive the proper nonlinear corrections to  \eqref{Intro:Regge-Wheeler}. We will show in fact that these corrections  admit a structure very similar to the one  found in \cite{KS}, see  Theorem 2.4.7 in that paper.

 
  \subsection{Short summary of the formalism}
  
  
  We give here a short summary of the formalism, which is extensively described in  section \ref{section-general-formalism}.

  
  \subsubsection{Basic definitions}
  
  
Let $(\MM, \g)$ a Lorentzian space-time. Consider a fixed    null pair  of vectorfields  $(e_3, e_4)$, i.e. 
\beaa
\g(e_3, e_3)=\g(e_4, e_4)=0, \qquad  \g(e_3, e_4)=-2, 
\eeaa
and denote  by  $\O(\MM)$ the vector space  of horizontal vectorfields $X$  on $\MM$, i.e.  $\g(e_3, X)= \g(e_4, X)=0$.
 A null  frame $(e_3, e_4, e_1, e_2)$ on $\MM$ consists, in addition to the null pair $(e_3, e_4)$, of a choice of horizontal vectorfields  $(e_1, e_2)$, such that
 \beaa
 \g(e_a, e_b)=\de_{ab}\qquad  a, b=1,2.
 \eeaa  
 The commutator $[X,Y]$ of two horizontal vectorfields
may fail however to be horizontal. We say that the pair $(e_3, e_4 )$ is integrable if   $\O(\MM)$  forms an integrable distribution,
i.e. $X, Y\in\O(\MM) $ implies that $[X,Y]\in\O(\MM)$. As  it is well-known,  the  principal null pair in Kerr fails to be integrable, 
see also Remark \ref{rem:nonintegrabilityandatrchatrchb}. Given an arbitrary vectorfield $X$ we denote by $^{(h)}X$
its  horizontal projection, 
$$^{(h)}X=X+ \frac 1 2 \g(X,e_3)e_4+ \frac 1 2   \g(X,e_4) e_3.$$ 
For any $ X, Y\in \O(\MM)$ we define  $\ga(X, Y)=\g(X, Y)$ and\footnote{In the particular case where the horizontal structure is integrable, $\ga$ is the induced metric, and $\chi$ and $\chib$ are the  null second fundamental forms.}  
\beaa
\chi(X,Y)=\g(\D_Xe_4 ,Y), \qquad \chib(X,Y)=\g(\D_Xe_3,Y).
\eeaa
Observe that    $\chi$
 and $\chib$  are  symmetric if and 
 only if   the horizontal structure is 
 integrable. Indeed this follows easily from
 the formulas\footnote{Note  that  we  can view  $\chi$ and $\chib$ as horizontal 2-covariant tensor-fields
 by extending their definition to arbitrary  $X, Y$, by  $\chi(X, Y)= \chi( ^{(h)}X, ^{(h)}Y)$,  $\chib(X, Y)= \chib( ^{(h)}X, ^{(h)}Y)$.},
 \beaa
 \chi(X,Y)-\chi(Y,X)&=&\g(\D_X e_4, Y)-\g(\D_Ye_4,X)=-\g(e_4, [X,Y]),\\
 \chib(X,Y)-\chib(Y,X)&=&\g(\D_X e_3, Y)-\g(\D_Ye_3,X)=-\g(e_3, [X,Y]).
\eeaa
   We define their trace $\trch$, $\trchb$,  and anti-trace $\atrch$, $\atrchb$ as follows
\beaa
\trch:=\de^{ab}\chi_{ab}, \qquad \trchb:=\de^{ab}\chib_{ab}, \qquad \atrch:=\in^{ab}\chi_{ab}, \qquad \atrchb:=\in^{ab}\chib_{ab}.
\eeaa
Accordingly, we  decompose $\chi, \chib$ as follows
\beaa
\chi_{ab}&=&\chih_{ab} +\frac 1 2 \de_{ab} \trch+\frac 1 2 \in_{ab}\atrch,\\
\chib_{ab}&=&\chibh_{ab} +\frac 1 2 \de_{ab} \trchb+\frac 1 2 \in_{ab}\atrchb.
\eeaa

\begin{remark}\lab{rem:nonintegrabilityandatrchatrchb}
The non integrability of $(e_3, e_4)$  corresponds to non vanishing $\atrch$ and $\atrchb$. A celebrated example of  a non integrable null frame  is the principal null frame of Kerr for which $\atrch$ and $\atrchb$ are indeed non trivial.
\end{remark}

We define the horizontal covariant operator $\nab$ as follows:
 \bea
 \nab_X Y&:=&^{(h)}(\D_XY)=\D_XY- \frac 1 2 \chib(X,Y)e_4 -  \frac 1 2 \chi(X,Y) e_3, \quad X, Y\in \O(\MM).
 \eea
Note that,
\beaa
 \nab_X Y-\nab_Y X   &=&[X, Y]-\frac 1 2 \left(\atrchb\,  e_4+\atrch \, e_3\right)\in(X, Y).
\eeaa
 Note that  $\nab$  acts like a Levi-Civita connection  i.e.,  for  all  $X,Y, Z\in \O(\MM)$,
 \beaa
 Z \ga (X,Y)=\ga(\nab_Z X, Y)+ \ga(X, \nab_ZY).
 \eeaa
 We can then
  define connection  and curvature coefficients   similar to   those  in the integrable case, as  in  \cite{Ch-Kl}, 
  \beaa
 \begin{split}
\chib_{ab}&=\g(\D_ae_3, e_b),\qquad \,\,\chi_{ab}=\g(\D_ae_4, e_b),\qquad\,\,\,\,
\xib_a=\frac 1 2 \g(\D_3 e_3 , e_a),\qquad \,\,\,\xi_a=\frac 1 2 \g(\D_4 e_4, e_a),\\
\omb&=\frac 1 4 \g(\D_3e_3 , e_4),\qquad\,\, \om=\frac 1 4 \g(\D_4 e_4, e_3),\qquad 
\etab_a=\frac 1 2 \g(\D_4 e_3, e_a),\qquad \quad \eta_a=\frac 1 2 \g(\D_3 e_4, e_a),\qquad\\
 \ze_a&=\frac 1 2 \g(\D_{e_a}e_4,  e_3),
 \end{split}
\eeaa
\beaa
\a_{ab}=\R_{a4b4},\quad \b_a=\frac 12 \R_{a434}, \quad   \bb_a=\frac 1 2 \R_{a334},  \quad \aa_{ab}=\R_{a3b3},\quad \rho=\frac 1 4 \R_{3434} , \quad \rhod=\frac 1 4\dual \R_{3434},
\eeaa
and derive  the corresponding   null structure and  null Bianchi equations, see  Propositions 
\ref{prop-nullstr} and  \ref{prop:bianchi}. 

\begin{remark} The main advantage of  the formalism presented above    is that, with the exception of the presence of  the  integrable defect  scalars $\atrch, \atrchb$,  the  null structure and  null Bianchi equations  look very similar to the  familiar equations in  \cite{Ch-Kl}.
\end{remark}


\subsubsection{Conformally invariant derivative operators}


 The Ricci and  curvature coefficients   depend, of course, on the particular   null frame we choose.  Of  particular importance in our work here are    the conformal frame  transformations
  $e_3'=\la^{-1} e_3, \,  e'_4 = \la e_4 , \, e_a'= e_a$. 
 Note, in particular,  that under such   a  frame  transformation  we have
\beaa
 \trchb'&=&\la^{-1} \trchb, \quad \atrchb'=\la^{-1} \atrchb,  \quad   \trch'=\la\trch, \quad \atrch'=\la \atrch, \\
  \xi'&=& \la \xi, \quad   \eta'=\eta, \quad \etab'=\etab,  \quad \xib'=\la^{-1}\xib,\\
   \a'&=&\la^2 \a,\quad \b'=\la \b,    \quad \rho'=\rho,    \quad \rhod'=\rhod,  \quad \bb'=\la^{-1} \bb,\quad  \aa'=\la^{-2} \aa,
   \eeaa
   and
   \beaa
 \omb'&=& \la^{-1}\left(\omb +\frac{1}{2} e_3(\log \la)\right), \quad \om'= \la\left(\om -\frac{1}{2} e_4(\log \la)\right), \quad
 \ze'= \ze - \nab (\log \la).\\
\eeaa
We say that  a horizontal tensor $f$ is $s$-conformal invariant  if, under the  conformal  frame transformation above it changes as\footnote{Note that $s$ is precisely what   is called    in \cite{Ch-Kl}    the  signature of the tensor.} $f'=\la^s f $.  According to this definition $\chi  $ is $1$-conformal,  $\a$  is  $2$-conformal, \ldots, while  $\om, \, \omb, \, \ze$ fail to be conformal.

\begin{remark}
\lab{remark-Conf-derivaties-Intro}
If $f$ verifies $f'= \lambda^s f$, then   the derivatives  $\nab_3 f, \nab_4 f, \nab_a f$ are not conformal invariant.
 Note however that 
  \begin{itemize}
 \item $\nabc_3 f:= \nab_3f-2 s \omb f$ is $(s-1)$-conformally invariant.
 \item $\nabc_4 f:= \nab_4f+2 s \om f$ is $(s+1)$-conformally invariant.
 \item $\nabc_Af:= \nab_Af+ s \ze_A f$ is $s$-conformally invariant. 
  \end{itemize}
  The conformal invariant operator  $\nabc_3 $ played an important role in the derivation of the Regge-Wheeler equation in \cite{KS}. Here they  play an  equally important role.
 \end{remark}


\subsubsection{Complexification and values in Kerr}


The   null structure and null Bianchi equations   simplify considerably  by introducing  complex notations such as 
\beaa
&& A:=\a+i\dual\a, \quad B:=\b+i\dual\b, \quad P:=\rho+i\dual\rho,\quad \Bb:=\bb+i\dual\bb, \quad \Ab:=\aa+i\dual\aa,
\\
&& X:=\chi+i\dual\chi, \quad \Xb:=\chib+i\dual\chib, \quad H:=\eta+i\dual \eta, \quad \Hb:=\etab+i\dual \etab, \quad Z:=\ze+i\dual\ze.
\eeaa    
In particular, note that  $\tr X = \trch-i\atrch, \,   \tr\Xb = \trchb -i\atrchb$.

 These  complexified  tensors   take a particularly simple form  in Kerr,  relative to a principal null frame\footnote{There is an indeterminacy in the principal null frame as one may replace the null pair $(e_3, e_4)$ with $(\la^{-1}e_3, \la e_4)$ for any $\la>0$. The formulas provided here correspond to the arbitrary choice of $\la>0$ ensuring $\nab_4e_4=0$ and thus $\om=0$. Note that our main result, stated in Theorem \ref{main-theorem-intro},  is independent of this particular choice.}, see  section \ref{sec:recallbasicthingsinKerr}, 
 \beaa
   \Xh=\Xbh=\Xi=\Xib=\om=0,\qquad A=B=\Bb=\Ab=0,
 \eeaa
and
\beaa
\tr X=\frac{2}{q}, \qquad \tr\Xb=-\frac{2\Delta q}{|q|^4}, \qquad P=-\frac{2m}{q^3},
\eeaa
where $q= r+i a \cos \th$ and $\De= r^2+a^2-2 m r$ relative to  the Boyer-Lindquist coordinates $(r, \th)$.  We note also that $q$ verifies the equations, see \eqref{equations:forq},
\bea
\nab_4 q= \frac 1 2 \tr X q,  \qquad  \nab_3 q=\frac 1 2 \ov{\tr \Xb }  q, \qquad 
\DD q= q  \Hb , \qquad   \DDb q=q \ov{H}.
\eea


\subsection{General perturbations of Kerr} 


To state  our main theorem we need to have a sufficiently   general candidate for spacetime structures which are  small perturbations of Kerr.

\begin{definition}
We say that a spacetime $\MM$ is  an $O(\ep)$ perturbation of   $Kerr(a, m)$, $|a|<m$,   if  $\MM$ comes equipped with two coordinate  functions $r:\MM\longrightarrow (0, \infty), \,\,\th: \MM\longrightarrow [0, \pi] $    and       a null frame $(e_3, e_4, e_1, e_2)$ such that  all Ricci and  curvature coefficients which  vanish    in Kerr are $O(\ep)$ and all  other quantities differ by $O(\ep)$  from their corresponding values in Kerr, expressed with respect to  $(r, \th)$.
 \end{definition}
 
We introduce a schematic notation, similar  to the one used in \cite{KS}, see  Definition 2.3.8 in that paper,  to keep track of the  specific  structure of   error terms. We divide the connection coefficient terms into
\bea\label{definition-Gammas-intro}
\bsplit
\Ga_g^{(0)}&=\left\{ r \Xi, \quad \Xh, \quad \widecheck{Z}, \quad \widecheck{\Hb}, \quad \frac 1 r\nab_4 q- \frac {1}{ 2r} \tr X q, \quad  \frac 1 r  (\DD q- q  \Hb),  \quad  \frac 1 r ( \DDb q-q \ov{H})\right\}, \\
\Ga_b^{(0)}&=\left\{ \widecheck{H},\quad  \Xbh,\quad  \Xib, \quad  \frac 1 r \nab_3 q-\frac {1}{ 2r} \ov{\tr \Xb }  q\right\},
\end{split}
\eea
where, given a quantity $Q$,   we have  denoted by $\widecheck{Q}  $   the renormalized  quantity $\widecheck{Q}= Q-Q_{Kerr} $, with $Q_{Kerr} $   the corresponding   value of $Q $  in Kerr,  expressed in terms of the variables  $r$ and $\th$. Thus, for example,
\beaa
\widecheck{P}=P+\frac{2m}{q^3}, \qquad q=r+i a \cos\th. 
\eeaa

For higher derivatives, we denote 
\beaa
\Ga_g^{(s)}&=& \dk^{\leq s} \Ga_g, \qquad \Ga_b^{(s)}= \dk^{\leq s} \Ga_b,
\eeaa
where
\beaa
\dk&=& \left\{\nab_3,  r\nab_4, r\DD \right\}.
\eeaa

\begin{remark}
The notations above  embody the expected decay properties of the Ricci coefficients,  that are better for $\Ga_g$ than for $\Ga_b$.  The notation  $\dk$ corresponds to the fact that the decay properties of derivatives  behave differently in different directions.
\end{remark}


\subsection{Statement of the main theorems}



\subsubsection{Teukolsky equation} 


Making use of the null Bianchi equations for $A$ and $B$, expressed using  our conformal  derivatives,  we derive the following version of the Teukolsky equation.

 \begin{proposition}[Teukolsky equation]\label{Teukolsky-proposition-intro} The complex tensor $A$ satisfies the following equation:
 \bea\label{Teukolsky-equation-tens}
 \LL(A)&=& \err[\LL(A)]
 \eea
 where\footnote{The operators  $\nabc_4, \, \nabc_3, \, \nabc, \,  \DDc$ are  extensions    of the conformal operators,  introduced in Remark \ref{remark-Conf-derivaties-Intro},  for complex tensors.}
\bea\label{Teukolsky-operator-intro}
\begin{split}
\LL(A) &=-\nabc_4\nabc_3A+ \frac{1}{2}\DDc\hot (\DDbc \c A)+\left(- \frac 1 2 \tr X -2\ov{\tr X} \right)\nabc_3A-\frac{1}{2}\tr\Xb \nabc_4A\\
&+\left( 4H+\Hb +\ov{\Hb} \right)\c \nabc A+ \left(-\ov{\tr X} \tr \Xb +2\ov{P}\right) A+  2H   \hot (\ov{\Hb} \c A)
\end{split}
\eea
with error term  expressed schematically\footnote{The error terms  denoted  $\lot$ are 
quadratic   in  the perturbation and  enjoy  better  decay  properties,  or are higher order  and decay at least as good.}
\bea
\err[\LL(A)]&=& r^{-1}\frak{d}^{\leq 1} \left( \Ga_g B\right)+\Xi \nab_3 B+\lot
\eea
\end{proposition}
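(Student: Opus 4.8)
The plan is to start from the two null Bianchi equations that govern the evolution of the extreme curvature component $A$ along the ingoing direction and along the horizontal directions, and to commute them so as to produce a second-order wave-type operator in $A$ alone, modulo terms involving $B$ (and its derivatives) which are then absorbed into the error term. Concretely, I would first write down, from Proposition \ref{prop:bianchi}, the complexified Bianchi identity of the schematic form $\nabc_3 A + (\text{zeroth order in }\tr\Xb, P)\, A = -\DDc \hot B + (\text{coupling to } B, \Xi, \ldots)$, together with the companion identity for $\nabc_4 B$, namely $\nabc_4 B = -\DDbc\cdot A + (\text{connection})\cdot A + (\ldots)$. The key algebraic move is to apply $\nabc_4$ to the first identity and substitute the second, so that the troublesome $\nabc_4\DDc\hot B$ term becomes $\DDc\hot\nabc_4 B$ plus a commutator, and $\nabc_4 B$ gets replaced by an expression in $A$. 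This is exactly the mechanism by which $-\nabc_4\nabc_3 A$ and $\frac12\DDc\hot(\DDbc\cdot A)$ appear as the principal part of $\LL(A)$.

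The second ingredient is the commutator calculus for the conformal operators. Since $A$ is $2$-conformal, the operators $\nabc_3, \nabc_4, \nabc, \DDc, \DDbc$ are the correct conformally covariant derivatives (Remark \ref{remark-Conf-derivaties-Intro}), and I would need the commutation relations $[\nabc_4, \nabc_3]$, $[\nabc_4, \DDc]$ acting on horizontal complex $2$-tensors. These generate precisely the first-order terms $\left(-\tfrac12\tr X - 2\ov{\tr X}\right)\nabc_3 A$, $-\tfrac12\tr\Xb\,\nabc_4 A$, and $\left(4H + \Hb + \ov{\Hb}\right)\cdot\nabc A$, as well as the zeroth-order terms $\left(-\ov{\tr X}\,\tr\Xb + 2\ov P\right) A$ and the quadratic horizontal term $2H\hot(\ov{\Hb}\cdot A)$. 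The coefficients here must match the Kerr values recorded in the excerpt (e.g. $\tr X = 2/q$, $\tr\Xb = -2\Delta q/|q|^4$, $P = -2m/q^3$), and one should verify that in exact Kerr this reduces to the known Teukolsky operator — a useful internal consistency check. Throughout, every term which is quadratic in the perturbation (schematically $\Ga_g\cdot B$, $\Ga_b\cdot B$, or $\Ga\cdot\Ga\cdot A$) and every term multiplied by a defect scalar like $\Xi$ or $\atrch$ is to be collected into $\err[\LL(A)]$; tracking the $r$-weights via the $\dk$-counting and confirming the claimed form $r^{-1}\frak{d}^{\leq 1}(\Ga_g B) + \Xi\nab_3 B + \lot$ is where the bookkeeping is delicate but routine.

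The main obstacle I anticipate is \emph{not} the derivation of the principal part, which is essentially forced, but rather the careful treatment of the non-integrability defects $\atrch, \atrchb$ (equivalently the imaginary parts of $\tr X, \tr\Xb$) together with the non-conformal coefficients $\om, \omb, \ze$. Unlike the integrable (Schwarzschild/polarized) setting of \cite{KS}, here the horizontal structure genuinely fails to be a foliation, so $\nab_a$ and $\nab_b$ do not commute cleanly — the correction $-\tfrac12(\atrchb\, e_4 + \atrch\, e_3)\in(X,Y)$ enters the commutators and must be shown either to cancel or to land in the error term with the stated schematic structure. Getting the precise combination $4H + \Hb + \ov{\Hb}$ in the first-order horizontal term (rather than some other linear combination of $\eta, \etab, \ze$) requires keeping scrupulous track of which connection coefficients come from the Bianchi identities versus from the commutators versus from the conformal renormalization; this is the step where an arithmetic slip is most likely, and where one most wants to cross-check against the linearized results of \cite{D-H-R-Kerr} and \cite{Ma} after restricting to the Kerr background.
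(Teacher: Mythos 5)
Your proposal follows essentially the same route as the paper's proof: apply $\nabc_4$ to the Bianchi identity $\nabc_3A-\DDc\hot B=-\frac12\tr\Xb\,A+4H\hot B-3\ov{P}\Xh$, commute $\nabc_4$ past $\DDc\hot$, substitute the Bianchi identity for $\nabc_4 B$ to trade $\DDc\hot B$ and $\nabc_4 B$ for $\DDbc\c A$ and lower-order terms, and then re-use the first identity together with Codazzi and the Leibniz rules to reach \eqref{Teukolsky-operator-intro}. The one ingredient you under-emphasize is that differentiating the coefficients $\tr\Xb$, $H$, $\Xh$, $\ov P$ forces you to invoke the $\nabc_4$ null structure and Bianchi equations for these quantities (not only the commutator $[\nabc_4,\DDc\hot]$), which is where several of the first- and zeroth-order coefficients in $\LL(A)$ actually originate; this is a bookkeeping point rather than a gap in the strategy.
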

 A comparison between  this version of the Teukolsky  equation  and the traditional version\footnote{Note that $A$ here is a complex horizontal $2$-tensor, while the standard Teukolsky equation is expressed relative to the complex scalar $\alpha^{[\pm2]}$.}  derived   in   the NP formalism  is done in   Appendix  \ref{appendix:comparisonofTeuk}.


\subsubsection{Generalized Regge-Wheeler   equation}


We look for a quantity  which generalizes  the quantity $\qf$  used in \cite{KS}. In analogy   with   \cite{KS},  it is natural  to look   for a renormalized version of the  conformally invariant  tensor
\bea\lab{definition-Q(A)-intro}
 Q(A)&=& \nabc_3\nabc_3 A + \C \  \nabc_3A +\D \  A
 \eea
 for some well chosen  complex scalar functions $\C, \D$.  Note  that any such expression is   $O(\ep^2)$  invariant and vanishes in Kerr. 
\begin{definition} 
Given a general null frame $(e_3, e_4, e_1, e_2)$ and given scalar functions $r$ and $\th$ satisfying the assumptions in section \ref{perturbations-section}, we define our main quantity $\qf$ as 
\bea
\qf&=& q \ov{q}^{3} Q(A)=q \ov{q}^{3} \left( \nabc_3\nabc_3 A + \C \  \nabc_3A +\D \  A\right)
\eea
where $q=r+ i a \cos\th$, and where  the complex scalar functions $\C, \D$  are to be suitably  chosen.
\end{definition}

We are  now  ready to state the main result of the paper,   concerning  the wave equation satisfied by $\qf$. 
 
 \begin{theorem}[The generalized Regge-Wheeler equation]\label{main-theorem-intro} 
 There  exist choices of  complex scalar functions $\C, \D$  such that $\qf$ defined above  verifies the equation
 \bea\lab{wave-equation-qf-intro}
 \square_2 \qf   - \frac{4i a\cos\th}{|q|^2} \T( \qf )   - V  \qf &=& a \,  L_{\qf}[A] + \err[\square_2 \qf]
 \eea
 where
 \begin{itemize}
 \item $\T$ is a vectorfield defined by \eqref{definition-T-vectorfield} in the nonlinear case, which reduces to $\partial_t$ in Kerr, 
 
 \item the potential $V$ is a complex scalar function  whose real part coincides with the potential of the Regge-Wheeler equation  in \cite{KS}, i.e. $V=-\trch\trchb+ O\left(\frac{|a|}{r^3}\right)$, 
 
 \item $ L_{\qf}[A ]$ is  a  linear second order   operator  in $A$,   given by
  \beaa
 L_{\qf}[A] &=& c_1\nabc_2 ( \nabc_3 A) + c_2\nabc_3 A + c_3 \nabc (A) + c_4 A
\eeaa
with $c_1,\ldots, c_4$  smooth  functions  of $(r, \th)$ having the following fall-off in $r$ 
\beaa
c_1=O\left(r\right), \qquad c_2=O\left(1\right), \qquad c_3=O\left(1\right), \qquad c_4=O\left(\frac{1}{r}\right),
\eeaa

 \item $\err[\square_2 \qf]$ is  the nonlinear correction term, which is given schematically by  the expression\footnote{The error terms  denoted  $\lot$ are 
quadratic   in  the perturbation and  enjoy  better  decay  properties,  or are higher order  and decay at least as good.} 
 \beaa
 \err[\square_2 \qf]&=& r^2 \frak{d}^{\leq 2} (\Ga_g \c (A, B))+ \nab_3 (r^3 \frak{d}^{\leq 2}( \Ga_g \c (A, B)))\\
 &&+\frak{d}^{\leq 1} (\Ga_g \qf) + r^2 \widecheck{H} \nab^{\leq 1} \nab_3^{\leq 1} A+\lot
 \eeaa
 \end{itemize}
 \end{theorem}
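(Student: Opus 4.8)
The plan is to derive \eqref{wave-equation-qf-intro} by applying the conformal operator $\nabc_3$ twice to the Teukolsky equation \eqref{Teukolsky-equation-tens}, renormalising after each application, in the spirit of the spin-$2$ Chandrasekhar transformation of \cite{D-H-R}, \cite{D-H-R-Kerr}, \cite{KS}; the second-order operator $Q$ of \eqref{definition-Q(A)-intro} is then the composition of the two steps. The inputs are: (i) commutation identities for the conformally invariant operators $\nabc_3,\nabc_4,\nabc,\DDc,\DDbc$ on complex horizontal tensors, recorded once and for all from the null structure equations of Proposition \ref{prop-nullstr} and the Bianchi equations of Proposition \ref{prop:bianchi}; (ii) the Teukolsky equation itself, used to remove every occurrence of $\nabc_4\nabc_3A$ in favour of angular and lower-order terms; (iii) the $e_3$-transport equations for $\tr X,\tr\Xb,H,\Hb$, invoked whenever a connection coefficient is differentiated by $\nabc_3$. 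A guiding observation is that in $Kerr(a,m)$ the angular connection coefficients $H,\Hb,Z$, the integrability defects $\atrch,\atrchb$, and $\mathrm{Im}\,q=a\cos\th$ are all $O(a/r^2)$ (or $O(a)$); hence every term produced by the computation that is not already present in the Schwarzschild reduction of \cite{KS} carries a factor of $a$ and will be collected into $a\,L_{\qf}[A]$, while the renormalised parts $\widecheck{H},\widecheck{Z},\ldots$ of these coefficients feed into $\err[\square_2\qf]$.

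\textbf{The two transformations.} First, I would commute $\nabc_3$ through $\LL$ in \eqref{Teukolsky-operator-intro}: this turns $-\nabc_4\nabc_3A$ into $-\nabc_4\nabc_3(\nabc_3A)-[\nabc_3,\nabc_4]\nabc_3A$, turns $\frac12\DDc\hot(\DDbc\c A)$ into $\frac12\DDc\hot(\DDbc\c\nabc_3A)$ plus commutators, and, by Leibniz, turns the first-order terms into coefficients times $\nabc_3\nabc_3A$ plus (a $\nabc_3$-derivative of a coefficient) times $\nabc_3A$; the single term $-\frac12\tr\Xb\,\nabc_3\nabc_4A$ is rewritten via $\nabc_3\nabc_4A=\nabc_4\nabc_3A+[\nabc_3,\nabc_4]A$ together with the Teukolsky equation. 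One is left with a Teukolsky-type equation for $\nabc_3A$ with the same principal part but modified lower-order terms and a coupling to $A$; the first renormalisation $\psi_1=\nabc_3A+b_0A$, with $b_0$ solving a transport ODE $\nabc_3b_0=\ldots$ along $e_3$, is chosen to cancel that coupling and to improve the potential. Repeating on the equation for $\psi_1$ and renormalising a second time, $\qf=q\ov{q}^{3}(\nabc_3\psi_1+b_1\psi_1)=q\ov{q}^{3}Q(A)$, yields the final equation, with $\C,\D$ determined by $b_0,b_1$ and their $\nabc_3$-derivatives; one must check that both transport ODEs are solvable and that $b_0,b_1$ — hence $\C,\D$ — reduce in Kerr to explicit functions of $(r,\th)$, so that $Q(A)$ vanishes there. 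The asymmetric weight $q\ov{q}^{3}$, dictated by the ingoing direction of the transformation, is precisely the one for which the reorganised second-order equation is recognised, via the null decomposition of the wave operator on weighted complex $2$-tensors, as having left-hand side exactly $\square_2\qf-\frac{4ia\cos\th}{|q|^2}\T(\qf)-V\qf$: here $\T$ is the vectorfield of \eqref{definition-T-vectorfield}, arising from the combination of $e_3$ and $e_4$ whose coefficients are the antisymmetric (integrability-defect and $\mathrm{Im}\,q$) parts of the coefficients of $\nabc_3\qf,\nabc_4\qf$, and $V$ is the remaining zeroth-order coefficient, whose real part equals $-\trch\trchb+O(|a|/r^3)$ by comparison of the $a=0$ reduction with Theorem 2.4.7 of \cite{KS}.

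\textbf{Error and lower-order bookkeeping.} For the right-hand side, I would collect everything quadratic in the perturbation into $\err[\square_2\qf]$; its structure is inherited from $\err[\LL(A)]=r^{-1}\dk^{\leq1}(\Ga_g B)+\Xi\,\nab_3B+\lot$: applying $\nabc_3$ once (note $\nab_3\in\dk$) and rescaling by $q\ov{q}^{3}\sim r^4$ produces the contributions $r^2\dk^{\leq2}(\Ga_g\c(A,B))$ and $\nab_3\bigl(r^3\dk^{\leq2}(\Ga_g\c(A,B))\bigr)$, the commutator and renormalisation steps contribute $\dk^{\leq1}(\Ga_g\qf)$, and the terms in which an $H$ or $\Hb$ has been differentiated give $r^2\,\widecheck{H}\,\nab^{\leq1}\nab_3^{\leq1}A$; all remaining terms are higher order or decay strictly better and are absorbed into $\lot$. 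The delicate point is that no $\Ga_b$-type factor and no term with more than two $\dk$-derivatives may survive outside $\lot$, which is what forces the precise pairing of $\nabc_3$-derivatives with $r$-weights inside $\dk$. In parallel, every genuinely linear term proportional to $a$ — produced by $H_{Kerr},\Hb_{Kerr},Z_{Kerr},\atrch,\atrchb$ and by $\mathrm{Im}\,q$ — is organised into $a\,L_{\qf}[A]=a\bigl(c_1\nabc_2(\nabc_3A)+c_2\nabc_3A+c_3\nabc(A)+c_4A\bigr)$, and the stated $r$-fall-off of $c_1,\ldots,c_4$ is read off from the Kerr identities $\tr X=\frac2q$, $\tr\Xb=-\frac{2\Delta q}{|q|^4}$, $P=-\frac{2m}{q^3}$ and from the equations satisfied by $q$.

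\textbf{Main obstacle.} The hard part will be the second transformation: showing that a single choice of the renormalising coefficients $b_0,b_1$ — equivalently of $\C,\D$ — simultaneously cancels all the non-wave-type terms, namely the leftover $\nabc_4A$, the $\DDbc\c A$, and the mixed angular/$\nabc_3$ terms generated by the two rounds of commutation, down to the admissible remainder $a\,L_{\qf}[A]$, and does so without spoiling the sign of $\mathrm{Re}\,V$. In the integrable Schwarzschild setting these cancellations are classical; the new difficulty is that the non-integrability of the principal frame injects $\atrch,\atrchb$-terms into every commutation identity, and one must verify that they conspire either to cancel or to land exactly in $a\,L_{\qf}[A]$ with the prescribed fall-off. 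Establishing this, together with the solvability of the two transport ODEs and the verification that their Kerr values yield precisely the $\C,\D$ for which $\qf$ vanishes in Kerr, is the technical heart of the argument.
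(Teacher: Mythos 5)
Your overall strategy is the right one and matches the paper's: the paper likewise defines $Q(A)=\nabc_3\nabc_3A+\C\,\nabc_3A+\D\,A$, imposes transport equations on $\C,\D$ along $e_3$ so that the commutator $[Q,\LL]$ loses its $\nabc_4\nabc_3A$ and $\nabc_4A$ terms, rescales by $q\ov{q}^{3}$, and reads off the $\T$-term from the integrability defects; your two-step cascade $\psi_1=\nabc_3A+b_0A$, $\qf=q\ov{q}^{3}(\nabc_3\psi_1+b_1\psi_1)$ composes to exactly this $Q$ with $\C=b_0+b_1$, $\D=\nabc_3 b_0+b_0b_1$. (One cosmetic difference: the paper does not solve the transport ODEs abstractly but exhibits explicit algebraic solutions $\C=2\trchb+i\frak{c}\,\atrchb$, $\D=\frac12\trchb^2+\frak{f}\,\atrchb^2+\frac{i}{2}\frak{c}\,\trchb\atrchb$ and verifies them directly against the null structure equations.)

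There are, however, two places where your proposal has a genuine gap relative to the statement being proved. First, the transport equation for $\C$ only constrains its \emph{real} part: any $\frak{c}$ in $\C=2\trchb+i\frak{c}\,\atrchb$ is admissible at that stage, and for generic $\frak{c}$ the surviving linear term in $a\,L_{\qf}[A]$ is a full angular derivative $d_1\c\nabc(\nabc_3A)$ with a nonvanishing $\nabc_1$-component. To obtain the stated structure $c_1\nabc_2(\nabc_3A)$ --- which matters, since $e_2$ lies in the span of $\T,\Z$ --- one must exploit this residual freedom and fix $\frak{c}=-4$, i.e.\ $\C=2\trchb-4i\atrchb$; the cancellation of the $\nabc_1$-component then rests on the Kerr identity $\trchb\,\etab+\atrchb\,\dual\etab=0$ modulo terms in $\eta-\etab$ and $\dual\eta+\dual\etab$ (which are purely $e_2$-directed). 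Your proposal never identifies this extra degree of freedom or the need to fix it, so as written it would only yield the weaker conclusion with a general $\nabc(\nabc_3A)$ in $L_{\qf}[A]$.

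Second, your account of the $\T$-term is incomplete. The combination $i(\atrch\nab_3+\atrchb\nab_4)\qf$ coming from the antisymmetric parts of the $\nab_3\qf,\nab_4\qf$ coefficients produces, once $e_3,e_4$ are expressed through $\T,\Z$, both a $\T(\qf)$ and a $\Z(\qf)$ contribution; the $\Z(\qf)$ piece is only removed because the residual angular first-order term $(-H+\ov{H}-\Hb+\ov{\Hb})\c\nab\qf$ --- which survives the rescaling by $q\ov{q}^{3}$ precisely because $\nab(q\ov{q}^{3})$ involves the asymmetric combination $\frac32 H+\frac12\ov{H}+\frac12\Hb+\frac32\ov{\Hb}$ --- contributes an equal and opposite $-\frac{4ia^2\cos\th}{|q|^4}\Z(\qf)$. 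Without tracking this cancellation you cannot conclude that the left-hand side of \eqref{wave-equation-qf-intro} contains only $\T(\qf)$ and no $\Z(\qf)$.
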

 
\begin{remark}
Note the  similarity between the structure  of the nonlinear terms here with that of  Theorem  2.4.7 in \cite{KS}.
\end{remark}

\begin{remark}
Observe that the generalized Regge-Wheeler equation \eqref{wave-equation-qf-intro} for the complexified symmetric traceless 2-tensor $\qf$ reduces to the equation \eqref{Intro:Regge-Wheeler} obtained in \cite{Ma} and \cite{D-H-R-Kerr} for complex scalar functions once the equation is projected to the component $\qf(e_1, e_1)$, as shown in section \ref{section-projection-equation}.    A   general discussion of   such projections can be found in section     \ref{section:projwave}.
\end{remark}


\subsection{Structure of the paper}


The structure of the paper is as follows.
\begin{itemize}
\item In section 2,  we introduce our general  formalism. 

\item In section 3,  we  introduce our complex notations and rewrite the equations of section 2 that considerably simplify in this setting. 

\item In section 4,  we express the covariant wave operator for complexified symmetric traceless 2-tensors using our complex notations.

\item  In section 5,  we specify our general formalism in the particular case of Kerr.
 
\item In section 6, we  derive the Teukolsky equation in the nonlinear setting.

\item In section 7,  we derive the generalized Regge-Wheeler equation in the nonlinear setting.

\item In section 8,  we derive useful identities involving the complexified symmetric traceless 2-tensor $\qf$.
\end{itemize}


\subsection{Acknowledgments}


The second author is supported   by  the  NSF grant  DMS 180841 as well as by  the Simons grant  10011738. He would like to thank the Laboratoire Jacques-Louis Lions  of Sorbonne Universit\'e  and   IHES   for their  hospitality during  his many visits.  The third author is supported by ERC grant  ERC-2016 CoG 725589 EPGR.


\section{A general formalism}\label{section-general-formalism}



\subsection{Null pairs and horizontal structures}


Let $(\MM, \g)$ a Lorentzian space-time. Consider an arbitrary null pair $e_3=\Lb$, $e_4=L$, i.e.
\beaa
\g(e_3, e_3)=\g(e_4, e_4)=0, \qquad  \g(e_3, e_4)=-2.
\eeaa

\begin{definition}
We say that a vectorfield $X$ is $(L,\Lb)$-horizontal, or
simply horizontal,  if
\beaa
\g(L,X)=\g(\Lb, X)=0. 
\eeaa
We denote by $\O(\MM)$ the set of horizontal vectorfield on $\MM$.  Given a fixed 
 orientation  on $\MM$,  with corresponding  volume form  $\in$,  we define  the induced 
 volume form on   $\O(\MM)$ by,
 \bea
 \in(X, Y):=\frac 1 2\in(X, Y, \Lb, L).  \label{vol.form}
 \eea
\end{definition}

Clearly, any linear combination   of horizontal vectorfields is again
horizontal. The commutator $[X,Y]$ of two horizontal vectorfields
may fail however to be horizontal. We say that the pair $(L,\Lb)$ is integrable if the set of horizontal vectorfields  forms an integrable distribution,
i.e. $X, Y\in\O(\MM) $ implies that $[X,Y]\in\O(\MM)$.
 In this work we will work  with general,
 not necessarily integrable null pairs.

Given an arbitrary vectorfield $X$ we denote by $\Xho$
its  horizontal projection,
\beaa
\Xho&=&X+ \frac 1 2 \g(X,\Lb)L+ \frac 1 2   \g(X,L)\Lb.
\eeaa

\begin{definition}
A  $k$-covariant tensor-field $U$ is said to be horizontal,  $U\in \O_k(\MM)$,
if  for any $X_1,\ldots X_k$ we have,
\beaa
U(X_1,\ldots X_k)=U(\Xho_1,\ldots \Xho_k).
\eeaa
\end{definition}

We can  define the projection operator,
\beaa
\Pi^{\mu\nu}&=& \g^{\mu\nu}+ \frac 1 2 (\Lb^\mu L^\nu+L^\mu\Lb^\nu).
\eeaa
Clearly $ \Pi_{\a}^{\mu}\Pi_{\mu}^\b=\Pi_{\a}^\b$.  An arbitrary tensor 
$U_{\a_1\ldots\a_m}$ is horizontal,  if
\beaa
\Pi_{\a_1}^{\b_1}\ldots \Pi_{\a_m}^{\b_m}\, U_{\b_1\ldots\b_m}=U_{\a_1\ldots\a_m}.
\eeaa

 \begin{definition}\lab{def:proxyfirstandsecondfundameentalform} 
 For any horizontal $X,Y$ we define\footnote{In the particular case where the horizontal structure is integrable, $\ga$ is the induced metric, and $\chi$ and $\chib$ are the  null second fundamental forms.} 
  \bea
 \ga(X,Y) &=& \g(X,Y)
 \eea
 and   
\begin{equation}\label{fo1}
\begin{cases}
&\chi(X,Y)=\g(\D_XL,Y),\\ 
&\chib(X,Y)=\g(\D_X\Lb,Y).
\end{cases}
\end{equation}
\end{definition}

Observe that    $\chi$
 and $\chib$  are  symmetric if and 
 only if   the horizontal structure is 
 integrable. Indeed this follows easily from
 the formulas,
 \beaa
 \chi(X,Y)-\chi(Y,X)&=&\g(\D_X L, Y)-\g(\D_YL,X)=-\g(L, [X,Y]),\\
 \chib(X,Y)-\chib(Y,X)&=&\g(\D_X \Lb, Y)-\g(\D_Y\Lb,X)=-\g(\Lb, [X,Y]).
\eeaa
 We can view $\ga$,  $\chi$ and $\chib$ as horizontal 2-covariant tensor-fields
 by extending their definition to arbitrary vectorfields  $X, Y$  according to,
 \beaa
  \ga(X,Y)&=&\ga(\Xho,\Yh)
  \eeaa
  and
  \beaa
  \chi(X,Y)&=&\chi(\Xho, \Yh),\qquad \chib(X,Y)=\chib(\Xho, \Yh).
 \eeaa

  Given a  general 2- covariant horizontal  tensor $U$
  we decompose it in its symmetric and antisymmetric part as follows,
  \bea
  \Us(X,Y)&=&\frac 1 2 \big(U(X, Y)+U(Y, X)\big),\qquad\\
   \Ua(X,Y)&=&\frac 1 2 \big(U(X, Y)-U(Y, X)\big).\nn
  \eea
  Given an  horizontal structure  defined by $e_3=\Lb$,  $e_4=L$ 
  we  associate a null frame by choosing   orthonormal  horizontal  vectorfields
  $e_1, e_2$  such that $\ga(e_a, e_b)=\de_{ab}$.  By convention, we   say that 
  $(e_1, e_2)$ is positively oriented  on $\O(\MM)$  if,
  \bea
  \in(e_1, e_2)=\in(e_1, e_2,  e_3, e_4)=1.
  \eea

  Given a covariant, horizontal,  2-tensor $U$   and an arbitrary
 orthonormal  horizontal frame $(e_a)_{a=1,2}$ we have,
\beaa
\Us_{ab}=\frac 12 (U_{ab}+U_{ba}),\qquad \Ua_{ab}=\frac 12 (U_{ab}-U_{ba}).
\eeaa
We define the trace and anti-trace  according to,
\begin{definition}
The trace  of a  horizontal  2-tensor $U$ is defined by
\bea
\tr (U):=\de^{ab}U_{ab}=\de^{ab}\Us_{ab}.
\eea
We define the anti-trace of $U$ by,
\bea
\atr (U):=\in^{ab}U_{ab}=\in^{ab}\Ua_{ab}.
\eea
Observe that  the  first    trace  is   independent  of  the particular choice  of  the  frame $e_1, e_2$.
On the other hand, for fixed $e_3, e_4$,   $\atr$  depends on the orientation of $e_1, e_2$.
Also, by interchanging  $e_3, e_4$, $\atr$  changes sign.
\end{definition}

A general  horizontal,  2-tensor $U$  can be decomposed according to,
\bea
U_{ab}&=&\Us_{ab}+\Ua_{ab}=\widehat{U}_{ab}+\frac 1 2 \de_{ab}\, \tr( U)+\frac 12 \in_{ab}\atr (U). \label{decompU}
\eea

In what follows we fix  a null pair $e_3, e_4$ and an orientation  on $\O(\MM)$.
\begin{definition}\label{definition-hodge-duals}
We define the left and right duals of horizontal 1-forms $\om$ and $2$- covariant  tensor-fields $U$,
\bea
\dual \om_{a}=\in_{ab}\om_b,\qquad \om^*\, _{a}=\om_b\in_{ba},
\eea
\bea
(\dual U)_{ab}&=&\in_{ac} U_{cb},\qquad (U^*)_{ab}=
U_{ac}\in_{cb}.
\eea
\end{definition}

\begin{lemma}
Given a horizontal 1-form $\om$, we have
\beaa
\dual(\dual \om)=-\om, \qquad \dual\om=-\om^*.
\eeaa
\end{lemma}

\begin{lemma} 
Given an arbitrary covariant, horizontal 2-tensor $U$, we have
\begin{enumerate}
\item $\dual (\dual U)=-U$.

\item If $U$ is symmetric, then $\dual U=- U^*$. 

\item If  $U=\hat U$ is symmetric, traceless,
then, $\dual \hat U=-\hat U^*$ is also symmetric 
traceless. 
\item  In general, 
\beaa
\tr(\dual U)&=&\tr(U^*)=-\atr(U),\\
\atr(\dual U)&=&\atr(U^*)=\tr(U),\\
\widehat{\dual U}&=&\dual\hat{U}.
\eeaa
\end{enumerate}
\end{lemma}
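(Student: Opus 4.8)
The plan is to reduce every assertion to the scalar case via the volume form $\in_{ab}$ and the representation \eqref{decompU}. Recall that an orthonormal horizontal frame gives $\in_{ab}$ the matrix form $\begin{pmatrix} 0 & 1 \\ -1 & 0\end{pmatrix}$, so $\in_{ac}\in_{cb} = -\de_{ab}$. The first item, $\dual(\dual U) = -U$, is immediate: $(\dual(\dual U))_{ab} = \in_{ac}(\dual U)_{cb} = \in_{ac}\in_{cd}U_{db} = -\de_{ad}U_{db} = -U_{ab}$. For the second item, if $U$ is symmetric then $(\dual U)_{ab} = \in_{ac}U_{cb} = \in_{ac}U_{bc}$, while $(U^*)_{ab} = U_{ac}\in_{cb} = -\in_{bc}U_{ac}$; the claim $\dual U = -U^*$ then amounts to $\in_{ac}U_{bc} = \in_{bc}U_{ac}$, which follows by writing $U_{ab} = \hat U_{ab} + \frac12\de_{ab}\tr U$ (no anti-trace part, since $U$ is symmetric) and checking the two terms separately: for the $\de_{ab}$ term both sides are $\in_{ab}\tr U$ up to the symmetry, and for the traceless symmetric part one uses that in two dimensions $\in_{ac}\hat U_{bc}$ is symmetric in $a,b$ — this is the elementary two-by-two identity $\in\hat U + (\in\hat U)^T = \tr(\in\hat U)\,\de = 0$ since $\in \hat U$ is traceless (as $\tr(\in\hat U) = \in^{ab}\hat U_{ab} = \atr\hat U = 0$ for $\hat U$ symmetric). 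The third item is then immediate: if $\hat U = U$ is symmetric traceless, item 2 gives $\dual\hat U = -\hat U^*$, and one checks $\dual\hat U$ is traceless ($\tr(\dual\hat U) = \in^{ab}\hat U_{ab} = 0$) and symmetric (by the same two-by-two identity just invoked), hence equals its own hat.

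For the fourth item I would compute each trace directly in the orthonormal frame. Using $\de^{ab}\in_{ac} = \in_{cb} \cdot(\text{sign bookkeeping})$, more precisely $\tr(\dual U) = \de^{ab}(\dual U)_{ab} = \de^{ab}\in_{ac}U_{cb} = \in_{cb}U_{cb}$ up to relabeling, which is $\in^{cb}U_{cb} = \atr U$ — but with the index placement in the definition $\dual\om_a = \in_{ab}\om_b$ one must track the sign: the stated claim is $\tr(\dual U) = -\atr(U)$, so I expect the careful contraction to yield the minus sign, coming from $\de^{ab}\in_{ac} = -\in_{bc}$ in the chosen convention (equivalently $\in_{ba}\de^{ab}$ reversed). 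Similarly $\tr(U^*) = \de^{ab}U_{ac}\in_{cb} = U_{ac}\in_{ca} = -\in_{ac}U_{ac} = -\atr U$. The anti-trace computations are parallel: $\atr(\dual U) = \in^{ab}(\dual U)_{ab} = \in^{ab}\in_{ac}U_{cb} = \de_{bc}U_{cb}\cdot(\pm 1) = \pm\tr U$, and I expect the sign to come out $+$ as stated, using $\in^{ab}\in_{ac} = \de^b_c$. For the hat identity $\widehat{\dual U} = \dual\hat U$, apply the dual to the decomposition \eqref{decompU}: $\dual U = \dual\hat U + \frac12\tr(U)\,\dual\de + \frac12\atr(U)\,\dual\in$. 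One computes $\dual\de = \in$ (since $(\dual\de)_{ab} = \in_{ac}\de_{cb} = \in_{ab}$) and $\dual\in = -\de$ (since $(\dual\in)_{ab} = \in_{ac}\in_{cb} = -\de_{ab}$), so $\dual U = \dual\hat U - \frac12\atr(U)\,\de + \frac12\tr(U)\,\in$; comparing with the canonical decomposition $\dual U = \widehat{\dual U} + \frac12\tr(\dual U)\,\de + \frac12\atr(\dual U)\,\in$ and using the already-established trace/anti-trace formulas gives $\widehat{\dual U} = \dual\hat U$ together with a consistency check on the other two components. I would also separately verify that $\dual\hat U$ is genuinely traceless and symmetric (done above), so that it really is the hat part.

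The main obstacle is purely notational: the excerpt defines \emph{two} duals, the "left" dual $\dual U$ with $\in_{ac}U_{cb}$ and the "right" dual $U^*$ with $U_{ac}\in_{cb}$, and keeping the sign conventions for $\in_{ab}$, $\de^{ab}\in_{ac}$, and $\in^{ab}\in_{ac}$ consistent throughout all six sub-identities is the only place an error can creep in. There is no real analytic content — everything is the algebra of $2\times 2$ matrices decomposed into multiples of the identity, the traceless-symmetric part, and the antisymmetric part $\in$, with $\dual$ acting as left multiplication by $\in$ and $(\cdot)^*$ as right multiplication. Once I fix the frame and the convention $\in(e_1,e_2) = 1$ from the paragraph preceding \eqref{decompU}, all signs are determined, and the proof is a short bookkeeping exercise; I would present it by first recording the three scalar facts $\dual\de = \in$, $\dual\in = -\de$, $\in_{ac}\in_{cb} = -\de_{ab}$, then applying $\dual$ to \eqref{decompU} to get items 1--3 and the hat identity at once, and finally doing the four one-line index contractions for item 4.
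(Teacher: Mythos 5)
Your overall strategy --- decompose $U$ into its $\de$-, $\in$-, and symmetric-traceless parts and let $\dual$ and $(\cdot)^*$ act as left and right multiplication by $\in_{ab}$ --- is the right one. Item 1, the four trace/anti-trace contractions of item 4 (once the signs are actually pinned down rather than "expected"), and the hat identity obtained by applying $\dual$ to \eqref{decompU} are all fine; the latter is precisely the computation the paper itself displays immediately after the lemma.

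There are, however, two concrete errors. First, in item 2 your check of the $\de_{ab}$ term fails, and not merely "up to the symmetry": restricted to the pure-trace part $\frac12\de_{ab}\tr(U)$, the two sides of $\in_{ac}U_{bc}=\in_{bc}U_{ac}$ are $\frac12\tr(U)\in_{ab}$ and $\frac12\tr(U)\in_{ba}=-\frac12\tr(U)\in_{ab}$, which are negatives of one another. Indeed $\dual\de=\de^*=\in$, so $\dual U=-U^*$ is simply false for $U=\de$; the identity needs $\tr(U)=0$ in addition to symmetry, consistent with the general formula $U^*_{ab}=-\dual U_{ab}+\in_{ab}\tr(U)-\de_{ab}\atr(U)$ recorded two displays after the lemma. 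You should flag this rather than absorb it into a vague symmetry remark. Second, in item 3 the "elementary two-by-two identity" $\in\hat U+(\in\hat U)^T=\tr(\in\hat U)\,\de$ is not an identity: the matrix $M=\in\hat U$ is itself symmetric traceless (that is the conclusion you are after), so $M+M^T=2M$ while $\tr(M)\,\de=0$; as written your argument would prove $\dual\hat U$ is \emph{anti}symmetric. The correct tool is the antisymmetric-part identity $M_{ab}-M_{ba}=\atr(M)\in_{ab}$ from \eqref{decompU}, combined with $\atr(\dual\hat U)=\tr(\hat U)=0$ (your own item 4), or simply a two-component check. With these two repairs the proof goes through.
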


Given a general horizontal 2-form $U$ we have, according to \eqref{decompU}
\beaa
U_{ab}&=&\hat{U}_{ab}+\frac 1 2 \de_{ab}\, \tr( U)+\frac 12 \in_{ab}\atr (U). 
\eeaa
Therefore,
\beaa
U^*_{ab}&=&\hat{U}^*_{ab}+\frac 1 2 \in_{ab}\, \tr( U)- \frac 12 \de_{ab}\atr (U), \\
\dual U_{ab}&=&\dual \hat{U}_{ab}+\frac 1 2 \in_{ab}\, \tr( U)- \frac 12 \de_{ab}\atr (U).
\eeaa
Hence,
\bea
U^*_{ab}&=&-\dual U_{ab}+\in_{ab}\, \tr( U)-  \de_{ab}\atr (U).
\eea

We note the following lemma,
\begin{lemma}
\label{le:duals}
Given two  1-forms $\xi, \eta$ we have,
\beaa
\dual\xi \c  \eta=-\xi\dual\c \eta=\xi\c\eta^*.
\eeaa
Given  a  1-form  $\xi$ and    2-tensor $U$ we have,
\beaa
\xi_a U^*_{ab}&=&\dual \xi_a  U_{ab}-\dual \xi_b( \tr U)-\xi_b (\atr U ),\\
\dual \xi_a U^*_{ab}&=&- \xi_a  U_{ab}+ \xi_b( \tr U)-\dual \xi_b (\atr U ).
\eeaa
Thus,
\beaa
\dual \xi_a U^*_{ab}+ \xi_a  U_{ab}&=& \xi_b( \tr U)-\dual \xi_b (\atr U ).
\eeaa
Also,
\beaa
\dual \xi_a U^*_{ab}- \xi_a  U_{ab}&=&-2\xi_a \hat{U}_{ab}.
\eeaa
\end{lemma}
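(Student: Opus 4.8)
The plan is to reduce every assertion to elementary index manipulations with the two-dimensional volume form $\in_{ab}$, together with the decomposition formula \eqref{decompU} and the relation
\[
U^*_{ab}=-\dual U_{ab}+\in_{ab}\,\tr(U)-\de_{ab}\,\atr(U)
\]
established immediately before the statement. The only structural facts I would invoke are the antisymmetry of $\in$, the two-dimensional contraction identity $\in_{ac}\in_{bc}=\de_{ab}$ (equivalently $\in_{ac}\in_{cb}=-\de_{ab}$), and the resulting rule for contracting a $1$-form into a slot of $\in$, namely $\xi_a\in_{ab}=-\dual\xi_b$ and $\in_{ac}\xi_a=-\dual\xi_c$, where $\dual\xi_b=\in_{bc}\xi_c$. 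I also use the $1$-form dualities $\dual(\dual\xi)=-\xi$ and $\dual\om=-\om^*$ recorded in the preceding lemma.

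For the first line I would compute directly in components: writing $\dual\xi\c\eta=\in_{ab}\xi_b\eta_a$ and $\xi\dual\c\eta=\xi_a\in_{ab}\eta_b$, a relabeling of the summation indices combined with the antisymmetry of $\in$ turns one expression into minus the other, giving $\dual\xi\c\eta=-\xi\dual\c\eta$. The equality with $\xi\c\eta^*$ then follows at once from $\eta^*_a=\eta_b\in_{ba}$, i.e. from $\dual\eta=-\eta^*$.

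For the tensorial identities I would substitute the displayed formula for $U^*$ into $\xi_aU^*_{ab}$ and treat the three resulting terms separately. The contribution $-\xi_a\dual U_{ab}=-\xi_a\in_{ac}U_{cb}$ becomes $\dual\xi_cU_{cb}=\dual\xi_aU_{ab}$ by the contraction rule; the contribution $\xi_a\in_{ab}\tr(U)$ becomes $-\dual\xi_b\,\tr(U)$; and $-\xi_a\de_{ab}\atr(U)$ is simply $-\xi_b\,\atr(U)$. Assembling these yields the first tensorial identity. The second identity I would obtain from the first by replacing $\xi$ with $\dual\xi$ and using $\dual(\dual\xi)=-\xi$, which converts $\dual\xi_aU_{ab}$ into $-\xi_aU_{ab}$ and the trace term accordingly, leaving the stated expression. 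Adding $\xi_aU_{ab}$ to both sides of the second identity then produces the \enquote{Thus} line. Finally, for $\dual\xi_aU^*_{ab}-\xi_aU_{ab}=-2\xi_a\hat U_{ab}$ I would insert the decomposition $U_{ab}=\hat U_{ab}+\tfrac12\de_{ab}\tr(U)+\tfrac12\in_{ab}\atr(U)$ into $\xi_aU_{ab}$, using $\tfrac12\xi_a\in_{ab}\atr(U)=-\tfrac12\dual\xi_b\atr(U)$, and subtract from the second identity; the trace and anti-trace contributions cancel in pairs, leaving exactly twice the trace-free symmetric part.

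The work is essentially pure bookkeeping, so I do not expect a genuine conceptual obstacle; the one place demanding care is the consistent tracking of signs arising from the asymmetry between the left dual $\dual(\,\cdot\,)$ and the right dual $(\,\cdot\,)^*$ and from whether a $1$-form is contracted into the first or the second index of $\in$. Getting those conventions straight at the outset (and verifying the auxiliary rules $\xi_a\in_{ab}=-\dual\xi_b$, $\in_{ac}\xi_a=-\dual\xi_c$) is what makes the rest of the computation fall into place mechanically.
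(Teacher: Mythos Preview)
Your proposal is correct and follows essentially the same approach as the paper: direct index manipulation using the decomposition \eqref{decompU} and the displayed relation $U^*_{ab}=-\dual U_{ab}+\in_{ab}\tr(U)-\de_{ab}\atr(U)$. The paper only writes out the verification of the final identity (decomposing both $\dual\xi_aU^*_{ab}$ and $\xi_aU_{ab}$ via the $\hat U$-decomposition and subtracting), whereas you build the identities sequentially and obtain the last one from the second; the underlying computation is the same.
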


\begin{proof} 
The last  formula  follows from,
\beaa
\dual \xi_a U^*_{ab}&=&\dual \xi_a\left(\hat{U}^*_{ab}+\frac 1 2 \in_{ab}\tr(U)-\frac 1 2 \de_{ab} \atr(U)\right)\\
&=&-\xi_a\hat{U}_{ab}+\frac 1 2 \xi_b\tr(U)-\dual \xi_b\atr(U),\\
 \xi_a  U_{ab}&=&\xi_a\left(\hat{U}_{ab}+\frac 1 2 \de_{ab} \tr(U)+\frac 1 2 \in_{ab} \atr(U)\right)\\
&=& \xi_a\hat{U}_{ab}+\frac 1 2 \xi_b \tr(U)-\xi_b \atr(U).
 \eeaa
\end{proof}

\begin{definition}\label{definition-SS-real}
We denote by $\SS_0=\SS_0(\MM)$ the set of pairs of scalar functions on $\MM$, $\SS_1=\SS_1(\MM)$ the  set of horizontal $1$-forms  on $\MM$, and by $\SS_2=\SS_2(\MM)$
  the set of symmetric traceless   horizontal $2$-forms on $\MM$.
  \end{definition}

\begin{definition} 
Given  $\xi, \eta\in\SS_1 $  we denote
\beaa
\xi\c \eta&:=&\de^{ab} \xi_a\eta_b,\\
\xi\wedge\eta&:=&\in^{ab} \xi_a\eta_b=\xi\c\dual \eta,\\
(\xi\hot \eta)_{ab}&:=&\frac 1 2 \big( \xi_a \eta_b +\xi_b \eta_a-\de_{ab} \xi\c \eta\big).
\eeaa
 Given   $\xi\in \SS_1 $,  $U\in \SS_2$ we denote
\beaa
(\xi\c U)_a&:=&\de^{bc} \xi_b U_{ac}.
\eeaa
Given     $U, V \in \SS_2$ we denote
\beaa
(U\wedge V)_{ab} &:=& \ep^{ab}U_{ac}V_{cb}.
\eeaa
\end{definition}

\begin{remark} 
Notice that the definition of $\xi\hot \eta$ differ by $\frac 1 2$ from the one given in \cite{Ch-Kl}
\end{remark}

The following two lemmas   are   immediate.
\begin{lemma}
Given a horizontal vector  $\xi$ and a symmetric 2-form $U$, we have
\beaa
U_{ab}\xi^b&=& \left(\hat{U}\c\xi+\frac{1}{2}\tr(U)\xi+\frac{1}{2}\atr(U)\dual\xi\right)_a.
\eeaa 
\end{lemma}

\begin{lemma}
\label{le:sym-product}
Given  two symmetric, traceless, horizontal 2-tensors $\hat U, \hat V$
we have,
\beaa
\hat U_{ac}\hat V_{cb}+\hat V_{ac}\hat U_{cb}=\de_{ab} \hat U\c\hat V
\eeaa
where,
 \beaa
 \hat U\c\hat V=\de^{ac}\de^{bd} \hat U_{ab} \hat V_{cd}.
 \eeaa
In particular,
\beaa
\hat V_{ac }\hat V_{cb}=\frac 1 2 \de_{ab}|\hat V|^2
\eeaa
with,
$
|\hat V|^2=\hat V\c\hat V.
$
\end{lemma}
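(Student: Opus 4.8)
The plan is to reduce the claim to a purely algebraic statement about $2\times2$ symmetric traceless matrices, since the horizontal distribution $\O(\MM)$ is two-dimensional: relative to the orthonormal frame $(e_1,e_2)$ the tensors $\hat U,\hat V$ are represented by symmetric traceless $2\times2$ arrays, so each has only two independent components, $\hat U_{11}=-\hat U_{22}$ and $\hat U_{12}=\hat U_{21}$ (and likewise for $\hat V$). All the content of the lemma lives in dimension two; the analogue fails in higher dimension, so the trace-free condition together with $\dim\O(\MM)=2$ and the fact that $\ga_{ab}=\de_{ab}$ in this frame is exactly what is being exploited.

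First I would prove the general bilinear identity by direct computation in this frame. Expanding $\hat U_{ac}\hat V_{cb}+\hat V_{ac}\hat U_{cb}$ for each $a,b\in\{1,2\}$ and substituting $\hat U_{22}=-\hat U_{11}$, $\hat V_{22}=-\hat V_{11}$, the two off-diagonal entries cancel, while each diagonal entry collapses to $2(\hat U_{11}\hat V_{11}+\hat U_{12}\hat V_{12})$. Comparing with $\hat U\c\hat V=\de^{ac}\de^{bd}\hat U_{ab}\hat V_{cd}=\hat U_{11}\hat V_{11}+2\hat U_{12}\hat V_{12}+\hat U_{22}\hat V_{22}=2(\hat U_{11}\hat V_{11}+\hat U_{12}\hat V_{12})$, the two agree, so the combination equals $\de_{ab}\,\hat U\c\hat V$ as claimed. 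The special case then follows immediately by setting $\hat U=\hat V$, giving $\hat V_{ac}\hat V_{cb}=\f12\de_{ab}|\hat V|^2$.

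A more structural alternative, which I would mention, is to obtain the special case first and polarize. Viewing $\hat V$ as a $2\times2$ matrix, the Cayley--Hamilton theorem gives $\hat V^2-(\tr\hat V)\hat V+(\det\hat V)\,\mathrm{Id}=0$; since $\tr\hat V=0$ and, for a symmetric traceless $2\times2$ matrix, $\det\hat V=-\f12|\hat V|^2$, this reads $\hat V_{ac}\hat V_{cb}=\f12\de_{ab}|\hat V|^2$. Replacing $\hat V$ by $\hat U+\hat V$ (again symmetric traceless) and subtracting the identities for $\hat U$ and $\hat V$ separately recovers the bilinear statement $\hat U_{ac}\hat V_{cb}+\hat V_{ac}\hat U_{cb}=\de_{ab}\,\hat U\c\hat V$.

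Either route is elementary; there is no genuine obstacle here beyond correctly using that $\O(\MM)$ is two-dimensional and that $\ga$ reduces to $\de_{ab}$ in an orthonormal horizontal frame, which is precisely what forces the anticommutator to be a scalar multiple of $\de_{ab}$. I would present the direct computation as the primary argument and note the Cayley--Hamilton/polarization viewpoint as the conceptual explanation of why the identity holds.
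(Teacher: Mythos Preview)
Your proposal is correct. The paper states this lemma as ``immediate'' and gives no proof; your direct frame computation is exactly the verification one would supply, and the Cayley--Hamilton/polarization remark is a nice conceptual gloss on why the identity is forced in dimension two.
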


\begin{remark}\lab{rmk:tracelesspartofproducttracelessis0}
The previous lemma implies in particular $\widehat{\hat{U}\c\hat{V}}=0$.
\end{remark}

We generalize  the lemma as follows,
\begin{lemma}
\lab{le:traces}
Given $U,V$  arbitrary 2-covariant horizontal tensor-fields, we  have,
\bea
\de^{ab}U_{ac}V_{cb}&=&\hat U\c\hat V+\frac 1 2 \big(\tr(U)\tr(V)-\atr(U)\atr(V)\big),\\
\in^{ab}U_{ac}V_{cb}&=&\hat U\wedge \hat V+\frac 12 \big(\atr(U)\tr (V)+\tr(U)\atr(V)\big),\\
\widehat{U_{ac} V_{cb}}&=&\frac 1 2 \big(\Uh_{ab}\tr(V)+\Vh_{ab}\tr(U)\big)
+\f12\big(-\dual\Uh_{ab}\atr(V)+\dual\Vh_{ab}\atr(U)\big),
\eea
where,
\beaa
\hat U\cdot\hat V&=&\de^{ac}\de^{bd}\hat U_{ab}\hat V_{cd},\\
\hat U\wedge \hat V&=&\hat U\cdot \dual \hat V=\in^{ab}\hat U_{ac}\hat V_{cb}.
\eeaa
\end{lemma}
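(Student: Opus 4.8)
The strategy is to reduce everything to the decomposition \eqref{decompU}, namely $U_{ab}=\hat U_{ab}+\frac12\de_{ab}\tr(U)+\frac12\in_{ab}\atr(U)$ and likewise for $V$, substitute into each of the three bilinear expressions, expand, and collect terms by type. First I would write $U_{ac}V_{cb}$ as a sum of nine products obtained from the three pieces of $U$ times the three pieces of $V$. The pure terms are handled by already-established identities: $\hat U_{ac}\hat V_{cb}+\hat V_{ac}\hat U_{cb}=\de_{ab}\hat U\c\hat V$ from Lemma \ref{le:sym-product}; the scalar$\times$scalar piece $\frac14\tr(U)\tr(V)\de_{ac}\de_{cb}=\frac14\tr(U)\tr(V)\de_{ab}$ is trivial; the antitrace$\times$antitrace piece uses $\in_{ac}\in_{cb}=-\de_{ab}$; and the mixed scalar$\times$antitrace pieces combine into terms proportional to $\in_{ab}$. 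The cross terms of the form $\hat U_{ac}\cdot(\text{scalar or antitrace part of }V)$ produce $\hat U_{ab}$ or $\dual\hat U_{ab}$ contributions (using $\hat U_{ac}\in_{cb}=(\hat U^*)_{ab}=-\dual\hat U_{ab}$ by item 3 of the duals lemma), and symmetrically for $\hat V$.

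Once the full expansion $U_{ac}V_{cb}=(\text{combination of }\de_{ab},\,\in_{ab},\,\hat U_{ab},\,\hat V_{ab},\,\dual\hat U_{ab},\,\dual\hat V_{ab},\text{ and }\hat U_{ac}\hat V_{cb})$ is in hand, the three claimed formulas are read off by applying the appropriate extraction operator. For the first formula I would contract with $\de^{ab}$: the $\de_{ab}$-terms give the $\frac12(\tr U\tr V-\atr U\atr V)$ piece together with $\hat U\c\hat V$ from Lemma \ref{le:sym-product}, while $\de^{ab}\in_{ab}=0$, $\de^{ab}\hat U_{ab}=\tr\hat U=0$ (traceless), and $\de^{ab}\hat U_{ac}\hat V_{cb}=\hat U\c\hat V$; careful bookkeeping of the factor $\frac12$ in $\hat U\c\hat V$ versus the symmetrized Lemma \ref{le:sym-product} gives exactly the stated coefficient. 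For the second formula I would contract with $\in^{ab}$, using $\in^{ab}\de_{ab}=0$, $\in^{ab}\in_{ab}=2$, $\in^{ab}\hat U_{ac}\hat V_{cb}=\hat U\wedge\hat V$, and $\in^{ab}\dual\hat U_{ab}=\in^{ab}\in_{ac}\hat U_{cb}=-\de^{bc}\hat U_{cb}=-\tr\hat U=0$. For the third formula I would apply $\widehat{\cdot}$ (trace-free symmetrization): the $\de_{ab}$ and $\in_{ab}$ terms drop, $\widehat{\hat U_{ac}\hat V_{cb}}=0$ by Remark \ref{rmk:tracelesspartofproducttracelessis0}, and what survives is precisely the linear combination of $\hat U_{ab},\hat V_{ab},\dual\hat U_{ab},\dual\hat V_{ab}$ with the scalar coefficients as claimed — here one must remember that $\hat U_{ab}$ and $\dual\hat U_{ab}$ are already symmetric traceless (item 3 of the duals lemma), so $\widehat{\cdot}$ acts as the identity on them.

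The only genuinely delicate point is sign and factor discipline: tracking the $\frac12$'s from \eqref{decompU}, the orientation-dependent signs in $\in_{ac}\in_{cb}=-\de_{ab}$ and $\dual(\dual\om)=-\om$, and the convention $\hat U^*=-\dual\hat U$ so that cross terms land with the right sign in front of $\dual\hat U$ versus $\dual\hat V$ in the third identity (note the asymmetry: $+\dual\hat V\atr(U)$ but $-\dual\hat U\atr(V)$ arises because $\atr(U)$ multiplies the $V$-dual and vice versa, with the antisymmetry of $\in$ flipping one sign). I expect no conceptual obstacle — the result is a bookkeeping identity — so the proof is essentially the displayed expansion of $U_{ac}V_{cb}$ followed by the three contractions, and I would present it by first recording that expansion as an auxiliary computation and then specializing.
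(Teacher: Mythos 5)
Your proposal is correct and follows essentially the same route as the paper: the paper's proof consists precisely of recording the expansion of $U_{ac}V_{cb}$ obtained by substituting the decomposition \eqref{decompU} for both factors, and then reading off the three identities via the trace, anti-trace, and trace-free symmetric projections, using $\widehat{\hat U\c\hat V}=0$ from Remark \ref{rmk:tracelesspartofproducttracelessis0}. Your sign bookkeeping (in particular $\hat U^*_{ab}=-\dual\hat U_{ab}$ producing the asymmetric signs in the third identity) matches the paper's expansion.
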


\begin{proof} 
In view of the decomposition \eqref{decompU}, we have
\beaa
U_{ac} V_{cb}&=&\Uh_{ac} \Vh_{cb}+\frac 1 2 \big(\tr(V) \Uh_{ab}+\tr(U) \Vh{ab}\big)
+\frac 1 2\big(  \atr(U)\dual \Vh_{ab}+\atr(V) \Uh^*_{ab}\big)\\
&+&\frac 1 4 \big( \tr(U) \tr(V)-\atr(U)\atr(V)\big)\de_{ab}+\frac 1 4 \big( \tr(U) \atr(V)+\atr(U)\tr(V)\big)\in_{ab}
\eeaa
and the proof easily follows, using also the fact that $\widehat{\hat{U}\c\hat{V}}=0$ according to Remark \ref{rmk:tracelesspartofproducttracelessis0}.
\end{proof}

The following is an  immediate consequences of the lemma.
\begin{corollary}
\label{special.products}
In the particular case when $U=V$ the lemma becomes,
\beaa
\de^{ab}U_{ac}U_{cb}
&=&|\hat U|^2+\frac 1 2 \big((\tr(U))^2-(\atr(U))^2\big),\\
\in^{ab}U_{ac}U_{cb}&=&\tr(U)\atr(U),\\
\widehat{U_{ac}U_{cb}} &=& \tr(U)\Uh_{ab}.
\eeaa
\end{corollary}

As another corollary to Lemma \ref{le:traces} we have
\begin{lemma}
 \lab{le:nonsym-product}
Let   $u$ be an arbitrary $2$-horizontal tensor and $v\in \SS_2$. Then
\beaa
u_{ac} v_{cb}+ u_{bc} v_{ca}&=&\de_{ab} u\c v   +(\tr u )v_{ab}  +\frac 1 2\Big[\big( u_{ac}-u_{ca}\big) v_{cb}+ \big( u_{bc}-u_{cb}\big)v_{ca}\Big].
\eeaa
\end{lemma}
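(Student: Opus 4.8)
The plan is to prove the identity by a direct index computation after splitting $u$ into its symmetric and antisymmetric parts via the decomposition \eqref{decompU}, namely
\[
u_{ab}=\hat u_{ab}+\tfrac12\de_{ab}\,\tr(u)+\tfrac12\in_{ab}\,\atr(u),
\]
together with the fact that $v\in\SS_2$ means $v_{ab}=\hat v_{ab}$ is symmetric and traceless. The point is that each of the three pieces of $u$ feeds exactly one of the three terms on the right-hand side, so no cancellation or regrouping across pieces is needed.

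\textbf{Key steps.} First I would substitute the decomposition into $u_{ac}v_{cb}+u_{bc}v_{ca}$ and treat the three contributions separately. For the symmetric traceless piece, since $\hat u$ and $v$ are both symmetric one has $\hat u_{bc}v_{ca}=v_{ac}\hat u_{cb}$, so the contribution is $\hat u_{ac}v_{cb}+v_{ac}\hat u_{cb}$, which equals $\de_{ab}\,\hat u\c v$ by Lemma \ref{le:sym-product} (with $\hat U=\hat u$, $\hat V=v$); I would then note that $\hat u\c v=u\c v$ because the trace and antitrace parts of $u$ contract to zero against the symmetric traceless $v$ (using $\tr v=0$ and $\in^{cd}v_{cd}=0$), giving the first term $\de_{ab}\,u\c v$. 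For the trace piece, $\tfrac12\de_{ac}\tr(u)v_{cb}+\tfrac12\de_{bc}\tr(u)v_{ca}=\tfrac12\tr(u)(v_{ab}+v_{ba})=(\tr u)v_{ab}$ by symmetry of $v$, which is the second term. Finally, the antisymmetric piece of $u$ is precisely $\tfrac12\in_{ab}\atr(u)=\tfrac12(u_{ab}-u_{ba})$, so its contribution is $\tfrac12\big[(u_{ac}-u_{ca})v_{cb}+(u_{bc}-u_{cb})v_{ca}\big]$, which is exactly the bracketed term. Summing the three contributions yields the claimed identity.

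\textbf{Main obstacle.} There is no serious obstacle: the only care required is bookkeeping of the symmetries, namely correctly recognizing that (i) only the symmetric traceless part of $u$ survives in the $\de_{ab}$-term, via Lemma \ref{le:sym-product} rather than the more general Lemma \ref{le:traces}; (ii) the identification $u\c v=\hat u\c v$ uses the vanishing of $\tr v$ and $\in^{cd}v_{cd}$; and (iii) the antisymmetrized combination reassembles into $u_{ac}-u_{ca}$ without an extra factor. Alternatively, the same result can be read off from the third identity of Lemma \ref{le:traces} applied with $V=v$ after symmetrizing in $a,b$, but the direct decomposition above is the most transparent route.
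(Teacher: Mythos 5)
Your proposal is correct and follows essentially the same route as the paper: both decompose $u$ into its symmetric traceless, trace, and antisymmetric parts (the paper does this in two stages, first the symmetric case via \eqref{decompU} and Lemma \ref{le:sym-product}, then peels off the antisymmetric part by writing $u_{ac}=\tfrac12(u_{ac}+u_{ca})+\tfrac12(u_{ac}-u_{ca})$), and both reduce the core identity to Lemma \ref{le:sym-product}. Your bookkeeping, including the observation that $u\c v=\hat u\c v$ because $\tr v=\atr v=0$ and that the antisymmetric part is exactly $\tfrac12(u_{ab}-u_{ba})=\tfrac12\in_{ab}\atr(u)$, is accurate.
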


\begin{proof}
We give below a  direct proof based on Lemma \ref{le:sym-product}
 according to which,  given  $u, v\in\SS_2$,   we have
\beaa
u_{ac} v_{cb}+ u_{bc} v_{ca}=\de_{ab} u\c v. 
\eeaa
If $u$ is only symmetric and $v\in \SS_2$   we  can write,
\beaa
u_{ac} v_{cb}+ u_{bc} v_{ca}&=&\left( \hat{u}_{ac}+\frac 1 2 \de_{ac} tr (u)\right) v_{cb}+  \left( \hat{u}_{bc}+\frac 1 2 \de_{bc} tr (u)\right) v_{ca}\\
&=&\de_{ab}\hat{u} \c v   +(\tr u )v_{ab}.
\eeaa
If $u$ is an arbitrary $2$-tensor and $v\in \SS_2$,
\beaa
u_{ac} v_{cb}+ u_{bc} v_{ca}&=&\frac 1 2 \Big( u_{ac}+u_{ca} +\big( u_{ac}-u_{ca}\big)\Big) v_{ca}+\frac 1 2  \Big( u_{bc}+u_{cb} +\big( u_{bc}-u_{cb}\big)\Big) v_{ca}\\
&=&\frac  12 \de_{ab} \Big( u_{ac}+u_{ca}\Big) v_{ac} +\frac 1 2\Big(\big( u_{ac}-u_{ca}\big)\Big) v_{cb}+ \big( u_{bc}-u_{cb}\big)\Big) v_{ca}\Big)\\
&=&\de_{ab} u\c v   +(\tr u )v_{ab}  +\frac 1 2\Big(\big( u_{ac}-u_{ca}\big)\Big) v_{cb}+ \big( u_{bc}-u_{cb}\big)\Big) v_{ca}\Big).
\eeaa
\end{proof}

\begin{lemma}\label{lemma:usefulidentitiesforcomplexification}
The following hold true
\begin{itemize}
\item Given two  1-forms $\xi, \eta$ we have,
\beaa
\dual\xi \c  \eta &=& -\xi\c\dual\eta,\\
\dual\xi \c  \dual\eta &=& \xi\c\eta,\\
\dual\xi\wedge\eta &=& -\xi\wedge\dual\eta,\\
\dual\xi\wedge\dual\eta &=& \xi\wedge\eta,\\
\dual\xi \hot  \eta &=& \xi\hot\dual\eta,\\
\dual(\xi\hot\eta) &=& \dual\xi \hot  \eta,\\
\dual\xi \hot  \dual\eta &=& -\xi\hot\eta.
\eeaa

\item Given a horizontal vector  $\xi$ and a symmetric traceless 2-form $U$, we have
\beaa
\dual(\xi\c U) &=& \xi\c\dual U,\\
\dual\xi\c U &=& -\xi\c\dual U,\\
\dual\xi\c\dual U &=& \xi\c U.
\eeaa

\item Given 2 symmetric traceless 2-forms $U,V$, we  have,
\beaa
\dual U\c V &=& -U\c\dual V,\\
\dual U\c\dual V &=& U\c V,\\
\dual U\wedge V &=& -U\wedge\dual V,\\
\dual U\wedge\dual V &=& U\wedge V.
\eeaa
\end{itemize}
\end{lemma}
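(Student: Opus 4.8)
The plan is to exploit the fact that every identity in the statement is pointwise and purely algebraic on the two‑dimensional horizontal space $\O(\MM)_p$, so it suffices to fix, at an arbitrary point, a positively oriented orthonormal horizontal frame $(e_1,e_2)$ (so that $\in_{12}=1$ and $\de_{ab}=\delta_{ab}$) and check each relation componentwise. The only structural inputs I will use are: the involutivity identities $\dual(\dual\om)=-\om$ on $\SS_1$ and $\dual(\dual U)=-U$ on horizontal $2$‑tensors established above; the fact, also established above, that $\dual U=-U^*$ and that $\dual U$ is again symmetric traceless when $U\in\SS_2$; Lemma \ref{le:duals}; and the two‑dimensional $\epsilon$‑contraction identity $\in_{ac}\in_{bd}=\de_{ab}\de_{cd}-\de_{ad}\de_{cb}$ (equivalently $\in_{ac}\in_{bc}=\de_{ab}$).

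For the identities involving only $1$‑forms, the first line $\dual\xi\c\eta=-\xi\c\dual\eta$ is immediate from Lemma \ref{le:duals} together with $\dual\eta=-\eta^*$. Writing $\xi\wedge\eta=\xi\c\dual\eta$ and using $\dual\dual=-\mathrm{id}$, the relations $\dual\xi\c\dual\eta=\xi\c\eta$, $\dual\xi\wedge\eta=-\xi\wedge\dual\eta$ and $\dual\xi\wedge\dual\eta=\xi\wedge\eta$ then drop out by successive substitutions. For the $\hot$‑identities I will expand $(\dual\xi\hot\eta)_{ab}$ and $(\xi\hot\dual\eta)_{ab}$ from the definition of $\hot$: the trace terms are reconciled by the first line, and the symmetrized terms by the two‑dimensional identity $\dual\xi_a\eta_b+\dual\xi_b\eta_a-\xi_a\dual\eta_b-\xi_b\dual\eta_a=-2\de_{ab}\,(\xi\c\dual\eta)$, which itself is a consequence of $\in_{ac}\in_{bd}=\de_{ab}\de_{cd}-\de_{ad}\de_{cb}$; this yields $\dual\xi\hot\eta=\xi\hot\dual\eta$, while $\dual(\xi\hot\eta)=\dual\xi\hot\eta$ is a one‑line index manipulation. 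Finally $\dual\xi\hot\dual\eta=-\xi\hot\eta$ follows by applying $\dual\xi\hot\eta=\xi\hot\dual\eta$ with $\eta$ replaced by $\dual\eta$ and using $\dual(\dual\eta)=-\eta$.

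For the mixed and $2$‑tensor identities the same mechanism applies. The relation $\dual(\xi\c U)=\xi\c\dual U$ is a direct relabeling of contracted indices and needs nothing beyond the definitions; $\dual\xi\c U=-\xi\c\dual U$ is the one place that requires a genuine, if short, computation — expand both sides, use that $U$ is symmetric and traceless, and contract the two $\epsilon$'s with the identity above — and then $\dual\xi\c\dual U=\xi\c U$ follows by composing the two. For the $\SS_2$‑pairings I will write $U\c V=U_{ab}V_{ab}$ and $U\wedge V=U\c\dual V=\in^{ab}U_{ac}V_{cb}$ as in Lemma \ref{le:traces}; then $\dual U\c V=-U\c\dual V$ is simply the antisymmetry of $\in$ under the interchange $a\leftrightarrow c$, and $\dual U\c\dual V=U\c V$, $\dual U\wedge V=-U\wedge\dual V$, $\dual U\wedge\dual V=U\wedge V$ all follow by inserting $\dual(\dual V)=-V$.

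\textbf{Main obstacle.} There is no analytic difficulty here; the only real care needed is bookkeeping of signs and conventions — keeping the left dual $\dual$ and the right dual $(\cdot)^*$ separate, remembering that $\atr$, and hence the trace parts of these products, is orientation‑dependent, and consistently using $\in_{12}=1$. Once the orthonormal frame is fixed, each line is a one‑ or two‑step check. I will also remark that all of these identities become transparent under the identification $\SS_1\cong\CCC$, $\SS_2\cong\CCC$ (sending $\xi$ to $\xi_1+i\xi_2$ and $U$ to $U_{11}+iU_{12}$), under which $\dual$ becomes multiplication by $-i$ and $\xi\hot\eta$ becomes $\tfrac12$ times the complex product of $\xi$ and $\eta$; but since this identification is precisely the content of the following section, I will keep the frame computation as the self‑contained proof.
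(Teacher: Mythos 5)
Your proposal is correct and follows essentially the same route as the paper: the non-$\hot$ identities are deduced from the earlier duality lemmas (in particular Lemma \ref{le:duals} and $\dual\dual=-\mathrm{id}$), and the $\hot$-identities are verified by a direct componentwise check in a positively oriented orthonormal frame, which is exactly what the paper does. The only cosmetic difference is that you package the frame computation through the two-dimensional $\epsilon$-contraction identity rather than listing the $11$ and $12$ components explicitly; both amount to the same verification.
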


\begin{proof}
The statements follow from the above results except the ones involving $\hot$. To check those,  we write 
      \beaa
      \dual( \xi\hot \eta)_{11}&=&( \xi\hot \eta)_{21} =   \xi_2 \eta_1+\xi_1 \eta_2, \\
      ( \xi\hot \dual \eta)_{11}&=& \xi_1( \dual \eta)_1- \xi_2 (\dual \eta)_2 =\xi_1 \eta_2 + \xi_2 \eta_1, \\
      (\dual \xi \hot \eta)_{11} &=&(\dual \xi)_1 \eta_1-(\dual \xi)_2 \eta_2= \xi_2 \eta_1+ \xi_1 \eta_2.
      \eeaa
  Also,
      \beaa
       2 \dual(\xi\hot\eta)_{12}&=&\xi_2\eta_2-\xi_1 \eta_1, \\
       2 (\xi\hot \dual\eta)_{12}&=&\xi_1 \dual \eta _2+\xi_2\dual  \eta_1 = -\xi_1 \eta_1 + \xi_2 \eta_2,\\
       2 (\dual\xi \hot\eta)_{12} &=&\dual \xi_1 \eta_2+ \dual\xi_2 \eta_1  -\xi_1 \eta_1 + \xi_2 \eta_2.
      \eeaa
      Hence,
      \beaa
      \dual(\xi\hot\eta)=\dual \xi \hot \eta= \xi\hot \dual\eta.
      \eeaa  
\end{proof}

\begin{lemma}
\lab{dot-hot}
Given  $\xi , \eta \in \SS_1$, $  u\in \SS_2$  we have 
\beaa
\xi \hot ( \eta  \c u) + \eta  \hot ( \xi \c u)= (\xi\c \eta  ) u. 
\eeaa
\end{lemma}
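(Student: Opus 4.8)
The plan is to verify the identity by a direct componentwise computation in an orthonormal horizontal frame $(e_1,e_2)$, since both sides are horizontal $2$-tensors and it suffices to check the $(1,1)$ and $(1,2)$ components (the $(2,2)$ and $(2,1)$ cases following by the symmetry $a\leftrightarrow b$ and the symmetry of $u\in\SS_2$). First I would record the explicit components of the three objects appearing on the left: for $\xi,\eta\in\SS_1$ and $u\in\SS_2$ with $u_{11}=-u_{22}$ and $u_{12}=u_{21}$, one has $(\eta\c u)_1=\eta_1 u_{11}+\eta_2 u_{12}$ and $(\eta\c u)_2=\eta_1 u_{12}-\eta_2 u_{11}$, and similarly with $\xi$ in place of $\eta$. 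Then using $(\zeta\hot w)_{ab}=\frac12(\zeta_a w_b+\zeta_b w_a-\de_{ab}\,\zeta\c w)$ for a generic $1$-form $w$, I would expand $\xi\hot(\eta\c u)+\eta\hot(\xi\c u)$ entry by entry.

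The key computation: for the $(1,1)$ entry, $2(\xi\hot(\eta\c u))_{11}=2\xi_1(\eta\c u)_1-\xi\c(\eta\c u)$ and likewise for the symmetrized term; adding and simplifying the scalar contractions $\xi\c(\eta\c u)=\eta\c(\xi\c u)$ (which is symmetric in $\xi,\eta$) against the $\xi_1(\eta\c u)_1$ pieces, the $u_{11},u_{12}$ coefficients should collapse to $(\xi\c\eta)u_{11}$. For the $(1,2)$ entry, $2(\xi\hot(\eta\c u))_{12}=\xi_1(\eta\c u)_2+\xi_2(\eta\c u)_1$, and adding the $\eta\leftrightarrow\xi$ term the off-diagonal $u$-coefficients should combine into $(\xi\c\eta)u_{12}$; here the $\de_{12}=0$ term drops out so it is even a bit shorter. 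Organizing the bookkeeping by grouping terms according to which of $u_{11},u_{12}$ they carry, and factoring out $\xi_1\eta_1+\xi_2\eta_2=\xi\c\eta$, gives the claim.

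An alternative, slightly slicker route I would keep in reserve: observe that $\eta\c u$ and $\xi\c u$ are again $1$-forms, and rewrite $u_{ab}\xi^b$ via the earlier lemma stating $U_{ab}\xi^b=(\hat U\c\xi)_a$ for symmetric traceless $U$ (so $\xi\c u=u\c\xi$ up to the conventions here); one could then try to reduce the identity to the already-established bilinear identities among $\hot$, $\c$, and duals (in particular to Lemma~\ref{le:sym-product} and the $\hot$-formulas of Lemma~\ref{lemma:usefulidentitiesforcomplexification}). However, because $\hot$ is not associative with $\c$, I expect this to require essentially the same amount of expansion, so the honest componentwise check is the cleaner presentation.

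The main obstacle is purely combinatorial rather than conceptual: keeping the many cross-terms straight and correctly using the tracelessness $u_{22}=-u_{11}$ and symmetry $u_{12}=u_{21}$ at the right moments, together with remembering that $\xi\c(\eta\c u)$ is symmetric under swapping $\xi$ and $\eta$ (which is what makes the two symmetrized contributions cooperate). No genuinely hard step is anticipated — the identity is an algebraic consequence of working in two dimensions, where $\SS_2$ is two-dimensional and these bilinear maps are highly constrained.
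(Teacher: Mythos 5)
Your proposal is correct, and your component computation does go through: with the paper's convention $(\eta\c u)_a=\eta_c u_{ac}$ one finds $2\big(\xi\hot(\eta\c u)\big)_{11}=(\xi_1\eta_1+\xi_2\eta_2)u_{11}+(\xi_1\eta_2-\xi_2\eta_1)u_{12}$, and symmetrizing in $\xi\leftrightarrow\eta$ kills the antisymmetric piece and yields $(\xi\c\eta)u_{11}$; the $(1,2)$ entry works the same way, and since both sides lie in $\SS_2$ these two entries suffice. However, your route differs from the paper's. The paper avoids coordinates entirely: it first records the index identity $\big(\xi\hot(\eta\c u)\big)_{ab}=\frac12\big(\xi_a\eta_c u_{bc}+\xi_b\eta_c u_{ac}-\de_{ab}(\xi\hot\eta)\c u\big)$, then expands $(\xi\hot\eta)_{ac}u_{cb}+(\xi\hot\eta)_{bc}u_{ca}$ from the definition of $\hot$ to show it equals $\xi\hot(\eta\c u)+\eta\hot(\xi\c u)-(\xi\c\eta)u+\de_{ab}(\xi\hot\eta)\c u$, and finally evaluates the same quantity by Lemma~\ref{le:sym-product} applied to the pair of $\SS_2$-tensors $\xi\hot\eta$ and $u$, which gives $\de_{ab}(\xi\hot\eta)\c u$; comparing the two expressions yields the claim. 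This is essentially the ``slicker route'' you kept in reserve but judged not worth pursuing. What the paper's argument buys is that the two-dimensionality enters only once, packaged inside Lemma~\ref{le:sym-product}, and the index bookkeeping stays frame-independent; what your argument buys is that it is self-contained and requires no prior lemma, at the cost of explicit $2\times 2$ algebra. Either is acceptable as a proof.
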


\begin{proof}
We check first  the identities
\beaa
\Big(\xi \hot ( \eta  \c u) \Big)_{ab}&=& \frac 1 2 (\xi_a \eta_c u_{bc}+ \xi _b \eta _c u_{ac}- \delta_{ab} (\xi  \hot \eta ) \c u),\\
\Big(\eta  \hot ( \xi  \c u)  \Big)_{ab}&=& \frac 1 2 (\eta _a \xi_c u_{bc}+ \eta  _b \xi  _c u_{ac}- \delta_{ab} (\eta   \hot \xi) \c u),
\eeaa
which follows  easily from the definition of $\hot$ and 
\beaa
 (\xi  \hot \eta ) \c u&=& \xi \c ( \eta\c u)= \eta(\xi\c u). 
\eeaa
Note also that,
\beaa
(\xi\hot \eta)_{ac}  u_{cb}+ (\xi\hot \eta)_{bc}  u_{ca}&=& \frac 1 2 \big(\xi_a \eta_c+ \xi_c \eta _a 
- \delta_{ac} (\xi \c \eta\big)  u_{cb}+\frac 1 2\big (\xi_b \eta_c +\xi_c\eta_b -\delta_{bc} (\xi \c \eta )\big) u_{ca}\\
&=&\big(\xi \hot (\eta \c u)\big)_{ab}+\frac 1 2 \delta_{ab} (\xi \hot\eta) \c u+\big(\eta \hot (\xi \c u)\big)_{ab}+\frac 1 2\delta_{ab} (\xi \hot\eta) \c u \\
& -& (\xi \c\eta) u_{ab}\\
&=&\big(\xi \hot (\eta \c u)\big)_{ab}+\big(\eta  \hot (\xi \c u)\big)_{ab}-  (\xi \c\eta) u_{ab}+\delta_{ab} (\xi \hot\eta) \c u.  
\eeaa
We now apply  Lemma \ref{le:sym-product}   to  the  $\SS_2$ tensors $\xi\hot \eta$ and $u$ to deduce,
\beaa
(\xi\hot \eta)_{ac}  u_{cb}+ (\xi\hot \eta)_{bc}  u_{ca}= (\xi\hot \eta)\c  u \de_{ab}. 
\eeaa
Hence,
\beaa
  (\xi\hot \eta)\c  u \de_{ab} &=&  (\xi\hot \eta)_{ac}  u_{cb}+ (\xi\hot \eta)_{bc}  u_{ca}\\
  &=&\big(\xi \hot (\eta \c u)\big)_{ab}+\big(\eta  \hot (\xi \c u)\big)_{ab}-  (\xi \c\eta) u_{ab}+\delta_{ab} (\xi \hot\eta) \c u  
\eeaa
i.e.,
\beaa
\big(\xi \hot (\eta \c u)\big)_{ab}+\big(\eta  \hot (\xi \c u)\big)_{ab}-  (\xi \c\eta) u_{ab}=0
\eeaa
as desired.
\end{proof}

  
  \subsection{Horizontal covariant derivative}
  
  
 We are now ready to define the horizontal covariant operator $\nab$ as follows:
 Given $X, Y\in\O(\MM)$ the covariant derivative $\D_XY$ fails in general  to be horizontal.
 We thus define,
 \bea
 \nab_X Y&:=&^{(h)}(\D_XY)=\D_XY- \frac 1 2 \chib(X,Y)L -  \frac 1 2 \chi(X,Y) \Lb.
 \eea
 
 \begin{proposition}
 For all  $X,Y\in \O(\MM)$,
  \beaa
  \nab_X Y-\nab_Y X
  &=&[X,Y]-\chiba(X,Y)L -\chia(X,Y)\Lb\\
  &=&[X, Y]-\frac 1 2 \left(\atrchb\,  L+\atrch \, \Lb\right)\in(X, Y).
 \eeaa
 In particular,
 \bea\,[X, Y]^\perp&=&\frac 1 2\left(\atrchb\,  L+\atrch \, \Lb\right)\in(X, Y).
 \eea
 
 For  all  $X,Y, Z\in \O(\MM)$,
 \beaa
 Z\ga(X,Y)=\ga(\nab_Z X, Y)+\ga(X, \nab_ZY).
 \eeaa
\end{proposition}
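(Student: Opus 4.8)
The plan is to derive the whole proposition from two standard properties of the ambient Levi-Civita connection $\D$ — it is torsion-free and metric — combined with the definition $\nab_XY={}^{(h)}(\D_XY)$ and the algebraic identities for $\chi,\chib$ recorded just above the statement. First I would establish the commutator formula: subtracting the defining relation $\nab_XY=\D_XY-\frac{1}{2}\chib(X,Y)L-\frac{1}{2}\chi(X,Y)\Lb$ from its counterpart with $X,Y$ interchanged gives
\[
\nab_XY-\nab_YX=(\D_XY-\D_YX)-\tfrac{1}{2}\big(\chib(X,Y)-\chib(Y,X)\big)L-\tfrac{1}{2}\big(\chi(X,Y)-\chi(Y,X)\big)\Lb .
\]
Torsion-freeness of $\D$ turns the first term into $[X,Y]$, and the antisymmetry formulas already displayed identify the remaining two terms as $-\chiba(X,Y)L-\chia(X,Y)\Lb$, which is the first asserted equality.

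For the second form, and simultaneously for the ``in particular'' clause, I would note that since $\nab_XY$ and $\nab_YX$ are horizontal one in fact has $\nab_XY-\nab_YX={}^{(h)}(\D_XY)-{}^{(h)}(\D_YX)={}^{(h)}[X,Y]$, so the correction terms above are precisely the non-horizontal part $[X,Y]^\perp=[X,Y]-{}^{(h)}[X,Y]$. Then I would insert the decomposition $\chi_{ab}=\chih_{ab}+\frac{1}{2}\de_{ab}\trch+\frac{1}{2}\in_{ab}\atrch$ (and its barred analogue): taking the antisymmetric part leaves only the $\in_{ab}$-term, so on horizontal $X,Y$ we get $\chia(X,Y)=\frac{1}{2}\atrch\,\in(X,Y)$ and $\chiba(X,Y)=\frac{1}{2}\atrchb\,\in(X,Y)$. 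Substituting yields $\nab_XY-\nab_YX=[X,Y]-\frac{1}{2}(\atrchb\,L+\atrch\,\Lb)\in(X,Y)$, and reading off the non-horizontal part gives $[X,Y]^\perp=\frac{1}{2}(\atrchb\,L+\atrch\,\Lb)\in(X,Y)$.

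For the Leibniz (metric-compatibility) identity I would start from metric compatibility of $\D$, namely $Z\ga(X,Y)=Z\g(X,Y)=\g(\D_ZX,Y)+\g(X,\D_ZY)$. Writing $\D_ZX=\nab_ZX+\frac{1}{2}\chib(Z,X)L+\frac{1}{2}\chi(Z,X)\Lb$ and using $\g(L,Y)=\g(\Lb,Y)=0$ because $Y$ is horizontal, the two non-horizontal correction terms drop out, so $\g(\D_ZX,Y)=\g(\nab_ZX,Y)=\ga(\nab_ZX,Y)$; symmetrically in the $Y$-slot. Adding the two gives $Z\ga(X,Y)=\ga(\nab_ZX,Y)+\ga(X,\nab_ZY)$.

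I do not anticipate a genuine obstacle here: each step is a short direct computation. The only thing requiring care is the bookkeeping of signs and conventions — the induced volume form $\in$ on $\O(\MM)$, the $\frac{1}{2}\in_{ab}\atr$ normalization of the antisymmetric part of a horizontal $2$-tensor, and the sign convention for $(\cdot)^\perp$ — together with the (here automatic) point that $\chi,\chib$ are tensors only after horizontal projection, which is moot since $X,Y,Z$ are already horizontal.
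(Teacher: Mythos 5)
Your proof is correct, and it follows exactly the route the paper intends: the paper states this proposition without an explicit proof, relying on the antisymmetry formulas $\chi(X,Y)-\chi(Y,X)=-\g(L,[X,Y])$ and the decomposition $\chia(X,Y)=\frac 12\atrch\in(X,Y)$ recorded just before, together with torsion-freeness and metric compatibility of $\D$ — precisely the ingredients you use. Nothing is missing.
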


\begin{remark} 
In the integrable case, $\nab$ coincides with the Levi-Civita connection
 of the metric induced on the integral surfaces of   $\O(\MM)$. 
\end{remark} 
 
 Given a general covariant, horizontal tensor-field  $U$
 we define its horizontal covariant derivative according to
 the formula,
 \bea
 \nab_Z U(X_1,\ldots X_k)=Z (U(X_1,\ldots X_k))&-&U(\nab_ZX_1,\ldots X_k)-\nn
 \\
\ldots   &-& U(X_1,\ldots \nab_ZX_k).
 \eea
 
 Given $X$ horizontal, $\D_lX$ and $\D_\Lb X$ are in general not horizontal.
 We define $\nab_L X$ and $\nab_\Lb X$  to be the horizontal projections
 of the former.  More precisely,
 \beaa
 \nab_L X&:=&^{(h)}(\D_L X)=\D_L X- g(X, \D_L\Lb) L- g(X, \D_L L) \Lb,\\
 \nab_\Lb X&:=&^{(h)}(\D_\Lb X)=\D_\Lb X- g(X, \D_\Lb\Lb) L - g(X, \D_\Lb L) \Lb. 
 \eeaa


 \subsection{Ricci coefficients}
 
 
 \begin{definition} 
 We define the horizontal  $1$-forms,
 \bea
 \etab(X)&:=& \frac 1 2 g(X, \D_L\Lb),\qquad \eta(X):= \frac 1 2  g(X, \D_\Lb L),\\
 \xib(X)&:=& \frac 1 2  g(X, \D_\Lb \Lb),\qquad \xi(X):= \frac 1 2 g(X, \D_L L).
 \eea
 With these definitions we have,
 \beaa
 \nab_L X&:=&^{(h)}(\D_L X)=\D_L X-\etab(X)L -\xi(X) \Lb,\\
 \nab_\Lb X&:=&^{(h)}(\D_\Lb X)=\D_\Lb X-\xib(X)L -\eta(X) \Lb. 
 \eeaa
 \end{definition}
 
  We can extend the operators $\nab_L$ and $\nab_\Lb$ to
 arbitrary  $k$-covariant, horizontal tensor-fields  $U$  as follows,
 \beaa
 \nab_LU(X_1,\ldots, X_k)=L(U(X_1,\ldots, X_k))&-&U(\nab_L X_1,\ldots, X_k)-\\
 \ldots&-&U( X_1,\ldots, \nab_LX_k),\\
  \nab_\Lb U(X_1,\ldots, X_k)=\Lb(U(X_1,\ldots, X_k))&-&U(\nab_\Lb X_1,\ldots, X_k)-\\
  \ldots&-&U( X_1,\ldots, \nab_\Lb X_k).
 \eeaa 
 The following  proposition follows easily from the definition.
 \begin{proposition}
 The operators $\nab$, $\nab_L$ and $\nab_\Lb$ take horizontal tensor-fields into
 horizontal tensor-fields. We have,
 \bea
 \nab \ga=\nab_L\ga=\nab_\Lb\ga=0.
 \eea
 \end{proposition}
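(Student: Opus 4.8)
The plan is to verify the two assertions in turn: first that the Leibniz-type formulas defining $\nab_Z U$, $\nab_L U$ and $\nab_\Lb U$ produce genuine horizontal tensor-fields, and then the metric-compatibility identities $\nab\ga=\nab_L\ga=\nab_\Lb\ga=0$.

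First I would dispose of the case of vectorfields. For $X,Y\in\O(\MM)$ the expression $\nab_XY={}^{(h)}(\D_XY)$ is horizontal by construction, and it is $C^\infty(\MM)$-linear in $X$ since $\D_{fX}Y=f\D_XY$ and the projection ${}^{(h)}$ is $C^\infty(\MM)$-linear; the same applies verbatim to $\nab_LX={}^{(h)}(\D_LX)$ and $\nab_\Lb X={}^{(h)}(\D_\Lb X)$. Moreover, for $f\in C^\infty(\MM)$ and $X\in\O(\MM)$ one has the Leibniz rule $\nab_Z(fX)=Z(f)\,X+f\,\nab_Z X$, obtained by applying ${}^{(h)}$ to $\D_Z(fX)=Z(f)X+f\D_ZX$, valid for $Z$ horizontal as well as for $Z=L,\Lb$. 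With this in hand, for a $k$-covariant horizontal tensor-field $U$ I would check directly from the defining formula that $\nab_ZU$ is $C^\infty(\MM)$-multilinear in the horizontal slots $X_1,\dots,X_k$: upon replacing $X_i$ by $fX_i$, the term $Z\!\left(U(X_1,\dots,X_k)\right)$ generates an extra summand $Z(f)\,U(X_1,\dots,X_k)$ which is precisely cancelled by the extra contribution in $-U(X_1,\dots,\nab_Z(fX_i),\dots,X_k)$ coming from the $Z(f)X_i$ term above, the remaining terms merely acquiring an overall factor $f$. Hence $\nab_ZU$ is a well-defined tensor-field; it is horizontal because all of its arguments are taken horizontal by definition, and linearity in $Z$ is inherited from $\D_{fZ}=f\D_Z$.

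For the second assertion, fix $Z$ to be $L$, $\Lb$, or an arbitrary horizontal field, and take $X,Y\in\O(\MM)$. Metric compatibility of $\D$ gives
\[ Z\!\left(\ga(X,Y)\right)=Z\!\left(\g(X,Y)\right)=\g(\D_ZX,Y)+\g(X,\D_ZY). \]
Now $\D_ZX$ differs from $\nab_ZX={}^{(h)}(\D_ZX)$ only by a linear combination of $L$ and $\Lb$ --- with coefficients built from $\chi,\chib$ when $Z$ is horizontal, and from $\eta,\etab,\xi,\xib$ when $Z=L$ or $Z=\Lb$, exactly as recorded in the preceding definitions --- and since $Y$ is horizontal these extra terms are $\g$-orthogonal to $Y$. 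Therefore $\g(\D_ZX,Y)=\g(\nab_ZX,Y)=\ga(\nab_ZX,Y)$, and likewise $\g(X,\D_ZY)=\ga(X,\nab_ZY)$. Substituting back,
\[ \nab_Z\ga(X,Y)=Z\!\left(\ga(X,Y)\right)-\ga(\nab_ZX,Y)-\ga(X,\nab_ZY)=0, \]
which yields $\nab\ga=\nab_L\ga=\nab_\Lb\ga=0$ simultaneously.

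There is no real obstacle here; the statement is a routine unwinding of the definitions. The only points deserving care are the two bookkeeping facts used above: that $\nab_Z$ genuinely satisfies the Leibniz rule on horizontal vectorfields (so the non-tensorial terms cancel and $\nab_ZU$ is a tensor), and that the defect $\D_ZX-\nab_ZX$ always lies in $\mathrm{span}\{L,\Lb\}$ (so it is annihilated by $\g(\cdot,Y)$ for horizontal $Y$) --- both immediate from the formulas defining $\nab$, $\nab_L$ and $\nab_\Lb$.
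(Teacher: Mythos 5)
Your proof is correct and follows the same route the paper intends — the paper simply asserts that the proposition "follows easily from the definition," and your unwinding (tensoriality via cancellation of the $Z(f)$ terms, and metric compatibility via the fact that $\D_ZX-\nab_ZX\in\mathrm{span}\{L,\Lb\}$ is $\g$-orthogonal to horizontal fields) is exactly the intended verification. Note also that the case of horizontal $Z$ is already recorded in the paper as the identity $Z\ga(X,Y)=\ga(\nab_ZX,Y)+\ga(X,\nab_ZY)$ in an earlier proposition; your argument extends it uniformly to $Z=L,\Lb$.
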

 
 In addition to the horizontal tensor-fields $\chi,\chib,\etab, \eta, \xi,\xib$ introduced above 
 we also define the scalars,
 \bea
 \omb&:=& \frac 1  4 g(\D_\Lb\Lb, L),\qquad\quad  \om:=\frac 1 4 g(\D_L L, \Lb),
 \eea
 and the horizontal 1-form,
 \bea
 \ze(X)&=&\frac 1 2 g(\D_XL,\Lb).
 \eea
 We summarize below the definition of the the horizontal $1$-forms $\xi, \xib, \eta, \etab, \ze\in\mathbf{O}_1$:
\begin{equation}\label{fo2}
\begin{cases}
&\xi(X)=\frac 1 2 \g(\D_LL,X),\quad \xib(X) =\frac 1 2 \g(\D_{\Lb}\Lb,X),\\
&\eta(X)=\frac 1 2 \g(\D_{\Lb}L,X),\quad \etab(X)=\frac 1 2 \g(\D_L\Lb,X),\\
&\ze(X)=\frac 1 2 \g(\D_X L,\Lb),
\end{cases}
\end{equation}
and the real scalars
\begin{equation}\label{fo3}
\om=\frac 1 4 \g(\D_LL,\Lb),\qquad\omb=\frac 1 4 \g(\D_{\Lb}\Lb,L).
\end{equation}
 
 \begin{definition}
 The horizontal tensor-fields $\chi,\chib, \eta, \etab, \ze,  \xi,\xib,\om, \omb$ are  called
 the connection coefficients of the null pair $(L,\Lb)$. Given  an  arbitrary  basis of 
  horizontal vectorfields $e_1,  e_2$, we write using the short hand notation $\D_a=\D_{e_a}, a=1,2$, 
  \beaa
\chib_{ab}&=&g(\D_a\Lb, e_b),\qquad \chi_{ab}=g(\D_aL, e_b),\\
\xib_a&=&\frac 1 2 \g(\D_\Lb\Lb, e_a),\qquad \xi_a=\frac 1 2 \ g(\D_L L, e_a),\\
\omb&=&\frac 1 4 g(\D_\Lb\Lb, L),\qquad\quad  \om=\frac 1 4 g(\D_L L, \Lb),\qquad \\
\etab_a&=&\frac 1 2 (\D_L\Lb, e_a),\qquad \quad \eta_a=\frac 1 2 g(\D_\Lb L, e_a),\qquad\\
 \ze_a&=&\frac 1 2 g(\D_{e_a}L,\Lb).
\eeaa
 \end{definition}
 
We easily  derive the  Ricci formulae,
\bea
\D_a e_b&=&\nab_a e_b+\frac 1 2 \chi_{ab} e_3+\frac 1 2  \chib_{ab}e_4,\nn\\
\D_a e_4&=&\chi_{ab}e_b -\ze_a e_4,\nn\\
\D_a e_3&=&\chib_{ab} e_b +\ze_ae_3,\nn\\
\D_3 e_a&=&\nab_3 e_a +\eta_a e_3+\xib_a e_4,\nn\\
\D_3 e_3&=& -2\omb e_3+ 2 \xib_b e_b,\label{ricci}\\
\D_3 e_4&=&2\omb e_4+2\eta_b e_b,\nn\\
\D_4 e_a&=&\nab_4 e_a +\etab_a e_4 +\xi_a e_3,\nn\\
\D_4 e_4&=&-2 \om e_4 +2\xi_b e_b,\nn\\
\D_4 e_3&=&2 \om e_3+2\etab_b e_b.\nn
\eea 

\begin{definition}
We  introduce the  notation
\bea
\trch:=\tr(\chi), \quad \atr\chi:=\atr(\chi),\quad \trchb:=\tr(\chib), \quad \atr\chib:=\atr(\chib).
\eea
$\chih, \trch$ and $\chibh, \trchb$  are called, respectively,  the shear and expansion 
of the horizontal distribution $\O(\MM)$. The scalars $ \atr\chi$ and $\atr\chib$ measure the
integrability defects of the distribution.
\end{definition}

\begin{definition} 
For a given horizontal   $1$ -form $\om$,
we  define the frame dependent   operators,   
\beaa
\div\om&=&\de^{ab}\nab_b\om_a,\qquad 
\curl\om=\in^{ab}\nab_a\om_b,\\
(\nab\hot \om)_{ba}&=&\frac 1 2 \big(\nab_b\om_a+\nab_a  \om_b-\de_{ab}( \div \om)\big).
\eeaa
\end{definition}


\subsection{Curvature and Weyl fields} 


Assume that $W\in\mathbf{T}_4^0(\MM)$ is a Weyl field, i.e.
\begin{equation}\label{fo4}
\begin{cases}
&W_{\al\be\mu\nu}=-W_{\be\al\mu\nu}=-W_{\al\be\nu\mu}=W_{\mu\nu\al\be},\\
&W_{\al\be\mu\nu}+W_{\al\mu\nu\be}+W_{\al\nu\be\mu}=0,\\
&\g^{\be\nu}W_{\al\be\mu\nu}=0.
\end{cases}
\end{equation}
We define the null components of the Weyl field $W$, $\al(W),\aa(W),\varrho(W)\in \mathbf{O}_2(\MM)$ and $\be(W),\bb(W)\in\mathbf{O}_1(\MM)$ by the formulas
\begin{equation}\label{fo5}
\begin{cases}
\al(W)(X,Y)=W(L,X,L,Y),\\
\aa(W)(X,Y)=W(\Lb,X,\Lb,Y),\\
\b(W)(X)=\frac 1 2 W(X,L,\Lb,L),\\
\bb(W)(X)=\frac 1 2 W(X,\Lb,\Lb, L),\\
\varrho(W)(X,Y)= W(X,\Lb,Y,L).
\end{cases}
\end{equation}
Recall that if $W$ is a Weyl field its 
Hodge dual $\dual W$, defined by 
${}^{\ast}W_{\al\be\mu\nu}=\frac{1}{2}{\in_{\mu\nu}}^{\rho\si}W_{\al\be\rho\si}$,  is also a Weyl field. We easily
check the formulas,
\begin{equation}
\label{eq:dualW}
\begin{cases}
&\aa(\dual W)=\dual \aa(W),\qquad \a(\dual W)=-
\dual \a(W), \\
&\bb(\dual W)=\dual\bb(W),\qquad \b(\dual W)=-\dual \b(W),\\
&\varrho(\dual W)=\dual \varrho(W). 
\end{cases}
\end{equation}
It is easy to check that $\a,\aa$ are symmetric traceless  horizontal tensor-fields.
 On the other hand    the horizontal 2-tensorfield  $\varrho$ is   neither symmetric nor 
 traceless.  It is convenient to express it in terms
 of  the following  two scalar quantities,
 \bea
 \label{fo5'}
 \rho(W)=\frac 1 4 W(L,\Lb,L,\Lb),\qquad \dual\rho(W)=
\frac 1 4  \dual W(L,\Lb,L,\Lb)\label{rho-dualrho}.
 \eea
 Observe also that,
 \beaa
\rho(\dual W)=\rhod(W), \qquad \rhod(\dual W)=-\rho.
\eeaa  
Thus,
\bea
\varrho(X,Y)=\big(-\rho\,\ga(X,Y)+\rhod\, \in(X,Y)\big),\qquad 
\forall\, X,Y\in \O(\MM).
\eea
We have
 \beaa
 W_{a3b4}&=&\varrho_{ab}=(-\rho\de_{ab} +\dual \rho \in_{ab}),\\
 W_{ab34}&=& 2 \in_{ab}\dual\rho,\\
 W_{abcd}&=&-\in_{ab}\in_{cd}\rho,\\
 W_{abc3}&=&\in_{ab}\dual \bb_c,\\
 W_{abc4}&=&-\in_{ab}\dual \b_c.
 \eeaa

\begin{remark}
\label{rem:pairing}
In addition to the Hodge duality we  will need to take into account the
 the duality with respect to the interchange of $L, \Lb$, which we call a pairing transformation.  Clearly, under this transformation,   $\a\leftrightarrow \aa$, 
 $\b \leftrightarrow -\bb$,   $\rho   \leftrightarrow \rho$,  $\dual \rho   \leftrightarrow -\dual \rho$,
  $\varrho \leftrightarrow \check{\varrho}$ with 
  $\check{\varrho}_{ab}:=\varrho_{ba}$.    One has to be careful however when combining 
  the Hodge dual  and pairing  transformations. In that case we have,  
   $\dual \aa    \leftrightarrow  -\dual \a$,   $\dual\bb \leftrightarrow \dual \b$. 
   This is due to the  fact that under the pairing transformation 
$\in_{ab}\to-\in_{ab}$ (since $\in_{ab}=\in_{ab34}$).  Indeed, for example,
   \beaa
   \dual\aa_{ab}&=&  \aa(\dual W)_{ab}    = \dual W_{a3b3} =  - \in_{a3c4} W_{c3 b3}=\in_{ac34}W_{c3 b3}=\in_{ac}\aa_{cb},\\
      \dual\a_{ab}&=&  \a(\dual W)_{ab}    = \dual W_{a4b4} =  - \in_{a4c3} W_{c4 b4}=-\in_{cb34}W_{c4 b4}=
   -   \in_{ac}\a_{cb}.
   \eeaa
   \end{remark}

The decomposition above applies in particular to the Riemann curvature tensor  $\R$ of a vacuum spacetime.

\begin{definition}[Horizontal curvature tensor]
 We define the curvature tensor $R_{cdab}$ of the horizontal structure  by the usual  formula,
 \beaa
\nab_a \nab_b X_c -\nab_b \nab_a X_c&=&R_{cd ab} X^d. 
 \eeaa
\end{definition} 
 


\subsection{Connection to  the Newman-Penrose formalism} 
\lab{section:NPformalism}


In the Newman-Penrose NP  formalism,  one choses a   specific orthonormal  basis of  horizontal  vectors $(e_1, e_2)$ and defines  all connection coefficients relative   to the  complexified  frame $(e_3, e_4, m, \ov{m})$ where $m=e_1+i e_2$, $\ov{m}= e_1-i e_2.$ Thus, all quantities of interest are complex scalars instead of our horizontal  tensors such as  $\SS_1, \SS_2$.   The NP  formalism works well  for deriving  the basic equations, but has the disadvantage 
of  substantially  increasing  the number of  variables.    Moreover, the calculations become far  more    cumbersome when deriving equations involving higher   derivatives   of the main quantities, in perturbations of Kerr.
 Another  advantage of  the formalism used here is that all important equations look similar to the ones  in \cite{Ch-Kl}.  
 
   We refer  to \cite{NP}  for the original form of the NP  formalism and to  the appendix in \cite{I-Kl} for a  more  detailed comparison between the NP and our  formalism.


\subsection{Null  structure equations}


We state below   the  null structure equation in the general setting  discussed above.   We assume given a  vacuum spacetime endowed with  a general null frame $(e_3, e_4, e_1, e_2)$ relative to which we define our  connection and curvature coefficients.
 
\begin{proposition}[Null structure equations] 
The  connection coefficients  verify the following   equations
\label{prop-nullstr}
\beaa
\nab_3\trchb&=&-|\chibh|^2-\frac 1 2 \big( \trchb^2-\atrchb^2\big)+2\div\xib  - 2\omb \trchb +  2 \xib\c(\eta+\etab-2\ze),\\
\nab_3\atrchb&=&-\trchb\atrchb +2\curl \xib -2\omb\atrchb+ 2 \xib\wedge(-\eta+\etab+2\ze),\\
\nab_3\chibh&=&-\trchb\,  \chibh+ 2 \nab\hot \xib- 2 \omb \chibh+  2  \xib\hot(\eta+\etab-2\ze)-\aa,
\eeaa
\beaa
\nab_3\trch
&=& -\chibh\c\chih -\frac 1 2 \trchb\trch+\frac 1 2 \atrchb\atrch    +   2   \div \eta+ 2 \omb \trch + 2 \big(\xi\c \xib +|\eta|^2\big)+ 2\rho,\\
\nab_3\atrch
&=&-\chibh\wedge\chih-\frac 1 2(\atrchb \trch+\trchb\atrch)+ 2 \curl \eta + 2 \omb \atrch + 2 \xib\wedge\xi  -  2 \dual \rho,\\
\nab_3\chih
&=&-\frac 1 2 \big( \trch \chibh+\trchb \chih\big)-\frac 1 2 \big(-\dual \chibh \, \atrch+\dual \chih\,\atrchb\big)
+2 \nab\hot \eta +2 \omb \chih\\
&+&2 \xib\hot\xi +2\eta\hot\eta,
\eeaa
\beaa
\nab_4\trchb
&=& -\chih\c\chibh -\frac 1 2 \trch\trchb+\frac 1 2 \atrch\atrchb    +  2   \div \etab+ 2 \om \trchb + 2\big( \xi\c \xib +|\etab|^2\big)+2\rho,\\
\nab_4\atrchb
&=&-\chih\wedge\chibh-\frac 1 2(\atrch \trchb+\trch\atrchb)+ 2 \curl \etab + 2 \om \atrchb + 2 \xi\wedge\xib+2 \dual \rho,\\
\nab_4\chibh
&=&-\frac 1 2 \big( \trchb \chih+\trch \chibh\big)-\frac 1 2 \big(-\dual \chih \, \atrchb+\dual \chibh\,\atrch\big)
+\nab\hot \etab +2 \om \chibh\\
&+&2 \xi\hot\xib +2 \etab\hot\etab,
\eeaa
\beaa
\nab_4\trch&=&-|\chih|^2-\frac 1 2 \big( \trch^2-\atrch^2\big)+ 2 \div\xi  - 2 \om \trch + 2   \xi\c(\etab+\eta+2\ze),\\
\nab_4\atrch&=&-\trch\atrch + 2 \curl \xi - 2 \om\atrch+ 2 \xi\wedge(-\etab+\eta-2\ze),\\
\nab_4\chih&=&-\trch\,  \chih+2 \nab\hot \xi- 2 \om \chih+  2  \xi\hot(\etab+\eta+2\ze)-\a.
\eeaa
Also,
\beaa
\nab_3 \ze+2\nab\omb&=& -\chibh\c(\ze+\eta)-\frac{1}{2}\trchb(\ze+\eta)-\frac{1}{2}\atrchb(\dual\ze+\dual\eta)+ 2 \omb(\ze-\eta)\\
&&+\hch\c\xib+\frac{1}{2}\trch\,\xib+\frac{1}{2}\atrch\dual\xib +2 \om \xib -\bb,
\\
\nab_4 \ze -2\nab\om&=& \chih\c(-\ze+\etab)+\frac{1}{2}\trch(-\ze+\etab)+\frac{1}{2}\atrch(-\dual\ze+\dual\etab)+2 \om(\ze+\etab)\\
&& -\chibh\c\xi -\frac{1}{2}\trchb\,\xi-\frac{1}{2}\atrchb\dual\xi -2 \omb \xi -\b,
\\
\nab_3 \etab -\nab_4\xib &=& -\chibh\c(\etab-\eta) -\frac{1}{2}\trchb(\etab-\eta)+\frac{1}{2}\atrchb(\dual\etab-\dual\eta) -4 \om \xib  +\bb, \\
\nab_4 \eta    -    \nab_3\xi &=& -\chih\c(\eta-\etab) -\frac{1}{2}\trch(\eta-\etab)+\frac{1}{2}\atrch(\dual\eta-\dual\etab)-4\omb \xi -\b,\\
\eeaa
and
\beaa
\nab_3\om+\nab_4\omb -4\om\omb -\xi\c \xib -(\eta-\etab)\c\ze +\eta\c\etab&=&   \rho.
\eeaa
Also,
\beaa
\div\chih +\ze\c\chih &=& \frac{1}{2}\nab\trch+\frac{1}{2}\trch\ze -\frac{1}{2}\dual\nab\atrch-\frac{1}{2}\atrch\dual\ze -\atrch\dual\eta-\atrchb\dual\xi -\b,\\
\div\chibh -\ze\c\chibh &=& \frac{1}{2}\nab\trchb-\frac{1}{2}\trchb\ze -\frac{1}{2}\dual\nab\atrchb+\frac{1}{2}\atrchb\dual\ze -\atrchb\dual\etab-\atrch\dual\xib +\bb,
\eeaa
and\footnote{Note that this equation follows from  expanding  $\R_{34ab}$.}
\beaa
\curl\ze&=&-\frac 1 2 \chih\wedge\chibh   +\frac 1 4 \big(  \trch\atrchb-\trchb\atrch   \big)+\om \atrchb -\omb\atrch+\dual \rho.
\eeaa
\end{proposition}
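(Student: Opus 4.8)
The plan is to derive every equation of the Proposition from three inputs: the Ricci formulae \eqref{ricci}, which express all the spacetime covariant derivatives $\D_\mu e_\nu$ of the frame in terms of the connection coefficients; the Ricci identity for the spacetime connection, $\R(X,Y)Z=\D_X\D_YZ-\D_Y\D_XZ-\D_{[X,Y]}Z$; and the null decomposition \eqref{fo5}, together with \eqref{rho-dualrho}, of the Weyl field $\R$. First I would record, once and for all, the commutators of the frame vectors, obtained by subtracting the appropriate lines of \eqref{ricci}: $[e_3,e_a]=\nab_3e_a-\chib_{ab}e_b+(\eta_a-\ze_a)e_3+\xib_ae_4$, $[e_4,e_a]=\nab_4e_a-\chi_{ab}e_b+\xi_ae_3+(\etab_a+\ze_a)e_4$, $[e_3,e_4]=-2\om\,e_3+2\omb\,e_4+2(\eta_b-\etab_b)e_b$, and, crucially, $[e_a,e_b]=\nab_ae_b-\nab_be_a+\frac{1}{2}(\atrchb\,e_4+\atrch\,e_3)\in(e_a,e_b)$, the last being the commutator formula established earlier. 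The non-horizontal part of $[e_a,e_b]$ is the only place where non-integrability enters, and it is the exclusive source of all the $\atrch,\atrchb$ corrections relative to \cite{Ch-Kl}.

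Next I would go through the equations family by family, each family coming from one null component of $\R$. The transport equations for $\trchb,\atrchb,\chibh$ along $e_3$ come from the Riccati-type identity attached to $\aa_{ab}=\R_{a3b3}$: differentiating $\chib_{ab}=\g(\D_ae_3,e_b)$ along $e_3$, substituting \eqref{ricci} into the inner and outer derivatives and using $\R(e_3,e_a)e_3$, one obtains a raw $2$-tensor identity of the form $\nab_3\chib=-\chib\cdot\chib-\aa+(\mbox{lower order})$; the three stated equations are then its trace, anti-trace, and symmetric-traceless projections, obtained from the decomposition \eqref{decompU} together with Corollary~\ref{special.products} and Lemmas~\ref{le:traces} and \ref{le:sym-product} for the quadratic terms. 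In the same way: the mixed equations for $\nab_3\chi$ and $\nab_4\chib$ come from $\varrho_{ab}=W_{a3b4}=-\rho\de_{ab}+\dual\rho\,\in_{ab}$, which accounts for the $\rho$ and $\dual\rho$ on their right-hand sides; the $e_4$-transport equations for $\trch,\atrch,\chih$ from $\a_{ab}=\R_{a4b4}$; the torsion equations for $\nab_3\ze+2\nab\omb$ and $\nab_4\ze-2\nab\om$ from $\bb_a=\frac{1}{2}\R_{a334}$ and $\b_a=\frac{1}{2}\R_{a434}$; the equations for $\nab_3\etab-\nab_4\xib$ and $\nab_4\eta-\nab_3\xi$ from those same curvature components, now extracted from $\R(e_3,e_4)e_3$ and $\R(e_4,e_3)e_4$ paired with $e_a$; the scalar equation for $\nab_3\om+\nab_4\omb$ from $\rho=\frac{1}{4}\R_{3434}$; the two Codazzi-type constraints for $\div\chih$ and $\div\chibh$ from $\R_{abc4}=-\in_{ab}\dual\b_c$ and $\R_{abc3}=\in_{ab}\dual\bb_c$ contracted with $\de^{ac}$, where the $\atrch\dual\eta,\atrchb\dual\xi$ (resp. $\atrch\dual\xib,\atrchb\dual\etab$) terms are precisely the contribution of $\D_{[e_a,e_b]}e_c$ through the non-horizontal part of $[e_a,e_b]$; and finally the equation for $\curl\ze$ from expanding $\R_{34ab}=2\in_{ab}\dual\rho$, as the footnote indicates, since $\curl\ze=\in^{ab}\nab_a\ze_b$ and $\ze_a=\frac{1}{2}\g(\D_ae_4,e_3)$.

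In carrying this out, three bookkeeping devices are used systematically: contractions against $\in_{ab}$ are converted to Hodge duals via Lemma~\ref{le:duals} and Lemma~\ref{lemma:usefulidentitiesforcomplexification}; symmetric-traceless parts of products of two horizontal $2$-tensors are simplified with Remark~\ref{rmk:tracelesspartofproducttracelessis0} and the third line of Lemma~\ref{le:traces}; and as an internal check one uses that each equation is homogeneous in the conformal/signature weight of Remark~\ref{remark-Conf-derivaties-Intro}, so only terms of the correct weight may appear, and that setting $\atrch=\atrchb=0$ (the integrable limit) must collapse the whole system onto the one of \cite{Ch-Kl}.

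The main obstacle is organizational rather than conceptual, and it is twofold. First, one must carry the non-horizontal parts of $[e_a,e_b]$ — and the non-horizontal structure of the other commutators — with correct signs through every Ricci-identity expansion; this is exactly what distinguishes the computation from the integrable case, it produces all the $\atrch,\atrchb$ terms as well as the $\dual\eta,\dual\xi$-type terms in the Codazzi equations and in the $\curl\ze$ equation, and an error here propagates into most of the system. Second, because $\chi$ and $\chib$ are no longer symmetric, in every contraction one must separate symmetric and antisymmetric parts by hand: the clean product identity of Lemma~\ref{le:sym-product} no longer applies directly and must be replaced by the more general Lemmas~\ref{le:traces} and \ref{le:nonsym-product}. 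Finally, a little care is needed with the orientation convention $\in(e_1,e_2)=1$ entering $\atr$, since interchanging $e_3$ and $e_4$ flips the sign of $\in_{ab}$ — and hence of every $\atr$-quantity — so all the bookkeeping must be carried out relative to a fixed orientation.
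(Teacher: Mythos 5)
Your proposal is correct and follows essentially the same route as the paper, whose proof simply states that the derivation is as in \cite{Ch-Kl} once one keeps track of the index order in the no-longer-symmetric $\chi,\chib$; your roadmap fills in exactly the details the paper omits (the frame commutators from \eqref{ricci}, the curvature component sourcing each family, and the fact that the non-horizontal part of $[e_a,e_b]$ is the sole origin of the $\atrch,\atrchb$ corrections). No gaps.
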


\begin{proof}
 Except for  the fact that the order of indices  in $\chi, \chib$ is important, since they are no longer symmetric,    the  derivation is   exactly as in \cite{Ch-Kl}.  
 \end{proof}
 
 Note that  we  are missing the traditional Gauss equation which, in the integrable case,  connects the  Gauss curvature   of a  sphere   to  a Riemann curvature component. In what follows we state a result which is its non-integrable analogue. 
 
 \begin{proposition}
 \lab{prop-nullstr:0}
 The following identity holds true.
 \bea
\lab{Gauss-eq-horizontal}
\bsplit
 \nab_a (\nab_b X_c)- \nab_b (\nab_a X_c)&= \R_{c d   ab}X^d+\frac 1 2 \in_{ab}\Big(\atrch\nab_3+\atrchb \nab_4\Big) X_c
\\
 &-\frac 1 2 \Big(\chi_{ac}\chib_{bd} + \chib_{ac}\chi_{bd} - \chi_{bc}\chib_{ad}- \chib_{bc}\chi_{ad}\Big) X^d .
 \end{split}
\eea
 \end{proposition}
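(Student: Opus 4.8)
The plan is to reduce the identity \eqref{Gauss-eq-horizontal} to the standard curvature identity for the Levi-Civita connection $\D$ of $\g$ by carefully tracking the non-horizontal corrections generated when commuting the horizontal covariant derivatives $\nab_a$, $\nab_b$. First I would recall the definition $\nab_X Y = {}^{(h)}(\D_X Y) = \D_X Y - \frac 1 2 \chib(X,Y)L - \frac 1 2 \chi(X,Y)\Lb$ together with the extended Leibniz rule for $\nab$ acting on horizontal tensor-fields. For a fixed horizontal vector field $X$ and fixed frame vectors $e_a,e_b$, I would write $\nab_b X_c$ in terms of $\D_b X$ and the $\chi,\chib$ corrections, then apply $\nab_a$ again, using the Ricci formulae \eqref{ricci} each time a $\D$ hits a frame vector or a correction term. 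The point is that $\nab_a(\nab_b X)$ differs from ${}^{(h)}(\D_a \D_b X)$ by terms involving $\chi_{ac}\chib_{bd}X^d$, $\chib_{ac}\chi_{bd}X^d$ and derivatives of $\chi,\chib$; most of the latter cancel on antisymmetrization.

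The key steps, in order, are: (1) compute $\nab_a\nab_b X_c - \nab_b\nab_a X_c$ down to ${}^{(h)}\big((\D_a\D_b - \D_b\D_a)X\big)_c$ plus an explicit collection of $\chi$-$\chib$ quadratic terms and curvature-coefficient terms; (2) handle the genuinely non-horizontal piece of $(\D_a\D_b-\D_b\D_a)X$: since $[e_a,e_b]$ is not horizontal, $(\D_a\D_b-\D_b\D_a)X = \R(e_a,e_b)X + \D_{[e_a,e_b]}X$, and the formula $[e_a,e_b]^\perp = \frac 1 2(\atrchb\, L + \atrch\,\Lb)\in_{ab}$ from the Proposition on $\nab_X Y - \nab_Y X$ produces exactly the $\frac 1 2\in_{ab}(\atrch\nab_3 + \atrchb\nab_4)X_c$ term after projecting $\D_{[e_a,e_b]^\perp}X$ horizontally and noting $\D_3 X, \D_4 X$ project to $\nab_3 X,\nab_4 X$; (3) observe that the horizontal part of $\R(e_a,e_b)X$ contributes $\R_{cdab}X^d$ (this being precisely the spacetime Riemann tensor restricted to horizontal indices, which is what appears in the statement, \emph{not} the intrinsic $R_{cdab}$); (4) collect the remaining algebraic terms and verify they assemble into $-\frac 1 2(\chi_{ac}\chib_{bd} + \chib_{ac}\chi_{bd} - \chi_{bc}\chib_{ad} - \chib_{bc}\chi_{ad})X^d$ — here one uses that the $\chi,\chib$ are no longer symmetric, so index order matters, but the differentiated terms $\nab_a\chi_{bc}$ etc. all cancel in the antisymmetrized combination just as in the integrable Gauss equation.

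The main obstacle is bookkeeping step (2)–(4): one must be scrupulous about which slot of the non-symmetric $\chi,\chib$ is being contracted at each stage, and about the sign conventions in $\in_{ab}$, $\D_3,\D_4$ versus $\D_\Lb,\D_L$, and the duals. In the integrable case the analogous computation collapses because $\chi,\chib$ are symmetric and $[e_a,e_b]$ is horizontal; here the extra $\atrch,\atrchb$ terms are exactly the obstruction to that collapse, and the delicate point is confirming that they organize into the clean $\frac 1 2\in_{ab}(\atrch\nab_3 + \atrchb\nab_4)$ operator rather than leaving residual lower-order terms. I would do this by testing the identity against the frame with $a=1,b=2$ and using $\in_{12}=1$, so that $[e_1,e_2]^\perp = \frac 1 2(\atrchb\, L + \atrch\,\Lb)$, then reading off each side component-wise; once the $(1,2)$ case checks out, antisymmetry in $a,b$ of both sides finishes the proof.
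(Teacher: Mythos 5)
Your proposal is correct and follows essentially the same route as the paper: a direct comparison of the horizontal and spacetime second covariant derivatives of $X$, with the quadratic $\chi$--$\chib$ corrections produced by the Ricci formulae (via $\D_b X_3=-\chib_{bd}X_d$, $\D_b X_4=-\chi_{bd}X_d$) and the non-integrability entering through the antisymmetric parts of $\chi,\chib$. The only organizational difference is that the paper works with the tensorial second derivative $\D_a\D_b X_c$, so that $\D_a\D_b X_c-\D_b\D_a X_c=\R_{cdab}X^d$ with no commutator term and the contribution $\frac 1 2\in_{ab}\big(\atrch\nab_3+\atrchb\nab_4\big)X_c$ arises from $\chi_{ab}-\chi_{ba}=\in_{ab}\atrch$ (and its $\chib$ analogue) multiplying $\nab_3 X_c$, $\nab_4 X_c$, whereas you extract the same term from $\D_{[e_a,e_b]^\perp}X$ --- an equivalent bookkeeping, by the paper's own identity $[e_a,e_b]^\perp=\frac 1 2\big(\atrchb\, e_4+\atrch\, e_3\big)\in(e_a,e_b)$.
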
  
 
 \begin{proof}
 Given  $X\in \SS_1$ we have,
\beaa
\D_b X_c&=&\nab_b X_c, \qquad 
\D_3X_c=\nab_3 X_c,\\
\D_4 X_c&=&\nab_4 X_c, \qquad 
\D_b X_3=-\chib_{bd} X_d, \qquad 
\D_b X_4=-\chi_{bd} X_d.
\eeaa
Also,
\beaa
\D_a \D_b X_c&=&\nab_a (\nab_b X_c)- \frac 1 2 \chi_{ab} \D_3 X_c -\frac 1 2 \chib_{ab}\D_4 X_c -\frac 1 2\chi_{ac} \D_b X_3-\frac 1 2 \chib_{ac} \D_b X_4\\
&=&\nab_a (\nab_b X_c)- \frac 1 2 \chi_{ab} \nab_3 X_c -\frac 1 2 \chib_{ab}\nab_4 X_c+\frac 1 2 \chi_{ac}\chib_{bd} X_d+\frac 1 2 \chib_{ac}\chi_{bd} X_d.
\eeaa
Hence,
\beaa
\D_a \D_b X_c&=&\nab_a (\nab_b X_c)- \frac 1 2 \chi_{ab} \nab_3 X_c -\frac 1 2 \chib_{ab}\nab_4 X_c+\frac 1 2 \chi_{ac}\chib_{bd} X_d+\frac 1 2 \chib_{ac}\chi_{bd} X_d,\\
\D_b\D_a X_c&=&\nab_b (\nab_a X_c)- \frac 1 2 \chi_{ba} \nab_3 X_c -\frac 1 2 \chib_{ba}\nab_4 X_c+\frac 1 2 \chi_{bc}\chib_{ad} X_d+\frac 1 2 \chib_{bc}\chi_{ad} X_d.
\eeaa
Subtracting we derive
\beaa
 \R_{c d   ab}X^d&=&\D_a \D_b X_c-\D_b\D_a X_c\\
 &=& \nab_a (\nab_b X_c)- \nab_b (\nab_a X_c)-\frac 1 2 (\chi_{ab}-\chi_{ba})\nab_3 X_c-\frac 1 2 (\chib_{ab}-\chib_{ba})\nab_4 X_c\\
&+&\frac 1 2 \Big(\chi_{ac}\chib_{bd} X_d+ \chib_{ac}\chi_{bd} - \chi_{bc}\chib_{ad}- \chib_{bc}\chi_{ad}\Big) X^d. 
\eeaa
Thus,
\beaa
 \nab_a (\nab_b X_c)- \nab_b (\nab_a X_c)&=&\frac 1 2 (\chi_{ab}-\chi_{ba})\nab_3 X_c
 +\frac 1 2 (\chib_{ab}-\chib_{ba})\nab_4 X_c\\
 &-&\frac 1 2 \Big(\chi_{ac}\chib_{bd} + \chib_{ac}\chi_{bd} - \chi_{bc}\chib_{ad}- \chib_{bc}\chi_{ad}\Big) X^d + \R_{c d   ab}X^d.
\eeaa
To end the proof we set
\beaa
\chi_{ab}-\chi_{ba}=\in_{ab} \atrch, \qquad \chib_{ab}-\chib_{ba}=\in_{ab} \atrchb.
\eeaa
\end{proof}

\begin{remark}
Recall that 
\beaa
\nab_a (\nab_b X_c)- \nab_b (\nab_a X_c)=R_{cdab} X^d. 
\eeaa
Thus, taking $X_d=e_d$, we can rewrite our Gauss formula
in the form
\beaa
R_{c d   ab}&=&\frac 1 2 \atrch\in_{ab} g(\nab_3  e_d, e_c)  +\frac 1 2 \atrchb\in_{ab} g(\nab_4 e_d, e_c)\\
&-&\frac 1 2 \Big(\chi_{ac}\chib_{bd} + \chib_{ac}\chi_{bd} - \chi_{bc}\chib_{ad}- \chib_{bc}\chi_{ad}\Big) + \R_{c d   ab}.
\eeaa
\end{remark}


\subsection{Null Bianchi identities}


We state  below the  equations verified by the null curvature components of an Einstein vacuum manifold.
 
    \begin{proposition}[Null Bianchi]\label{prop:bianchi} 
        We have,
    \beaa
    \nab_3\a-2  \nab\hot \b&=&-\frac 1 2 \big(\trchb\a+\atrchb\dual \a)+4\omb \a+
 2 (\ze+4\eta)\hot \b - 3 (\rho\chih +\rhod\dual\chih),\\
\nab_4\beta - \div\a &=&-2(\trch\beta-\atrch \dual \b) - 2  \om\b +\a\c  (2 \ze +\etab) + 3  (\xi\rho+\dual \xi\rhod),\\
     \nab_3\b+\div\varrho&=&-(\trchb \b+\atrchb \dual \b)+2 \omb\,\b+2\bb\c \chih+3 (\rho\eta+\rhod\dual \eta)+    \a\c\xib,\\
 \nab_4 \rho-\div \b&=&-\frac 3 2 (\trch \rho+\atrch \rhod)+(2\etab+\ze)\c\b-2\xi\c\bb-\frac 1 2 \chibh \c\a,\\
   \nab_4 \rhod+\curl\b&=&-\frac 3 2 (\trch \rhod-\atrch \rho)-(2\etab+\ze)\c\dual \b-2\xi\c\dual \bb+\frac 1 2 \chibh \c\dual \a, \\
     \nab_3 \rho+\div\bb&=&-\frac 3 2 (\trchb \rho -\atrchb \rhod) -(2\eta-\ze) \c\bb+2\xib\c\b-\frac{1}{2}\chih\c\aa,
 \\
   \nab_3 \rhod+\curl\bb&=&-\frac 3 2 (\trchb \rhod+\atrchb \rho)- (2\eta-\ze) \c\dual \bb-2\xib\c\dual\b-\frac 1 2 \chih\c\dual \aa,\\
     \nab_4\bb-\div\varoc&=&-(\trch \bb+ \atrch \dual \bb)+ 2\om\,\bb+2\b\c \chibh
    -3 (\rho\etab-\rhod\dual \etab)-    \aa\c\xi,\\
     \nab_3\bb +\div\aa &=&-2(\trchb\,\bb-\atrchb \dual \bb)- 2  \omb\bb-\aa\c(-2\ze+\eta) - 3  (\xib\rho-\dual \xib \rhod),\\
     \nab_4\aa+2 \nab\hot \bb&=&-\frac 1 2 \big(\trch\aa-\atrch\dual \aa)+4\om \aa+2
 (\ze-4\etab)\hot \bb - 3  (\rho\chibh -\rhod\dual\chibh).
\eeaa
Here,
\beaa
\div\varo&=&- (\nab\rho+\dual\nab\rhod),\\
\div\varoc&=&- (\nab\rho-\dual\nab\rhod).
\eeaa
    \end{proposition}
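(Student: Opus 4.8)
The plan is to obtain all ten identities by projecting the contracted second Bianchi identity onto the null frame, exactly as in \cite{Ch-Kl}, the only genuinely new feature being the non-symmetry of $\chi$ and $\chib$. Since $\operatorname{Ric}(g)=0$, the Riemann tensor $\R$ of $\MM$ is a Weyl field and the twice-contracted second Bianchi identity reduces to the divergence-free condition $\D^\lambda\R_{\lambda\alpha\beta\gamma}=0$; its Hodge dual $\dual\R$ is again a Weyl field (by \eqref{eq:dualW}) and satisfies $\D^\lambda\dual\R_{\lambda\alpha\beta\gamma}=0$ as well. In the null frame $g^{34}=g^{43}=-\tfrac12$ and $g^{ab}=\de^{ab}$, so these become $-\tfrac12\D_3\R_{4\alpha\beta\gamma}-\tfrac12\D_4\R_{3\alpha\beta\gamma}+\D_a\R_{a\alpha\beta\gamma}=0$, and similarly for $\dual\R$. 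One then lets $(\alpha,\beta,\gamma)$ run over the independent triples of frame indices and reads off one equation in each case: for instance $(\alpha,\beta,\gamma)=(a,4,b)$ pairs off $\D_3\R_{4a4b}=\D_3\a_{ab}$ and, after projecting onto its symmetric traceless part in $a,b$, produces the $\nab_3\a$ equation; $(a,3,b)$ likewise gives the $\nab_4\aa$ equation; the triples $(3,a,b)$ and $(4,a,b)$ give the $\rhod$ (i.e. $\curl$-type) equations, since $\R_{34ab}=2\in_{ab}\rhod$ and $\R_{abc4}=-\in_{ab}\dual\b_c$; and the remaining triples give the $\b$, $\bb$ and $\rho$ equations.

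The rest is algebraic bookkeeping. Each covariant derivative $\D$ falling on a frame vector is converted into $\nab$ plus connection-coefficient terms by the Ricci formulae \eqref{ricci}, and the frame components of $\R$ that appear are read off from its null decomposition given after \eqref{fo5} (so $\R_{a3b4}=-\rho\de_{ab}+\rhod\in_{ab}$, $\R_{abcd}=-\in_{ab}\in_{cd}\rho$, $\R_{abc3}=\in_{ab}\dual\bb_c$, $\R_{abc4}=-\in_{ab}\dual\b_c$). The resulting products of connection coefficients with curvature components are then repackaged into their symmetric-traceless, trace and anti-trace parts and into the $\div$, $\curl$, $\nab\hot$, $\c$, $\wedge$ notation using Lemma \ref{le:duals}, Lemma \ref{le:traces} and Lemma \ref{lemma:usefulidentitiesforcomplexification}, together with the definitions of $\div\varrho$ and $\div\varoc$; it is the Hodge-dual identities for one-forms that convert, e.g., a term $\in_{ab}\nab_a\rhod$ into $\dual\nab\rhod$.

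I expect the main (and essentially only) point requiring care to be the non-symmetry of $\chi$ and $\chib$. In handling the term $\D_a\R_{a\alpha\beta\gamma}$ one commutes $\D_a$ past the frame vectors, and now the antisymmetric pieces $\chi_{ab}-\chi_{ba}=\in_{ab}\atrch$ and $\chib_{ab}-\chib_{ba}=\in_{ab}\atrchb$ survive — precisely as in the proof of Proposition \ref{prop-nullstr:0} — and these are the source of the $\atrchb\dual\a$ term in the first equation, the $\atrch\dual\b$ term in the second, and of all the analogous anti-trace corrections throughout the list. The sole obstacle is therefore to keep the order of indices in $\chi$ and $\chib$ consistent at every step; there is no conceptual difficulty, and the computation can be sanity-checked by verifying that upon setting $\atrch=\atrchb=0$ and symmetrizing $\chi,\chib$ the entire list collapses to the null Bianchi equations of \cite{Ch-Kl} verbatim.
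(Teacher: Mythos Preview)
Your proposal is correct and matches the paper's own argument: the paper's proof says only that the derivation ``follows line by line from the derivation in \cite{Ch-Kl} except, once more, for keeping track of the lack of symmetry for $\chi, \chib$,'' which is exactly the strategy and the single caveat you describe. Your write-up is in fact a more explicit account of the same approach.
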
 
    
    \begin{proof}
    The proof  follows line by line from the derivation  in \cite{Ch-Kl} except, once more, for keeping track of the lack of symmetry for $\chi, \chib$. Note also  that $\varoc_{ab}=\varo_{ba}$ and  that $ (\div\varo)_b=\nab^a\varo_{ab}$.
    \end{proof}

We also recall, see \cite{Ch-Kl}, the  basic Hodge operators
\begin{itemize}
\item  $\DDd_1 $ takes $\SS_1$ into\footnote{Recall that $\SS_0$ refers to pairs of scalar functions $(a,b)$.}  $\SS_0$
\beaa
\DDd_1 \xi =(\div\xi, \curl \xi),
\eeaa
\item  $\DDd_1 $ takes $\SS_2$ into $\SS_1$
\beaa
(\DDd_2 \xi)_a =\nab^b \xi_{ab},
\eeaa
\item $\DDs_1$ takes  $\SS_0 $ into  $\SS_1$
\beaa
\DDs_1( f, f_*) &=& -\nab_a f+\in_{ab}  \nab_b  f_*,
\eeaa
\item 
$\DDs_2 $ takes  $\SS_1$ into $\SS_2$ 
\beaa
\DDs_2 \xi&=& -\nab\hot \xi.
\eeaa
\end{itemize}

     
\subsection{Commutation formulas}


  \begin{lemma}
   \lab{lemma:comm-gen}
Let $U_{A}= U_{a_1\ldots a_k} $ be a general $k$-horizontal  tensorfield.
\begin{enumerate}
\item  We have
\bea
\bsplit
[\nab_3, \nab_b] U_A&= -\chib_{bc}\nab_c U_{A} +(\eta_b-\ze_b)  \nab_3 U_A  +\sum_{i=1}^k \big( \chib_{a_i  b}  \eta_c-\chib_{bc} \eta_{a_i} \big) U_{a_1\ldots }\,^ c \,_{\ldots a_k} \\
&+\err_{3bA}[U],\\
\err_{3bA}[U]&=\sum_{i=1}^k \Big( \chi_{a_i  c}  \xib_c-\chi_{bc}\, \xib_{a_i}-\in_{a_i c} \dual\bb_b  \Big) U_{a_1\ldots}\,^ c\,_{\ldots a_k} +\xib_b \nab_4 U_{A }.
\end{split} 
\eea

\item We have
\bea
\bsplit
[\nab_4, \nab_b] U_A&= -\chi_{bc}\nab_c U_{A}  +(\etab_b+\ze_b)  \nab_4 U_A+\sum_{i=1}^k \big( \chib_{a_i  b}  \eta_c-\chib_{bc} \eta_{a_i} \big) U_{a_1\ldots c\ldots a_k} \\
&+\err_{4bA}[U],\\
\err_{4bA}[U]&=\sum_{i=1}^k \Big( \chib_{a_i  c}  \xi_c-\chib_{bc}\, \xi_{a_i}+\in_{a_i c} \dual\b_b  \Big) U_{a_1\ldots}\,^ c\,_{\ldots a_k} +\xi_b \nab_3  U_{A }.
\end{split} 
\eea

\item We have,
\bea
\bsplit
\, [\nab_4, \nab_3] U_A&= 2(\etab_b-\eta_b ) \nab_b U_A + 2\sum_{i=1}^k\big( \eta_{a_i} \etab_b-\etab_{a_i} \eta_b- \in_{a_i b}\dual \rho) U_{a_1\ldots}\,^b\,_{\ldots a_k} \\
&+ 2 \om \nab_3 U_A -2\omb \nab_4 U_A +\err_{43A},\\
\err_{43A}&= 2\sum_{i=1}^k \big( \xib_{a_i}  \xi_b- \xi_{a_i}  \xib_b )U_{a_1\ldots} \,^b\,_{\ldots a_k}.
\end{split}
\eea
\end{enumerate}
\end{lemma}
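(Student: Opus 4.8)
The plan is to follow the derivation of the commutation formulae in \cite{Ch-Kl}, carefully tracking the extra terms produced by the non‑integrability of the horizontal structure (i.e. by $\atrch,\atrchb,\xi,\xib$). It suffices to carry out the computation for $X\in\SS_1$: for a general $k$‑tensor $U_A$ the same argument applies, and the sums $\sum_{i=1}^k$ in the statement arise because the outer derivative, expanded by the Leibniz rule, also differentiates each frame vector $e_{a_i}$ appearing in $U(e_{a_1},\dots,e_{a_k})$; the scalar case $k=0$ reduces to the frame commutators below.

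For $X\in\SS_1$ I would first relate the iterated horizontal derivatives to spacetime covariant derivatives. From the Ricci relations \eqref{ricci} one checks that $\nab$ and $\D$ agree on horizontal indices, e.g. $(\nab_bX)_c=\D_bX_c$ and $(\nab_3X)(e_c)=(\D_3X)(e_c)$ since $X(e_3)=X(e_4)=0$. The subtlety appears upon composing: $\nab_b(\nab_3X)_c$ is \emph{not} $(\D_b\D_3X)_c$, because $\nab_3X={}^{(h)}(\D_3X)$ has been stripped of its $e_3$‑ and $e_4$‑components while $\D_b$ of the genuine $1$‑form $\D_3X$ still sees them. Using $X(e_3)=X(e_4)=0$ together with $\D_3e_3=-2\omb e_3+2\xib_ce_c$ and $\D_3e_4=2\omb e_4+2\eta_ce_c$ gives $(\D_3X)(e_3)=-2\,\xib\c X$, $(\D_3X)(e_4)=-2\,\eta\c X$, and feeding these through $\D_be_c=\nab_be_c+\frac12\chi_{bc}e_3+\frac12\chib_{bc}e_4$ produces explicit correction terms quadratic in $(\chi,\chib)$ and $(\xi,\xib,\eta,\etab)$. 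The analogous identities hold for $\nab_3(\nab_bX)_c$ and, with $3\leftrightarrow4$, for the $\nab_4$ formulae.

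Next I would invoke the spacetime Ricci identity $\D_3\D_bX-\D_b\D_3X=\D_{[e_3,e_b]}X+R(e_3,e_b)X$ (and its $(e_4,e_b)$, $(e_4,e_3)$ analogues), reading the frame commutators off \eqref{ricci}:
\beaa
\,[e_3,e_b]&=&\nab_3e_b-\chib_{bc}e_c+(\eta_b-\ze_b)e_3+\xib_be_4,\\
\,[e_4,e_b]&=&\nab_4e_b-\chi_{bc}e_c+(\etab_b+\ze_b)e_4+\xi_be_3,\\
\,[e_4,e_3]&=&2(\etab_b-\eta_b)e_b+2\om e_3-2\omb e_4.
\eeaa
The horizontal parts give the leading transport‑direction terms $-\chib_{bc}\nab_cU_A$, $-\chi_{bc}\nab_cU_A$, $2(\etab_b-\eta_b)\nab_bU_A$; the $e_3$‑ and $e_4$‑components give the $(\eta_b-\ze_b)\nab_3$, $\xib_b\nab_4$, $(\etab_b+\ze_b)\nab_4$, $\xi_b\nab_3$, $2\om\nab_3$, $-2\omb\nab_4$ terms; and the $\nab_3e_b$, $\nab_4e_b$ pieces cancel the connection contributions hidden inside $\nab_3\nab_b$, $\nab_4\nab_b$. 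It then remains to expand the curvature $R(e_3,e_b)X$ etc. acting on a horizontal $1$‑form into null components, using $W_{a3b4}=\varrho_{ab}$, $W_{abc3}=\in_{ab}\dual\bb_c$, $W_{abc4}=-\in_{ab}\dual\b_c$, $W_{ab34}=2\in_{ab}\dual\rho$, together with the Gauss‑type identity of Proposition \ref{prop-nullstr:0}, which converts the intrinsic $R_{cdab}$ into the horizontal curvature plus $\chi\c\chib$ quadratic terms and supplies the $\atrch\nab_3$, $\atrchb\nab_4$ pieces. Collecting these with the corrections from the previous paragraph and sorting the $2$‑tensor products by Lemma \ref{le:traces} yields the stated formulae, the antisymmetric curvature pieces $\dual\bb$, $\dual\b$, $\dual\rho$ and the $(\xi,\xib)$‑quadratic terms being routed into $\err_{3bA}$, $\err_{4bA}$, $\err_{43A}$.

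The main obstacle is the sign bookkeeping rather than any conceptual difficulty. Because $\chi$ and $\chib$ are no longer symmetric, the index order matters at every step, so one cannot freely import the \cite{Ch-Kl} identities; and, as flagged in Remark \ref{rem:pairing}, one must track how the Hodge‑dual and ``pairing'' ($L\leftrightarrow\Lb$) conventions act on $\dual\bb,\dual\b,\dual\rho$ so that the antisymmetric curvature terms land in the error terms with the correct signs. Modulo this, the computation is a routine transcription of the argument in \cite{Ch-Kl}.
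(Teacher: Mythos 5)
Your proposal is correct and follows essentially the same route as the paper: the paper likewise reduces to $k=1$, expresses $\D^2_{3b}U_a$ and $\D^2_{b3}U_a$ in terms of $\nab_3\nab_b U_a$, $\nab_b\nab_3 U_a$ and the connection coefficients via the Ricci relations \eqref{ricci}, and identifies the difference with $\R_{ac3b}U_c=-\in_{ac}\dual\bb_b\,U_c$ (and analogously for the $(4,b)$ and $(4,3)$ cases), your Lie-bracket form of the Ricci identity being equivalent to the paper's Hessian form. The only superfluous ingredient is your appeal to Proposition \ref{prop-nullstr:0}, which concerns $[\nab_a,\nab_b]$ and is not needed for the three commutators of this lemma.
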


\begin{proof}
It suffices to consider the case $k=1$. We write,
\beaa
\D^2_{3b} U_a&=& \nab_3\nab_bU_a +\big(\eta_a \chib_{bc}+\xib_a \chi_{bc} \big)U_c -\eta_b \nab_3 U_a -\xib_b \nab_4 U_a,\\
\D^2_{b3} U_a&=& \nab_b \nab_3 U_a +\big(\chi_{ab} \xib_c +\chib_{ab}\eta_c\big) U_c  -\chib_{bc} \nab_c U_a -\ze_b \nab_3 U_a,\\
\D^2_{3b} U_a-\D^2_{b3} U_a&=& \R_{ac3b} U_c= -\in_{ac}\dual \bb_b U_c.
\eeaa
Hence,
\beaa
\,[\nab_3, \nab_b] U_a &=&  \nab_3\nab_bU_a- \nab_b \nab_3 U_a\\
&=&-\big(\eta_a \chib_{bc}+\xib_a \chi_{bc} \big)U_c +\eta_b \nab_3 U_a +\xib \nab_4 U_a +\D^2_{3b} U_a\\
&+&\big(\chi_{ab} \xib_c +\chib_{ab}\eta_c\big) U_c  -\chib_{bc} \nab_c U_a -\ze_b \nab_3 U_a-\D^2_{b3} U_a\\
&=& -\chib_{bc} \nab_c U_a+\big(\chib_{ab}  \eta_c - \chib_{bc}  \eta_a\big) U_c+(\eta_b-\ze_b)\nab_3 U_a \\
&+&\big(\chi_{ab} \xib_c- \chi_{bc} \xib_a  \big) +\xib \nab_4 U_a  -\in_{ac}\dual \bb_b U_c
\eeaa
i.e.,
\beaa
\,[\nab_3, \nab_b] U_a &=&  -\chib_{bc} \nab_c U_a+\big(\chib_{ab}  \eta_c - \chib_{bc}  \eta_a\big) U_c+(\eta_b-\ze_b)\nab_3 U_a \\
&+&\big(\chi_{ab} \xib_c- \chi_{bc} \xib_a  \big) +\xib \nab_4 U_a  -\in_{ac}\dual \bb_b U_c
\eeaa
as stated. 
The commutator formula  for   $\,[\nab_4, \nab_b] U_a $ is derived easily by symmetry.
Also,
\beaa
\D^2_{43} U_a &=& \nab_4 \nab_3 U_a -2\om \nab_3 U_a-2\etab^b \nab_b U_a +2\etab_a \eta^b U_b  + 2 \xi_a \xib^c U_c,\\
\D^2_{34} U_a&=&\nab_3\nab_4 U_a   -2 \omb \nab_4 U_a-  2\eta^b \nab_b U_a + 2 \eta_a \etab_b U_b + 2 \xib_a \xi^c U_c,\\
\D^2_{43}U_a-\D^2_{34} U_a &=&\R_{ab43}  U^b=-2\rhod \in_{ab} U^b.
\eeaa
We deduce,
\beaa
\,[\nab_4, \nab_3]U_a &=&2\om \nab_3 U_a+2\etab^b \nab_b U_a -2\etab_a \eta^b U_b  - 2 \xi_a \xib^c U_c\\
  &-&2 \omb \nab_4 U_a-  2\eta^b \nab_b U_a + 2 \eta_a \etab_b U_b + 2 \xib_a \xi^c U_c -2\rhod \in_{ab} U^b\\
  &=& 2(\etab_b-\eta_b)\nab_b  U_a+ 2\om \nab_3 U_a-2\omb \nab_4 U + 2\big( \eta_a \etab_b-\etab_a\eta_b) U^b  -2\rhod \in_{ab} U^b\\
  &+& 2\big( \xib_a \xi_c- \xi_a \xib_c)U^c.
\eeaa
Thus,
\beaa
\,[\nab_4, \nab_3]U_a &=&2(\etab_b-\eta_b)\nab_b  U_a+ 2\om \nab_3 U_a-2\omb \nab_4 U + 2\big( \eta_a \etab_b-\etab_a\eta_b-\dual \rho \in_{ab}) U^b\\
 &+& 2\big( \xib_a \xi_c- \xi_a \xib_c)U^c
\eeaa
as stated. 
\end{proof} 

\begin{remark}
The formulas  of Lemma \ref{lemma:comm-gen} remain true if we   substitute  in the  main terms 
\beaa
\chi_{a b} \longrightarrow \frac 1 2\big( \trch \de_{ab}+ \atrch \in _{ab}\big)\\
\chib_{a b} \longrightarrow \frac 1 2\big( \trchb \de_{ab}+ \atrchb \in _{ab}\big)
\eeaa
 and add   in the error terms  the contributions due to $ \chih, \chibh$. 
 \end{remark} 
 
   Thus the  main terms in  the commutator formula  $[\nab_3, \nab_b] U_A$  take the form
 \beaa
 [\nab_3, \nab_b] U_A&=& -\chib_{bc}\nab_c U_{A} +(\eta_b-\ze_b)  \nab_3 U_a   +\sum_{i=1}^k \big( \chib_{a_i  b}  \eta_c-\chib_{bc} \eta_{a_i} \big) U_{a_1\ldots }\,^ c \,_{\ldots a_k}\\
 &=&-\frac 1 2 \big( \trchb \nab_b U_A +\atrchb \dual \nab_b U_A\big) +(\eta_b-\ze_b)  \nab_3 U_A  \\
 &+&\frac 1 2 \sum_{i=1}^k\big(\de_{a_i b} \trchb+\in_{a_i b} \atrchb\big) \eta_c  U_{a_1\ldots }\,^ c \,_{\ldots a_k}\\
 &-&\frac 1 2   \sum_{i=1}^k\big(\trchb  \de_{bc}+\atrchb\in_{bc} \big) \eta_{a_i}   U_{a_1\ldots }\,^ c \,_{\ldots a_k}.
 \eeaa

 If in addition $U$ is symmetric traceless in all indices, i.e. $U\in \SS_k$ we deduce,
 \beaa
  [\nab_3, \nab_b] U_A&= &-\frac 1 2 \big( \trchb \nab_b U_A +\atrchb \dual \nab_b U_A\big) +(\eta_b-\ze_b)  \nab_3 U_A\\
  &+&\frac 1 2 \sum_{i=1}^k\big(\de_{a_i b} \trchb+\in_{a_i b} \atrchb\big) \eta_c  U_{a_1\ldots }\,^ c \,_{\ldots a_k}\\
  &-&\frac 1 2 \sum_{i=1}^k \eta_{a_i}  \big(\trchb U_{a_1\ldots b\ldots a_k} +\atrchb \dual U_{a_1\ldots b\ldots a_k}  \big).
 \eeaa
 
   In the following Lemma we specialize  to the case of $\SS_0$, $\SS_1$ and $\SS_2$.
  
   \begin{lemma}
   \lab{lemma:comm}
   The following commutation formulas hold true.
   \begin{enumerate}
\item Given   $f \in \SS_0$ we have
       \beaa
        \,[\nab_3, \nab_a] f &=&-\frac 1 2 \left(\trchb \nab_a f+\atrchb \dual \nab f\right)+(\eta_a-\ze_a) \nab_3 f-\chibh_{ab}\nab_b f  +\xib_a \nab_4 f,\\
         \,[\nab_4, \nab_a] f &=&-\frac 1 2 \left(\trch \nab_a f+\atrch \dual \nab f\right)+(\etab_a+\ze_a) \nab_4 f-\chih_{ab}\nab_b f  +\xi_a \nab_3 f, \\
         \, [\nab_4, \nab_3] f&=& 2(\etab-\eta ) \c \nab f + 2 \om \nab_3 f -2\omb \nab_4 f. 
       \eeaa

  \item   Given  $u\in \SS_1$ we have
    \bea\label{commutator-3-a-u-b}\label{commutator-u-in-SS1}
         \bsplit            
\,  [\nab_3,\nab_a] u_b    &=-\frac 1 2 \trchb \big( \nab_a u_b+\eta_b u_a-\de_{ab} \eta \c u \big) -\frac 1 2 \atrchb \big( \dual \nab_a u_b+\eta_b \dual u_a-\in_{ab} \eta\c u\big)\\
&+(\eta-\ze)_a \nab_3 u_b+\err_{3ab}[u],\\
  \err_{3ab}[u] &=-\dual \bb_a\dual u_b+\xib_a\nab_4 u_b-\xib_b \chi_{ac} u_c+\chi_{ab} \,\xib\c u-\chibh_{ac}\nab_c u_b-\eta_b\chibh_{ac}u_c+\chibh_{ab}\eta\c u,
   \end{split}
   \eea
   \bea\label{commutator-4-a-u-b}
   \bsplit
\,  [\nab_4,\nab_a] u_b    &=-\frac 1 2 \trch \big( \nab_a u_b+\etab_b u_a-\de_{ab} \etab \c u \big) -\frac 1 2 \atrch \big( \dual \nab_a u_b+\etab_b \dual u_a-\in_{ab} \etab\c u\big)\\
&+(\etab +\ze)_a \nab_4 u_b +\err_{4ab}[u],\\
   \err_{4ab}[u]&=\dual \b_a\dual u_b+\xi_a\nab_3 u_b-\xi_b \chib_{ac} u_c+\chib_{ab} \,\xi\c u-\chih_{ac}\nab_c u_b-\etab_b\chih_{ac}u_c+\chih_{ab}\etab\c u, 
      \end{split}
   \eea
   \bea
   \bsplit
 \, [\nab_4, \nab_3] u_a&=2 \om \nab_3 u_a -2\omb \nab_4 u_a+ 2(\etab_b-\eta_b ) \nab_b u_a +2(\etab \c u ) \eta_{a} -2 (\eta \c u )\etab_{a} -2 \dual \rho \dual u_a \\
 & +\err_{43a}[u],\\
 \err_{43a}[u]&= 2 \big( \xib_{a}  \xi_b- \xi_{a}  \xib_b )u^b.
\end{split}
\eea
\item  Given  $u\in \SS_2$ we have 
    \bea\label{commutator-u-in-SS2}\label{commutator-3-a-u-bc}
         \bsplit            
\,  [\nab_3,\nab_a] u_{bc}    &=-\frac 1 2 \trchb\, (\nab_a u_{bc}+\eta_bu_{ac}+\eta_c u_{ab}-\de_{a b}(\eta \c u)_c-\de_{a c}(\eta \c u)_b )\\
&-\frac 1 2 \atrchb\, (\dual \nab_a u_{bc} +\eta_b\dual u_{ac}+\eta_c\dual u_{ab}- \in_{a b}(\eta \c u)_c- \in_{a c}(\eta \c u)_b )\\
&+(\eta_a-\ze_a)\nab_3 u_{bc}+\err_{3abc}[u],\\
\err_{3abc}[u]&= -2\dual \bb_a \dual u_{bc}+\xib_a \nab_4 u_{bc} -\xib_b\chi_{ad}u_{dc} -\xib_c\chi_{ad}u_{bd}+\chi_{ab}\xib_d u_{dc} +\chi_{ac}\xib_d u_{bd}\\
&-\chibh_{ad} \nab_d u_{bc} -\eta_b\chibh_{ad}u_{dc} - \eta_c\chibh_{ad}u_{bd}+\chibh_{ab}\eta_du_{dc} +\chibh_{ac}\eta_du_{bd},
   \end{split}
   \eea
   \bea
   \bsplit
\,  [\nab_4,\nab_a] u_{bc}    &=-\frac 1 2 \trch\, (\nab_a u_{bc}+\etab_bu_{ac}+\etab_c u_{ab}-\de_{a b}(\etab \c u)_c-\de_{a c}(\etab \c u)_b )\\
&-\frac 1 2 \atrch\, (\dual \nab_a u_{bc} +\etab_b\dual u_{ac}+\etab_c\dual u_{ab}- \in_{a b}(\etab \c u)_c- \in_{a c}(\etab \c u)_b )\\
&+(\etab_a+\ze_a)\nab_4 u_{bc}+\err_{4abc}[u],\\
\err_{4abc}[u]&= 2\dual \b_a \dual u_{bc}+\xi_a \nab_3 u_{bc} -\xi_b\chib_{ad}u_{dc} -\xi_c\chib_{ad}u_{bd}+\chib_{ab}\xi_d u_{dc} +\chib_{ac}\xi_d u_{bd}\\
& -\chih_{ad} \nab_d u_{bc} -\etab_b\chih_{ad}u_{dc} - \etab_c\chih_{ad}u_{bd}+\chih_{ab}\etab_du_{dc} +\chih_{ac}\etab_du_{bd}, 
     \end{split}
   \eea
   \bea\label{commutator-3-a-u-bc}
   \bsplit
   \, [\nab_4, \nab_3] u_{ab}&=2 \om \nab_3 u_{ab} -2\omb \nab_4 u_{ab} + 2(\etab_c-\eta_c ) \nab_c u_{ab} + 4 \eta \hot (\etab \c u)-4 \etab \hot (\eta \c u) -4 \dual \rho \dual u_{ab} \\
   &+\err_{43ab}[u],\\
\err_{43ab}[u]&= 2 \big( \xib_{a}  \xi_c- \xi_{a}  \xib_c )u^c\,_{b}+2 \big( \xib_{b}  \xi_c- \xi_{b}  \xib_c )u_{a} \,^c.
   \end{split}
\eea
       \end{enumerate}
 \end{lemma}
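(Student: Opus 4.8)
The plan is to obtain all three families of formulas by specializing the general commutator Lemma \ref{lemma:comm-gen} to $k=0,1,2$ and then unwinding the horizontal algebra. The only inputs beyond that lemma are the decomposition \eqref{decompU} of a horizontal $2$-tensor into its shear, trace and anti-trace parts (which, as recorded in the Remark following Lemma \ref{lemma:comm-gen}, allows $\chi,\chib$ to be replaced by $\frac12(\trch\de+\atrch\in)$, resp.\ $\frac12(\trchb\de+\atrchb\in)$, in all main terms while relegating the $\chih,\chibh$-contributions to the error), the duality conventions of Definition \ref{definition-hodge-duals}, and --- for $\SS_2$ --- the product identities of Lemmas \ref{le:sym-product}, \ref{le:traces}, \ref{lemma:usefulidentitiesforcomplexification} and \ref{dot-hot}.

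First I would dispose of the $\SS_0$ case. With $k=0$ the index sums in Lemma \ref{lemma:comm-gen} are empty and the curvature term $\in_{a_ic}\dual\bb_b$ drops out, so the only main term is $-\chib_{ac}\nab_cf$; inserting $\chib_{ac}=\chibh_{ac}+\frac12\de_{ac}\trchb+\frac12\in_{ac}\atrchb$ and recognising $\in_{ac}\nab_cf=(\dual\nab f)_a$ gives the stated $[\nab_3,\nab_a]f$. The $[\nab_4,\nab_a]f$ formula is the mirror image under the interchange of the two null directions, $(L,\Lb)$, $(\chi,\chib)$, $(\om,\omb)$, $(\eta,\etab)$, $\ze\mapsto-\ze$, $\bb\mapsto\b$, $\nab_3\leftrightarrow\nab_4$, while $[\nab_4,\nab_3]f$ is part (3) of Lemma \ref{lemma:comm-gen} read with no tensor indices.

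For $u\in\SS_1$ ($k=1$) and $u\in\SS_2$ ($k=2$) I would start from the already-simplified form of $[\nab_3,\nab_b]U_A$ displayed just after the Remark following Lemma \ref{lemma:comm-gen} --- in which $\chi,\chib$ have been split and the $\chih,\chibh$-terms relegated to the error --- and then proceed in three passes. In the first pass I rewrite every $\in$ acting on a tensor slot of $u$ as a Hodge dual: $\in_{ac}u_c=\dual u_a$ for $1$-forms, and the corresponding identity from Lemma \ref{lemma:usefulidentitiesforcomplexification} for $\SS_2$ (using also $\dual u=-u^{*}$ on symmetric traceless $u$), so that all such terms become $\dual u$. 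In the second pass I collect, for $\SS_2$, the terms $\eta_{a_i}u_{\cdots b\cdots}$ together with $\de_{a_ib}\eta_cu_{\cdots c\cdots}$ and recognise them, via the definitions of $\c$ and $\hot$, as the symmetric traceless combinations written in the statement; for the $[\nab_4,\nab_3]$ equation this is the only work needed, since a short index computation gives $2\sum_{i}(\eta_{a_i}\etab_b-\etab_{a_i}\eta_b)u_{\cdots b\cdots}=4\,\eta\hot(\etab\c u)-4\,\etab\hot(\eta\c u)$ and $-2\dual\rho\sum_{i}\in_{a_ib}u_{\cdots b\cdots}=-4\dual\rho\,\dual u$, and the error $\err_{43A}$ of part (3) of Lemma \ref{lemma:comm-gen} is already in the required form. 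In the third pass I gather the surviving terms carrying $\chih$, $\chibh$, $\xi$, $\xib$, or the curvature $1$-form $\dual\bb$ (resp.\ $\dual\b$) into $\err_{3ab}[u]$, $\err_{4ab}[u]$, $\err_{43a}[u]$ (and their three-index analogues), using Lemma \ref{le:sym-product} to reduce shear--shear products. As in the $\SS_0$ case the $[\nab_4,\nab_a]$ formulas follow from the $[\nab_3,\nab_a]$ ones by the interchange of the two null directions.

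The computation is otherwise mechanical; the one place where care is genuinely required is the $\SS_2$ case, where one must track the duals $\dual u_{ab}$ (which stay symmetric traceless by Lemma \ref{lemma:usefulidentitiesforcomplexification}), keep the numerous shear- and $\xi$-type error terms organised, and check that no spurious trace part survives after applying Lemma \ref{le:sym-product} --- this last point being exactly the content of Remark \ref{rmk:tracelesspartofproducttracelessis0}. That bookkeeping, rather than any conceptual difficulty, is the main obstacle.
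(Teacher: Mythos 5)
Your proposal is correct and follows exactly the route the paper takes (the paper in fact gives no separate proof of Lemma \ref{lemma:comm}, relying precisely on the specialization of Lemma \ref{lemma:comm-gen} to $k=0,1,2$ together with the Remark on substituting $\chi\to\frac12(\trch\,\de+\atrch\in)$, $\chib\to\frac12(\trchb\,\de+\atrchb\in)$ and relegating the $\chih,\chibh,\xi,\xib$ and curvature contributions to the error). Your index checks — in particular $\sum_i(\eta_{a_i}\etab_b-\etab_{a_i}\eta_b)u\mapsto 2\,\eta\hot(\etab\c u)-2\,\etab\hot(\eta\c u)$ and $\in_{a_ic}u\mapsto\dual u$ — are the right ones and all verify.
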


   We deduce the following commutation formulas.
      \begin{corollary}\label{corr:comm} The following commutation formulas hold true.
      \begin{enumerate}
      \item    Given  $u\in \SS_1$ we have,
    \bea
    \bsplit
  \,    [\nab_3,\div] u&=-\frac 1 2 \trchb \,\big( \div u-\eta\c u\big) +
  \frac 1 2 \atrchb\,\big(  \div\dual u  -\eta \c \dual u\big)      +(\eta-\ze)\c\nab_3 u  
+\err_{3\div}[u],\\
\err_{3\div}[u]&=-\dual \bb \c \dual u +\xib \c \nab_4 u -\xib \c \chih \c  u-\chibh \c \nab u-\eta \c \chibh \c u,
    \\
\,   [\nab_4,\div] u&=-\frac 1 2 \trch \, \big(\div u-\etab\c u\big) +\frac 1 2 \atrch\, \big(\div\dual u -   \etab\c \dual u\big) 
+(\etab+\ze)\c\nab_4 u    +\err_{4\div}[u], \\
\err_{4\div}[u]&=\dual \b \c \dual u +\xi \c \nab_3 u -\xi\c \chibh \c  u-\chih \c \nab u-\etab \c \chih \c u.
    \end{split}
    \eea
 Also, 
\bea\label{last-statement-item1}
\bsplit
\, [\nab_3, \nab \hot] u&=- \frac 1 2 \trchb \left( \nab\hot u +\eta \hot u\right)- \frac 1 2 \atrchb\, \dual  \left( \nab \hot  u+\etab \hot  u\right)+ (\eta-\ze) \hot \nab_3 u+\err_{3\hot}[u],\\
\err_{3\hot}[u] &=-\dual \bb \hot \dual u+\xib \hot \nab_4 u-\xib \hot ( \chi \c u)+\chih \,(\xib\c u)-\chibh \c \nab u-\eta \hot (\chibh \c u)+\chibh (\eta\c u),\\
\\
\, [\nab_4, \nab \hot] u&=- \frac 1 2 \trch \left( \nab\hot u +\etab \hot u\right)- \frac 1 2 \atrch\,  \dual  \left(\nab \hot  u+\etab \hot  u\right)+ (\etab+\ze) \hot \nab_4 u+\err_{4\hot}[u], \\
\err_{4\hot}[u] &=\dual \b \hot \dual u+\xi \hot \nab_3 u-\xi \hot ( \chib \c u)+\chibh \,(\xi\c u)-\chih \c \nab u-\etab \hot (\chih \c u)+\chih (\etab\c u).
\end{split}
\eea
\item Given $u\in \SS_2$ we have
\bea
\bsplit
\,[\nab_3,  \div] u &= -\frac 1 2 \trchb \big(  \div u -  2 \eta \c u\big) +\frac 1 2 \atrchb\big(  \div\dual u -2 \eta\c \dual u\big) +(\eta-\ze)\c\nab_3 u +\err_{3\div}[u],\\
 \err_{3\div}[u] &=-2\dual \bb \c \dual u +\xib\c \nab_4 u -\xib\c  \chi\c  u -(\chi\c u)\xib +\xib\c u\c\chi -\chibh\c \nab u\\
 & -\eta\c \chibh \c u-(\chibh\c u)\eta+\eta\c u\c\chibh, \\
\,[\nab_4,  \div] u &=-\frac 1 2 \trch \big(  \div u - 2\etab \c u\big) +\frac 1 2 \atrch\big(  \div\dual u -2 \etab\c \dual u\big) +(\etab+\ze)\c\nab_4 u +\err_{4\div}[u],\\
\err_{4\div}[u] &=2\dual \b \c \dual u +\xi\c \nab_3 u -\xi\c  \chib\c  u -(\chib\c u)\xi +\xi\c u\c\chib -\chih\c \nab u\\
& -\etab\c \chih \c u-(\chih\c u)\etab+\etab\c u\c\chih.
\end{split}
\eea
\end{enumerate} 
\end{corollary}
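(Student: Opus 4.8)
The plan is to obtain every identity in the corollary as a purely algebraic consequence of the pointwise commutation formulas in Lemma \ref{lemma:comm}, by applying to them the two natural projections on horizontal $2$-tensors: the $\ga$-trace $\de^{ab}(\cdot)$ and the symmetric traceless part $(\cdot)\hot$. Since $\nab_3$ and $\nab_4$ annihilate $\ga$ (the content of the proposition $\nab\ga=\nab_L\ga=\nab_\Lb\ga=0$) and also the horizontal volume form $\in$ — a $\ga$-metric connection on the oriented rank-$2$ horizontal bundle preserves the area form; equivalently, in an orthonormal horizontal frame parallel-transported along $e_3$ both $\de_{ab}$ and $\in_{ab}$ are constant — these contractions and projections commute with $\nab_3$ and $\nab_4$. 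Hence, for $u\in\SS_1$,
\[
[\nab_3,\div]u \;=\; \de^{ab}\,[\nab_3,\nab_a]u_b ,
\]
and $[\nab_3,\nab\hot]u$ is the symmetric traceless part (in the two free indices) of $[\nab_3,\nab_a]u_b$; the same holds with $\nab_3$ replaced by $\nab_4$, and the $\SS_2$ divergence formulas arise by contracting two of the three free indices of the $\SS_2$-commutator of Lemma \ref{lemma:comm}.

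For the divergence identities I would first contract the first display of item (2) of Lemma \ref{lemma:comm} with $\de^{ab}$. The main terms collapse at once: $\de^{ab}(\nab_a u_b+\eta_b u_a-\de_{ab}\eta\c u)=\div u-\eta\c u$, and, rewriting $\de^{ab}\dual\nab_a u_b$ and $\de^{ab}\eta_b\dual u_a$ with the elementary dual identities of Lemma \ref{le:duals} and Lemma \ref{lemma:usefulidentitiesforcomplexification}, the $\atrchb$-block becomes $\tfrac12\atrchb(\div\dual u-\eta\c\dual u)$; the term $(\eta-\ze)_a\nab_3u_b$ contracts to $(\eta-\ze)\c\nab_3u$. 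Contracting $\err_{3ab}[u]$ with $\de^{ab}$ and writing the resulting contractions $\de^{ab}\dual\bb_a\dual u_b$, $\de^{ab}\xib_b\chi_{ac}u_c$, $\de^{ab}\chibh_{ac}\nab_c u_b$, etc., in the notation $\dual\bb\c\dual u$, $\xib\c\chih\c u$, $\chibh\c\nab u$ reproduces $\err_{3\div}[u]$ exactly — here one must keep the ordering in $\xib\c\chih\c u$ since $\chi$ is no longer symmetric. The $\nab_4$ version follows verbatim by the $3\leftrightarrow4$ symmetry of Lemma \ref{lemma:comm}, and for $u\in\SS_2$ the same contraction on two indices produces the factor $2$ in front of $\eta\c u$ and $\etab\c u$ in the main terms, while $\err_{3abc}[u]$ contracts to the stated $\err_{3\div}[u]$ with its three inequivalent products $\xib\c\chi\c u$, $(\chi\c u)\xib$, $\xib\c u\c\chi$.

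For the $\nab\hot$ identities I would instead take the symmetric traceless part of item (2) of Lemma \ref{lemma:comm}. The main terms give $-\tfrac12\trchb(\nab\hot u+\eta\hot u)$, and, using $\widehat{\dual(\cdot)}=\dual\widehat{(\cdot)}$ from Lemma \ref{le:duals} together with the $\hot$/dual identities of Lemma \ref{lemma:usefulidentitiesforcomplexification} (in particular $\dual(\xi\hot\eta)=\dual\xi\hot\eta=\xi\hot\dual\eta$), $-\tfrac12\atrchb\,\dual(\nab\hot u+\etab\hot u)$; the term $(\eta-\ze)_a\nab_3u_b$ projects to $(\eta-\ze)\hot\nab_3u$. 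In the error part, the pair $-\xib_b\chi_{ac}u_c$ and $\chi_{ab}\,\xib\c u$ combines, after symmetrization, into $-\xib\hot(\chi\c u)+\chih(\xib\c u)$: here Lemma \ref{dot-hot}, i.e. $\xi\hot(\eta\c u)+\eta\hot(\xi\c u)=(\xi\c\eta)u$, is used to isolate the $\chih$ piece from the scalar-$\trch$ piece, and likewise $-\eta_b\chibh_{ac}u_c$ and $\chibh_{ab}\eta\c u$ yield $-\eta\hot(\chibh\c u)+\chibh(\eta\c u)$; the remaining terms $\dual\bb_a\dual u_b$ and $\xib_a\nab_4u_b$ symmetrize trivially to $-\dual\bb\hot\dual u$ and $\xib\hot\nab_4 u$. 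Collecting gives $\err_{3\hot}[u]$, and the $\nab_4$ case follows by symmetry.

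The only real difficulty is bookkeeping rather than structure: each step where \cite{Ch-Kl} uses the symmetry of $\chi,\chib$ to identify $\chi\c u$ with $u\c\chi$ must now be carried out separately, which is exactly why the error terms retain the distinct products $\xib\c\chih\c u$, $(\chibh\c u)\eta$, $\eta\c u\c\chibh$ in the $\SS_2$ case, and why $\atrch,\atrchb$ and the corresponding $\dual$-terms appear in the main terms. A useful internal check is that setting $\atrch=\atrchb=0$ and symmetrizing $\chi,\chib$ recovers, term by term, the commutation identities of \cite{Ch-Kl}; no new curvature contributions are generated, because the curvature entering Lemma \ref{lemma:comm} has already been reduced there to $\dual\bb,\dual\b,\dual\rho$, which pass unchanged through the $\de^{ab}$- and $\hot$-projections.
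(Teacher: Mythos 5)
Your proposal is correct and follows essentially the same route as the paper: the paper likewise obtains the divergence identities by contracting the index-level commutators of Lemma \ref{lemma:comm} with $\de^{ab}$, and the $\nab\hot$ identities by taking the symmetric traceless part, writing $2[\nab_4,\nab\hot]u_{ab}=[\nab_4,\nab_a]u_b+[\nab_4,\nab_b]u_a-\de_{ab}[\nab_4,\div]u$ and identifying the resulting $\atrch$-block as $2\dual(\nab\hot u+\eta\hot u)$ via $\dual\xi\hot\eta=\xi\hot\dual\eta=\dual(\xi\hot\eta)$. Your extra justification that the projections commute with $\nab_3,\nab_4$ (since the connection preserves $\ga$ and $\in$) is implicit in the paper, and the appeal to Lemma \ref{dot-hot} in the error-term bookkeeping is unnecessary but harmless, since the hat projection of $-\xib_b(\chi\c u)_a+\chi_{ab}\,\xib\c u$ is directly $-\xib\hot(\chi\c u)+\chih\,(\xib\c u)$.
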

     
     \begin{proof}
     We check \eqref{last-statement-item1}. From \eqref{commutator-4-a-u-b} we have 
 \beaa
2 [\nab_4, \nab \hot] u_{ab}&=&  [\nab_4,\nab_a] u_b   + \,  [\nab_4,\nab_b] u_a - \de_{ab} [\nab_4, \div] u   \\
 &=&-   \trch \left( \nab\hot u +\etab \hot u\right) +(\etab +\ze)_a \nab_4 u_b-\frac 1 2  \atrch  H_{ab}\\
 &&+\err_{4ab}[u] +\err_{4ba}[u]-\de_{ab}\err_{4\div}[u]
 \eeaa
  where
 \beaa
 H_{ab}:&=&\big( \dual \nab_a u_b+\eta_b \dual u_a-\in_{ab} \eta\c u\big)+\big( \dual \nab_b u_a+\eta_b \dual u_a-\in_{ba} \eta\c u\big)- \de_{ab}\big( \dual\nab \c u +\eta\c \dual u \big)\\
 &=& 2 (\dual \nab \hot u)_{ab}+ 2 ( \eta\hot\dual  u)_{ab}.
 \eeaa
 Recalling that $\dual\xi \hot  \eta = \xi\hot\dual\eta=\dual\big(\xi \hot  \eta\big)$ we infer that
 $  H= 2 \dual \big(\nab \hot u+ \eta\hot  u\big)$.
  This proves the desired result.

 We check the last statement in item 2. From \eqref{commutator-3-a-u-bc}
 \beaa
 \,  [\nab_4,\nab_a] u_{bc}    &=&-\frac 1 2 \trch\, (\nab_a u_{bc}+\etab_bu_{ac}+\etab_c u_{ab}-\de_{a b}(\etab \c u)_c-\de_{a c}(\etab \c u)_b )\\
&&-\frac 1 2 \atrch\, (\dual \nab_a u_{bc} +\etab_b\dual u_{ac}+\etab_c\dual u_{ab}- \in_{a b}(\etab \c u)_c- \in_{a c}(\etab \c u)_b )\\
&&+(\etab_a+\ze_a)\nab_4 u_{bc}
 \eeaa
 we deduce, recalling that $\delta_{ab} u_{ab}=0$,  
 \beaa
 [\nab_4,  \div] u_c &=& \delta_{ab} [\nab_4, \nab_a] u_{bc}\\
 &=&-\frac 1 2 \trch\, (\div u_{c}+(\etab \c u)_{c}-2(\etab \c u)_c-(\etab \c u)_c )\\
&&-\frac 1 2 \atrch\, (-\div \dual u_{c} +(\etab \c\dual u)_{c}+ \dual(\etab \c u)_c )+((\etab+\ze) \c\nab_4 u)_{c}
 \eeaa
 which proves the desired result.
   \end{proof}


\subsection{Null structure and Bianchi   using conformally invariant derivatives}


Consider  frame transformations of the form
\beaa
e_3'=\la^{-1} e_3, \qquad  e'_4 = \la e_4 , \qquad e_a'= e_a.
\eeaa
Note that  under   the   above  mentioned  frame transformation we have
\beaa
 \trchb'&=&\la^{-1} \trchb, \quad \atrchb'=\la^{-1} \atrchb,  \quad   \trch'=\la\trch, \quad \atrch'=\la \atrch, \\
  \xi'&=& \la \xi, \quad   \eta'=\eta, \quad \etab'=\etab,  \quad \xib'=\la^{-1}\xib,\\
   \a'&=&\la^2 \a,\quad \b'=\la \b,    \quad \rho'=\rho,    \quad \rhod'=\rhod,  \quad \bb'=\la^{-1} \bb,\quad  \aa'=\la^{-2} \aa,
   \eeaa
   and
   \beaa
 \omb'&=& \la^{-1}\left(\omb +\frac{1}{2} e_3(\log \la)\right), \quad \om'= \la\left(\om -\frac{1}{2} e_4(\log \la)\right), \quad
 \ze'= \ze - \nab (\log \la).
\eeaa

\begin{remark}
If $f$ verifies $f'= \lambda^s f$, then  $\nab_3 f, \nab_4 f, \nab_a f$ are not conformal invariant.
\end{remark} 

 We correct the lacking of being conformal invariant by making the following  definition.  

\begin{lemma}
If $f$ verifies $f'= \lambda^s f$, then 
 \begin{enumerate}
 \item $\nabc_3 f:= \nab_3f-2 s \omb f$ is $(s-1)$-conformally invariant.
 \item $\nabc_4 f:= \nab_4f+2 s \om f$ is $(s+1)$-conformally invariant.
 \item $\nabc_A f:= \nab_Af+ s \ze_A f$ is $s$-conformally invariant. 
  \end{enumerate}
\end{lemma}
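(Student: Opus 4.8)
The plan is to reduce each assertion to a transformation rule for the bare derivative of $f$ under the conformal frame change, and then observe that the correction terms $-2s\,\omb$, $+2s\,\om$, $+s\,\ze$ are designed precisely to kill the anomalous term that the rescaling introduces. Throughout, $f$ denotes an arbitrary horizontal tensor (in particular a scalar) with $f'=\la^s f$, and primes denote quantities computed with respect to the rescaled frame $e_3'=\la^{-1}e_3$, $e_4'=\la e_4$, $e_a'=e_a$.

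First I would record that the horizontal structure itself is conformally invariant: since $\Pi^{\mu\nu}=\g^{\mu\nu}+\tfrac12(\Lb^\mu L^\nu+L^\mu\Lb^\nu)$ and $\la^{-1}e_3\otimes\la e_4=e_3\otimes e_4$, the projector $\Pi$ — equivalently the operation $X\mapsto {}^{(h)}X$ — does not change, so $\O(\MM)$, $\ga$, and the connection $\nab_X Y={}^{(h)}(\D_X Y)$ on horizontal $X,Y$ are literally the same in the two frames. Using $C^\infty$-linearity of $\D$ in its direction slot this gives $\nab'_{e_3'}X={}^{(h)}(\D_{\la^{-1}e_3}X)=\la^{-1}\nab_3 X$, $\nab'_{e_4'}X=\la\,\nab_4 X$, and $\nab'_{e_a'}X=\nab_a X$ on horizontal $X$. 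Feeding these into the Leibniz definition of $\nab_3,\nab_4,\nab_a$ on a horizontal tensor, and letting the derivative fall on the scalar factor $\la^s$, I obtain
\[
\nab_3' f'=\la^{s-1}\big(\nab_3 f+s\, e_3(\log\la)\, f\big),\qquad
\nab_4' f'=\la^{s+1}\big(\nab_4 f+s\, e_4(\log\la)\, f\big),\qquad
\nab_a' f'=\la^{s}\big(\nab_a f+s\, \nab_a(\log\la)\, f\big),
\]
the extra logarithmic terms being the only obstruction to conformal invariance.

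Next I would substitute the transformation laws already recorded above, namely $\omb'=\la^{-1}\big(\omb+\tfrac12 e_3(\log\la)\big)$, $\om'=\la\big(\om-\tfrac12 e_4(\log\la)\big)$, and $\ze_a'=\ze_a-\nab_a(\log\la)$, into $\nabc_3' f'=\nab_3' f'-2s\,\omb'\, f'$, $\nabc_4' f'=\nab_4' f'+2s\,\om'\, f'$, $\nabc_a' f'=\nab_a' f'+s\,\ze_a'\, f'$. In each identity the term $+s\,e_3(\log\la)f$ (respectively $+s\,e_4(\log\la)f$, $+s\,\nab_a(\log\la)f$) coming from the derivative is exactly cancelled by the $-s\,e_3(\log\la)f$ (respectively its analogues) coming from the correction term, leaving $\nabc_3' f'=\la^{s-1}\nabc_3 f$, $\nabc_4' f'=\la^{s+1}\nabc_4 f$, $\nabc_a' f'=\la^{s}\nabc_a f$, which are precisely the three claims.

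The only genuinely delicate point is the bookkeeping in the first step: one must verify carefully that the horizontal projector, and hence $\nab$ restricted to horizontal vectors, really is frame-independent, and that under the rescaling the sole change in $\nab_3$ (resp. $\nab_4$, $\nab_a$) is the overall factor $\la^{-1}$ (resp. $\la$, $1$) from the direction together with the term produced by differentiating the weight $\la^s$. Once that is pinned down, the rest is the same elementary cancellation as in the corresponding computation in \cite{KS}.
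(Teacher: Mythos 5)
Your proof is correct and is exactly the computation the paper has in mind — the paper simply records it as ``Immediate verification,'' whereas you have written out the cancellation explicitly, including the (correct) observation that the horizontal projector and hence $\nab$ on horizontal arguments are unchanged under the rescaling. Nothing further is needed.
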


\begin{proof}
Immediate verification.
\end{proof}

\begin{remark} 
Note that $s$ is precisely what   in \cite{Ch-Kl} is called       the  signature of the tensor.
\end{remark}

Using these definitions we rewrite the main equations as follows
\begin{proposition}
\label{prop-nullstr-conformal}
\beaa
\nabc_3\trchb&=&-|\chibh|^2-\frac 1 2 \big( \trchb^2-\atrchb^2\big)+2\divc\xib +  2 \xib\c(\eta+\etab),\\
\nabc_3\atrchb&=&-\trchb\atrchb +2\curlc \xib + 2 \xib\wedge(-\eta+\etab),\\
\nabc_3\chibh&=&-\trchb\,  \chibh+2  \nab\hot \xib+   2  \xib\hot(\eta+\etab)-\aa,
\eeaa
\beaa
\nabc_3\trch
&=& -\chibh\c\chih -\frac 1 2 \trchb\trch+\frac 1 2 \atrchb\atrch    +   2   \divc\eta+ 2 \big(\xi\c \xib +|\eta|^2\big)+ 2\rho,\\
\nabc_3\atrch
&=&-\chibh\wedge\chih-\frac 1 2(\atrchb \trch+\trchb\atrch)+ 2 \curlc \eta  + 2 \xib\wedge\xi  -  2 \dual \rho,\\
\nabc_3\chih
&=&-\frac 1 2 \big( \trch \chibh+\trchb \chih\big)-\frac 1 2 \big(-\dual \chibh \, \atrch+\dual \chih\,\atrchb\big)
+2  \nabc\hot \eta +2 \xib\hot\xi +2\eta\hot\eta,
\eeaa
\beaa
\nabc_4\trchb
&=& -\chih\c\chibh -\frac 1 2 \trch\trchb+\frac 1 2 \atrch\atrchb    +  2   \divc \etab+ 2\big( \xi\c \xib +|\etab|^2\big)+2\rho,\\
\nabc_4\atrchb
&=&-\chih\wedge\chibh-\frac 1 2(\atrch \trchb+\trch\atrchb)+ 2 \curlc\etab  + 2 \xi\wedge\xib+2 \dual \rho,\\
\nabc_4\chibh
&=&-\frac 1 2 \big( \trchb \chih+\trch \chibh\big)-\frac 1 2 \big(-\dual \chih \, \atrchb+\dual \chibh\,\atrch\big)
+2 \nabc\hot \etab +2 \xi\hot\xib +2 \etab\hot\etab,
\eeaa
\beaa
\nabc_4\trch&=&-|\chih|^2-\frac 1 2 \big( \trch^2-\atrch^2\big)+ 2 \divc\xi  + 2   \xi\c(\etab+\eta),\\
\nabc_4\atrch&=&-\trch\atrch + 2 \curlc\xi + 2 \xi\wedge(-\etab+\eta),\\
\nabc_4\chih&=&-\trch\,  \chih+ 2 \nabc\hot \xi+  2   \xi\hot(\etab+\eta)-\a,
\eeaa
\beaa
\nabc_3 \etab -\nabc_4\xib &=& -\chibh\c(\etab-\eta) -\frac{1}{2}\trchb(\etab-\eta)+\frac{1}{2}\atrchb(\dual\etab-\dual\eta) -4 \om \xib  +\bb, \\
\nabc_4 \eta    -    \nabc_3\xi &=& -\chih\c(\eta-\etab) -\frac{1}{2}\trch(\eta-\etab)+\frac{1}{2}\atrch(\dual\eta-\dual\etab)-4\omb \xi -\b.\\
\eeaa
Also,
\beaa
\divc\chih &=& \frac{1}{2}\nabc(\trch) -\frac{1}{2}\dual\nabc (\atrch) -\atrch\dual\eta-\atrchb\dual\xi -\b,\\
\divc\chibh  &=& \frac{1}{2}\nabc(\trchb) -\frac{1}{2}\dual\nabc(\atrchb) -\atrchb\dual\etab-\atrch\dual\xib +\bb.
\eeaa
\end{proposition}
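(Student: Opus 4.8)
The plan is to read Proposition~\ref{prop-nullstr-conformal} off Proposition~\ref{prop-nullstr} by a purely formal substitution: no new geometric identity is required, one only has to insert the definitions of the conformally invariant operators into the null structure equations already established. For a horizontal quantity $f$ of signature $s$ (so that $f'=\la^s f$ under the conformal change of frame) I would use
\[
\nab_3 f=\nabc_3 f+2s\,\omb\, f,\qquad \nab_4 f=\nabc_4 f-2s\,\om\, f,\qquad \nab_a f=\nabc_a f-s\,\ze_a f,
\]
together with the identities these induce on the Hodge-type operators, namely $\divc u=\div u+s\,\ze\c u$, $\curlc u=\curl u+s\,\ze\wedge u$ and $\nabc\hot u=\nab\hot u+s\,\ze\hot u$ for a $1$-form $u$ of signature $s$. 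The signatures needed are those recorded in the transformation list preceding the proposition: $s=-1$ for $\trchb,\atrchb,\chibh,\bb$; $s=+1$ for $\trch,\atrch,\chih,\b$; $s=0$ for $\eta,\etab,\rho,\rhod$; and $s=+2$ for $\xi$ and $\a$, while $s=-2$ for $\xib$ and $\aa$.

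Feeding these into each of the structure equations of Proposition~\ref{prop-nullstr} whose left-hand side is conformally covariant, two cancellations occur. First, the explicit terms $-2\omb\trchb$, $+2\omb\trch$, $+2\omb\chih$ and their $\om$-analogues are exactly what is produced when $\nab_3,\nab_4$ are turned into $\nabc_3,\nabc_4$ acting on the left-hand side, so they disappear. Second, the factors of $\ze$ grouped inside expressions such as $\xib\c(\eta+\etab-2\ze)$, $\xi\hot(\etab+\eta+2\ze)$, and the $\ze\c\chih$, $\frac12\trch\ze$, $\frac12\atrch\dual\ze$ appearing in the Codazzi relations, are precisely the corrections produced when $\div,\curl,\nab\hot,\nab$ are replaced by $\divc,\curlc,\nabc\hot,\nabc$ on the relevant signature-$(\pm2)$ or signature-$(\pm1)$ quantities; after the substitution the $\ze$ is absorbed and the clean right-hand sides in the statement remain. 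The $\nabc_3$-block and the $\nabc_4$-block are images of one another under $e_3\leftrightarrow e_4$, which interchanges $\om\leftrightarrow\omb$, $\xi\leftrightarrow\xib$, $\eta\leftrightarrow\etab$, $\a\leftrightarrow\aa$ and reverses the sign of $\ze$ and of the anti-traces, so only one of the two needs to be checked. Finally, the structure equations of Proposition~\ref{prop-nullstr} governing $\ze$, $\om$, $\omb$ or $\curl\ze$ themselves are intentionally not transcribed, since $\ze,\om,\omb$ are not conformal and admit no clean conformal form.

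There is no conceptual obstacle here; the whole content is careful bookkeeping, and the only places one can slip are the signature assignments and the signs in the three substitution rules. The point worth flagging is that $\xi$ and $\xib$ carry signature $\pm2$, not the $\pm1$ that a single ``heavy'' direction might suggest, and that $\om,\omb,\ze$ transform inhomogeneously, so pointwise terms carrying the special coefficients dictated by the signatures — $-2\omb\trchb$, $+2\omb\chih$, $\ze\c\chih$, and so on — genuinely have to be folded into the conformal operators rather than left as they stand. Once the signatures are fixed, it suffices to verify one representative in each family — say the equations for $\nabc_3\trchb$, $\nabc_3\chih$, $\nabc_3\chibh$, $\nabc_3\etab-\nabc_4\xib$, and $\divc\chih$ — and then to invoke the $e_3\leftrightarrow e_4$ symmetry for the remainder.
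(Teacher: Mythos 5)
Your proposal is correct and is precisely the derivation the paper intends: Proposition \ref{prop-nullstr-conformal} is stated without proof immediately after the definition of $\nabc_3,\nabc_4,\nabc_A$, and the content is exactly the bookkeeping substitution into Proposition \ref{prop-nullstr} that you describe. Your signature assignment $s=\pm2$ for $\xi,\xib$ — although it contradicts the paper's own displayed transformation law $\xi'=\la\,\xi$, $\xib'=\la^{-1}\xib$ — is the correct one (a direct computation with $e_4'=\la e_4$ gives $\xi'=\la^2\xi$), and it is the only choice for which the terms $2\xi\c(\pm 2\ze)$, $2\xi\hot(\pm2\ze)$ are fully absorbed into $\divc,\curlc,\nabc\hot$; you were right to flag this point, as the paper's list contains a typo there.
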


    \begin{proposition}\label{prop:bianchi-conformal} 
    We have,
    \beaa
    \nabc_3\a- 2   \nabc\hot \b&=&-\frac 1 2 \big(\trchb\a+\atrchb\dual \a)+
 2 \c  4\eta\hot \b - 3 (\rho\chih +\rhod\dual\chih),\\
\nabc_4\beta - \divc\a &=&-2(\trch\beta-\atrch \dual \b)  +\a\c  \etab + 3  (\xi\rho+\dual \xi\rhod),\\
     \nabc_3\b+\divc\varrho&=&-(\trchb \b+\atrchb \dual \b)+2\bb\c \chih+3 (\rho\eta+\rhod\dual \eta)+    \a\c\xib,\\
 \nabc_4 \rho-\divc \b&=&-\frac 3 2 (\trch \rho+\atrch \rhod)+2\etab \c\b-2\xi\c\bb-\frac 1 2 \chibh \c\a,\\
   \nabc_4 \rhod+\curlc \b&=&-\frac 3 2 (\trch \rhod-\atrch \rho)-2\etab\c\dual \b-2\xi\c\dual \bb+\frac 1 2 \chibh \c\dual \a, \\
     \nabc_3 \rho+\divc\bb&=&-\frac 3 2 (\trchb \rho -\atrchb \rhod) -2\eta \c\bb+2\xib\c\b-\frac 12\chih\c\aa,
 \\
   \nabc_3 \rhod+\curlc \bb&=&-\frac 3 2 (\trchb \rhod+\atrchb \rho)- 2\eta\c\dual \bb-2\xib\c\dual\b-\frac 1 2 \chih\c\dual \aa,\\
     \nabc_4\bb-\divc\varoc&=&-(\trch \bb+ \atrch \dual \bb)+2\b\c \chibh
    -3 (\rho\etab-\rhod\dual \etab)-    \aa\c\xi,\\
     \nabc_3\bb +\divc\aa &=&-2(\trchb\,\bb-\atrchb \dual \bb)   -\aa\c \eta - 3  (\xib\rho-\dual \xib \rhod),\\
     \nabc_4\aa+ 2  \nabc\hot \bb&=&-\frac 1 2 \big(\trch\aa-\atrch\dual \aa)
 -  2  \c 4\etab\hot \bb - 3  (\rho\chibh -\rhod\dual\chibh).
\eeaa
    \end{proposition}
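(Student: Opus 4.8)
The plan is to obtain these identities directly from the null Bianchi equations of Proposition~\ref{prop:bianchi} by substituting the definitions of the conformally invariant operators. Recall from the transformation laws listed above that the null curvature components have signatures $s(\a)=2$, $s(\b)=1$, $s(\rho)=s(\rhod)=0$, $s(\bb)=-1$, $s(\aa)=-2$, so that, by the definitions of the conformal derivatives $\nabc_3$, $\nabc_4$, $\nabc$, on each of them one has $\nab_3=\nabc_3+2s\omb$, $\nab_4=\nabc_4-2s\om$ and $\nab_A=\nabc_A-s\ze_A$; consequently, on a tensor of signature $s$, one has $\div=\divc-s\,\ze\c(\,\cdot\,)$, $\curl=\curlc-s\,\ze\wedge(\,\cdot\,)$ and $\nab\hot=\nabc\hot-s\,\ze\hot(\,\cdot\,)$, these last three identities being immediate from $\nabc_A=\nab_A+s\ze_A$ together with the definitions of $\div$, $\curl$, $\nab\hot$. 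I would insert these expressions into each of the ten equations of Proposition~\ref{prop:bianchi} and collect terms.

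The mechanism is that all the explicit $\om$, $\omb$ and $\ze$ terms then cancel. For instance, in the first equation, writing $\nab_3\a=\nabc_3\a+4\omb\a$ kills the $4\omb\a$ term, while $2\nab\hot\b=2\nabc\hot\b-2\ze\hot\b$ (using $s(\b)=1$) absorbs the $2\ze\hot\b$ part of $2(\ze+4\eta)\hot\b$, leaving exactly $2\cdot4\eta\hot\b$; in the second equation $\nab_4\b=\nabc_4\b-2\om\b$ and $\div\a=\divc\a-2\ze\c\a$ clear the $-2\om\b$ term and the $\ze$-part of $\a\c(2\ze+\etab)$; and so on through the remaining eight equations. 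Conceptually this cancellation is not accidental but forced: the left-hand sides of the stated equations are conformally invariant by construction, and every right-hand side term that does not involve $\om,\omb,\ze$ is conformally invariant as well (using $\eta'=\eta$, $\etab'=\etab$, $\rho'=\rho$, $\rhod'=\rhod$, $\xi'=\la\xi$, $\xib'=\la^{-1}\xib$, $\chih'=\la\chih$, $\chibh'=\la^{-1}\chibh$, $\trch'=\la\trch$, $\trchb'=\la^{-1}\trchb$, together with the signatures above), so no explicit $\om,\omb,\ze$ term can survive.

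The only point requiring genuine care is the bookkeeping for the Hodge-type operators. The $\ze$-correction attached to $\div$, $\curl$ and $\nab\hot$ depends on the signature of the operand: it is $+2\,\ze\c(\,\cdot\,)$ on $\a$ but $-2\,\ze\c(\,\cdot\,)$ on $\aa$, and it is absent on $\varrho$ and $\varoc$ since these are built from the signature-$0$ scalars $\rho,\rhod$ (so that $\divc\varrho=\div\varrho$ and $\divc\varoc=\div\varoc$). Moreover one must track the $\dual$-signs occurring in $\curl$ and in the duality identities of Lemma~\ref{lemma:usefulidentitiesforcomplexification}, so that, e.g., the correction $+\ze\c\dual\b$ produced in passing from $\curl\b$ to $\curlc\b$ cancels the $-\ze\c\dual\b$ term in the fifth equation. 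This (purely computational) matching is the main obstacle; beyond it, as in Proposition~\ref{prop:bianchi}, the derivation is formally identical to the one in \cite{Ch-Kl}, the only structural difference being the non-symmetry of $\chi,\chib$ carried through the anti-traces $\atrch,\atrchb$.
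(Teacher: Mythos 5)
Your proposal is correct and is precisely the (omitted) argument behind the proposition: each identity follows from the corresponding equation of Proposition~\ref{prop:bianchi} by substituting $\nab_3=\nabc_3+2s\omb$, $\nab_4=\nabc_4-2s\om$, $\nab_A=\nabc_A-s\ze_A$ with the signatures $s(\a)=2$, $s(\b)=1$, $s(\rho)=s(\rhod)=0$, $s(\bb)=-1$, $s(\aa)=-2$, after which all $\om$, $\omb$, $\ze$ terms cancel exactly as you describe (I checked the sign bookkeeping for the $\div$, $\curl$ and $\nab\hot$ corrections in all ten equations and it is consistent). Your conformal-invariance argument for why the cancellation is forced is a sound sanity check and matches the paper's viewpoint.
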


    
\section{Main equations in complex notations} 


In this section we introduce complex notations for the Ricci coefficients and the curvature components with the objective of simplifying the main equations. From the real scalars, 1-tensors and symmetric traceless 2-tensors already introduced, we define their complexified version which results in anti-self dual tensors. 


\subsection{Complex notations}


Recall Definition \ref{definition-SS-real} of the set of real horizontal $k$-tensors $\SS_k=\SS_k(\MM, \mathbb{R})$ on $\MM$. For instance, 
\begin{itemize}
\item $a \in \SS_0$ is a real scalar function on $\MM$, 
\item $f \in \SS_1$ is a real horizontal 1-tensor on $\MM$,
\item $u \in \SS_2$ is a real horizontal symmetric traceless 2-tensor on $\MM$.
\end{itemize}

By Definition \ref{definition-hodge-duals}, the duals of real horizontal tensors are real horizontal tensors of the same type, i.e. $\dual f \in \SS_1$ and $\dual u \in \SS_2$.

We define the complexified version of horizontal tensors on $\MM$.

\begin{definition} We denote by $\SS_k(\mathbb{C})=\SS_k(\MM, \mathbb{C})$ the set of complex anti-self dual $k$-tensors on $\MM$. More precisely, 
\begin{itemize}
\item $a+ i b \in \SS_0(\mathbb{C})$ is a complex scalar function on $\MM$ if $(a, b) \in \SS_0$,
\item $F= f+ i \dual f  \in \SS_1(\mathbb{C})$ is a complex anti-self dual 1-tensor on $\MM$ if $f \in \SS_1$,
\item $U=u + i \dual u \in \SS_2(\mathbb{C})$ is a complex anti-self dual symmetric traceless 2-tensor on $\MM$ if $u \in \SS_2$.
\end{itemize}
\end{definition}

Observe that $F\in \SS_1(\mathbb{C})$ and $ U\in \SS_2(\mathbb{C})$ are indeed anti-self dual tensors, i.e.:
\beaa
\dual F=-i F, \qquad \dual U=-i U. 
\eeaa
More precisely 
\beaa
U_{12}=U_{21}= i \dual U_{12}=i \in_{12} U_{22}=-i U_{11}, \qquad U_{11} =i U_{12}.
\eeaa

Recall that the derivatives $\nab_3$, $\nab_4$ and $\nab_a$ are real derivatives. We can use the dual operators to define the complexified version of the $\nab_a$ derivative, which allows to simplify the notations in the main equations.

\begin{definition}
We define the complexified version of the horizontal derivative as 
\beaa
\DD= \nab+ i \dual \nab, \qquad \DDov=\nab- i \dual \nab
\eeaa
More precisely, we have
\begin{itemize}
\item for $a+i b \in \SS_0(\mathbb{C})$, 
\beaa
\DD(a+ib) &:=& (\nabla+i\dual\nabla)(a+ib),\qquad \DDov(a+ib) := (\nabla-i\dual\nabla)(a+ib).
\eeaa
\item For $f+ i \dual f \in\SS_1(\mathbb{C}) $, 
\beaa
\DD\c(f+i\dual f) &:=& (\nabla+i\dual\nabla)\c(f+i\dual f)\, =0, \\
\DDov\c(f+i\dual f) &:=& (\nabla-i\dual\nabla)\c(f+i\dual f),\\
\DD\hot(f+i\dual f) &:=& (\nabla+i\dual\nabla)\hot(f+i\dual f).
\eeaa
\item For  $u+ i \dual u \in \SS_2(\mathbb{C}) $, 
\beaa
\DD (u+i\dual u) &:=& (\nabla+i\dual\nabla) (u+i\dual u)=0\\
\DDov (u+i\dual u) &:=& (\nabla- i\dual\nabla) (u+i\dual u).
\eeaa
\end{itemize}
\end{definition}

Note that 
\beaa
\dual \DD=-i\DD. 
\eeaa
 
\begin{remark}
For $F= f+i \dual f\in\SS_1(\mathbb{C})$ the operator $-\frac 1 2 \DD\hot $ is formally adjoint to  the operator $\DDov \c U$ applied to $U\in \SS_2(\mathbb{C})$.
For $ h=a+ib \in \SS_0(\mathbb{C})$ the operator $-\DD h $ is formally adjoint to the operator $\DDov \c F$ applied to $F\in\SS_1(\mathbb{C})$.  These notions makes sense  literally only  if  the horizontal structure is integrable.
\end{remark}

\begin{lemma}
The following holds .
\begin{itemize}
\item If $\xi, \eta\in \SS_1$
\beaa
\xi\c \eta+i\dual\xi\c \eta &=& \frac{1}{2}\Big((\xi+i\dual\xi)\c (\ov{\eta+i\dual \eta})\Big),\\
\xi\hot \eta+i\dual(\xi\hot \eta) &=& \frac{1}{2}\Big((\xi+i\dual \xi)\hot (\eta+i\dual \eta)\Big).
\eeaa

\item If $\eta\in \SS_1 $,    $u\in\SS_2$ 
\beaa
u\c \eta+i\dual u\c \eta &=& \frac{1}{2}(u+i\dual u)\c (\ov{\eta+i\dual \eta}),\\
u\c \eta+i\dual (u\c \eta) &=& \frac{1}{2}(u+i\dual u)\c (\ov{\eta+i\dual \eta}).
\eeaa

\item If $u, v\in \SS_2$ 
\beaa
u\c v+i\dual u\c v &=& \frac{1}{2}(u+i\dual u)\c (\ov{v+i\dual v}).
\eeaa

\item If $a, b \in \SS_0 $
\beaa
\nabla a-\dual\nabla b+i(\dual\nabla a+\nabla b) &=& \DD(a+ib).
\eeaa

\item If $\xi \in \SS_1 $
\beaa
\div \xi+i\curl \xi &=& \frac{1}{2}\ov{\DD}\c(\xi+i\dual \xi)\\
\nabla\hot \xi+i\dual(\nabla\hot \xi) &=& \frac{1}{2}\mathcal{D}\hot(\xi+i\dual \xi).
\eeaa

\item If $u\in \SS_2 $
\beaa
\div u+i\dual(\div u) &=& \frac{1}{2}\ov{\DD}\c(u+i\dual u).
\eeaa
\end{itemize}
\end{lemma}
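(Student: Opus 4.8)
The plan is to establish every identity by the same mechanism: expand the complexified product on the right-hand side into its four real pieces, and then collapse them using the elementary Hodge-duality relations already recorded in Lemma~\ref{lemma:usefulidentitiesforcomplexification} (namely $\dual\xi\c\eta=-\xi\c\dual\eta$, $\dual\xi\c\dual\eta=\xi\c\eta$, together with their $\hot$-, mixed $\SS_1$--$\SS_2$, and $\SS_2$--$\SS_2$ analogues), supplemented by $\dual(\dual\,\cdot\,)=-\,\text{id}$ on $\SS_1$ and on symmetric traceless $\SS_2$. Throughout one works in a positively-oriented orthonormal horizontal frame $(e_1,e_2)$ with $\in(e_1,e_2)=1$, so that all index computations reduce to the elementary two-dimensional ones.

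Concretely, for the first block one writes $(\xi+i\dual\xi)\c(\ov{\eta+i\dual\eta})=(\xi+i\dual\xi)\c(\eta-i\dual\eta)=\big(\xi\c\eta+\dual\xi\c\dual\eta\big)+i\big(\dual\xi\c\eta-\xi\c\dual\eta\big)$; by the two relations above the real part is $2\,\xi\c\eta$ and the imaginary part is $2i\,\dual\xi\c\eta$, which is the claim after dividing by $2$. The $\hot$-identity is word-for-word the same, now invoking $\dual\xi\hot\eta=\xi\hot\dual\eta$ and $\dual\xi\hot\dual\eta=-\xi\hot\eta$. The mixed identities with $u\in\SS_2$, $\eta\in\SS_1$ and the purely $\SS_2$ identity follow identically, using $\dual\xi\c U=-\xi\c\dual U$, $\dual\xi\c\dual U=\xi\c U$ and $\dual U\c V=-U\c\dual V$, $\dual U\c\dual V=U\c V$; the two displayed lines in the $\SS_2$--$\SS_1$ case coincide once one notes the pointwise identity $\dual(u\c\eta)=\dual u\c\eta$, immediate from $\in_{ac}(u\c\eta)_c=\in_{ac}u_{cb}\eta_b=(\dual u)_{ab}\eta_b$.

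The remaining identities are unwrappings of the definitions $\DD=\nab+i\dual\nab$, $\DDov=\nab-i\dual\nab$. For scalars, $\DD(a+ib)=\nab a-\dual\nab b+i(\dual\nab a+\nab b)$ on the nose. For the Hodge operators one first records $\nab\c\dual\xi=\curl\xi$, $\dual\nab\c\xi=-\curl\xi$, $\dual\nab\c\dual\xi=\div\xi$ (and, for $u\in\SS_2$, $\nab\c\dual u=\div\dual u=\dual(\div u)$, $\dual\nab\c u=-\dual(\div u)$, $\dual\nab\c\dual u=\div u$ — each a one-line index check using the $2$-dimensional contraction $\in_{ac}\in_{bd}=\de_{ab}\de_{cd}-\de_{ad}\de_{cb}$). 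Feeding these into $\DDov\c(\xi+i\dual\xi)$ yields $2\div\xi+2i\curl\xi$, into $\DD\hot(\xi+i\dual\xi)$ yields $2\nab\hot\xi+2i\dual(\nab\hot\xi)$ via the $\hot$-duality relations above, and similarly $\DDov\c(u+i\dual u)=2\div u+2i\dual(\div u)$; dividing by $2$ gives the three stated formulas.

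There is no substantive obstacle here: every step is a finite two-dimensional index manipulation, most of it already packaged in Lemmas~\ref{le:sym-product} and~\ref{lemma:usefulidentitiesforcomplexification}. The only thing that requires care is sign bookkeeping in the terms where $\dual$ is composed with $\hot$ or with $\nab$, and keeping the orientation convention $\in(e_1,e_2)=1$ fixed, which is exactly what makes $\dual(\dual\,\cdot\,)=-\,\text{id}$ and the formal-adjointness remarks hold without spurious signs.
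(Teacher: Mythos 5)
Your proposal is correct and follows essentially the same route as the paper: the algebraic identities are reduced to the duality relations of Lemma~\ref{lemma:usefulidentitiesforcomplexification}, and the operator identities to the elementary facts $\nab\c\dual\xi=\curl\xi$, $\dual\nab\c\xi=-\curl\xi$, $\dual\nab\c\dual\xi=\div\xi$, $\dual(\nab\hot\xi)=\nab\hot\dual\xi$, $\dual(\div u)=\nab\c\dual u$, verified by two-dimensional index computations in an oriented orthonormal frame. The only added (and harmless) touch is your explicit observation $\dual(u\c\eta)=\dual u\c\eta$ reconciling the two displayed lines of the $\SS_2$--$\SS_1$ case.
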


\begin{proof}
The first identities rely on Lemma \ref{lemma:usefulidentitiesforcomplexification}. The other rely on the following identities, for  $\xi\in \SS_1 $,  $u\in\SS_2 $.
\beaa
&&\nab\c\dual \xi=\curl \xi, \quad \dual\nab\c \xi=-\curl \xi, \quad \dual\nab\c\dual \xi=\nab \xi,\\
&& \nabla\hot \dual \xi=\dual\nabla\hot \xi=\dual(\nabla\hot \xi), \quad \dual\nabla\hot \dual \xi = -\nabla\hot \xi,\\
&& \dual(\div u)=\nab\c \dual u, \quad \dual\nab\c u=-\dual(\div u), \quad \dual\nab\c\dual u=\nab\c u.
\eeaa
 For example, we check that $\dual (\nab \hot \xi)= \nab \hot \dual \xi$. Let               $ C=\nab \hot \xi$ and $D=\nab \hot \dual \xi $. We have,
      \beaa
      C_{11}&=&\nab_1 \xi_1-\nab_2 \xi_2=-C_{22},\\
      C_{12}&=&\nab_1 \xi_2+\nab_2 \xi_1 =C_{21}.
      \eeaa
      Hence,
     \beaa
      (\dual C)_{11}&=& C_{21}=\nab_2 \xi_1+\nab_1 \xi_2,\\
      (\dual C)_{12}&=&C_{22}=-C_{11}=-(\nab_1 \xi_1-\nab_2 \xi_2 ),\\
       (\dual C)_{21}&=&-C_{11}=-(\nab_1 \xi_1-\nab_2 \xi_2 ),\\
       (\dual C)_{22}&=&- C_{12}=-(\nab_1 \xi_2+\nab_2 \xi_1).
      \eeaa
      On the other hand,
      \beaa
      D_{11}&=& \nab_1 \dual \xi_1-\nab_2 \dual \xi_2=\nab_1 \xi _2+\nab_2 \xi_1,\\
      D_{12}&=& \nab_1 \dual \xi_2+\nab_2 \dual \xi_1=-\nab_1  \xi_1+\nab_2 \xi_2. 
      \eeaa
      Hence,
      \beaa
     D_{11}  &=&(\dual C)_{11},\\
     D_{12}&=&(\dual C)_{12},
      \eeaa
      i.e. $\dual C= D$ as desired.
\end{proof}

\begin{lemma}\label{dot-hot-complex} 
Let  $E= \xi +i \dual \xi \in \SS_1(\mathbb{C})  $ and $F=\eta+i\dual \eta \in \SS_1(\mathbb{C}) $ and $U=u+i \dual u\in \SS_2(\mathbb{C}) $.  Then
\bea\label{simil-Leibniz}
E \hot ( \ov{F} \c U)+F \hot ( \ov{E} \c U)&=& ( E \c \ov{F}+ \ov{E} \c F) \ U.
\eea
\end{lemma}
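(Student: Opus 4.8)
The identity is the complexified counterpart of Lemma \ref{dot-hot}, so the plan is to reduce it to that real identity using the complexification dictionary collected in Lemma \ref{lemma:usefulidentitiesforcomplexification} together with the complexification identities established above. Write $E=\xi+i\dual\xi$, $F=\eta+i\dual\eta$, $U=u+i\dual u$ with $\xi,\eta\in\SS_1$ and $u\in\SS_2$, so that $\ov F=\eta-i\dual\eta$ and $\ov E=\xi-i\dual\xi$.

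The first step is to simplify the contraction $\ov F\c U$. Expanding $\ov F\c U=(\eta-i\dual\eta)\c(u+i\dual u)$ and using the relations $\dual\eta\c\dual u=\eta\c u$, $\dual\eta\c u=-\eta\c\dual u$ and $\eta\c\dual u=\dual(\eta\c u)$ from Lemma \ref{lemma:usefulidentitiesforcomplexification}, the real and imaginary parts each combine and one finds $\ov F\c U=2\big((\eta\c u)+i\dual(\eta\c u)\big)$, i.e.\ twice the anti-self-dual $1$-tensor attached to the real $1$-form $\eta\c u$; symmetrically $\ov E\c U=2\big((\xi\c u)+i\dual(\xi\c u)\big)$. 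Next, applying the complexified $\hot$-identity $\mu\hot\nu+i\dual(\mu\hot\nu)=\frac12(\mu+i\dual\mu)\hot(\nu+i\dual\nu)$ with the $1$-forms $\mu=\xi$, $\nu=\eta\c u$, one gets $E\hot(\ov F\c U)=4\big(\xi\hot(\eta\c u)+i\dual(\xi\hot(\eta\c u))\big)$, and likewise $F\hot(\ov E\c U)=4\big(\eta\hot(\xi\c u)+i\dual(\eta\hot(\xi\c u))\big)$. Adding these and invoking Lemma \ref{dot-hot}, namely $\xi\hot(\eta\c u)+\eta\hot(\xi\c u)=(\xi\c\eta)u$, the left-hand side collapses to $4\big((\xi\c\eta)u+i\dual((\xi\c\eta)u)\big)=4(\xi\c\eta)\,U$.

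It remains to evaluate the scalar factor on the right. Expanding $E\c\ov F=(\xi+i\dual\xi)\c(\eta-i\dual\eta)$ and $\ov E\c F=(\xi-i\dual\xi)\c(\eta+i\dual\eta)$ and using $\dual\xi\c\dual\eta=\xi\c\eta$ together with $\dual\xi\c\eta=-\xi\c\dual\eta$ (Lemma \ref{lemma:usefulidentitiesforcomplexification}), the imaginary parts cancel upon adding, so that $E\c\ov F+\ov E\c F=4\,\xi\c\eta$, and hence the right-hand side also equals $4(\xi\c\eta)\,U$, which concludes the proof. The only real work is bookkeeping: keeping straight the factor $\frac12$ in the definition of $\hot$ against the factors of $2$ generated by the complexification maps, and applying each Hodge-duality swap in the correct direction; there is no analytic or structural obstacle.
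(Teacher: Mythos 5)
Your proof is correct and follows essentially the same route as the paper: reduce $\ov F\c U$ and $\ov E\c U$ to twice the anti-self-dual $1$-tensors attached to $\eta\c u$ and $\xi\c u$, push the complexified $\hot$-identity through to pick up the factor $4$, invoke Lemma \ref{dot-hot}, and match this against $E\c\ov F+\ov E\c F=4\,\xi\c\eta$. The only cosmetic difference is that you re-derive the complexification of $\ov F\c U$ from the duality identities of Lemma \ref{lemma:usefulidentitiesforcomplexification}, whereas the paper cites the pre-established complexification formula directly; the computation is identical.
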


\begin{proof} 
Recall, see Lemma \ref{dot-hot}, 
\beaa
\xi \hot ( \eta  \c u) + \eta  \hot ( \xi \c u)= (\xi\c \eta  ) u. 
\eeaa
Now
\beaa
E \hot ( \ov{F} \c U)&=& (\xi+ i \dual \xi ) \hot \big( \ov{(\eta + i \dual \eta)} \c (u + i \dual u)\big) \\
&=& 2(\xi+ i \dual \xi ) \hot \big( (u \c \eta) + i \dual ( u \c \eta)\big) \\
&=& 4\Big (\xi \hot  (u \c \eta) + i \dual ( \xi \hot  (u \c \eta) ) \Big), 
\\
F \hot ( \ov{E} \c U)&=& 4 \Big(\eta \hot  (u \c \xi) + i \dual ( \eta \hot  (u \c \xi) ) \Big) .
\eeaa
Therefore
\beaa
E \hot ( \ov{F} \c U)+F \hot ( \ov{E} \c U)&=&4 (\xi \hot  (u \c \eta) + i \dual ( \xi \hot  (u \c \eta) ) ) +4 (\eta \hot  (u \c \xi) + i \dual ( \eta \hot  (u \c \xi) ) )\\
 &=&4 (\xi \hot  (u \c \eta) +\eta \hot  (u \c \xi))+ 4i \dual ( \xi \hot  (u \c \eta)  +  \eta \hot  (u \c \xi) )\\
  &=&4 ( (\xi \c \eta) \ u)+ 4i \dual (  (\xi \c \eta) \ u)\\
    &=&4 (\xi \c \eta) \ ( u+ i \dual  u)
\eeaa
while
\beaa
E \c \ov{F}+ \ov{E} \c F&=& 2(\xi \c \eta + i \dual \xi \c \eta)  + 2(\eta \c \xi + i \dual \eta \c \xi)=4(\xi \c \eta ). 
\eeaa
Hence,
\beaa
E \hot ( \ov{F} \c U)+F \hot ( \ov{E} \c U)&=&\big(E \c \ov{F}+ \ov{E} \c F\big)U
\eeaa
as stated.
\end{proof}


\subsection{Leibniz formulas}


We collect here Leibniz formulas involving the derivative operators defined above.

\begin{lemma} 
Let $h \in \SS_0(\mathbb{C})$, $F= f+i \dual f \in \SS_1(\mathbb{C}) $,  $U=u+i \dual u\in \SS_2(\mathbb{C})$.  Then
\bea
\bsplit
 \ov{\DD} \c (h F)&= h \ov{\DD} \c F+  \ov{\DD}(h) \c F \lab{ov-HH-hF},\\
 \DD\hot (h F)&=h  \DD\hot  F+ \DD( h) \hot F  \lab{DD-hot-hF},\\
\ov{\DD}\c( h U)&=  \ov{\DD}(h) \c U  + h(\ov{\DD} \c U) \lab{ov-DD-hu},
\end{split}
\eea
\bea\label{Leibniz-hot}
\bsplit
 \DD\hot(\ov{F}\c U)&=   (\DD\c \ov{F} ) U +  (\ov{F}\c \DD) U.
\end{split}
\eea
\end{lemma}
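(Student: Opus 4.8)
The plan is to verify the four Leibniz identities directly, by reducing each to the corresponding real-tensor statement via the anti-self-dual structure. The key observation is that for a complex scalar $h=a+ib$ and an anti-self-dual tensor like $F=f+i\dual f$, both $\nab$ and $\dual\nab$ act as real Leibniz derivations, so one only needs to track how the complex structure $i\leftrightarrow\dual$ interacts with the products $\c$ and $\hot$. The relevant compatibilities are exactly those collected in Lemma~\ref{lemma:usefulidentitiesforcomplexification} (e.g. $\dual\xi\c\eta=-\xi\c\dual\eta$, $\dual(\xi\hot\eta)=\dual\xi\hot\eta=\xi\hot\dual\eta$, $\dual(\xi\c U)=\xi\c\dual U$) together with the defining relations $\DD=\nab+i\dual\nab$, $\dual\DD=-i\DD$, and the vanishing $\DD\c F=0$, $\DD U=0$ for anti-self-dual arguments.

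First I would prove \eqref{ov-HH-hF}. Write $h=a+ib$ and expand $\DDov(hF)=(\nab-i\dual\nab)\c((a+ib)(f+i\dual f))$; the real Leibniz rule for $\nab$ and $\dual\nab$ applied componentwise gives $\nab\c(hF)$-type terms plus terms where the derivative hits $h$. Collecting the derivative-on-$F$ terms produces $h\,\DDov\c F$ on the nose, and the derivative-on-$h$ terms assemble into $\DDov(h)\c F$ after using $\dual\nab a\c f = -\nab a\c\dual f$ and the matching identities, which is precisely where the factor structure $a+i\dual$ collapses correctly. The identity \eqref{DD-hot-hF} is identical in spirit, replacing $\c$ by $\hot$ and using $\dual(\xi\hot\eta)=\dual\xi\hot\eta$; and \eqref{ov-DD-hu} is the same computation with $U\in\SS_2(\mathbb{C})$ in place of $F$, using $\dual(\xi\c U)=\xi\c\dual U$ from Lemma~\ref{lemma:usefulidentitiesforcomplexification}. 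These three are genuinely routine once the bookkeeping is set up, so I would present the first one in detail and state that the others follow mutatis mutandis.

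The last identity \eqref{Leibniz-hot}, $\DD\hot(\ov F\c U)=(\DD\c\ov F)U+(\ov F\c\DD)U$, is the one I expect to be the main obstacle, because here the conjugate $\ov F$ is \emph{self}-dual ($\dual\ov F=i\ov F$) while $U$ and $\DD$ are anti-self-dual, so the duality signs no longer cancel in the naive way and one cannot simply quote the scalar-coefficient Leibniz rule. I would handle it by working in an orthonormal frame $(e_1,e_2)$, using the component relations $U_{11}=iU_{12}$, $F_1=i\dual F_1$ etc. from the text, together with Lemma~\ref{dot-hot-complex} (the complexified version of Lemma~\ref{dot-hot}) to identify $\ov F\c U$ as an element of $\SS_1(\mathbb{C})$ and then apply the real product rule for $\nab\hot$ on $\SS_1$. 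Concretely: expand $\ov F\c U$ in components, apply $\DD\hot$ using $\DD\hot(\cdot)=(\nab+i\dual\nab)\hot(\cdot)$ and the real Leibniz rule, and then resum, repeatedly invoking $\nab\hot\dual\xi=\dual(\nab\hot\xi)$ and the fact that $\DD$ annihilates anti-self-dual $1$- and $2$-tensors to kill the spurious terms. The one subtlety to watch is the interchange of the free index on $\ov F\c U$ with the symmetrized-traceless structure of $\hot$; identifying the correct traceless projection is the place where a sign could go wrong, and I would double-check it against the two-component formulas for $\hot$ already used in the proof of Lemma~\ref{lemma:usefulidentitiesforcomplexification}.
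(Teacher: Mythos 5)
Your proposal is correct and follows essentially the same route as the paper: the first three identities are obtained by expanding in the real operators $\div$, $\curl$, $\nab\hot$, applying the ordinary product rule, and recombining via the duality compatibilities of Lemma \ref{lemma:usefulidentitiesforcomplexification}, exactly as in the text. For \eqref{Leibniz-hot} the paper likewise argues by a direct index computation exploiting anti-self-duality — the resummation you describe ultimately rests on the two identities $\DD_a \ov{F}_b -\DD_b \ov{F}_a = i\in_{ab}(\DD\c\ov{F})$ and $\DD_a U_{cb}+\DD_b U_{ca}=2\DD_c U_{ab}$, both of which fall out of the component check you propose.
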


\begin{proof} 
Note that
\beaa
\curl (hf)&=& \ep^{ab} \nab_a(h f)_b=\ep^{ab} \nab_a(h) f_b+\ep^{ab} h \nab_a(f)_b-\ep^{ba} \nab_a(h) f_b+ h \curl f\\
&=&-\dual \nab h \c f+ h \curl f,\\
\div(hu)&=& \nab^{a} (h u)_{ab}=\nab^{a} (h) u_{ab}+h \nab^a u_{ab}=\nab h \c u + h \div u,
\eeaa
and
 \beaa
 (\nab \hot (hf))_{AB}&=& \frac 1 2 \Big(\nab_A (hf)_B+\nab_B (hf)_A-\delta_{AB} (\div (hf))\Big)\\
 &=& \frac 1 2 (\nab_A (h)f_B+ h \nab_Af_B+\nab_B hf_A+h \nab_B f_A-\delta_{AB} (\nab h \c f +h\div (f))) \\
 &=&h (\nab \hot f)_{AB}+  (\nab h \hot f)_{AB}. 
 \eeaa

Now,
\beaa
\ov{\DD}\c( h F)&=& \ov{\DD}\c( h f + i \dual (h f ))= 2 \div (hf) + 2i \curl (hf)\\
&=& 2 (\nab h \c f + h \div f) + 2i (-\dual \nab h \c f+ h \curl f)\\
&=& h \ov{\DD} \c F+ 2 \nab h \c f  + 2i ( \nab h \c \dual f)\\
&=& h \ov{\DD} \c F+ F \c (\nab h - i \dual \nab h )\\
&=& h \ov{\DD} \c F+  \ov{\DD}(h) \c F. 
\eeaa

We have 
\beaa
\DD\hot (h F)&=& 2 \nab \hot (hf)+2i \dual (\nab \hot (hf))\\
 &=& 2 (h (\nab \hot f)+ (\nab h \hot f))+2i \dual (h (\nab \hot f)+ (\nab h \hot f)\\
 &=& 2 h (\nab \hot f)+2i h\dual  (\nab \hot f)+  2(\nab h \hot f)+ 2i \dual (\nab h \hot f)\\
  &=&  h \DD \hot F+  2(\nab h \hot f)+ 2i \dual (\nab h \hot f)\\
 &=&h  \DD\hot  F+ (\nab h+i \dual \nab h) \hot (f+i \dual f) \\
 &=&h  \DD\hot  F+ \DD( h) \hot F. 
 \eeaa

We have
\beaa
\ov{\DD}\c( h U)&=& \ov{\DD}\c( h u + i \dual (h u ))= 2 \div (hu) + 2i \dual (\div (hu))\\
&=&  2 (\nab h \c u + h \div u) + 2i \dual (\nab h \c u + h \div u)\\
&=&  2 \nab h \c u + 2i \dual (\nab h \c u) + h(\ov{\DD} \c U)\\
&=&   \nab h \c u +\dual \nab h \c \dual u + i \nab h \c \dual u- i \dual \nab h \c u + h(\ov{\DD} \c U)\\
&=&   \nab h \c u- i \dual \nab h \c u+ i (\nab h \c \dual u -i\dual \nab h \c \dual u ) + h(\ov{\DD} \c U)\\
&=&   (\nab h- i \dual \nab h )\c (u+ i\dual u)  + h(\ov{\DD} \c U)\\
&=&   \ov{\DD}(h) \c U  + h(\ov{\DD} \c U)
\eeaa 
as desired.

We write
\beaa
2 \DD\hot(\ov{F}\c U)_{ab}&=& \DD_a(\ov{F}\c U)_b+\DD_b(\ov{F}\c U)_a- \de_{ab}\DD^c (\ov{F}\c U)_c\\
&=& \DD_a ( \ov{F} ^c  U_{cb}  ) +\DD_b( \ov{F} ^c  U_{ca}  ) -\de_{ab} \DD^d ( \ov{F}^c U_{cd}) \\
&=& \DD_a  \ov{F} ^c  U_{cb}   +\DD_b  \ov{F} ^c  U_{ca}   -\de_{ab} \DD^d  \ov{F}^c U_{cd}\\
&+& \ov{F}^c\big(   \DD_a  U_{cb} +  \DD_b  U_{ca}- \de_{ab} \DD^d U_{cd}\big).
\eeaa
Now, in view of Lemma \ref{le:nonsym-product},
\beaa
 \DD_a  \ov{F} ^c  U_{cb}   +\DD_b  \ov{F} ^c  U_{ca} &=&\de_{ab}   (\DD^d  \ov{F} ^c) U_{cd}+(\DD\c \ov{F} ) U_{ab} 
 +\frac 1 2\Big(\big( \DD_a \ov{F} _c-\DD_c \ov{F}_a \big) U_{cb}+ \big( \DD_b \ov{F}_c -\DD_c \ov{F}_b \big) U_{ca}\Big).
\eeaa
Hence
\beaa
 \DD_a  \ov{F} ^c  U_{cb}   +\DD_b  \ov{F} ^c  U_{ca}   -\de_{ab} \DD^d  \ov{F}^c U_{cd}&=&(\DD\c \ov{F} ) U_{ab} +\frac 1 2\Big(\big( \DD_a \ov{F} _c-\DD_c \ov{F}_a \big) U_{cb}+ \big( \DD_b \ov{F}_c -\DD_c \ov{F}_b \big) U_{ca}\Big).
 \eeaa
 Recall that $\dual F=-i F, \,  \dual U=-i U, \,  \dual \DD=-i\DD $. We deduce,
 \bea
 \DD_a \ov{F}_b -\DD_b \ov{F} _a&=& i\in_{ab}( \DD\c \ov{F} ).
 \eea
To check note that 
 \beaa
   \DD_1 \ov{F}_2 -\DD_2 \ov{F} _1&=& 2\Big[(\nab_1 f_2 -\nab_2 f_1) + i (\nab_1 f_1+\nab_2 f_2)\Big],\\
   ( \DD\c \ov{F} )&=& 2 \Big[  (\nab_1 f_1+\nab_2 f_2)- i (\nab_1 f_2 -\nab_2 f_1)\Big].
 \eeaa
 We deduce,
 \beaa
 \DD_a  \ov{F} ^c  U_{cb}   +\DD_b  \ov{F} ^c  U_{ca}   -\de_{ab} \DD^d  \ov{F}^c U_{cd}&=&(\DD\c \ov{F} ) U_{ab}+\frac 1 2 i  (\DD\c \ov{F} )\big( \in_{ac} U_{cb}+\in_{bc} U_{ca}\big)\\
 &=& (\DD\c \ov{F} ) U_{ab} +\frac 1 2 i  (\DD\c \ov{F} )\big( - 2 i U_{ab}\big)\\
&=& 2  (\DD\c \ov{F} ) U_{ab}.
 \eeaa
 Therefore,
 \beaa
2  \DD\hot(\ov{F}\c U)_{ab}&=&  2  (\DD\c \ov{F} ) U_{ab}+\ov{F}^c\big(   \DD_a  U_{cb} +  \DD_b  U_{ca}- \de_{ab} \DD^d U_{cd}\big).
 \eeaa
 It remains to re-express the tensor 
 \beaa
   \DD_a  U_{cb} +  \DD_b  U_{ca}- \de_{ab} \DD^d U_{cd}.
   \eeaa
 Note also that  $ \DD^d U_{cd}=0$. We claim
 \beaa
  \DD_a  U_{cb} +  \DD_b  U_{ca}&=& 2 \DD_c U_{ab}.
 \eeaa
Indeed, for $a=b=1$, $c=2$,
 \beaa
 2\DD_1  U_{21} &=&2(\nab_1+i \dual \nab_1)  U_{21}=2(\nab_1+i  \nab_2)  U_{12}=-2i(\nab_1+i  \nab_2)  U_{11}=2( \nab_2-i\nab_1)  U_{11},\\
 2\DD_2 U_{11}&=& 2(\nab_2+i \dual \nab_2)  U_{11}=2(\nab_2-i \nab_1)  U_{11}.
 \eeaa
 For $a=c=1$, $b=2$,
 \beaa
 \DD_1  U_{12} +  \DD_2  U_{11}&=&(\nab_1+i\nab_2) U_{12} + (\nab_2-i\nab_1 )  U_{11}=(\nab_1+i\nab_2) U_{12} + i(\nab_2-i\nab_1 )  U_{12}\\
&=&2(\nab_1+i\nab_2) U_{12} =2\DD_1 U_{12}.
 \eeaa
 
 We deduce,
 \beaa
2 \DD\hot(\ov{F}\c U)_{ab}&=&  2  (\DD\c \ov{F} ) U_{ab}+ 2 \ov{F}^c \DD_c U_{ab}.
 \eeaa
This implies the lemma.
\end{proof}

\begin{lemma}\label{simplification-angular} 
Let $F= f+i \dual f \in \SS_1(\mathbb{C}) $,  $U=u+i \dual u\in \SS_2(\mathbb{C})$.  Then
\beaa
F \hot (\DDb \c U)&=& (F\c \DDb) U, \\
(F \c \DDb) U + (\ov{F} \c \DD) U&=& 4f \c \nab U, \\
 (F \c  \DDb) U &=&2 F  \c\nab U. 
 \eeaa
 As a consequence,
 \bea\label{relation0angular=der}
 2f \c \nab U&=&  \left(F +\ov{F} \right) \c\nab U.  
 \eea
\end{lemma}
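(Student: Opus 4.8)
The plan is to reduce the three identities to the two auxiliary ``directional--derivative'' formulas
\[
(F\c\DDb)\,U \;=\; 2\,F\c\nab U,
\qquad
(\ov F\c\DD)\,U \;=\; 2\,\ov F\c\nab U .
\]
I would establish these first. Expanding $\DDb=\nab-i\,\dual\nab$ and contracting with the horizontal vectorfield $F$ (via $\ga$), the dual term is $F^a(\dual\nab)_a U=F^a\in_{ab}\nab_b U=-(\dual F)^b\nab_b U$, and the anti-self-duality $\dual F=-i\,F$ turns this into $i\,F\c\nab U$; hence $(F\c\DDb)U=F\c\nab U-i\cdot i\,F\c\nab U=2\,F\c\nab U$. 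The identical computation, with $\DD=\nab+i\,\dual\nab$ and $\dual\ov F=i\,\ov F$, gives $(\ov F\c\DD)U=2\,\ov F\c\nab U$. No property of $U$ beyond being horizontal enters here.

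Given these, the second displayed identity is immediate: adding the two formulas and using $F+\ov F=2f$ gives $(F\c\DDb)U+(\ov F\c\DD)U=2(F+\ov F)\c\nab U=4\,f\c\nab U$, and the stated consequence $2\,f\c\nab U=(F+\ov F)\c\nab U$ is then once more $F+\ov F=2f$. For the first identity, by the first auxiliary formula it suffices to prove $F\hot(\DDb\c U)=2\,F\c\nab U$. Here I would use that $\DDb\c U$ equals $2\,\div U$, i.e. twice the complexification of $\div u$ --- this is exactly the Leibniz identity $\div u+i\,\dual(\div u)=\frac{1}{2}\DDb\c(u+i\,\dual u)$ recalled above --- so $\DDb\c U$ is an anti-self-dual $1$-form. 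It then remains to check $F\hot\div U=F\c\nab U$; both sides are anti-self-dual symmetric traceless $2$-tensors, hence determined by their $(11)$-entry, and a short computation in an orthonormal horizontal frame, using $F_2=-i\,F_1$ and the anti-self-duality of $U$ (so that the cross term $F\c\div U$ vanishes, a contraction of two anti-self-dual $1$-forms being zero), shows that this entry equals $F_1\big((\nab_1U)_{11}-i\,(\nab_2U)_{11}\big)$ on both sides. Combining with the first auxiliary formula yields $F\hot(\DDb\c U)=(F\c\DDb)U$.

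The only step with genuine content is the first identity. Unlike the $\hot$-identities already at our disposal (Lemmas \ref{dot-hot} and \ref{dot-hot-complex}), it is not a purely algebraic consequence of them, because its left-hand side sees $U$ only through the contracted $1$-form $\DDb\c U$ whereas its right-hand side still carries the full first derivative $\nab U$; reconciling the two forces one first to recognize $\DDb\c U=2\,\div U$ and then to exploit that the fibers of $\SS_1(\mathbb{C})$ and $\SS_2(\mathbb{C})$ are each one-complex-dimensional, so that the contracted and the uncontracted derivatives of an anti-self-dual $U$ are rigidly proportional. Everything else is routine bookkeeping with the complexified operators $\DD,\DDb$.
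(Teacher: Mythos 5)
Your proof is correct and follows essentially the same route as the paper's: a direct verification in an orthonormal horizontal frame using the anti-self-duality relations $\dual F=-iF$, $\dual U=-iU$ (and the consequent vanishing of contractions of two anti-self-dual $1$-forms). Your organization is somewhat more economical — you derive $(F\c\DDb)U=2F\c\nab U$ abstractly from $F^a\in_{ab}=-(\dual F)_b$ and reduce the $\hot$-identity to its $(11)$-entry, whereas the paper writes out every component of every term — but the mathematical content is identical.
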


\begin{proof}
We have
\beaa
( \DDb \c U )_1&=& \DDb^a U_{1a}=(\nab - i \dual \nab)^a U_{1a}\\
&=&(\nab_1 - i \dual \nab_1) U_{11}+(\nab_2 - i \dual \nab_2) U_{12}\\
&=&(\nab_1 - i  \nab_2) U_{11}+(\nab_2 + i  \nab_1) (-i U_{11})\\
&=&2(\nab_1 - i  \nab_2) U_{11},
\eeaa
and
\beaa
( \DDb \c U )_2&=& \DDb^a U_{2a}=(\nab - i \dual \nab)^a U_{2a}\\
&=&(\nab_1 - i \dual \nab_1) U_{21}+(\nab_2 - i \dual \nab_2) U_{22}\\
&=&(\nab_1 - i  \nab_2)(-i U_{11})+(\nab_2 + i  \nab_1) (-U_{11})\\
&=&-2i(\nab_1 -i  \nab_2) U_{11}.
\eeaa
Therefore 
\beaa
2(F \hot ( \DDb \c U ))_{11}&=& 2F_1( \DDb \c U )_1-\de_{11} F \c ( \DDb \c U )\\
&=& F_1( \DDb \c U )_1- F_2 ( \DDb \c U )_2\\
&=& (f_1+i \dual f_1)2(\nab_1 - i  \nab_2) U_{11}- (f_2+i\dual f_2)(-2i(\nab_1 -i  \nab_2) U_{11})\\
&=& 4f_1\nab_1U_{11} - 4i  f_1\nab_2U_{11} +4i f_2\nab_1U_{11} +4 f_2   \nab_2U_{11}. 
\eeaa
On the other hand,
\beaa
2(F \hot ( \DDb \c U ))_{12}&=& F_1( \DDb \c U )_2+F_2( \DDb\c U )_1\\
&=& (f_1+i \dual f_1)(2(\nab_1 -i  \nab_2) U_{12})+ (f_2+i\dual f_2)2(\nab_1 - i  \nab_2) iU_{12}\\
&=& (f_1+i  f_2)(2(\nab_1 -i  \nab_2) U_{12})+ (f_2-i f_1)2(\nab_1 - i  \nab_2) iU_{12}\\
&=& 4f_1\nab_1U_{12} - 4i  f_1\nab_2U_{12} +4i f_2\nab_1U_{12} +4 f_2   \nab_2U_{12} 
\eeaa
which therefore gives
\beaa
(F \hot ( \DDb \c U ))_{ab}&=& 2f_1\nab_1U_{ab} - 2i  f_1\nab_2U_{ab} +2i f_2\nab_1U_{ab} +2 f_2   \nab_2U_{ab}. 
\eeaa
On the other hand,
\beaa
(F\c \DDb U)_{ab}&=& F^c\DDb_c U_{ab}= F_1\DDb_1 U_{ab}+F_2\DDb_2 U_{ab}\\
&=&(f_1+ i \dual f_1)(\nab_1- i \dual \nab_1) U_{ab}+(f_2+ i \dual f_2)(\nab_2- i \dual \nab_2)  U_{ab}\\
&=&(f_1+ i  f_2)(\nab_1- i  \nab_2) U_{ab}+(f_2-i  f_1)(\nab_2+ i  \nab_1)  U_{ab}\\
&=&2f_1\nab_1U_{ab} - 2i  f_1\nab_2U_{ab} +2i f_2\nab_1U_{ab} +2 f_2   \nab_2U_{ab}.
\eeaa
From the above we also have
\beaa
(\ov{F}\c \DD U)_{ab}&=&2f_1\nab_1U_{ab} + 2i  f_1\nab_2U_{ab} -2i f_2\nab_1U_{ab} +2 f_2   \nab_2U_{ab}
\eeaa
which implies
\beaa
(F\c \DDb U)_{ab}+(\ov{F}\c \DD U)_{ab}&=&4f_1\nab_1U_{ab} +4 f_2   \nab_2U_{ab}=4f \c \nab U
\eeaa
as stated. Finally,
\beaa
 F^c\ov{ \DD_c}U &=&  f^c\ov{ \DD_c}U +i( \dual f^c) \ov{ \DD_c}U = f^c\ov{ \DD_c}U - i f^c( \dual \ov{ \DD_c}U) = 2f^c\ov{ \DD_c}U =2 F^c \nab_c U.
 \eeaa
\end{proof}


   \subsection{Complex notations for the Ricci coefficients and curvature components}
   
   We now extend the definitions for the Ricci coefficients and curvature components given in Section \ref{section-general-formalism} to the complex case by using the anti-self dual tensors defined above. 
 
\begin{definition} Let $(\MM, \g)$ be a manifold satisfying the Einstein vacuum equation. Then we define the following complex anti-self dual tensors:
\beaa
A:=\a+i\dual\a, \quad B:=\b+i\dual\b, \quad P:=\rho+i\dual\rho,\quad \Bb:=\bb+i\dual\bb, \quad \Ab:=\aa+i\dual\aa,
\eeaa      
 and   
\beaa
&& X=\chi+i\dual\chi, \quad \Xb=\chib+i\dual\chib, \quad H=\eta+i\dual \eta, \quad \Hb=\etab+i\dual \etab, \quad Z=\ze+i\dual\ze, \\ 
&& \Xi=\xi+i\dual\xi, \quad \Xib=\xib+i\dual\xib.
\eeaa    
In particular, note that 
\beaa
\tr X = \trch-i\atrch, \quad \Xh=\chih+i\dual\chih, \quad \tr\Xb = \trchb -i\atrchb, \quad \Xbh=\chibh+i\dual\chibh.
\eeaa
\end{definition}


\subsection{Main equations  in complex form}


The complex notations allow us to rewrite the Ricci equations in  a more compact  form. 
\begin{proposition}
\label{prop-nullstr:complex}
\beaa
\nab_3\tr\Xb +\frac{1}{2}(\tr\Xb)^2+2\omb\,\tr\Xb &=& \DD\c\ov{\Xib}+\Xib\c\ov{\Hb}+\ov{\Xib}\c(H-2Z)-\frac{1}{2}\Xbh\c\ov{\Xbh},\\
\nab_3\Xbh+\Re(\tr\Xb) \Xbh+ 2\omb\,\Xbh&=& \DD\hot \Xib+   \Xib\hot(H+\Hb-2Z)-\Ab,
\eeaa
\beaa
\nab_3\tr X +\frac{1}{2}\tr\Xb\tr X-2\omb\tr X &=& \DD\c\ov{H}+H\c\ov{H}+2P+\Xib\c\ov{\Xi}-\frac{1}{2}\Xbh\c\ov{\Xh},\\
\nab_3\widehat{X} +\frac{1}{2}\tr\Xb\, \widehat{X} -2\omb\widehat{X} &=& \DD\hot H  +H\hot H -\frac{1}{2}\ov{\tr X} \widehat{\Xb}+\frac{1}{2}\Xib\hot\Xi,
\eeaa
\beaa
\nab_4\tr\Xb +\frac{1}{2}\tr X\tr\Xb -2\om\tr\Xb &=& \DD\c\ov{\Hb}+\Hb\c\ov{\Hb}+2\ov{P}+\Xi\c\ov{\Xib}-\frac{1}{2}\Xh\c\ov{\Xbh},\\
\nab_4\widehat{\Xb} +\frac{1}{2}\tr X\, \widehat{\Xb} -2\om\widehat{\Xb} &=& \DD\hot\Hb  +\Hb\hot\Hb -\frac{1}{2}\ov{\tr\Xb} \widehat{X}+\frac{1}{2}\Xi\hot\Xib,
\eeaa
\beaa
\nab_4\tr X +\frac{1}{2}(\tr X)^2+2\om\tr X &=& \DD\c\ov{\Xi}+\Xi\c\ov{H}+\ov{\Xi}\c(H+2Z)-\frac{1}{2}\Xh\c\ov{\Xh},\\
\nab_4\Xh+\Re(\tr X)\Xh+ 2\om\Xh&=& \DD\hot \Xi+  \Xi\hot(\Hb+H+2Z)-A.
\eeaa
Also,
\beaa
\nab_3Z +\frac{1}{2}\tr\Xb(Z+H)-2\omb(Z-H) &=& -2\DD\omb -\frac{1}{2}\widehat{\Xb}\c(\ov{Z}+\ov{H})\\
&&+\frac{1}{2}\tr X\Xib+2\om\Xib -\Bb+\frac{1}{2}\ov{\Xib}\c\Xh,\\
\nab_4Z +\frac{1}{2}\tr X(Z-\Hb)-2\om(Z+\Hb) &=& 2\DD\om +\frac{1}{2}\widehat{X}\c(-\ov{Z}+\ov{\Hb})\\
&&-\frac{1}{2}\tr\Xb\Xi-2\omb\Xi -B-\frac{1}{2}\ov{\Xi}\c\Xbh,\\
\nab_3\Hb -\nab_4\Xib &=&  -\frac{1}{2}\ov{\tr\Xb}(\Hb-H) -\frac{1}{2}\Xbh\c(\ov{\Hb}-\ov{H}) -4\om\Xib+\Bb,\\
\nab_4H -\nab_3\Xi &=&  -\frac{1}{2}\ov{\tr X}(H-\Hb) -\frac{1}{2}\Xh\c(\ov{H}-\ov{\Hb}) -4\omb\Xi-B,
\eeaa
and
\beaa
\nab_3\om+\nab_4\omb -4\om\omb -\xi\c \xib -(\eta-\etab)\c\ze +\eta\c\etab&=&   \rho.
\eeaa
Also,
\beaa
\frac{1}{2}\ov{\DD}\c\Xh +\frac{1}{2}\Xh\c\ov{Z} &=& \frac{1}{2}\DD\ov{\tr X}+\frac{1}{2}\ov{\tr X}Z-i\Im(\tr X)H-i\Im(\tr \Xb)\Xi-B,\\
\frac{1}{2}\ov{\DD}\c\Xbh -\frac{1}{2}\Xbh\c\ov{Z} &=& \frac{1}{2}\DD\ov{\tr\Xb}-\frac{1}{2}\ov{\tr\Xb}Z-i\Im(\tr\Xb)\Hb-i\Im(\tr X)\Xib+\Bb,
\eeaa
and,
\beaa
\curl\ze&=&-\frac 1 2 \chih\wedge\chibh   +\frac 1 4 \big(  \trch\atrchb-\trchb\atrch   \big)+\om \atrchb -\omb\atrch+\dual \rho.
\eeaa
\end{proposition}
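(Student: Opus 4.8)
The plan is to read off each complex equation from the real null structure equations of Proposition \ref{prop-nullstr} by taking the appropriate linear combination and then translating all products and all angular derivatives through the complexification dictionary established above. For a quantity assembled from a trace and an anti-trace, such as $\tr\Xb=\trchb-i\atrchb$, I would combine the equation for $\nab_3\trchb$ with $-i$ times the equation for $\nab_3\atrchb$; for a quantity assembled from the traceless part, such as $\Xbh=\chibh+i\,\dual\chibh$, I would combine the equation for $\nab_3\chibh$ with $i$ times its Hodge dual, using that $\nab_3$, $\nab_4$ and $\nab$ commute with the horizontal Hodge operator $\dual$ (equivalently $\nab\in=0$ in all directions, the volume-form analogue of $\nab\ga=\nab_L\ga=\nab_\Lb\ga=0$, valid since $\nab$ is metric and orientation preserving). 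The scalar balance law $\nab_3\om+\nab_4\omb-4\om\omb-\xi\c\xib-(\eta-\etab)\c\ze+\eta\c\etab=\rho$ and the $\curl\ze$ identity carry no genuine complexification and are simply restated.

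Next I would convert the differential operators. The $\nab_3,\nab_4$ derivatives are left untouched, while the angular operators are replaced using the identities already recorded, i.e. $\div\xi+i\curl\xi=\tfrac12\ov{\DD}\c(\xi+i\dual\xi)$, $\nab\hot\xi+i\dual(\nab\hot\xi)=\tfrac12\DD\hot(\xi+i\dual\xi)$, $\div u+i\dual(\div u)=\tfrac12\ov{\DD}\c(u+i\dual u)$, and $\nab h-\dual\nab k+i(\dual\nab h+\nab k)=\DD(h+ik)$ for real scalars $h,k$; for instance the pair $2\div\xib$, $2\curl\xib$ appearing in the $\trchb$ and $\atrchb$ equations recombines into $\DD\c\ov{\Xib}$, and the term $2\nab\hot\xib$ in the $\chibh$ equation becomes $\DD\hot\Xib$, while $2\nab\omb$ in the $\ze$ equation becomes $2\DD\omb$. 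The quadratic terms are then rewritten with Lemma \ref{lemma:usefulidentitiesforcomplexification} together with Lemma \ref{le:traces}, Corollary \ref{special.products}, and Lemmas \ref{le:sym-product}, \ref{le:nonsym-product}, \ref{dot-hot-complex}, \ref{simplification-angular}: products of the real horizontal tensors $\xi,\xib,\eta,\etab,\ze,\chih,\chibh$ with their traces and anti-traces are reorganized into the anti-self-dual products $\DD\c\ov H$, $H\c\ov H$, $\Xh\c\ov{\Xh}$, $\Xib\hot\Xi$, $\Xbh\c\ov{\Xbh}$, and so on, that appear on the right-hand sides, and the curvature terms $-\aa$, $-\a$, $\pm\rho$, $\pm\dual\rho$ assemble into $-\Ab$, $-A$, $2P$, $2\ov P$ via $\Ab=\aa+i\dual\aa$ and $P=\rho+i\dual\rho$.

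The genuine work, and the main obstacle, is purely algebraic bookkeeping. Because the horizontal structure is not integrable, $\chi$ and $\chib$ are not symmetric, so products such as $\chibh\c\chih$, $\chibh\wedge\chih$ and $\xib\hot\xi$ must be expanded with the full non-symmetric formulas of Lemma \ref{le:traces} rather than the symmetric shortcuts, and the resulting combinations of trace, anti-trace and traceless pieces must be matched term by term against the complexified products; this is where most of the care is needed. A second bookkeeping point is the appearance of the coefficients $\Re(\tr X)$, $i\,\Im(\tr X)$ (and their $\tr\Xb$ analogues) in the $\nab_3\Xbh$, $\nab_4\Xh$ equations and in the $\div\Xh$, $\div\Xbh$ equations: these occur precisely because $\chih$ is real whereas $\Xh$ is anti-self-dual, so a symmetrized product of the real $\chih$ against $\trch$ and against $\atrch$ recombines into a \emph{real} multiple of $\Xh$ plus an \emph{imaginary} multiple of $\Xh$, and one must keep track of which combination occurs in each slot. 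Once these two issues are dispatched, every complex equation in the proposition falls out termwise from its real ancestors in Proposition \ref{prop-nullstr}.
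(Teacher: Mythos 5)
Your proposal is correct and is exactly the route the paper intends: the paper does not write out a proof of this proposition, but its proof of the complexified Bianchi identities (Proposition \ref{prop:bianchi:complex}) proceeds precisely by adding $i$ times the Hodge-dual (or $-i$ times the anti-trace) equation to the real one and then invoking the complexification dictionary of Lemmas \ref{lemma:usefulidentitiesforcomplexification}--\ref{simplification-angular}, which is what you describe. Your two flagged bookkeeping points — the non-symmetry of $\chi,\chib$ forcing the general product formulas of Lemma \ref{le:traces}, and the splitting into $\Re(\tr X)$ and $i\,\Im(\tr X)$ coefficients because $\chih$ is real while $\Xh$ is anti-self-dual — are indeed where all the care lies.
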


The complex notations allow us to rewrite the Bianchi identities as follows.  
  \begin{proposition}\label{prop:bianchi:complex} 
    We have,
 \beaa
 \nab_3A -\DD\hot B &=& -\frac{1}{2}\tr\Xb A+4\omb A +(Z+4H)\hot B -3\ov{P}\Xh,\\
\nab_4B -\frac{1}{2}\ov{\DD}\c A &=& -2\ov{\tr X} B -2\om B +\frac{1}{2}A\c  (\ov{2Z +\Hb})+3\ov{P} \,\Xi,\\
\nab_3B-\DD\ov{P} &=& -\tr\Xb B+2\omb B+\ov{\Bb}\c \Xh+3\ov{P}H +\frac{1}{2}A\c\ov{\Xib},\\
\nab_4P -\frac{1}{2}\DD\c \ov{B} &=& -\frac{3}{2}\tr X P +\frac{1}{2}(2\Hb+Z)\c\ov{B} -\ov{\Xi}\c\Bb -\frac{1}{4}\Xbh\c \ov{A}, \\
\nab_3P +\frac{1}{2}\ov{\DD}\c\Bb &=& -\frac{3}{2}\ov{\tr\Xb} P -\frac{1}{2}(\ov{2H-Z})\c\Bb +\Xib\c \ov{B} -\frac{1}{4}\ov{\Xh}\c\Ab, \\
\nab_4\Bb+\DD P &=& -\tr X\Bb+2\om\Bb+\ov{B}\c \Xbh-3P\Hb -\frac{1}{2}\Ab\c\ov{\Xi},\\
\nab_3\Bb +\frac{1}{2}\ov{\DD}\c\AA &=& -2\ov{\tr\Xb}\,\Bb -2\omb\,\Bb -\frac{1}{2}\Ab\c (\ov{-2Z +H})-3P \,\Xib,\\
\nab_4\Ab +\frac{1}{2}\DD\hot\Bb &=& -\frac{1}{2}\ov{\tr X} \Ab+4\om\Ab +\frac{1}{2}(Z-4\Hb)\hot \Bb -3P\Xbh.
\eeaa
    \end{proposition}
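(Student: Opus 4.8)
The plan is to obtain each of the eight complex Bianchi identities by pairing up two of the ten real identities of Proposition~\ref{prop:bianchi}: one gets the equation for a complex quantity $F=f+i\dual f$ by adding to the real equation for $f$ precisely $i$ times the equation for its Hodge dual $\dual f$. The latter is itself a legitimate consequence of Proposition~\ref{prop:bianchi}, since $\dual W$ is again a Weyl field — equivalently, since $\dual$ commutes with $\nab_3,\nab_4,\nab$ (all of which annihilate $\ga$, hence $\in$) and interacts consistently with $\c,\hot,\wedge,\div,\curl$ and $\nab\hot$. For the $\a$- and $\aa$-type equations this pairs an $\SS_2$-valued equation with its dual; for the $\b$- and $\bb$-type equations an $\SS_1$-valued equation with its dual; and the two scalar pairs $(\nab_4\rho,\nab_4\rhod)$, $(\nab_3\rho,\nab_3\rhod)$ combine directly into the $\nab_4 P$ and $\nab_3 P$ equations.

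First I would dispose of the linear terms, using the complexification formulas for the derivative operators established just after the definition of $\DD$: $\div\xi+i\curl\xi=\tfrac12\ov{\DD}\c(\xi+i\dual\xi)$ and $\nab\hot\xi+i\dual(\nab\hot\xi)=\tfrac12\DD\hot(\xi+i\dual\xi)$ for $\xi\in\SS_1$, $\div u+i\dual(\div u)=\tfrac12\ov{\DD}\c(u+i\dual u)$ for $u\in\SS_2$, and $\nab a-\dual\nab b+i(\dual\nab a+\nab b)=\DD(a+ib)$ for scalars. In particular $\div\varrho=-(\nab\rho+\dual\nab\rhod)$ together with its dual assembles into $-\DD\ov{P}$, the term $\div\aa$ together with its dual into $\tfrac12\ov{\DD}\c\Ab$, and $\div\b$ together with $-\curl\b$ into $\tfrac12\DD\c\ov{B}$; the $\nab_3,\nab_4$ terms are immediate, since $\nab_3 f+i\dual(\nab_3 f)=\nab_3(f+i\dual f)$ and likewise for $\nab_4$.

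Next come the quadratic terms. The scalar$\times$curvature products reduce to arithmetic of complex scalars: a real term $a\,f+b\,\dual f$, where $(a,b)$ runs over $(\trch,\atrch),(\trchb,\atrchb),(\rho,\rhod)$, collapses after the complexification to $\tr X,\tr\Xb$ or $P$ (or their complex conjugates, according to the relative sign of the two terms) acting on the complexified tensor — the appearance of conjugates being precisely the imprint of the pairing transformation $L\leftrightarrow\Lb$ recorded in Remark~\ref{rem:pairing}, under which $\in_{ab}$, and hence $\dual$, flips sign. Note that, in contrast with the null structure equations, the non-symmetry of $\chi,\chib$ causes no extra trouble here, since only the symmetric traceless parts $\chih,\chibh$ occur in the Bianchi system and the anti-trace scalars combine cleanly into $\tr X,\tr\Xb$. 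The remaining Ricci$\times$curvature products are handled by the complexified product identities — Lemma~\ref{lemma:usefulidentitiesforcomplexification}, the companion lemma expressing $\xi\c\eta+i\dual\xi\c\eta$, $\xi\hot\eta+i\dual(\xi\hot\eta)$ and $u\c\eta+i\dual u\c\eta$ as complex products, together with Lemmas~\ref{dot-hot-complex} and~\ref{simplification-angular}: terms $\eta\hot\b$, $\ze\hot\b$ become $H\hot B$, $Z\hot B$; terms $\etab\c\b$, $\eta\c\bb$, $\xi\c\bb$ become expressions of the form $\tfrac12(\ov{2Z+\Hb})\c\ov{B}$, $\ov{\Xi}\c\Bb$, etc. (here one trades the two $\dual$'s for a conjugation on one factor via $\dual\xi\c\dual\eta=\xi\c\eta$ and $\dual\xi\c\eta=-\xi\c\dual\eta$); and the $\chibh\c\a$-, $\a\c\xib$-type terms become $\tfrac14\Xbh\c\ov{A}$, $\tfrac12 A\c\ov{\Xib}$, etc.

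It then suffices to carry out this bookkeeping once, the remaining identities following by the identical procedure. For the $\nab_3 A$ equation, for instance, adding the $\nab_3\a$ identity of Proposition~\ref{prop:bianchi} to $i$ times its Hodge dual turns $\nab_3\a-2\nab\hot\b$ into $\nab_3 A-\DD\hot B$, turns $-\tfrac12(\trchb\a+\atrchb\dual\a)$ into $-\tfrac12\tr\Xb\,A$, turns $4\omb\a$ into $4\omb A$, turns $2(\ze+4\eta)\hot\b$ into $(Z+4H)\hot B$, and turns $-3(\rho\chih+\rhod\dual\chih)$ into $-3\ov{P}\,\Xh$ — exactly the asserted equation. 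The only real difficulty is the sign bookkeeping: every $\dual$ must be taken with the conventions of Definition~\ref{definition-hodge-duals} (in particular $\dual\dual=-1$ on $\SS_1$ and $\SS_2$), and one must keep track of \eqref{eq:dualW} and Remark~\ref{rem:pairing} on the $\bb$- and $\aa$-side. There is no conceptual obstacle — it is the same reduction used for the null structure equations in Proposition~\ref{prop-nullstr:complex}.
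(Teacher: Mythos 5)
Your proposal is correct and follows the same route as the paper: the paper's own proof takes each real Bianchi identity of Proposition \ref{prop:bianchi}, adds $i$ times its Hodge dual, and recognizes the complex operators and products via the complexification lemmas, carrying this out explicitly only for the $A$ and $B$ equations (your worked example for $\nab_3 A$ reproduces that computation term by term). Your additional remarks on why the dualized equation is itself valid and on the sign conventions of Remark \ref{rem:pairing} are consistent with the paper's treatment.
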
 
    
    \begin{proof} 
    We derive the equation for $A$ and $B$. Observe that from the Bianchi identity
    \beaa
      \nab_3\a&=&2\nab\hot \b-\frac 1 2 \big(\trchb\a+\atrchb\dual \a)+4\omb \a+
 2(\ze+4\eta)\hot \b - 3 (\rho\chih+\rhod\dual\chih)
    \eeaa 
    we obtain
    \beaa
     \dual\nab_3\a&=&2\dual(\nab\hot \b)-\frac 1 2 \big(\trchb \dual\a-\atrchb 
     \a)+4\omb \dual \a+
2\dual( (\ze+4\eta)\hot \b )- 3 (\rho \dual\chih -\rhod\chih).
    \eeaa
    This implies
    \beaa
    \nab_3 A&=& \nab_3( \a + i \dual \a)\\
    &=& 2\nab\hot \b+2i \dual(\nab\hot \b)-\frac 1 2 \big(\trchb\a+\atrchb\dual \a)-\frac 1 2 i\big(\trchb \dual\a-\atrchb 
     \a)+4\omb (\a+i\dual a)\\
     &&+2(\ze+4\eta)\hot \b+ 2i\dual( (\ze+4\eta)\hot \b ) - 3 (\rho\chih +\rhod\dual\chih)- 3i (\rho \dual\chih -\rhod\chih)\\
      &=&  \DD \hot (\b + i \dual \b)-\frac 1 2 (\trchb- i \atrchb )
     \a-\frac 1 2 (\trchb- i \atrchb) i\dual\a+4\omb (\a+i\dual a)\\
     &&+ ((\ze+4\eta+ i \dual (\ze +4 \eta))\hot (\b+i \dual \b))  - 3 (\rho- i \rhod)\chih - 3 (\rho- i \rhod) i\dual\chih     \eeaa
     which finally gives
     \beaa
      \nab_3 A      &=&  \DD \hot B-\frac 1 2 \tr\Xb A+4\omb A+ (Z+4H)\hot B  - 3 \ov{P}\hat{X}. 
     \eeaa
     From the equation 
     \beaa
     \nab_4\beta  &=&\div\a-2(\trch\beta-\atrch \dual \b) - 2  \om\b +\a\c  (2 \ze +\etab) + 3  (\xi\rho+\dual \xi\rhod)
     \eeaa
     we obtain
         \beaa
     \dual\nab_4\beta  &=&\dual\div\a-2(\trch \dual\beta+\atrch \b) - 2  \om\dual\b +\dual(\a\c  (2 \ze +\etab) )+ 3  (\dual\xi\rho-\xi\rhod).
     \eeaa
     This implies
     \beaa
     \nab_4B &=& \nab_4(\b +i \dual \b)\\
     &=&\div\a+i\dual\div\a-2(\trch\beta-\atrch \dual \b)-2i(\trch \dual\beta+\atrch \b)  - 2  \om(\b+i \dual\b) \\
     &&+\a\c  (2 \ze +\etab)+i\dual(\a\c  (2 \ze +\etab) ) + 3  (\xi\rho+\dual \xi\rhod)+ 3i  (\dual\xi\rho-\xi\rhod)\\
      &=&\frac 1 2 \ov{\DD} \c (\a + i \dual \a)-2(\trch+i\atrch )\b-2(\trch  +i\atrch )i\dual \b - 2  \om(\b+i \dual\b) \\
     &&+\frac 1 2 (\a+ i \dual \a)\c  (2 \ze +\etab-i\dual(2 \ze +\etab) ) + 3  (\rho-i\rhod) \xi+ 3 (\rho- i \rhod) i\dual\xi 
     \eeaa
     which finally gives
     \beaa
      \nab_4B       &=&\frac 1 2 \ov{\DD} \c A-2\ov{\tr X}B - 2  \om B+\frac 1 2 A\c  (2 \ov{Z} + \ov{\Hb}  ) + 3  \ov{P} \Xi
     \eeaa
     as stated. 
    \end{proof}


\subsection{Main equations in complex form using conformal operators}


\begin{definition}
 We define the following conformal angular derivatives in the complex notation:
 \begin{itemize}
\item For $a+i b \in \SS_0(\mathbb{C}) $  we define
\beaa
\DDc(a+ib) &:=& \big(\nabc +i\dual \nabc\big)(a+ib).
\eeaa

\item For  $f+i \dual f \in\SS_1(\mathbb{C}) $ we define
\beaa
\DDc(f+i\dual f) &:=& \big(\nabc+i\dual \nabc\big) \c (f+i\dual f),
\\
\DDc \hot(f+i\dual f) &:=& (\nabc+i\dual\nabc )\hot(f+i\dual f).
\eeaa
\item For $u+ i \dual u \in \SS_2(\mathbb{C})$ we define
\beaa
\DDc \c(u+i\dual u) &:=& \big(\nabc +i\dual\nabc \big)\c(u+i\dual u).
\eeaa
\item In all the above cases we set
\beaa
\DDbc&:=&\nabc-i\nabc.
\eeaa
\end{itemize}
\end{definition}

These complex notations allow us to rewrite the null structure equations as follows.  
\begin{proposition}
\label{prop-nullstr:complex-conf}
\beaa
\nabc_3\tr\Xb +\frac{1}{2}(\tr\Xb)^2 &=& \DDc\c\ov{\Xib}+\Xib\c\ov{\Hb}+\ov{\Xib}\c H-\frac{1}{2}\Xbh\c\ov{\Xbh},\\
\nabc_3\Xbh+\Re(\tr\Xb) \Xbh&=& \DDc\hot \Xib+   \Xib\hot(H+\Hb)-\Ab,
\eeaa
\beaa
\nabc_3\tr X +\frac{1}{2}\tr\Xb\tr X &=& \DDc\c\ov{H}+H\c\ov{H}+2P+\Xib\c\ov{\Xi}-\frac{1}{2}\Xbh\c\ov{\Xh},\\
\nabc_3\widehat{X} +\frac{1}{2}\tr\Xb\, \widehat{X} &=&\DDc\hot H  +H\hot H -\frac{1}{2}\ov{\tr X} \widehat{\Xb}+\Xib\hot\Xi,
\eeaa
\beaa
\nabc_4\tr\Xb +\frac{1}{2}\tr X\tr\Xb &=& \DDc\c\ov{\Hb}+\Hb\c\ov{\Hb}+2\ov{P}+\Xi\c\ov{\Xib}-\frac{1}{2}\Xh\c\ov{\Xbh},\\
\nabc_4\widehat{\Xb} +\frac{1}{2}\tr X\, \widehat{\Xb} &=& \DDc\hot\Hb  +\Hb\hot\Hb -\frac{1}{2}\ov{\tr\Xb} \widehat{X}+\Xi\hot\Xib,
\eeaa
\beaa
\nabc_4\tr X +\frac{1}{2}(\tr X)^2 &=& \DDc\c\ov{\Xi}+\Xi\c\ov{H}+\ov{\Xi}\c H-\frac{1}{2}\Xh\c\ov{\Xh},\\
\nabc_4\Xh+\Re(\tr X)\Xh&=& \DDc\hot \Xi+  \Xi\hot(\Hb+H)-A,
\eeaa
\beaa
\nabc_3\Hb -\nabc_4\Xib &=&  -\frac{1}{2}\ov{\tr\Xb}(\Hb-H) -\frac{1}{2}\Xbh\c(\ov{\Hb}-\ov{H}) +\Bb,\\
\nabc_4H -\nabc_3\Xi &=&  -\frac{1}{2}\ov{\tr X}(H-\Hb) -\frac{1}{2}\Xh\c(\ov{H}-\ov{\Hb}) -B.
\eeaa
Also,
\beaa
\frac{1}{2}\ov{\DDc}\c\Xh &=& \frac{1}{2}\DDc\ov{\tr X}-i\Im(\tr X)H-i\Im(\tr \Xb)\Xi-B,\\
\frac{1}{2}\ov{\DDc}\c\Xbh &=& \frac{1}{2}\DDc\ov{\tr\Xb}-i\Im(\tr\Xb)\Hb-i\Im(\tr X)\Xib+\Bb.
\eeaa
\end{proposition}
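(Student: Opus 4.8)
The plan is to deduce Proposition~\ref{prop-nullstr:complex-conf} from the complex null structure equations of Proposition~\ref{prop-nullstr:complex} by rewriting every occurrence of $\nab_3$, $\nab_4$, and of the angular operators $\DD\c$, $\DD\hot$, $\ov{\DD}\c$, in terms of their conformally invariant analogues $\nabc_3$, $\nabc_4$, $\DDc\c$, $\DDc\hot$, $\ov{\DDc}\c$ (see Remark~\ref{remark-Conf-derivaties-Intro}). This is the precise complex analogue of the passage from Proposition~\ref{prop-nullstr} to Proposition~\ref{prop-nullstr-conformal} in the real setting --- equivalently, one may complexify the real conformal equations of Proposition~\ref{prop-nullstr-conformal} directly --- so the proof is essentially a bookkeeping exercise carried out equation by equation. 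The first step is to record the conformal weight (signature) $s$ of every complex quantity that appears: $\tr X,\,\Xh,\,\Xi,\,B$ carry one weight and $\tr\Xb,\,\Xbh,\,\Xib,\,\Bb$ the opposite one, $A$ and $\Ab$ twice that, $H,\,\Hb,\,P$ have weight $0$, and a conjugate $\ov Q$ has the same weight as $Q$ since $\la>0$; one also notes that $Z$, $\om$ and $\omb$ are \emph{not} conformal, which is exactly why the equations for $Z$, the scalar identity $\nab_3\om+\nab_4\omb-\ldots=\rho$, and the $\curl\ze$ identity reappear in the statement unchanged.

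Next I would record, for a weight-$s$ complex tensor $F$, the relations $\nab_3 F=\nabc_3 F+2s\,\omb\,F$ and $\nab_4 F=\nabc_4 F-2s\,\om\,F$, together with the companion relations expressing $\DD\c$, $\DD\hot$, $\ov{\DD}\c$ acting on $F$ through $\DDc\c$, $\DDc\hot$, $\ov{\DDc}\c$, the difference in each case being a multiple by $s$ of a contraction of $F$ with $Z$ or $\ov Z$. These follow by applying the corresponding real statements (already used in proving Proposition~\ref{prop-nullstr-conformal}) to the real and imaginary parts of $F$ and reassembling them by means of Lemma~\ref{lemma:usefulidentitiesforcomplexification} and the Leibniz formulas of the previous subsection, keeping track of the factors $\tfrac12$ and $2$ built into the complexified $\hot$ and $\ov{\DD}\c$ operators. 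Then one goes through the equations of Proposition~\ref{prop-nullstr:complex} one at a time: on the left-hand side substitute the above relations, collect the resulting $\om$-, $\omb$- and $Z$-contributions, cancel them against the corresponding terms already present on the right-hand side, and check that what survives is exactly the printed right-hand side. All the equations for $\tr\Xb,\,\Xbh,\,\tr X,\,\Xh$ (in both the $\nabc_3$ and $\nabc_4$ blocks), the pairs $(\Hb,\Xib)$ and $(H,\Xi)$, and the two $\ov{\DDc}\c\Xh$, $\ov{\DDc}\c\Xbh$ identities come out this way.

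The main obstacle is purely the bookkeeping: there are on the order of twenty vector/tensor equations, the conformal corrections carry the weight $s$ with a definite sign and a definite numerical coefficient fixed by the $\tfrac12$/$2$ conventions of the complex operators, and the interplay of $\dual$ with $\c$, $\hot$ and $\wedge$ (Lemma~\ref{lemma:usefulidentitiesforcomplexification}) has to be organized so as to land on precisely the quoted constants. A sensible safeguard is to verify two representative equations in full --- say the one for $\nabc_3\tr\Xb$, which exercises the scalar and angular bookkeeping, and the pair $\nabc_3\Hb-\nabc_4\Xib$, $\nabc_4H-\nabc_3\Xi$, which exercises the $\om$, $\omb$ bookkeeping --- and then to invoke the symmetry under the pairing transformation $e_3\leftrightarrow e_4$ (Remark~\ref{rem:pairing}) to cut the remaining verifications roughly in half.
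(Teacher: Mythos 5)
Your proposal is correct and is essentially the argument the paper intends: Proposition \ref{prop-nullstr:complex-conf} is obtained from Proposition \ref{prop-nullstr:complex} (equivalently, by complexifying Proposition \ref{prop-nullstr-conformal}) by substituting the conformal operators according to the signatures $s$ and cancelling the resulting $\om$, $\omb$ and $Z$ contributions against those already present, exactly as you describe; the paper gives no separate proof precisely because this is a direct bookkeeping rewriting. (One small slip: the equations for $Z$, for $\nab_3\om+\nab_4\omb$, and for $\curl\ze$ do not "reappear unchanged" in this proposition --- they are simply omitted from it, consistent with your observation that those quantities are not conformal.)
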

    
The complex notations allow us to rewrite the Bianchi identities as follows.  
  \begin{proposition}\label{prop:bianchi:complex} 
    We have,
 \beaa
 \nabc_3A -\DDc\hot B &=& -\frac{1}{2}\tr\Xb A + 4 H   \hot B -3\ov{P}\Xh,\\
\nabc_4B -\frac{1}{2} \DDbc \c A &=& -2\ov{\tr X} B +\frac{1}{2}A\c \ov{\Hb}+3\ov{P} \,\Xi,\\
\nabc_3B-\DDc\ov{P} &=& -\tr\Xb B+\ov{\Bb}\c \Xh+3\ov{P}H +\frac{1}{2}A\c\ov{\Xib},\\
\nabc_4P -\frac{1}{2}\DDc\c \ov{B} &=& -\frac{3}{2}\tr X P + \Hb \c\ov{B} -\ov{\Xi}\c\Bb -\frac{1}{4}\Xbh\c \ov{A}, \\
\nabc_3P +\frac{1}{2}\DDbc \c\Bb &=& -\frac{3}{2}\ov{\tr\Xb} P - \ov{H} \c\Bb +\Xib\c \ov{B} -\frac{1}{4}\ov{\Xh}\c\Ab, \\
\nabc_4\Bb+\DDc P &=& -\tr X\Bb+\ov{B}\c \Xbh-3P\Hb -\frac{1}{2}\Ab\c\ov{\Xi},\\
\nabc_3\Bb +\frac{1}{2}\DDbc \c\AA &=& -2\ov{\tr\Xb}\,\Bb  -\frac 1 2  \Ab\c \ov{H}-3P \,\Xib,\\
\nabc_4\Ab +\frac{1}{2}\DDc\hot\Bb &=& -\frac{1}{2}\ov{\tr X} \Ab - 2 \Hb\hot \Bb -3P\Xbh.
\eeaa
    \end{proposition}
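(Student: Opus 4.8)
The plan is to obtain the eight conformal identities directly from the complex (non-conformal) Bianchi identities established earlier in this section, by replacing the ordinary frame derivatives $\nab_3,\nab_4$ and the ordinary complexified angular operators $\DD,\ov{\DD}$ by their conformally invariant counterparts $\nabc_3,\nabc_4,\DDc,\DDbc$. The only extra data needed are the conformal weights (signatures) of the curvature components, read off from the conformal transformation rules recalled above: $A$ has signature $2$, $B$ has signature $1$, $P$ has signature $0$, $\Bb$ has signature $-1$, $\Ab$ has signature $-2$, and the conjugated tensors $\ov A,\ov B,\ov P,\ov\Bb,\ov\Ab$ carry the same signatures as $A,B,P,\Bb,\Ab$ respectively.

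First I would record the substitution rules. For a complex horizontal tensor $F$ of signature $s$, the definitions in Remark \ref{remark-Conf-derivaties-Intro} give $\nab_3F=\nabc_3F+2s\,\omb\,F$ and $\nab_4F=\nabc_4F-2s\,\om\,F$; and since $\nabc_a=\nab_a+s\,\ze_a$, linearity of the complexified operators immediately yields that each complexified angular operator applied to $F$ differs from its conformal version by precisely $s$ times the corresponding $Z$-term, for instance $\DDc\hot B=\DD\hot B+Z\hot B$ (as $B$ has signature $1$) and $\ov{\DDc}\c A=\ov{\DD}\c A+2\,\ov Z\c A$ (as $A$ has signature $2$). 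Substituting these into the non-conformal identities, every $\om$-, $\omb$- or $Z$-term produced by the substitution cancels against the term of the same type already present on the right-hand side, and what remains is exactly the stated conformal identity. For example, starting from
\beaa
\nab_3A-\DD\hot B&=&-\frac12\tr\Xb\, A+4\omb A+(Z+4H)\hot B-3\ov{P}\Xh,
\eeaa
the left-hand side becomes $\nabc_3A-\DDc\hot B+4\omb A+Z\hot B$, and cancelling $4\omb A$ and $Z\hot B$ leaves $\nabc_3A-\DDc\hot B=-\frac12\tr\Xb\, A+4H\hot B-3\ov{P}\Xh$. The analogous cancellation occurs in the remaining seven equations; the negative signatures of $\Bb$ and $\Ab$ merely reverse signs in a consistent way, and the two scalar equations for $P$ (signature $0$) receive no $\om$- or $\omb$-correction at all, matching the absence of such terms there.

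The main obstacle is entirely one of bookkeeping: one must track the precise $\ze$-shift carried by each of the distinct complexified angular operators ($\DD$ on scalars, $\DD\hot$ and $\ov{\DD}\c$ on $1$-forms, $\ov{\DD}\c$ on symmetric traceless $2$-tensors) applied to tensors of the various signatures, while simultaneously keeping the anti-self-dual conjugations straight under these operations. An equivalent and, I think, cleaner cross-check is to complexify the \emph{real} conformal Bianchi identities of Proposition \ref{prop:bianchi-conformal} directly, repeating verbatim the argument used above for the non-conformal complex case: take the real equation for $\a$, take its Hodge dual (using that $\dual$ commutes with $\nabc_3,\nabc_4$ since these differ from $\nab_3,\nab_4$ by scalar multiplication, and that $\dual(\nab\hot\,\cdot\,)=\nab\hot\dual(\cdot)$), add $i$ times the dual, and simplify with Lemma \ref{lemma:usefulidentitiesforcomplexification}; this yields the eight complex conformal identities in precisely the stated form, in parallel with the null structure equations of Proposition \ref{prop-nullstr:complex-conf}.
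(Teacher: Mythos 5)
Your proposal is correct and coincides with the paper's intended derivation: the paper states this proposition without a written proof precisely because it follows from the non-conformal complex Bianchi identities of Proposition \ref{prop:bianchi:complex} by inserting the conformal derivatives with the signatures $s(A,B,P,\Bb,\Ab)=(2,1,0,-1,-2)$ and cancelling the resulting $\om$, $\omb$ and $Z$ terms against those already present. Your alternative cross-check via complexification of Proposition \ref{prop:bianchi-conformal} is the same mechanism the authors use for the non-conformal complex version, so both routes you describe are consistent with the paper.
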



\section{Invariant wave operators}


Recall that given a  horizontal   structure $\O(\MM)$ and  $X, Y \in \O(\MM) $ we have defined $\nab_X Y$ to be the  horizontal projection of $D_X Y$.  We extend this definition to $X\in \T(\MM)$  and $Y\in\O(\MM)$ as follows

\begin{definition}
Given  $X\in \T(\MM)$ and $Y \in \O(\MM)$ we define,
\beaa
\Ddot_X Y&:=& ( \D_X Y)^\perp.
\eeaa
Given an orthonormal  frame  $e_1, e_2\in \O(\MM)$ we write 
\beaa
\Ddot_\mu  e_a&=&\sum_{b=1,2}(\La_\mu)_{ab}\, e_b,  \qquad  (\La_\mu)_{\a\b}:=\g(\D_\mu e_\b, e_\a).
\eeaa
\end{definition} 

 \begin{definition}
 \label{definition:S-covariantderivative}
 Given a general, covariant,  $S$- horizontal tensor-field  $U$
 we define its horizontal covariant derivative according to
 the formula,
 \bea
 \Ddot_X U(Y_1,\ldots Y_k)=X (U(Y_1,\ldots Y_k))&-&U(\Ddot_X Y_1,\ldots Y_k)-\ldots- U(Y_1,\ldots \Ddot_XY_k)
 \eea
 where $X\in \T\MM$ and $Y_1,\ldots Y_k\in \T_S\MM$.
 \end{definition}
 
   \begin{proposition}
 For  all  $X\in\T\MM$   and $Y_1, Y_2 \in \T_S\MM$,
 \beaa
 X h (Y_1,Y_2)= h (\Ddot_X Y_1, Y_2)+ h(Y_1, \Ddot_X Y_2).
 \eeaa
  \end{proposition}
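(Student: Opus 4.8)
The plan is to reduce the statement for a general $S$-horizontal tensor $h$ to the already-known compatibility $Z\gamma(X,Y)=\gamma(\nabla_Z X,Y)+\gamma(X,\nabla_Z Y)$ for $X,Y\in\mathbf{O}(\MM)$, which appears in the Proposition following the definition of the horizontal covariant derivative. Here $h=\gamma$ plays the role of the induced ``metric'' on $\mathbf{T}_S\MM$, and $\Ddot$ is the horizontal projection of $\D$, so the identity we want is exactly the Leibniz/metric-compatibility property of $\Ddot$ relative to $h$. The key observation is that $\Ddot_X Y=(\D_X Y)^\perp$, i.e. $\D_X Y=\Ddot_X Y+$ (a combination of $e_3,e_4$), and since $h(Z_1,Z_2)=\g(Z_1^{(h)},Z_2^{(h)})$ vanishes on any $e_3,e_4$ components, the correction terms drop out when paired against horizontal vectors.

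First I would fix an arbitrary point and work with an orthonormal horizontal frame $e_1,e_2$, writing $Y_i$ as horizontal tensors and using the frame components. Since both sides of the claimed identity are tensorial (i.e. $C^\infty$-linear) in $Y_1,Y_2$, it suffices to prove it for $Y_1=e_a$, $Y_2=e_b$. Then $h(e_a,e_b)=\ga(e_a,e_b)=\de_{ab}$ is constant, so the left-hand side $X h(e_a,e_b)=0$. On the right-hand side, by Definition of $\Ddot$ we have $\Ddot_X e_a=(\La_X)_{ac}e_c$ where $(\La_X)_{\a\b}=\g(\D_X e_\b,e_\a)$, so
\beaa
h(\Ddot_X e_a,e_b)+h(e_a,\Ddot_X e_b)=(\La_X)_{ac}\de_{cb}+(\La_X)_{bc}\de_{ca}=(\La_X)_{ab}+(\La_X)_{ba}.
\eeaa
Now $(\La_X)_{ab}+(\La_X)_{ba}=\g(\D_X e_b,e_a)+\g(\D_X e_a,e_b)=X\,\g(e_a,e_b)=X\de_{ab}=0$, using metric compatibility of the Levi-Civita connection $\D$ and the fact that $\g(e_a,e_b)=\de_{ab}$. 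Hence both sides vanish and the identity holds at $e_a,e_b$.

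To finish, I would invoke tensoriality to extend from the frame fields $e_a,e_b$ to arbitrary $Y_1,\ldots,Y_k\in\mathbf{T}_S\MM$ and to general covariant $S$-horizontal $h$; the general case follows since $h$ decomposes in the $e_a$-basis with smooth coefficients and $\Ddot$ satisfies the Leibniz rule of Definition \ref{definition:S-covariantderivative}. (Alternatively, one checks directly that the two sides differ by a term $X(h_{ab})Y_1^aY_2^b$ on both sides which cancels, reducing to the constant-$h$ computation above.) The only mild subtlety — the ``main obstacle'' — is bookkeeping: one must be careful that $\Ddot_X e_a$ is by definition already the horizontal projection, so no $e_3,e_4$ components enter the pairing, and that the frame coefficients $(\La_X)_{ab}$ are antisymmetric in $a,b$ precisely because $\D$ is a metric connection and $(e_1,e_2)$ is orthonormal; this antisymmetry is the whole content of the proof, and everything else is routine tensorial extension.
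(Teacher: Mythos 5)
Your argument is correct, but it is considerably more roundabout than the paper's. The paper proves this in one line: for $Y_1,Y_2$ horizontal, $h(Y_1,Y_2)=\g(Y_1,Y_2)$, so by metric compatibility of the Levi--Civita connection $Xh(Y_1,Y_2)=\g(\D_XY_1,Y_2)+\g(Y_1,\D_XY_2)$, and then $\g(\D_XY_i,Y_j)=\g(\Ddot_XY_i,Y_j)$ because $\D_XY_i-\Ddot_XY_i$ lies in the span of $e_3,e_4$, which is $\g$-orthogonal to the horizontal $Y_j$. Your proof encodes exactly the same two facts, but filtered through a frame: the antisymmetry of $(\La_X)_{ab}$ \emph{is} metric compatibility of $\D$ expressed in an orthonormal horizontal frame, and your remark that ``no $e_3,e_4$ components enter the pairing'' is the orthogonality step. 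What the invariant route buys is brevity and no need for the tensoriality reduction; what your route buys is nothing extra here, though it is harmless. One imprecision worth fixing: neither side of the identity is $C^\infty$-linear in $Y_1,Y_2$ (the left side picks up an $(Xf)h(Y_1,Y_2)$ term under $Y_1\mapsto fY_1$); what is tensorial is the \emph{difference} of the two sides, since the $(Xf)$ terms cancel, and that is what licenses checking the identity only on the frame fields $e_a,e_b$. Your closing paragraph about extending to ``general covariant $S$-horizontal $h$'' is superfluous: in this proposition $h$ is specifically the horizontal metric $\ga$, which is all your frame computation actually uses.
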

  
  \begin{proof}
  Indeed,
  \beaa
 X h (Y_1,Y_2)&=&X \g (Y_1,Y_2)=\g (\D_X Y_1,Y_2)+\g ( Y_1,\D_XY_2)= \g (\Ddot_X Y_1,Y_2)+\g ( Y_1,\Ddot_XY_2)\\
&=&  h (\Ddot_X Y_1,Y_2)+h ( Y_1,\Ddot_XY_2).
  \eeaa
   \end{proof}


\subsection{Mixed tensors}


We consider tensors  $\T^k \MM\otimes   \O \MM  $, i.e. tensors  of the form  $U_{\mu_1\ldots \mu_k,  A_1\ldots A_L}$
for which we define,
\beaa
\Ddot_\mu U_{\nu_1\ldots \nu_k,  A_1\ldots A_L}&=& e_\mu U_{\nu_1\ldots \nu_k,  A_1\ldots A_l}-U_{\D_\mu\nu_1\ldots \nu_k,  A_1\ldots A_l}-\ldots- U_{\nu_1\ldots  \D_\mu\nu_k,  A_1\ldots A_l}\\
&-& U_{\nu_1\ldots \nu_k,   \Ddot_\mu A_1\ldots A_l}-  U_{\nu_1\ldots \nu_k,   A_1 \ldots \Ddot_\mu A_l}.
\eeaa
We are now ready   to prove the following,
\begin{proposition}
We   have  the curvature formula
 \beaa
( \Ddot _\mu\Ddot_\nu -\Ddot_\nu\Ddot _\mu)\Psi_A=\R_{A}\, ^   B\,_{   \mu\nu}\Psi_B.
 \eeaa
 More  generally for a mixed tensor $\Psi_{\la A}$
  \beaa
( \Ddot _\mu\Ddot_\nu -\Ddot_\nu\Ddot _\mu)\Psi_{\la A}=    \R_{\la }\, ^   \si \,_{   \mu\nu}\Psi_{\si A}+    
            \R_{A}\, ^   B\,_{   \mu\nu}\Psi_{\la B}.
 \eeaa
 \end{proposition}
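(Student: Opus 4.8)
The plan is to reduce the mixed-tensor identity to the purely horizontal one and to a purely tangential one, both of which are essentially standard. First I would treat the base case: a purely horizontal one-form $\Psi_A$. The key observation is that $\Ddot_\mu$ acting on horizontal tensors is by construction the projection of $\D_\mu$ composed with the connection coefficients $(\La_\mu)_{ab}$, so that $\Ddot_\mu \Psi_a = e_\mu(\Psi_a) - (\La_\mu)_{ab}\Psi_b$ in an orthonormal horizontal frame $e_1,e_2$. Computing $\Ddot_\mu\Ddot_\nu\Psi_a - \Ddot_\nu\Ddot_\mu\Psi_a$ then produces the curvature of the connection $\La$, namely
\[
(\Ddot_\mu\Ddot_\nu-\Ddot_\nu\Ddot_\mu)\Psi_a = \big(e_\mu(\La_\nu)_{ab}-e_\nu(\La_\mu)_{ab} + (\La_\mu)_{ac}(\La_\nu)_{cb}-(\La_\nu)_{ac}(\La_\mu)_{cb} - [\mu,\nu]\text{-term}\big)\Psi_b,
\]
and I would identify the bracketed expression with $\R_A{}^B{}_{\mu\nu}$ by going back to the definition of $(\La_\mu)_{ab}=\g(\D_\mu e_b, e_a)$ and using the ambient curvature identity $(\D_\mu\D_\nu-\D_\nu\D_\mu)e_b = \R(e_b,\cdot,e_\mu,e_\nu)$ together with the fact that $e_a$ is a frame, i.e. $\R_{A}{}^{B}{}_{\mu\nu}=\g(\R(e_\mu,e_\nu)e_B,e^A)$ restricted via the projection $\Pi$. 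Here one must be slightly careful that $\Ddot$ only sees the horizontal projection, so the non-horizontal part of $\D_\mu e_b$ gets contracted against $e_a$ and vanishes; this is exactly the content of $\Ddot_\mu e_a = (\La_\mu)_{ab}e_b$ in the definition.

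Next I would handle the general mixed tensor $\Psi_{\la A}$. The operator $\Ddot_\mu$ on $\T^k\MM\otimes\O_L\MM$ acts as $\D_\mu$ on the tangential slots $\la$ and as the horizontal $\Ddot_\mu$ on the horizontal slots $A$, and these two actions commute with each other (they differentiate independent tensor factors). Therefore the commutator $[\Ddot_\mu,\Ddot_\nu]$ splits additively: the part acting on the tangential index gives the ordinary spacetime curvature $\R_\la{}^\si{}_{\mu\nu}\Psi_{\si A}$ by the usual Ricci identity for $\D$, and the part acting on the horizontal index gives $\R_A{}^B{}_{\mu\nu}\Psi_{\la B}$ by the base case just proved. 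The cross terms, in which one derivative hits the tangential slot and the other the horizontal slot, cancel in the antisymmetrized combination. I would spell this out by writing $\Ddot_\mu = \D_\mu^{(\mathrm{tang})}\oplus \Ddot_\mu^{(\mathrm{hor})}$ schematically and expanding the commutator, noting that $[\D_\mu^{(\mathrm{tang})},\Ddot_\nu^{(\mathrm{hor})}] + [\Ddot_\mu^{(\mathrm{hor})},\D_\nu^{(\mathrm{tang})}]$ vanishes because mixed second derivatives on a tensor product factorize.

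The main obstacle I expect is bookkeeping around the non-integrability and the projection: specifically, making sure that when $\mu,\nu$ are taken to be frame directions $e_3,e_4$ or horizontal $e_a$ (rather than abstract coordinate vectors), the bracket term $[\mu,\nu]$ in the commutator is handled consistently, since $[e_a,e_b]$ has a non-horizontal component governed by $\atrch,\atrchb$ (Remark \ref{rem:nonintegrabilityandatrchatrchb} and Proposition \ref{prop-nullstr:0}). The cleanest route is to prove the identity for $\mu,\nu$ arbitrary vectorfields using the defining property $\Ddot_X U(Y_1,\ldots) = X(U(Y_1,\ldots)) - \sum U(\ldots\Ddot_X Y_i\ldots)$ from Definition \ref{definition:S-covariantderivative}, which is tensorial in $X$, so that the bracket terms are automatically absorbed and one never needs to separately track $[\mu,\nu]$; the horizontal curvature $\R_A{}^B{}_{\mu\nu}$ that appears is then by definition the projected ambient curvature, exactly as in Proposition \ref{prop-nullstr:0}. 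In fact the one-slot case is precisely the statement $\Ddot_\mu\Ddot_\nu\Psi_A - \Ddot_\nu\Ddot_\mu\Psi_A = \R_A{}^B{}_{\mu\nu}\Psi_B$ with $\R$ the full spacetime Riemann tensor restricted to horizontal indices, and the multi-slot and mixed cases follow by the Leibniz rule for $\Ddot$ and linearity of the commutator over the tensor-product structure, exactly as in the standard proof of the Ricci identity for the Levi-Civita connection.
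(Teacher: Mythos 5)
Your reduction to the one--form case and the tensor--product splitting for mixed tensors are both reasonable, but the key identification step --- ``the non-horizontal part of $\D_\mu e_b$ gets contracted against $e_a$ and vanishes'' --- is exactly where the argument breaks. The vertical component of $\D_\nu e_a$ is invisible at \emph{first} order (which is why $\Ddot_\nu \Psi_a = \D_\nu \Psi_a$ for horizontal $\Psi$), but at \emph{second} order it is differentiated back into the horizontal bundle. Concretely, comparing the second covariant derivatives $\Ddot^2_{\mu\nu}\Psi_a$ and $\D^2_{\mu\nu}\Psi_a$ (with $\Psi$ extended as a horizontal spacetime covector), the only discrepancy is the term $-(\D\Psi)(e_\nu,\cdot)$ evaluated on $(\D_\mu e_a)^{(h)}$ rather than on $\D_\mu e_a$, so that
\[
[\Ddot_\mu,\Ddot_\nu]\Psi_a \;=\; \R_a{}^{b}{}_{\mu\nu}\Psi_b \;+\; (\D\Psi)\big(e_\nu,(\D_\mu e_a)^{\perp}\big)-(\D\Psi)\big(e_\mu,(\D_\nu e_a)^{\perp}\big),
\]
and the extra terms are quadratic in the connection coefficients but do \emph{not} cancel under antisymmetrization: for $(\mu,\nu)=(4,3)$ they equal $2(\eta_a\etab_b-\etab_a\eta_b)\Psi^b+2(\xib_a\xi_b-\xi_a\xib_b)\Psi^b$, and for horizontal $(\mu,\nu)=(b,c)$ they reproduce the $\chi\chib$ block of the Gauss-type identity. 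This is precisely what the paper's own Lemma \ref{lemma:comm-gen} (item 3: the $\eta_{a_i}\etab_b-\etab_{a_i}\eta_b$ terms and $\err_{43A}$) and Proposition \ref{prop-nullstr:0} (the $E$-terms) record, so invoking Proposition \ref{prop-nullstr:0} in support of ``the horizontal curvature is by definition the projected ambient curvature'' points the wrong way: that proposition exhibits exactly the correction terms you are claiming vanish. Note also that these corrections are not small perturbative errors --- $\eta,\etab$ do not vanish in Kerr --- so they cannot be discarded as $O(\ep^2)$.

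To close the gap you must either (a) read $\R_A{}^{B}{}_{\mu\nu}$ as the curvature two-form of the connection $\Ddot$ on the horizontal bundle, in which case the one-slot identity is a tautology and the real content is the (Gauss-type) computation relating that curvature to the ambient Riemann tensor plus the quadratic terms above; or (b) carry the correction terms explicitly through the base case. The second half of your argument --- additivity of the commutator over the spacetime and horizontal slots of a mixed tensor, with the cross terms cancelling --- is correct and needs no change.
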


\begin{proof}
Straightforward verification. 
\end{proof}

Observe that all the definitions above hold true for both real and complex tensors. 


\subsection{Invariant lagrangian}


Recall from Definition \ref{def:proxyfirstandsecondfundameentalform} that $\ga$ is a horizontal 2-tensor with  $\ga_{AB}=\g(e_A, e_B)$. We introduce for a complex tensor $\Psi \in \SS_k(\mathbb{C})$
 \beaa
 \LL&=& \g^{\mu\nu} \ga_{AB}\Ddot_\mu  \Psi^A \Ddot_\mu  \ov{\Psi}^B  +V \ga_{AB} \Psi^A \ov{\Psi}^B.
 \eeaa
 for $V$ real. 
\begin{proposition}
The Euler Lagrange equations are given by:
\beaa
\squared\Psi^A= V \Psi^A
\eeaa
where   $\squared\Psi^A:= \g^{\mu\nu} \Ddot_\mu\Ddot_ \nu \Psi^A.$
 \end{proposition}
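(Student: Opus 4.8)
The plan is to carry out the standard variational computation, being careful that the only subtlety compared with the scalar case is that the "metric" $\ga_{AB}=\g(e_A,e_B)$ on the horizontal bundle and the horizontal connection $\Ddot$ must be treated together, via the compatibility identity $X\,\ga(Y_1,Y_2)=\ga(\Ddot_X Y_1,Y_2)+\ga(Y_1,\Ddot_X Y_2)$ established in the Proposition just above, which is exactly the analogue of metric compatibility $\D\g=0$. I would vary $\ov{\Psi}^B$ (treating $\Psi$ and $\ov{\Psi}$ as independent, as usual for complex Lagrangians), so that the variation of $\LL$ reads
\beaa
\de \LL = \g^{\mu\nu}\ga_{AB}\,\Ddot_\mu\Psi^A\,\Ddot_\nu(\de\ov{\Psi}^B) + V\ga_{AB}\Psi^A\,\de\ov{\Psi}^B.
\eeaa

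First I would integrate by parts in the first term. The relevant identity is that for a horizontal vectorfield with components built from $\Psi$ and $\de\ov\Psi$, the full spacetime divergence $\D_\mu$ of $\g^{\mu\nu}\ga_{AB}\Ddot_\nu\Psi^A(\de\ov\Psi^B)$ equals $\g^{\mu\nu}\ga_{AB}\Ddot_\mu\Ddot_\nu\Psi^A(\de\ov\Psi^B) + \g^{\mu\nu}\ga_{AB}\Ddot_\nu\Psi^A\Ddot_\mu(\de\ov\Psi^B)$ up to terms that vanish because of the compatibility of $\Ddot$ with $\ga$; here one uses that $\ga$ is a genuine spacetime tensor (the horizontal 2-tensor extended by zero on the $e_3,e_4$ directions) so that $\D_\mu(\ga_{AB}T^{AB}) = \ga_{AB}\Ddot_\mu T^{AB}$ for any horizontal $T$, plus $\D_\mu(\g^{\mu\nu}\xi_\nu)$ being the ordinary divergence. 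Discarding the boundary term, $\de\LL$ becomes $-\g^{\mu\nu}\ga_{AB}(\Ddot_\mu\Ddot_\nu\Psi^A)\,\de\ov\Psi^B + V\ga_{AB}\Psi^A\,\de\ov\Psi^B$, and since $\de\ov\Psi^B$ is an arbitrary horizontal variation and $\ga_{AB}$ is nondegenerate on $\O(\MM)$, the factor $\ga_{AB}$ can be stripped, yielding $\g^{\mu\nu}\Ddot_\mu\Ddot_\nu\Psi^A = V\Psi^A$, i.e. $\squared\Psi^A = V\Psi^A$ with $\squared$ as defined in the statement.

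The one point that needs genuine care — and which I expect to be the main (though minor) obstacle — is justifying the integration by parts in this mixed-connection setting: one must check that $\Ddot$ really does produce a total $\D$-divergence when contracted appropriately, i.e. that there is no "anomaly" coming from the non-integrability of the horizontal distribution or from the difference between $\Ddot$ and $\D$. This is handled by the observation that $\ga$ (extended by zero) is a bona fide covariant spacetime tensor and that, for horizontal $\Psi$, $\Ddot_\mu\Psi$ is precisely the horizontal projection of $\D_\mu\Psi$, so that $\ga_{AB}\Ddot_\mu\Psi^A\,(\cdots)^B = \ga_{\a\b}\D_\mu\Psi^\a\,(\cdots)^\b$ — the projection being absorbed by the already-horizontal $\ga$. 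Once this bookkeeping is in place, the derivation is the textbook one. I would also remark that the reality of $V$ ensures the real and imaginary parts of $\Psi$ satisfy the same equation, so that the complex equation is equivalent to the pair of real Euler–Lagrange equations obtained by varying the real and imaginary parts separately, consistent with treating $\Psi,\ov\Psi$ as independent.
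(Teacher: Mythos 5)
Your proposal is correct and follows essentially the same route as the paper: the standard variational computation, integrating by parts via the compatibility of $\Ddot$ with $\ga$ and $\g$ and discarding the resulting total divergence. The only cosmetic difference is that you vary $\ov{\Psi}$ alone (invoking the reality of $V$ to recover the conjugate equation), whereas the paper varies $\Psi$ and $\ov{\Psi}$ simultaneously and reads off both equations at once; your extra care in justifying that the contracted $\Ddot$-derivative really is a total $\D$-divergence is a welcome addition the paper leaves implicit.
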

 
\begin{proof}
The variation of the action is given by,
\beaa
0&=&\int_\M  \ga_{AB}\left(  \g^{\mu\nu} \Ddot_\mu   \Psi^A   \Ddot_ \nu( \de\ov{\Psi})^B+ \g^{\mu\nu} \Ddot_\mu   \ov{\Psi}^A   \Ddot_ \nu( \de\Psi)^B +V\Psi^A \de\ov{\Psi}^B+V\ov{\Psi}^A \de\Psi^B\right) dv_\g\\
&=&\int_\M   \Ddot_ \nu\left ( \g^{\mu\nu}  \ga_{AB}\Ddot_\mu   \ov{\Psi}^A   ( \de\Psi)^B \right) dv_\g+\int_\M   \Ddot_ \nu\left ( \g^{\mu\nu}  \ga_{AB}\Ddot_\mu   \Psi^A   ( \de\ov{\Psi})^B \right) dv_\g\\
&&- \int_\M  \ga_{AB} \left ( \g^{\mu\nu}  \Ddot_ \nu \Ddot_\mu  \ov{ \Psi}^A   ( \de\Psi)^B  -V\ov{\Psi}^A \de\Psi^B+ \g^{\mu\nu}  \Ddot_ \nu \Ddot_\mu   \Psi^A   ( \de\ov{\Psi})^B  -V\Psi^A \de\ov{\Psi}^B \right)dv_\g\\
&=&- \int_\M  \ga_{AB} \left ( \g^{\mu\nu}  \Ddot_ \nu \Ddot_\mu  \ov{ \Psi}^A   ( \de\Psi)^B  -V\ov{\Psi}^A \de\Psi^B \right)dv_\g\\
&&- \int_\M  \ga_{AB} \left ( \g^{\mu\nu}  \Ddot_ \nu \Ddot_\mu   \Psi^A   ( \de\ov{\Psi})^B  -V\Psi^A \de\ov{\Psi}^B \right)dv_\g
\eeaa
from which the proposition follows.
\end{proof}

\begin{definition}
We introduce  the   energy-momentum tensor
 \beaa
 \QQ_{\mu\nu}:&=&\frac 1 2 \Ddot_\mu  \Psi \c \Ddot _\nu \ov{\Psi} +\frac 1 2 \Ddot_\mu  \ov{\Psi} \c \Ddot _\nu \Psi 
          -\frac 12 \g_{\mu\nu} \left(\Ddot_\la \Psi\c\Ddot^\la \ov{\Psi} +  V \Psi \c \ov{\Psi}\right)
 \eeaa
 where  the dot product   here denotes full contraction with respect to the  horizontal indices, i.e.
 $\Ddot_\mu  \Psi \c \Ddot _\nu \ov{\Psi}= h^{ac} h^{bd} \Ddot_\mu  \Psi _{ab} \c \Ddot _\nu \ov{\Psi}_{cd}$.
 \end{definition}

\begin{lemma}
We have,
  \beaa
 \D^\nu\QQ_{\mu\nu}  &=& \frac 1 2 \Ddot_\mu  \Psi \c \left(\squared \ov{\Psi}-V\ov{\Psi}\right)+\frac 1 2 \Ddot_\mu  \ov{\Psi} \c \left(\squared \Psi-V\Psi\right)\\
 && +\frac 1 2 \Ddot ^\nu \ov{\Psi}^A \R_{AB\nu\mu} \Psi^B  +\frac 1 2  \Ddot ^\nu \Psi^A \R_{AB\nu\mu} \ov{\Psi}^B   -\frac 12  \D_{\mu} V  \Psi \c \ov{\Psi}
 \eeaa
\end{lemma}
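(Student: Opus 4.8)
The plan is to compute $\D^\nu \QQ_{\mu\nu}$ directly, differentiating the three groups of terms in the definition of the energy-momentum tensor and repeatedly using the metric-compatibility of $\Ddot$ (established in the Proposition following Definition \ref{definition:S-covariantderivative}, which gives $\D_\nu\big(h_{AB}\big)=0$ relative to an orthonormal horizontal frame, so $h$ passes freely through $\Ddot$). First I would differentiate the symmetric gradient term $\frac12\Ddot_\mu\Psi\c\Ddot_\nu\ov\Psi+\frac12\Ddot_\mu\ov\Psi\c\Ddot_\nu\Psi$: applying $\D^\nu$ and using the Leibniz rule for $\Ddot$ on mixed tensors produces, schematically, $\frac12(\Ddot^\nu\Ddot_\mu\Psi)\c\Ddot_\nu\ov\Psi+\frac12\Ddot_\mu\Psi\c\Ddot^\nu\Ddot_\nu\ov\Psi$ plus the conjugate terms. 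The second piece is exactly $\frac12\Ddot_\mu\Psi\c\squared\ov\Psi$ (and conjugate), matching the $\squared$ terms in the claimed formula once the potential terms are absorbed. Next I would differentiate the $-\frac12\g_{\mu\nu}(\Ddot_\la\Psi\c\Ddot^\la\ov\Psi+V\Psi\c\ov\Psi)$ term: since $\D^\nu\g_{\mu\nu}=0$, this gives $-\frac12\D_\mu(\Ddot_\la\Psi\c\Ddot^\la\ov\Psi)-\frac12(\D_\mu V)\Psi\c\ov\Psi-\frac12 V\D_\mu(\Psi\c\ov\Psi)$, and the last term immediately identifies the $-\frac12 V\Psi\c\Ddot_\mu\ov\Psi-\frac12 V\ov\Psi\c\Ddot_\mu\Psi$ contributions that combine with the $\squared$ terms above to give the factors $\big(\squared\ov\Psi-V\ov\Psi\big)$ and $\big(\squared\Psi-V\Psi\big)$. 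The lone explicitly-remaining potential term is the $-\frac12(\D_\mu V)\Psi\c\ov\Psi$ appearing in the statement.

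The crux of the computation is reconciling the two remaining gradient-squared expressions, namely $\frac12(\Ddot^\nu\Ddot_\mu\Psi)\c\Ddot_\nu\ov\Psi$ (and its conjugate) coming from the first group with $-\frac12\D_\mu(\Ddot_\la\Psi\c\Ddot^\la\ov\Psi)$ coming from the third. I would expand $\D_\mu(\Ddot_\la\Psi\c\Ddot^\la\ov\Psi)=(\Ddot_\mu\Ddot_\la\Psi)\c\Ddot^\la\ov\Psi+\Ddot_\la\Psi\c\Ddot_\mu\Ddot^\la\ov\Psi$, so the difference of the two expressions is $\frac12(\Ddot^\nu\Ddot_\mu-\Ddot_\mu\Ddot^\nu)\Psi\c\Ddot_\nu\ov\Psi$ plus the conjugate term $\frac12(\Ddot^\nu\Ddot_\mu-\Ddot_\mu\Ddot^\nu)\ov\Psi\c\Ddot_\nu\Psi$. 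This is precisely where the curvature formula from the Proposition in the "Mixed tensors" subsection enters: $(\Ddot_\mu\Ddot_\nu-\Ddot_\nu\Ddot_\mu)\Psi_A=\R_A{}^B{}_{\mu\nu}\Psi_B$, so the commutator terms become $\frac12\Ddot^\nu\ov\Psi^A\,\R_{AB\nu\mu}\,\Psi^B+\frac12\Ddot^\nu\Psi^A\,\R_{AB\nu\mu}\,\ov\Psi^B$ — exactly the curvature terms in the claimed identity, after a relabeling of dummy indices and using the antisymmetry $\R_{AB\mu\nu}=-\R_{AB\nu\mu}$ to fix signs.

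The main obstacle I anticipate is purely bookkeeping: tracking the index placements carefully when raising/lowering with $\g$ versus with the horizontal metric $h$, and making sure the symmetrization between $\Psi$ and $\ov\Psi$ is respected so that no factors of $\frac12$ or $2$ are lost. In particular one must be careful that the $\Ddot$ operator on a mixed tensor $\Psi_{\la A}$ acts via $\D$ on the spacetime index $\la$ and via $\Ddot$ on the horizontal indices $A$, as in the displayed definition, and that the commutator formula for mixed tensors is the one actually used when $\mu$ is a spacetime index. There is no essential analytic difficulty here — the identity is an exact pointwise computation — so once the Leibniz rules and the curvature commutation formula are applied in the right order, the four groups of terms in the statement assemble directly. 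I would close by remarking that the identity holds for both real and complex $\Psi$, as noted after the Mixed tensors proposition.
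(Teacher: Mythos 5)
Your plan is correct and matches the paper's proof essentially line for line: differentiate the gradient and trace terms, let the $\squared$ pieces combine with the $V$ terms, and convert the leftover second-derivative mismatch $\frac12\Ddot^\nu\Psi\c(\Ddot_\nu\Ddot_\mu-\Ddot_\mu\Ddot_\nu)\ov\Psi$ (plus conjugate) into the curvature terms via the mixed-tensor commutation formula. No gaps.
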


\begin{proof}
We have,
\beaa
 \D^\nu\QQ_{\mu\nu}&=&\frac 1 2 \Ddot^\nu \Ddot_\nu  \Psi \c \Ddot _\mu \ov{\Psi}+\frac 1 2 \Ddot^\nu \Ddot_\nu  \ov{\Psi} \c \Ddot _\mu \Psi\\
 &&+ \frac 1 2  \Ddot^\nu  \Psi\c  \left( \Ddot_\nu \Ddot _\mu -\Ddot_\mu \Ddot _\nu \right)\ov{\Psi}+ \frac 1 2  \Ddot^\nu  \ov{\Psi}\c  \left( \Ddot_\nu \Ddot _\mu -\Ddot_\mu \Ddot _\nu \right)\Psi\\
 &&-\frac 1 2 V\D_\mu \Psi \c \ov{\Psi}   -\frac 1 2 V\D_\mu \ov{\Psi} \c \Psi    -\frac 1 2 \D_\mu V \Psi\c\ov{\Psi}
\\
&=&\frac 1 2  \Ddot_\mu \ov{\Psi}\left(\squared \Psi- V\Psi\right)+\frac 1 2 \Ddot_\mu \Psi\left(\squared \ov{\Psi}- V\ov{\Psi}\right)+ \frac 1 2  \Ddot^\nu  \Psi^A\R_{ A   B   \nu\mu}\ov{\Psi}^B+ \frac 1 2  \Ddot^\nu  \ov{\Psi}^A\R_{ A   B   \nu\mu}\Psi^B\\
 &&  -\frac 1 2 \D_\mu V \Psi\c\ov{\Psi}
\eeaa
 as desired.
\end{proof}


\subsection{Standard calculation}\lab{sec:standardcalculationvectorfieldmethodwaveequationpsi}


 \begin{proposition}\label{prop-app:stadard-comp-Psi}
 Consider $\Psi\in \SS_2(\mathbb{C})$ and   $X$ a vectorfield on $\MM$.       
 \begin{enumerate}
\item 
 The $1$-form   $\PP_\mu=\QQ_{\mu\nu} X^\nu$   verifies,
\beaa
\D^\mu \PP_\mu&=& \frac 1 2 X^\mu \Ddot_\mu  \ov{\Psi} \c\left(\squared  \Psi-V\Psi\right) +\frac 1 2  X^\mu \Ddot_\mu  \Psi \c\left(\squared \ov{ \Psi}-V\ov{\Psi}\right) 
- X( V )  \Psi\c \ov{\Psi}.
\eeaa
\item
   Let $X$ as above,   $w$ a scalar   and $M$  a one form. Define,
 \beaa
 \PP_\mu &=&\PP_\mu[X, w, M]=\QQ_{\mu\nu} X^\nu +\frac 1 4  w \ov{\Psi} \c \Ddot_\mu \Psi +\frac 1 4  w \Psi \c \Ddot_\mu \ov{\Psi} -\frac 1 4|\Psi|^2   \pr_\mu w +\frac 1 4 |\Psi|^2 M_\mu.
  \eeaa
  with $|\Psi|^2:=\Psi\c\ov{\Psi}$. Then,
 \beaa
  \D^\mu  \PP_\mu[X, w, M] &=& \frac 1 2 \QQ  \c\piX - \frac 1 2 X( V )  \Psi\c \ov{\Psi}+\frac 12  w \LL[\Psi] -\frac 1 4|\Psi|^2   \square_\g  w\\
   &+&\frac 1 4 |\Psi|^2 \Div M+\frac 1 4 \ov{\Psi}\c \Ddot_\mu\Psi \,  M^\mu+\frac 1 4 \Psi\c \Ddot_\mu\ov{\Psi} \,  M^\mu\\
   &&+\frac 1 2  \left(X( \ov{\Psi} )+\frac 1 2   w \ov{\Psi}\right)\c \left(\squared  \Psi-V\Psi\right) + \frac 1 2 \left(X( \Psi )+\frac 1 2   w \Psi\right)\c \left(\squared  \ov{\Psi}-V\ov{\Psi}\right) \\
   &+&\frac 1 2 X^\mu \Ddot^\nu  \ov{\Psi} ^A\R_{ A   B   \nu\mu}\Psi^B+\frac 1 2 X^\mu \Ddot^\nu  \Psi ^A\R_{ A   B   \nu\mu}\ov{\Psi}^B.
 \eeaa
 \end{enumerate}
\end{proposition}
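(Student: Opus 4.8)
The plan is to deduce both identities purely from the divergence formula for $\QQ_{\mu\nu}$ established in the lemma above, together with the Leibniz rule; no new commutator computation is required, and every curvature contribution in the final formulas is inherited from that lemma.

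For part (1) I would write $\D^\mu\PP_\mu = (\D^\mu\QQ_{\mu\nu})\,X^\nu + \QQ_{\mu\nu}\,\D^\mu X^\nu$. Since $\QQ$ is symmetric, the first summand is governed by the $\D^\nu\QQ_{\mu\nu}$ lemma: contracting it with $X$ yields the wave-operator sources $\frac12 X^\mu\Ddot_\mu\ov\Psi\c(\squared\Psi - V\Psi) + \frac12 X^\mu\Ddot_\mu\Psi\c(\squared\ov\Psi - V\ov\Psi)$, the curvature terms $\frac12 X^\mu\Ddot^\nu\ov\Psi^A\R_{AB\nu\mu}\Psi^B + \frac12 X^\mu\Ddot^\nu\Psi^A\R_{AB\nu\mu}\ov\Psi^B$, and $-\frac12 X(V)\,\Psi\c\ov\Psi$. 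For the second summand, symmetry of $\QQ$ lets me replace $\D^\mu X^\nu$ by its symmetrization, so $\QQ_{\mu\nu}\D^\mu X^\nu = \frac12\QQ_{\mu\nu}\,\piX^{\mu\nu} = \frac12\QQ\c\piX$. Adding the two gives the identity of part (1).

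For part (2) I would differentiate $\PP_\mu[X,w,M]$ term by term, using metric compatibility of $\Ddot$ to commute $\D$ past full horizontal contractions. The $\QQ_{\mu\nu}X^\nu$ piece is handled by part (1). Differentiating $\frac14 w\,\ov\Psi\c\Ddot_\mu\Psi$ and $\frac14 w\,\Psi\c\Ddot_\mu\ov\Psi$ and adding gives $\frac14(\partial^\mu w)\,\partial_\mu|\Psi|^2 + \frac12 w\,\Ddot^\mu\Psi\c\Ddot_\mu\ov\Psi + \frac14 w\,(\ov\Psi\c\squared\Psi + \Psi\c\squared\ov\Psi)$, where I use $\partial_\mu|\Psi|^2 = \ov\Psi\c\Ddot_\mu\Psi + \Psi\c\Ddot_\mu\ov\Psi$ and $|\Psi|^2 := \Psi\c\ov\Psi$. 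Writing $\ov\Psi\c\squared\Psi = V|\Psi|^2 + \ov\Psi\c(\squared\Psi-V\Psi)$ and its conjugate, the term $\frac12 w\,\Ddot^\mu\Psi\c\Ddot_\mu\ov\Psi$ merges with $\frac12 wV|\Psi|^2$ into $\frac12 w\LL[\Psi]$, while $\frac14 w\,(\ov\Psi\c(\squared\Psi-V\Psi) + \Psi\c(\squared\ov\Psi-V\ov\Psi))$ combines with the part (1) sources to form $\frac12(X(\ov\Psi)+\frac12 w\ov\Psi)\c(\squared\Psi-V\Psi)$ and its conjugate. Differentiating $-\frac14|\Psi|^2\partial_\mu w$ gives $-\frac14(\partial^\mu|\Psi|^2)(\partial_\mu w) - \frac14|\Psi|^2\square_\g w$; differentiating $\frac14|\Psi|^2 M_\mu$ gives $\frac14(\partial^\mu|\Psi|^2)M_\mu + \frac14|\Psi|^2\Div M$, and rewriting $\partial^\mu|\Psi|^2 = \ov\Psi\c\Ddot^\mu\Psi + \Psi\c\Ddot^\mu\ov\Psi$ produces the $M$-dependent mass terms. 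The cross terms $(\partial^\mu w)(\partial_\mu|\Psi|^2)$ arising from $\frac14 w\,\ov\Psi\c\Ddot_\mu\Psi$ (plus conjugate) and from $-\frac14|\Psi|^2\partial_\mu w$ cancel, and collecting the rest yields the stated formula.

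There is no genuine obstacle here; the one point demanding care — where a stray sign or factor of $2$ is most likely to creep in — is the bookkeeping of $\D$ versus $\Ddot$ when differentiating mixed objects such as $\ov\Psi\c\Ddot_\mu\Psi$, which are spacetime $1$-forms obtained by contracting all horizontal indices of $\Ddot\Psi$ against $\ov\Psi$: one must invoke $\Ddot\ga=0$ to commute $\D$ with the contraction, and must note that no extra curvature is generated at this step because $\squared$ is defined with the fixed ordering $\g^{\mu\nu}\Ddot_\mu\Ddot_\nu$. Hence all curvature in the final identity comes solely from the $\D^\nu\QQ_{\mu\nu}$ lemma, and everything else is Leibniz plus regrouping.
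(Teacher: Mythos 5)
Your argument is correct, and it supplies a complete derivation where the paper only writes ``See Chapter 10 in \cite{KS}''; the route you take (split $\D^\mu(\QQ_{\mu\nu}X^\nu)$ into the divergence of $\QQ$ contracted with $X$ plus $\frac12\QQ\c\piX$ via symmetry of $\QQ$, then Leibniz on the lower-order corrections with cancellation of the $(\pr^\mu w)(\pr_\mu|\Psi|^2)$ cross terms) is exactly the standard vectorfield-method computation the citation refers to, and your identification of the only delicate point --- commuting $\D$ past the horizontal contraction using $\Ddot\ga=0$, with no new curvature generated because $\squared=\g^{\mu\nu}\Ddot_\mu\Ddot_\nu$ --- is the right one. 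One remark: what your part (1) computation actually produces is
\beaa
\D^\mu \PP_\mu&=& \frac 1 2 \QQ\c\piX+\frac 1 2 X^\mu \Ddot_\mu  \ov{\Psi} \c\left(\squared  \Psi-V\Psi\right) +\frac 1 2  X^\mu \Ddot_\mu  \Psi \c\left(\squared \ov{ \Psi}-V\ov{\Psi}\right)
- \frac 1 2 X( V )  \Psi\c \ov{\Psi}\\
&&+\frac 1 2 X^\mu \Ddot^\nu  \ov{\Psi} ^A\R_{ A   B   \nu\mu}\Psi^B+\frac 1 2 X^\mu \Ddot^\nu  \Psi ^A\R_{ A   B   \nu\mu}\ov{\Psi}^B,
\eeaa
which is the formula needed for part (2) (set $w=M=0$ there to check consistency), whereas the printed statement of item (1) omits the deformation-tensor and curvature terms and carries coefficient $-1$ rather than $-\frac12$ on $X(V)\,\Psi\c\ov\Psi$. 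That discrepancy lies in the paper's statement, not in your derivation, but you should not claim that ``adding the two gives the identity of part (1)'' as literally printed; state the full identity you derive and note that it is the one used in item (2).
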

\begin{proof} See Chapter 10 in \cite{KS}.
\end{proof}


\subsection{Canonical form of the wave equation}


 Consider the wave equation for  $\SS_2$-tensors
 \beaa
 \squared\Psi_{ab}:= \g^{\mu\nu} \Ddot_\mu\Ddot_ \nu \Psi_{ab}.
\eeaa

\begin{lemma}
We have, for $\Psi\in \SS_2(\mathbb{C})$
\begin{enumerate}
\item We have
\bea
\bsplit
\square_2 \Psi&=-\frac 1 2 \big(\nab_3\nab_4\Psi+\nab_4 \nab_3 \Psi\big)+\lap_2 \Psi +\left(\omb -\frac 1 2 \trchb\right) \nab_4\Psi+\left(\om -\frac 1 2 \trch\right) \nab_3\Psi \\
&+ (\eta+\etab) \c\nab \Psi.
\end{split}
\eea
\item We have
\bea
\,[\nab_3,  \nab_4 ] \Psi &=& 4\rhod \dual \Psi- 2\om \nab_3 \Psi+ 2\omb \nab_4 \Psi- 2(\etab-\eta)\c\nab \Psi.
\eea

\item 
We also have
\bea\label{first-equation-square}
\bsplit
\square_2 \Psi&=&-\nab_4 \nab_3 \Psi +\lap_2\Psi+\left(2\om -\frac 1 2 \trch\right) \nab_3\Psi- \frac 1 2 \trchb \nab_4\Psi+2 \etab \c \nab  \Psi-2\rhod \dual \Psi.
\end{split}
\eea
\end{enumerate}
\end{lemma}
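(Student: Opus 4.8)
The plan is to establish the three claimed identities in sequence, each one feeding into the next, starting from the definition $\square_2\Psi=\g^{\mu\nu}\Ddot_\mu\Ddot_\nu\Psi$ and expanding the metric in the null frame.

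First I would expand the d'Alembertian in the null frame. Since $\g^{\mu\nu}e_\mu\otimes e_\nu = -\frac12(e_3\otimes e_4+e_4\otimes e_3)+\de^{ab}e_a\otimes e_b$, we get $\square_2\Psi = -\frac12(\Ddot_3\Ddot_4\Psi+\Ddot_4\Ddot_3\Psi)+\de^{ab}\Ddot_a\Ddot_b\Psi$. The term $\de^{ab}\Ddot_a\Ddot_b\Psi$ is not yet $\lap_2\Psi = \de^{ab}\nab_a\nab_b\Psi$ because $\Ddot$ differs from $\nab$ by connection terms tangent to the $e_3,e_4$ directions; using the Ricci formulae \eqref{ricci} (in particular $\D_a e_b = \nab_a e_b+\frac12\chi_{ab}e_3+\frac12\chib_{ab}e_4$ and the expressions for $\D_a e_3,\D_a e_4$), one converts $\Ddot_a\Ddot_b\Psi$ into $\nab_a\nab_b\Psi$ plus correction terms involving $\chi,\chib$ contracted against $\Ddot_3\Psi,\Ddot_4\Psi$, and similarly $\Ddot_3,\Ddot_4$ acting on $\Psi$ differ from $\nab_3,\nab_4$ by terms in $\eta,\etab,\xi,\xib$. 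Tracing with $\de^{ab}$ and using $\de^{ab}\chi_{ab}=\trch$, $\de^{ab}\chib_{ab}=\trchb$, together with the definitions $\om=\frac14\g(\D_4e_4,e_3)$, $\omb=\frac14\g(\D_3e_3,e_4)$, one collects exactly the coefficients $\omb-\frac12\trchb$ in front of $\nab_4\Psi$, $\om-\frac12\trch$ in front of $\nab_3\Psi$, and $(\eta+\etab)\c\nab\Psi$ from the mixed terms. This gives item (1). For item (2), I would apply the mixed-tensor curvature formula $(\Ddot_\mu\Ddot_\nu-\Ddot_\nu\Ddot_\mu)\Psi_{ab}=\R_{a}{}^c{}_{\mu\nu}\Psi_{cb}+\R_{b}{}^c{}_{\mu\nu}\Psi_{ac}$ with $\mu=3,\nu=4$, together with the commutator computation already carried out in Lemma \ref{lemma:comm} item (3) for $\SS_2$ tensors; the curvature contribution $\R_{ab34}$ produces the $4\rhod\dual\Psi$ term via $W_{ab34}=2\in_{ab}\dual\rho$ (recalling $\R=W$ in vacuum), and the remaining terms $-2\om\nab_3\Psi+2\omb\nab_4\Psi-2(\etab-\eta)\c\nab\Psi$ are exactly the main terms of the $[\nab_4,\nab_3]$ commutator with the $\xi,\xib$ error terms absorbed or shown to cancel for the relevant tensor — more carefully, one should keep the $\Ddot$ commutator rather than the $\nab$ commutator, and the difference between $\Ddot_3\Ddot_4-\Ddot_4\Ddot_3$ and $\nab_3\nab_4-\nab_4\nab_3$ acting on $\Psi$ is a lower-order bookkeeping check.

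Finally, item (3) follows by pure algebra: write $\frac12(\nab_3\nab_4+\nab_4\nab_3)\Psi = \nab_4\nab_3\Psi+\frac12[\nab_3,\nab_4]\Psi$, substitute the commutator from item (2), and plug into item (1). The $\nab_3\Psi$ coefficient becomes $(\om-\frac12\trch)+(-\om)=2\om-\frac12\trch$ after accounting for the sign — wait, tracking signs carefully: item (1) has $-\frac12(\nab_3\nab_4+\nab_4\nab_3)\Psi = -\nab_4\nab_3\Psi-\frac12[\nab_3,\nab_4]\Psi$, and $-\frac12[\nab_3,\nab_4]\Psi = -2\rhod\dual\Psi+\om\nab_3\Psi-\omb\nab_4\Psi+(\etab-\eta)\c\nab\Psi$, so the $\nab_3\Psi$ coefficient is $\om+(\om-\frac12\trch)=2\om-\frac12\trch$, the $\nab_4\Psi$ coefficient is $-\omb+(\omb-\frac12\trchb)=-\frac12\trchb$, and the angular term is $(\etab-\eta)\c\nab\Psi+(\eta+\etab)\c\nab\Psi = 2\etab\c\nab\Psi$, with the leftover $-2\rhod\dual\Psi$. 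This matches \eqref{first-equation-square}.

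The main obstacle I expect is item (1): the careful conversion of $\Ddot_a\Ddot_b\Psi$ (second horizontal derivatives in the $\Ddot$ sense) into $\nab_a\nab_b\Psi$ and the verification that all the connection-coefficient corrections assemble into precisely the coefficients $\omb-\frac12\trchb$, $\om-\frac12\trch$, and $\eta+\etab$, with the non-integrability defects $\atrch,\atrchb$ and the $\xi,\xib$ terms genuinely dropping out of the trace. This is where the non-symmetry of $\chi,\chib$ and the extra commutator terms relative to \cite{Ch-Kl} must be handled with care, though in the end the structure is dictated by signature/conformal-weight considerations. The reference to Chapter 10 of \cite{KS} in the preceding proposition suggests the authors will invoke an analogous computation there.
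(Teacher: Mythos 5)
Your proposal is correct and follows essentially the same route as the paper: expand $\square_2\Psi=-\frac12(\Ddot_3\Ddot_4+\Ddot_4\Ddot_3)\Psi+\de^{cd}\Ddot_c\Ddot_d\Psi$, convert each $\Ddot$-derivative to $\nab$ via $\Ddot_3\Ddot_4\Psi=\nab_3\nab_4\Psi-2\omb\nab_4\Psi-2\eta\c\nab\Psi$, $\Ddot_4\Ddot_3\Psi=\nab_4\nab_3\Psi-2\om\nab_3\Psi-2\etab\c\nab\Psi$, $\Ddot_c\Ddot_d\Psi=\nab_c\nab_d\Psi-\frac12\chi_{cd}\nab_3\Psi-\frac12\chib_{cd}\nab_4\Psi$ for item (1), use the mixed-tensor curvature identity $(\Ddot_4\Ddot_3-\Ddot_3\Ddot_4)\Psi_{ab}=\R_a{}^c{}_{43}\Psi_{cb}+\R_b{}^c{}_{43}\Psi_{ac}=-4\rhod\dual\Psi_{ab}$ together with the same conversions for item (2), and combine algebraically for item (3), whose sign bookkeeping you carried out correctly. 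Your worry about the $\xi,\xib$ corrections in item (1) is moot (they are killed by the horizontal projection in $\Ddot_\mu e_a$), and the $\Ddot$-commutator route you settle on for item (2) is precisely the one the paper takes.
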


\begin{proof}
By definition
\beaa
\squared_2 \Psi_{ab}&=& \g^{34} \Ddot_3\Ddot_4 \Psi_{ab} + \g^{43} \Ddot_4\Ddot_3 \Psi_{ab}+ \g^{cd}\Ddot_c\Ddot_d \Psi_{ab}.
\eeaa
We write
\beaa
\Ddot_4 \Psi_{ab}&=& \nab_4 \Psi_{ab},\\
\Ddot_3 \Ddot_4 \Psi_{ab}&=& \nab_3 \nab_4 \Psi_{ab} - 2\omb \nab_4 \Psi_{ab}- 2\eta\c  \nab \Psi_{ab}, \\
\Ddot_4 \Ddot_3 \Psi_{ab}&=& \nab_4 \nab_3 \Psi_{ab} - 2\om \nab_3 \Psi_{ab}- 2\etab\c  \nab \Psi_{ab}, \\
\Ddot_d\Psi_{ab}&=&\nab_d\Psi_{ab},\\
\Ddot_c \Ddot_d \Psi_{ab}&=&\nab_c\nab_d \Psi_{ab} - \frac 1 2 \chi_{cd}\nab_3 \Psi_{ab} -\frac 1 2 \chib_{cd}\nab_4 \Psi_{ab}. 
\eeaa
Hence
\beaa
\squared_2 \Psi_{ab}&=&-\frac 1 2  \Ddot_3\Ddot_4 \Psi_{ab}    -\frac 1 2  \Ddot_4\Ddot_3 \Psi_{ab}     + \g^{cd}\Ddot_c\Ddot_d \Psi_{ab}\\
&=&-\frac 1 2 \big(\nab_3\nab_4\Psi_{ab}+\nab_4 \nab_3 \Psi_{ab}\big)+\g^{cd}\left(\nab_c\nab_d \Psi_{ab} - \frac 1 2 \chi_{cd}\nab_3 \Psi_{ab} -\frac 1 2 \chib_{cd}\nab_4 \Psi_{ab} \right)\\
& + &\omb \nab_4 \Psi_{ab}+\eta\c  \nab \Psi_{ab} +\omb\nab_3 \Psi_{ab}+\etab\c  \nab \Psi_{ab}\\
&=&-\frac 1 2 \big(\nab_3\nab_4\Psi_{ab}+\nab_4 \nab_3 \Psi_{ab}\big)+\lap_2 \Psi_{ab}- \frac 1 2 \trch\nab_3\Psi_{ab} -\frac1  2 \trchb \nab_4\Psi_{ab}\\
& + &\omb \nab_4 \Psi_{ab}+\eta\c  \nab \Psi_{ab} +\omb\nab_3 \Psi_{ab}+\etab\c  \nab \Psi_{ab}.
\eeaa
Hence
\beaa
\square_2 \Psi&=& -\frac 1 2 \big(\nab_3\nab_4\Psi_{ab}+\nab_4 \nab_3 \Psi_{ab}\big)+\lap_2 \Psi_{ab} +\left(\omb -\frac 1 2 \trchb\right) \nab_4\Psi+\left(\om -\frac 1 2 \trch\right) \nab_3\Psi \\
&&+(\eta+\etab) \c\nab \Psi.
\eeaa
To prove the second statement we recall 
\beaa
( \Ddot _\mu\Ddot_\nu -\Ddot_\nu\Ddot _\mu)\Psi_{ab}&=&\R_{a}\, ^   c\,_{   \mu\nu}\Psi_{cb}+\R_{b}\, ^   c\,_{   \mu\nu}\Psi_{ac}.
\eeaa
Hence,
\beaa
( \Ddot _4 \Ddot_3  -\Ddot_ 3 \Ddot _4)\Psi_{ab}&=&\R_{a}\, ^   c\,_{   43}\Psi_{cb}+\R_{b}\, ^   c\,_{   43}\Psi_{ac}= - 2 \in_{ac}\rhod \Psi_{cb}- 2 \in_{bc}\rhod \Psi_{ac} \\
&=&-4\rhod \dual \Psi_{ab}.
\eeaa
We deduce,
\beaa
  -4\rhod \dual \Psi_{ab}&=&     ( \Ddot _4 \Ddot_3  -\Ddot_ 3 \Ddot _4)\Psi_{ab}  \\
  &     =  &     (\nab_4 \nab_3 -\nab_3\nab_4\psi) \Psi_{ab}  +2\omb \nab_4 \Psi_{ab}+2\eta\c  \nab \Psi_{ab}- 2\om \nab_3 \Psi_{ab}-2\etab\c  \nab \Psi_{ab}
\eeaa
and thus
\beaa
\,[\nab_4,  \nab_3 ] \Psi &=& -4\rhod \dual \Psi+ 2\om \nab_3 \Psi- 2\omb \nab_4 \Psi+ 2(\etab-\eta)\c\nab \Psi
\eeaa
as stated. 
\end{proof}

    
\section{Perturbations of Kerr}
\lab{section:Kerr}


Before discussing perturbations of Kerr, we first provide basic facts concerning Kerr in sections \ref{sec:recallbasicthingsinKerr} and \ref{sec:recallbasicthingsinKerrbis}.


\subsection{The Kerr metric}
\lab{sec:recallbasicthingsinKerr}


We consider the Kerr metric in standard Boyer-Lindquist coordinates 
$$\g=-\frac{|q|^2\Delta}{\Sigma^2}(dt)^2+\frac{\Sigma^2(\sin\theta)^2}{|q|^2}\left(d\varphi-\frac{2amr}{\Sigma^2}dt\right)^2+\frac{|q|^2}{\Delta}(dr)^2+|q|^2(d\theta)^2,$$
where 
\bea\label{definition-q}
q=r+ i a \cos\th
\eea
and
$$
\left\{\ba{lll}
\Delta &=& r^2-2mr+a^2,\\
|q|^2 &=& r^2+a^2(\cos\theta)^2,\\
\Sigma^2 &=& (r^2+a^2)|q|^2+2mra^2(\sin\theta)^2=(r^2+a^2)^2-a^2(\sin\theta)^2\Delta.
\ea\right.
$$

The null frame is given by
\bea\label{null-frames}
\begin{split}
& e_4=\frac{r^2+a^2}{\Delta}\pr_t+\pr_r+\frac{a}{\Delta}\pr_\varphi,\quad e_3=\frac{r^2+a^2}{|q|^2}\pr_t-\frac{\Delta}{|q|^2}\pr_r+\frac{a}{|q|^2}\pr_\varphi,\\ & e_1=\frac{1}{|q|}\pr_\th,\quad e_2=\frac{a\sin\th}{|q|}\pr_t+\frac{1}{|q|\sin\th}\pr_\varphi.
\end{split}
\eea

\begin{remark}
There is an indeterminacy in the principal null frame as one may replace the null pair $(e_3, e_4)$ with $(\la^{-1}e_3, \la e_4)$ for any $\la>0$. In this section, the formulas correspond to the arbitrary choice of $\la>0$ ensuring $\nab_4e_4=0$ and thus $\om=0$. Note that our main result, on generalized Regge-Wheeler   equation,  is independent of this choice.
\end{remark}

The complex Ricci coefficients are given by  
\beaa
&& \Xh=\Xbh=\Xi=\Xib=\om=0, \qquad  \Hb=-Z,
\eeaa
\beaa
&&\tr X=\frac{2}{q}, \qquad\qquad \tr\Xb=-\frac{2\Delta q}{|q|^4},\\
&&H_1=\frac{ai\sin\th\, q}{|q|^3}, \qquad H_2=\frac{a\sin\th \, q}{|q|^3}, \\
&& Z_1=\frac{ai\sin\th\, \ov{q}}{|q|^3},\qquad\,\, Z_2=\frac{a\sin\th\,\ov{q}}{|q|^3},\\
&& \omb=\frac{a^2\cos^2\th (r-m)+mr^2-a^2r}{|q|^4}.
\eeaa

\begin{remark}
Note the identities 
\beaa
H_1=-\ov{Z_1}, \quad H_2=\ov{Z_2}
\eeaa
and also
\beaa
H_1=\ov{\Hb_1}, \quad H_2=-\ov{\Hb_2}.
\eeaa
\end{remark}

The complex curvature components are given by
\beaa
A=B=\Bb=\Ab=0,\quad P=-\frac{2m}{q^3}.
\eeaa


\subsection{Equations for $q$ in Kerr}
\lab{sec:recallbasicthingsinKerrbis}


Recall the definition \eqref{definition-q} of $q$, $q=r+ i a \cos\th$. We have the following equations. 
\begin{lemma} 
The scalar $q$ satisfies
\bea\lab{equations:forq}
\nab_4 q&= \frac 1 2 \tr X q,  \qquad  \nab_3 q=\frac 1 2 \ov{\tr \Xb }  q, \qquad  \DD q= q  \Hb , \qquad   \DDb q=q \ov{H}.
\eea
Also
\bea
\lab{eq:qHb=ovqH}
q \Hb=-\ov{q} H.
\eea
In particular $ |H|^2=|\Hb|^2 
$.
\end{lemma}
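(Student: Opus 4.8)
This is a purely computational statement: each identity is verified by evaluating both sides in the explicit Kerr frame \eqref{null-frames}, using the explicit values of the Ricci coefficients recalled in section \ref{sec:recallbasicthingsinKerr} together with the elementary identity $q\ov q=|q|^2$. So the plan is just to organize the bookkeeping.

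The first step is to note that $q=r+ia\cos\th$ is a scalar depending only on the Boyer--Lindquist coordinates $r,\th$, so $\pr_t q=\pr_\varphi q=0$, $\pr_r q=1$ and $\pr_\th q=-ia\sin\th$. Since $q$ is a scalar, its horizontal covariant derivatives are ordinary directional derivatives along the frame vectors, so \eqref{null-frames} gives at once $\nab_4 q=e_4(q)=1$, $\nab_3 q=e_3(q)=-\Delta/|q|^2$, $\nab_1 q=e_1(q)=-ia\sin\th/|q|$ and $\nab_2 q=e_2(q)=0$. The identities for $\nab_4 q$ and $\nab_3 q$ then follow: $\frac{1}{2}\tr X\,q=\frac{1}{2}\cdot\frac{2}{q}\cdot q=1$ and $\frac{1}{2}\ov{\tr\Xb}\,q=-\frac{\Delta q\ov q}{|q|^4}=-\frac{\Delta}{|q|^2}$.

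For the angular identities I would use that on a scalar $(\DD q)_1=e_1(q)+ie_2(q)$, $(\DD q)_2=e_2(q)-ie_1(q)$, with $(\DDb q)_a$ the same after $i\to-i$, coming from $\dual\nab_a q=\in_{ab}\nab_b q$ and $\in_{12}=1$. From the listed $H_1,H_2$ and the relations $H_1=\ov{\Hb_1}$, $H_2=-\ov{\Hb_2}$ one reads off $\Hb_1=-ia\sin\th\,\ov q/|q|^3$, $\Hb_2=-a\sin\th\,\ov q/|q|^3$, and likewise $\ov H_1=-ia\sin\th\,\ov q/|q|^3$, $\ov H_2=a\sin\th\,\ov q/|q|^3$; substituting the values of $\nab_a q$ and using $q\ov q=|q|^2$ then checks $\DD q=q\Hb$ and $\DDb q=q\ov H$ component by component. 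For $q\Hb=-\ov q H$ the cleanest route is to observe that $\DD r=0$ (since $e_1(r)=e_2(r)=0$), whence $q\Hb=\DD q=\DD(ia\cos\th)=ia\,\DD(\cos\th)$, while conjugating $\DDb q=q\ov H$ and using $\ov{\DDb q}=\DD\ov q$ gives $\ov q H=\DD\ov q=\DD(-ia\cos\th)=-ia\,\DD(\cos\th)$, so $q\Hb=-\ov q H$. Finally $|H|^2=|\Hb|^2$ is immediate from this last identity by taking pointwise norms, since multiplication by $q$, resp. $\ov q$, rescales by $|q|=|\ov q|>0$ throughout the region under consideration.

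There is no genuine difficulty here; the only thing requiring care is the bookkeeping of the complexification conventions of section \ref{section-general-formalism} — the signs in the Hodge duals, the anti-self-duality relation $\dual\DD=-i\DD$, and the relations among the components of $H$, $\Hb$, $Z$ in Kerr.
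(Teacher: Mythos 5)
Your proof is correct, but for the four transport/angular identities in \eqref{equations:forq} it takes a genuinely different route from the paper. You verify everything by brute force in Boyer--Lindquist coordinates: $e_4(q)=1$, $e_3(q)=-\Delta/|q|^2$, $e_1(q)=-ia\sin\th/|q|$, $e_2(q)=0$, and then match against the explicit values of $\tr X$, $\ov{\tr\Xb}$, $\Hb$, $\ov H$; I checked the arithmetic (including the signs in $(\DD q)_a=e_a(q)\pm i e_{a'}(q)$ coming from $\in_{12}=1$) and it all closes, using $q\ov q=|q|^2$. The paper instead derives each identity from the reduced null structure and Bianchi equations in Kerr: e.g.\ $\nab_4 q=\frac12 \tr X\, q$ follows from $\tr X=2/q$ together with $\nab_4\tr X+\frac12(\tr X)^2=0$, and $\DD q=q\Hb$ from $P=-2m/q^3$ together with $\DD P=-3P\Hb$, with no explicit frame derivatives computed. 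The paper's route is the more structural one --- it exhibits the equations for $q$ as exact consequences of the structure equations, which is what makes the perturbative versions in \eqref{definition-Gammas-intro} natural $O(\ep)$ quantities --- while yours is more elementary and self-contained, at the cost of relying on the particular Boyer--Lindquist expression of the principal frame. For \eqref{eq:qHb=ovqH} and $|H|^2=|\Hb|^2$ your argument (conjugate $\DDb q=q\ov H$ to get $\ov q H=\DD\ov q=-ia\,\DD(\cos\th)$ and compare with $q\Hb=\DD q=ia\,\DD(\cos\th)$, using $\DD r=0$) is essentially identical to the paper's. No gaps.
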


\begin{proof} 
From the value of $\tr X=\frac{2}{q}$, and the reduced equation $\nab_4\tr X +\frac{1}{2}(\tr X)^2 =0$
we deduce the equation for $\nab_4 q$. From the value of $P=-\frac{2m}{q^3}$ and the reduced Bianchi identity $\nab_3P  = -\frac{3}{2}\ov{\tr\Xb} P$ we deduce the equation for $\nab_3 q$. 
Similarly, the equation $ \DD\ov{P} =- 3\ov{P}H $ becomes $\DD\ov{q} =H\ov{q}$
  or $ \DDb q=q \ov{H}$. The last equation in \eqref{equations:forq}  follows in the same manner from  $\DD P = -3P\Hb$.
    
 To derive \eqref{eq:qHb=ovqH} we write, recalling that $q=r+ia \cos\th$ and using $e_a(r)=0$, i.e. $\DD r=0$, 
  \beaa
   q  \Hb&=& \DD q=\DD( r+i a \cos\th) =\nab(r+i a \cos \th)+ i \dual \nab(r+i a \cos\th)\\
   &=&ia \nab\cos\th -a \dual\nab \cos\th, \\
   q \ov{H}&=&\DDb q =\DDb( r+i a \cos\th)  =\nab(r+i a \cos \th)- i \dual \nab(r+i a \cos\th)\\
   &=& ia \nab\cos\th +a \dual\nab \cos\th.
  \eeaa
   We deduce $ \ov{ q  \Hb}= - q \ov{H}$ i.e. $  q\Hb=-\ov{q} H$  as stated.
 \end{proof}

\begin{lemma}\label{derivatives-tr} 
In Kerr we have
\beaa
\nab (\trchb)&=&  -\frac 3 2\trchb    (\etab+ \eta) -\frac 1 2\atrchb  (\dual \eta-  \dual \etab ),  \\
\nab(\atrchb)&=& -\frac 3 2  \atrchb  (\etab+ \eta) +\frac 1 2\trchb  (\dual \eta-  \dual \etab ).
\eeaa
\end{lemma}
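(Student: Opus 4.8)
The statement is an identity for the angular derivatives of $\trchb$ and $\atrchb$ in Kerr. The natural starting point is the complexified equation governing $\nabc_3\Xbh$ together with the expression for $\nab\tr\Xb$ that one can extract from the null structure equations; but the cleanest route is to combine the two pieces of information already made explicit for Kerr: (i) the value $\tr\Xb=-\tfrac{2\Delta q}{|q|^4}$ with $q=r+ia\cos\th$, and (ii) the transport equations $\DD q=q\Hb$ and $\DDb q=q\ov H$ from \eqref{equations:forq}. First I would write $\tr\Xb=-2\Delta q\,|q|^{-4}$, note that $\Delta=\Delta(r)$ depends only on $r$ and that $\DD r=0$ (since $e_a(r)=0$), so that $\DD\Delta=0$ and $\DDb\Delta=0$. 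Hence $\DD(\tr\Xb)$ reduces to $-2\Delta\,\DD(q\,|q|^{-4})$, and similarly for $\DDb$.

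Next I would compute $\DD(q|q|^{-4})$ using the Leibniz rule for $\DD$ on scalars. We have $|q|^2=q\ov q$, so $\DD(|q|^{-4})=\DD((q\ov q)^{-2})=-2(q\ov q)^{-3}(\ov q\,\DD q+q\,\DD\ov q)$. Now $\DD q=q\Hb$ from \eqref{equations:forq}, and $\DD\ov q=\ov{\DDb q}=\ov{q\ov H}=\ov q\,H$ — wait, one must be careful: $\DD\ov q$ is not the conjugate of $\DDb q$ in general because $\DD=\nab+i\dual\nab$ conjugates to $\DDb$, and $\ov q$ is the conjugate of $q$; so $\DD\ov q=\ov{\DDb q}=\ov{q\ov H}=\ov q\,H$. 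Therefore $\DD(|q|^{-4})=-2(q\ov q)^{-3}(\ov q\cdot q\Hb+q\cdot\ov q H)=-2|q|^{-4}(\Hb+H)$. Combining with $\DD q=q\Hb$ gives
\[
\DD(q|q|^{-4})=|q|^{-4}\,q\Hb+q\cdot(-2)|q|^{-4}(\Hb+H)=|q|^{-4}q\big(\Hb-2(\Hb+H)\big)=-|q|^{-4}q(\Hb+2H).
\]
Hence $\DD(\tr\Xb)=2\Delta|q|^{-4}q(\Hb+2H)=-\tr\Xb\,(\Hb+2H)$. The analogous computation with $\DDb$, using $\DDb q=q\ov H$ and $\DDb\ov q=\ov{\DD q}=\ov{q\Hb}=\ov q\,\ov{\Hb}$, yields $\DDb(\tr\Xb)=-\tr\Xb(\ov H+2\ov{\Hb})$.

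Finally I would decomplexify. Recall $\tr\Xb=\trchb-i\atrchb$, so $\DD(\tr\Xb)=\nab\trchb-\dual\nab\atrchb+i(\dual\nab\trchb+\nab\atrchb)$, and similarly $\DDb(\tr\Xb)=\nab\trchb+\dual\nab\atrchb+i(-\dual\nab\trchb+\nab\atrchb)$ — actually it is cleaner to observe $\nab(\tr\Xb)=\tfrac12(\DD+\DDb)(\tr\Xb)$ extracting the $\nab$ part, but since $\nab$ of a complex scalar still carries the real/imaginary split of $\tr\Xb$, I would instead add and subtract the two displayed equations. Adding: $(\DD+\DDb)(\tr\Xb)=-\tr\Xb(\Hb+2H+\ov H+2\ov{\Hb})$; the left side equals $2\nab(\tr\Xb)=2\nab\trchb-2i\nab\atrchb$. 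Using the Kerr identities $H_1=\ov{\Hb_1},\ H_2=-\ov{\Hb_2}$ (equivalently $q\Hb=-\ov q H$) to simplify $\Hb+\ov H$ and $H+\ov{\Hb}$ in terms of $\eta=\tfrac12(H+\ov H)$, $\etab=\tfrac12(\Hb+\ov{\Hb})$ and their duals, then separating real and imaginary parts, should produce exactly the two claimed formulas $\nab\trchb=-\tfrac32\trchb(\etab+\eta)-\tfrac12\atrchb(\dual\eta-\dual\etab)$ and $\nab\atrchb=-\tfrac32\atrchb(\etab+\eta)+\tfrac12\trchb(\dual\eta-\dual\etab)$. The main obstacle I anticipate is purely bookkeeping: correctly tracking the interplay between complex conjugation and the Hodge dual (the sign flip $\dual\to-\dual$ under $q\mapsto\ov q$ encoded in Remark~\ref{rem:pairing}) when converting $\Hb,H$ and their conjugates into $\eta,\etab,\dual\eta,\dual\etab$; getting those signs right is what makes the $-\tfrac32$ and $\pm\tfrac12$ coefficients come out correctly.
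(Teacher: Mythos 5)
Your proposal is correct and follows essentially the same route as the paper: differentiate the explicit Kerr value $\tr\Xb=-2\Delta q\,|q|^{-4}$ using $\DD q=q\Hb$, $\DDb q=q\ov{H}$ (the paper packages the two as $\nab q=\tfrac12 q(\Hb+\ov H)$ and works with $\nab$ directly, while you compute $\DD$ and $\DDb$ separately and average, arriving at the identical expression $\nab\tr\Xb=-\tr\Xb(\tfrac12\Hb+\tfrac12\ov H+H+\ov{\Hb})$), then decomplexify via $H=\eta+i\dual\eta$, etc. The only cosmetic remark is that the final step needs nothing beyond the definitional split $H=\eta+i\dual\eta$ — the Kerr identity $q\Hb=-\ov q H$ you invoke is not actually required — and the real/imaginary separation does yield the stated $-\tfrac32$ and $\pm\tfrac12$ coefficients.
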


\begin{proof} 
Using that $\nab q=\frac 1 2 q (\Hb+ \ov{H})$, and since $\tr\Xb= -\frac{2\Delta}{q \ov{q}^2}$ we have
\beaa
\nab \tr\Xb &=&  \frac{2\Delta}{q^2 \ov{q}^2}\nab q  +\frac{4\Delta}{q \ov{q}^3}\nab\ov{q}\\
&=&  \frac{2\Delta}{q \ov{q}^2}\frac 1 2  (\Hb+ \ov{H})  +\frac{2\Delta}{q \ov{q}^2} (\ov{\Hb}+ H)\\
&=&  \frac{2\Delta}{q \ov{q}^2}  (\frac 1 2\Hb+ \frac 1 2\ov{H}+\ov{\Hb}+ H)\\
&=& -\tr\Xb  \left(\frac 1 2\Hb+ \frac 1 2\ov{H}+\ov{\Hb}+ H\right).
\eeaa
We compute
\beaa
&& -\tr\Xb  \left(\frac 1 2\Hb+ \frac 1 2\ov{H}+\ov{\Hb}+ H\right)\\
&=&  -(\trchb - i \atrchb)  \left(\frac 1 2(\etab+ i \dual \etab)+ \frac 1 2(\eta-i \dual \eta)+\etab - i \dual \etab+ \eta + i \dual \eta\right)\\
&=&  -(\trchb - i \atrchb)  \left(\frac 3 2 \etab +\frac 3 2 \eta+i \frac 1 2 \dual \eta-i \frac 1 2  \dual \etab \right).
\eeaa
Since $\nab \tr\Xb= \nab \trchb - i \nab\atrchb$, and each term is real, we obtain the lemma. 
\end{proof}

\begin{lemma} 
In Kerr we have
\bea\label{relation=tr-eta}
 \frac 1 2\trchb  (  \eta +\etab)-\frac 1 2   \atrchb  ( \dual \eta-\dual\etab )=0.
\eea
\end{lemma}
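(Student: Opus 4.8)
The cleanest route is to exploit the complexification already set up in the excerpt. In complex notation the left-hand side is the real part of a single complex expression built from $\tr\Xb = \trchb - i\atrchb$ and the complex one-forms $H=\eta+i\dual\eta$, $\Hb=\etab+i\dual\etab$. Indeed, writing things out, $\frac12\tr\Xb(H+\Hb) = \frac12(\trchb - i\atrchb)\big((\eta+\etab)+i(\dual\eta+\dual\etab)\big)$, whose real part is $\frac12\trchb(\eta+\etab)+\frac12\atrchb(\dual\eta+\dual\etab)$ — not quite the target. So instead I would look at $\tr\Xb\,\ov{H}$ or a suitable combination: the target is $\Re\!\big(\frac12\tr\Xb(H+\Hb)\big)$ only after correcting signs on the $\etab$-dual term, which suggests pairing $H$ with $\Hb$ via one conjugation. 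The practical approach: use the relation $q\Hb = -\ov q H$ from equation \eqref{eq:qHb=ovqH}, together with $\tr\Xb = -\frac{2\Delta}{q\ov q^{\,2}}$ (stated in section \ref{sec:recallbasicthingsinKerr}), to express everything in terms of $q$, $\ov q$, and $H$ alone, and then read off that the relevant real part vanishes.

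Concretely, the steps I would carry out are: \textbf{(1)} Recall from the previous lemma (the proof of Lemma \ref{derivatives-tr}) the computation $\nab\tr\Xb = -\tr\Xb\big(\tfrac12\Hb + \tfrac12\ov H + \ov\Hb + H\big)$, and also recall the independent expression for $\nab\tr\Xb$ coming from $\nab\trchb$ and $\nab\atrchb$ in Lemma \ref{derivatives-tr} itself. \textbf{(2)} Alternatively and more directly, use the Codazzi-type null structure equation for $\div\chibh$ from Proposition \ref{prop-nullstr-conformal} (or its complex form in Proposition \ref{prop-nullstr:complex-conf}): in Kerr $\Xbh = \Xib = 0$, $\Bb = 0$, so that equation collapses to a relation among $\nabc(\trchb)$, $\nabc(\atrchb)$ and the duals of $\eta,\etab,\xib$; since $\xib=0$ this gives exactly $\tfrac12\nabc\trchb - \tfrac12\dual\nabc\atrchb = \atrch\cdot 0 + \ldots$, i.e. information tying $\nab\trchb$ to $\nab\atrchb$. \textbf{(3)} Combine the two expressions for $\nab\trchb, \nab\atrchb$ (the one from Lemma \ref{derivatives-tr}, which came from differentiating $q$, and the one that must hold as a null structure equation) and subtract; the difference is precisely a multiple of $\frac12\trchb(\eta+\etab)-\frac12\atrchb(\dual\eta-\dual\etab)$, forcing it to vanish. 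A parallel, perhaps simpler, check: verify the identity directly from the explicit Kerr values $H_1 = \frac{ai\sin\th\,q}{|q|^3}$, $H_2 = \frac{a\sin\th\,q}{|q|^3}$, $\Hb = -Z$ together with $\Hb_1 = \ov{H_1}$, $\Hb_2 = -\ov{H_2}$ (noted in the Remark after the Kerr Ricci coefficients) and $\trchb = \Re\tr\Xb$, $\atrchb = -\Im\tr\Xb$ with $\tr\Xb = -\frac{2\Delta q}{|q|^4}$.

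I expect the main obstacle to be purely bookkeeping: keeping the Hodge-dual sign conventions straight when passing between $\eta,\dual\eta$ and the complex one-form $H$, and making sure that the combination $(\dual\eta - \dual\etab)$ — with a \emph{minus} sign, in contrast to the $(\eta+\etab)$ with a plus — is correctly matched. The sign asymmetry is exactly the fingerprint of one complex conjugation, so the identity should be read as "$\Re$ of ($\tr\Xb$ times the conjugate of something proportional to $H+\Hb$) vanishes," and the vanishing follows because in Kerr $q\Hb = -\ov q H$ makes $\tr\Xb(H+\Hb)$ — after the appropriate conjugation — purely imaginary. Once the right complex quantity is identified, the proof is a two-line computation using \eqref{eq:qHb=ovqH} and $\tr\Xb = -2\Delta/(q\ov q^{\,2})$; the real work is choosing that quantity correctly. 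I would present it via the explicit-values route if a fully self-contained short proof is wanted, and via the null-structure-equation route if one wants to emphasize that the identity is structural rather than coincidental.
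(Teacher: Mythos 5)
Your proposal is correct in substance but takes a genuinely different route from the paper, whose proof is the "explicit-values" fallback you mention at the end: it substitutes the Boyer--Lindquist expressions for $\trchb,\atrchb,\eta,\etab$ and their duals and checks by direct arithmetic that both components of the expression vanish. Your primary (complexification) route does close, and the quantity you left unidentified is $\ov{\tr\Xb}\,(H+\ov{\Hb})$: since $H+\ov{\Hb}=(\eta+\etab)+i(\dual\eta-\dual\etab)$ and $\ov{\tr\Xb}=\trchb+i\atrchb$, the left-hand side of the lemma is exactly $\frac 12\Re\big[\ov{\tr\Xb}\,(H+\ov{\Hb})\big]$; now \eqref{eq:qHb=ovqH} in the form $\ov{\Hb}=-\frac{q}{\ov{q}}\ov{H}$ together with $\ov{\tr\Xb}=-\frac{2\Delta \ov{q}}{|q|^4}$ gives
\[
\ov{\tr\Xb}\,(H+\ov{\Hb})\;=\;-\frac{2\Delta}{|q|^4}\big(\ov{q}H-q\ov{H}\big)\;=\;-\frac{4i\Delta}{|q|^4}\,\Im(\ov{q}H),
\]
which is purely imaginary, so the real part vanishes. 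Your Codazzi route (steps (2)--(3)) also works: in Kerr the equation for $\divc\chibh$ in Proposition \ref{prop-nullstr-conformal} reduces, using $\Xbh=\Xib=\Bb=0$ and $\ze=-\etab$, to $0=\frac 12\nab\trchb-\frac 12\dual\nab\atrchb+\frac 12\trchb\,\etab-\frac 32\atrchb\,\dual\etab$, and substituting the formulas of Lemma \ref{derivatives-tr} collapses the right-hand side to minus the left-hand side of the lemma. What your routes buy is a structural explanation (the identity is the real part of a manifestly imaginary quantity, equivalently a consequence of the Codazzi equation), at the price of invoking $q\Hb=-\ov{q}H$ or the null structure equations; the paper's component check buys self-containedness at the price of opacity. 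The one genuine shortfall of your write-up is that the key complex quantity was not pinned down ("the real work is choosing that quantity correctly"), so the primary route as written is a plan rather than a proof; with the identification above it becomes the two-line argument you anticipated.
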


\begin{proof}
Recall that 
\beaa
 \trchb=-\frac{2r\Delta}{|q|^4}, \quad \atrchb=\frac{2a\Delta\cos\th}{|q|^4},
 \eeaa
 and 
\beaa
&&H_1=-\frac{a^2\sin\th \cos\th}{|q|^3}+i\frac{a\sin\th r}{|q|^3}, \qquad H_2=\frac{a\sin\th r}{|q|^3}+i\frac{a^2\sin\th \cos\th}{|q|^3}, \\
&& \Hb_1=-\frac{a^2\sin\th \cos\th }{|q|^3}-i\frac{a\sin\th(r)}{|q|^3},\qquad \Hb_2=-\frac{a\sin\th r}{|q|^3}+i\frac{a^2\sin\th \cos\th}{|q|^3}.
\eeaa
Therefore
\beaa
\eta_1&=& -\frac{a^2\sin\th \cos\th}{|q|^3}, \qquad \eta_2=\frac{a\sin\th r}{|q|^3}, \\
\dual \eta_1&=& \frac{a\sin\th r}{|q|^3}, \qquad \dual \eta_2= \frac{a^2\sin\th \cos\th}{|q|^3},\\
\etab_1&=& -\frac{a^2\sin\th \cos\th }{|q|^3}, \qquad \etab_2 =-\frac{a\sin\th r}{|q|^3},\\
\dual \etab_1&=& -\frac{a\sin\th(r)}{|q|^3}, \qquad \dual \etab_2 =\frac{a^2\sin\th \cos\th}{|q|^3}.
\eeaa
We evaluate the above expression in $1$:
\beaa
&&  -\frac 1 2\trchb    \eta_1 -\frac{1}{2}\trchb\, \etab_1+\frac 1 2   \atrchb  ( \dual \eta_1) -\frac{1}{2}\atrchb\dual\etab_1 \\
&=& - \frac 1 2\frac{2r\Delta}{|q|^4}  \frac{a^2\sin\th \cos\th}{|q|^3} -\frac{1}{2}\frac{2r\Delta}{|q|^4} \frac{a^2\sin\th \cos\th }{|q|^3}\\
&&+\frac 1 2  \frac{2a\Delta\cos\th}{|q|^4}  \left( \frac{a\sin\th r}{|q|^3}\right) +\frac{1}{2}\frac{2a\Delta\cos\th}{|q|^4} \frac{ar\sin\th}{|q|^3}=0.
\eeaa
We evaluate the expression in $2$:
\beaa
&&  -\frac 1 2\trchb    \eta_2 -\frac{1}{2}\trchb\, \etab_2+\frac 1 2   \atrchb  ( \dual \eta_2) -\frac{1}{2}\atrchb\dual\etab_2 \\
&=&  \frac 1 2\frac{2r\Delta}{|q|^4} \frac{a\sin\th r}{|q|^3} -\frac{1}{2}\frac{2r\Delta}{|q|^4} \frac{a\sin\th r}{|q|^3}\\
&&+\frac 1 2  \frac{2a\Delta\cos\th}{|q|^4}  \left( \frac{a^2\sin\th \cos\th}{|q|^3}\right) -\frac{1}{2}\frac{2a\Delta\cos\th}{|q|^4} \frac{a^2\sin\th \cos\th}{|q|^3}=0.
\eeaa
This proves the lemma. 
\end{proof}


\subsection{Perturbations of Kerr}\label{perturbations-section}


Recall that in Kerr the Ricci coefficients  $\Xh$, $\Xbh$, $\Xi$, $\Xib$ and the curvature components $A$, $B$, $\Bb$, $\Ab$ vanish identically. We therefore expect that in perturbations of Kerr these quantities stay small, i.e. of order $O(\ep)$ for a sufficiently small $\ep$.

\begin{definition}
We say that a spacetime $\MM$ is  an $O(\ep)$ perturbation of   $Kerr(a, m)$   if  $\MM$ comes equipped with two functions $r:\MM\longrightarrow (0, \infty), \,\,\th: \MM\longrightarrow [0, \pi] $
 and a null frame $(e_3, e_4, e_1, e_2)$ such that the following conditions are verified.
 \begin{enumerate}
 \item  The  connection  and curvature coefficients  verify
 \beaa
 \Xi, \quad \Xib, \quad \Xh, \quad \Xbh, \quad A, \quad B,\quad \Bb, \quad \Ab  &=& O(\ep), 
 \\
 \widecheck{H}, \quad \widecheck{\Hb},\quad  \widecheck{Z}, \quad   \widecheck{\tr X}, \quad  \widecheck{\tr \Xb}, \quad \widecheck{\om}, \quad \widecheck{\omb}, \quad \widecheck{P}&=& O(\ep),
\eeaa
where, given a quantity $Q$,   we denoted by $\widecheck{Q}  $   the renormalized  quantity $\widecheck{Q}= Q-Q_{Kerr} $ where $Q_{Kerr} $ denotes the value of $Q $  expressed in terms of the variables  $r$ and $\th$.

\item  The complex scalar $q= r+i a \cos\th$ verifies
\beaa
\nab_4 q- \frac 1 2 \tr X q=O(\ep) , \quad \nab_3 q-\frac 1 2 \ov{\tr \Xb }  q= O(\ep), \quad 
\DD q- q  \Hb= O(\ep),   \quad  \DDb q-q \ov{H}= O(\ep).
\eeaa
 \end{enumerate}
\end{definition}

We introduce a schematic notation, similar  to the one used in \cite{KS}, see  Definition 2.3.8,  to keep track of the error terms. We divide the connection coefficient terms into
\bea\label{definition-Gammas}
\bsplit
\Ga_g^{(0)}&=\left\{ r \Xi, \quad \Xh, \quad \widecheck{Z}, \quad \widecheck{\Hb}, \quad \frac 1 r\nab_4 q- \frac {1}{ 2r} \tr X q, \quad  \frac 1 r  (\DD q- q  \Hb),  \quad  \frac 1 r ( \DDb q-q \ov{H})\right\}, \\
\Ga_b^{(0)}&=\left\{ \widecheck{H},\quad  \Xbh,\quad  \Xib, \quad  \frac 1 r \nab_3 q-\frac {1}{ 2r} \ov{\tr \Xb }  q\right\}.
\end{split}
\eea
For higher derivatives we denote 
\beaa
\Ga_g^{(s)}&=& \dk^{\leq s} \Ga_g, \qquad \Ga_b^{(s)}= \dk^{\leq s} \Ga_b,
\eeaa
where
\beaa
\dk&=& \{\nab_3, r\nab_4, r\DD \}.
\eeaa

For a $O(\ep)$-perturbation of Kerr we can also define the following two vectorfields:
\bea
\T&=& \frac 1 2 e_3+\frac{\Delta}{2|q|^2} e_4-\frac{a \sin\th}{|q|} e_2, \label{definition-T-vectorfield}\\
 \Z&=&\left(|q|\sin\th +\frac{a^2 \sin^3\th}{|q|}\right) e_2-\frac 1 2a\sin^2\th e_3-\frac{a\sin^2\th\Delta}{2|q|^2} e_4.\label{definition-Z-vectorfield}
\eea
In Kerr spacetime we have $\T=\partial_t$ and $\Z=\partial_\vphi$.


\subsection{Commutation formulas for perturbations of Kerr}


In this section, we collect commutation formulas where   the error terms are written schematically by making use of  the notation introduced in \eqref{definition-Gammas}.


  \subsubsection{Commutation formulas in complex form}
   

   \begin{lemma}
   The following commutation formulas hold true.
   \begin{enumerate}
   \item
    Let $F=f+i\dual f \in \SS_1 (\mathbb{C})$.  Then
\bea
 \lab{commutator-nab-43-D-hot} 
\bsplit
\, [\nab_4, \mathcal{D}\hot ]F&=- \frac 1 2 \tr X( \mathcal{D}\hot F + \underline{H} \hot F)+ (\underline{H}+Z) \hot \nab_4 F+\err_{4\DD \hot}[F], \\
\, [\nab_3, \mathcal{D}\hot] F&=- \frac 1 2 \tr \Xb( \mathcal{D}\hot F + H \hot F)+ (H-Z) \hot \nab_3 F+\err_{3\DD \hot}[F],
\end{split}
\eea
where 
\beaa
\err_{4 \DD\hot}[F]&=&  r^{-1} \Ga_g \c  \dk^{\leq 1} F+\lot, \qquad \err_{3 \DD\hot}[F]= \Ga_g \c \dk^{\leq 1} F+ \lot
\eeaa

\item Let $U=u+i \dual u\in \SS_2(\mathbb{C})$. Then
\bea
 \lab{commutator-nab-4-D-c} 
\bsplit
\, [\nab_3, \nab_4] U&=-2 \om \nab_3  U +2\omb \nab_4 U + 2(\eta-\etab ) \c  \nab  U - 4  \eta \hot (\etab \c U))+4 \etab \hot (\eta \c U) +4 i  \rhod \, U \\
&+ \err_{34}[U]
 \end{split}
 \eea
 where
 \beaa
 \err_{34}[U]&=& \left(\Ga_g  \c \Ga_g \right) U = \lot
 \eeaa

 \item  For $U\in \SS_2(\mathbb{C})$
 \bea
 \bsplit
\, [\nab_4, \ov{\DD}\c] U&=- \frac 1 2\ov{\tr X}\, ( \ov{\DD} \c U - 2 \ov{\Hb} \c U)+\ov{(\Hb+Z)} \c \nab_4 U +\err_{4\ov{\DD}}[U] \\
\, [\nab_3, \ov{\DD}\c] U&=- \frac 1 2\ov{\tr\Xb}\, ( \ov{\DD} \c U -  2 \ov{H} \c U)+\ov{(H-Z)} \c \nab_3 U +\err_{3\ov{\DD}}[U] 
\end{split}
\eea
where
\beaa
\err_{4\ov{\DD}}[U] &=& r^{-1} \Ga_g  \c \dk^{\leq 1} U+\lot , \qquad \err_{3\ov{\DD}}[U]=  \Ga_g \c  \dk^{\leq 1} U+\lot
\eeaa
Also,
\bea\label{commutator-nab-3-nab-a-U}
\bsplit
\, [\nab_3, \nab_a] U_{bc}&=-\frac  1 2   \trchb\, \Big(\nab_a U_{bc}+\eta_bU_{ac}+\eta_c U_{ab}-\de_{a b}(\eta \c U)_c-\de_{a c}(\eta \c U)_b \Big)\\
&-\frac 1 2 \atrchb\, \Big(\dual \nab_a  U_{bc} +\eta_b \dual U_{ac}+\eta_c \dual U_{ab}- \in_{a b}(\eta \c  U)_c- \in_{a c}(\eta \c  U)_b \Big)\\
 &+(\eta_a-\ze_a)\nab_3 U_{bc}+ \err_{3abc}[U]
 \end{split}
\eea
where
\beaa
 \err_{3abc}[U]&=&\Ga_g \c \dk^{\leq 1} U+\lot
\eeaa
\end{enumerate}
where  $\lot$ denotes error terms which are 
quadratic   in  the perturbation and  enjoy  better  decay  properties,  or are higher order  and decay at least as good.
\end{lemma}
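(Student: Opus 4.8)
The plan is to derive each commutation formula from the exact, non-schematic identities already established in Lemma \ref{lemma:comm} and Corollary \ref{corr:comm}, and then to repackage the error terms using the definitions \eqref{definition-Gammas} of $\Ga_g$, $\Ga_b$ and $\dk$ together with the Kerr values recorded in section \ref{sec:recallbasicthingsinKerr}. The guiding principle is that every Ricci coefficient appearing as a coefficient splits as its Kerr value plus a $\widecheck{\phantom{Q}}$-correction of size $O(\ep)$; the Kerr part that multiplies a curvature component or another Ricci coefficient is either $O(1/r)$ (for $\Ga_g$-type factors like $\trch$, $H$, $Z$, $\Hb$) or genuinely large only in the main terms we keep explicit (like $\tr X$, $\tr\Xb$, $\omb$, $\eta$, $\etab$, $\rho$). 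So for each formula I would (i) start from the corresponding exact complex identity — for $[\nab_4,\DD\hot]F$ and $[\nab_3,\DD\hot]F$ from \eqref{last-statement-item1} after complexification, for $[\nab_3,\nab_4]U$ from item (3) of Lemma \ref{lemma:comm} (the $\SS_2$ case) rewritten in complex form, for $[\nab_4,\DDb\c]U$ and $[\nab_3,\DDb\c]U$ from the $\SS_2$ part of Corollary \ref{corr:comm}, and for $[\nab_3,\nab_a]U_{bc}$ directly from \eqref{commutator-3-a-u-bc}; (ii) isolate the terms that reproduce the stated main terms by substituting the Kerr values and using the vanishing relations $\Xh=\Xbh=\Xi=\Xib=\om=0$, $\Hb=-Z$, and the identities $H_1=-\ov{Z_1}$, $H_2=\ov{Z_2}$; and (iii) collect everything else into the error terms.

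Next I would verify, term by term, that each leftover piece matches the claimed schematic form. The key bookkeeping points are: a coefficient like $\widecheck{\trchb}$ or $\widecheck{H}$ is in $\Ga_b^{(0)}$ and has size $O(\ep)$, but when it multiplies $\dk^{\leq1}F$ we must check whether the overall structure is $\Ga_g\c\dk^{\leq1}F$ or carries the extra $r^{-1}$; the $r^{-1}$ appears precisely in the $\nab_4$ (outgoing) commutators because $\dk$ contains $r\nab_4$ and $r\DD$ while $\dk$ contains $\nab_3$ with no weight, so a derivative in a good direction that has been normalized already produces the $r^{-1}$ gain, whereas the $\nab_3$ commutators do not. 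For $[\nab_3,\nab_4]U$, the error term $\err_{43ab}[u]=2(\xib_a\xi_c-\xi_a\xib_c)u^c{}_b+\cdots$ is manifestly quadratic in $\Xi,\Xib\in\Ga_g$ (note $r\Xi\in\Ga_g^{(0)}$ and $\Xib\in\Ga_b^{(0)}$), hence $\err_{34}[U]=(\Ga_g\c\Ga_g)U=\lot$; one must also absorb the $\chih$, $\chibh$ contributions (from the remark following Lemma \ref{lemma:comm-gen}) and the corrections to $\eta\hot(\etab\c U)$, $\etab\hot(\eta\c U)$, $\rhod$, $\om$, $\omb$ coming from their $\widecheck{\phantom{Q}}$-parts, each of which is $O(\ep)$ times something bounded and therefore $\lot$. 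For \eqref{commutator-nab-3-nab-a-U} the exact error term $\err_{3abc}[u]$ from \eqref{commutator-3-a-u-bc} is a sum of terms each containing at least one factor among $\dual\bb_a$, $\xib$, $\chibh$; since $\aa=\Ab$-part, $\bb=\Bb$-part are $O(\ep)$ curvature and $\xib\in\Ga_b$, $\chibh\in\Ga_b$, and the accompanying derivative of $U$ is at most $\dk^{\leq1}U$, every term is of the form $\Ga_g\c\dk^{\leq1}U$ up to $\lot$ — here one uses that $\Ga_b\subset\Ga_g$ in the sense that a $\Ga_b$ factor times a good-direction derivative still fits the schematic $\Ga_g\c\dk^{\leq1}U$ bound, or else is genuinely $\lot$.

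Finally I would assemble the pieces into the four displayed equations, being careful that the main terms are written with the \emph{unchecked} Kerr-plus-perturbation quantities (i.e. $\tr X$, $\tr\Xb$, $H$, $\Hb$, $Z$, $\eta$, $\etab$, $\rhod$, $\om$, $\omb$ themselves, not their Kerr values), so that the identities are exact modulo the stated error terms; the replacement of a full coefficient by its Kerr value only happens when the difference has been shunted into the error. The main obstacle I anticipate is precisely the $r$-weight accounting: one has to track, in each of the dozens of product terms produced by complexifying \eqref{last-statement-item1}, the $\SS_2$-commutators of Corollary \ref{corr:comm}, and \eqref{commutator-3-a-u-bc}, whether the derivative falling on $U$ (or $F$) is in a "good" direction ($\nab_4$, $\DD$) — which carries a built-in $r^{-1}$ relative to $\nab_3$ in the $\dk$/$\dk$ conventions — and whether the Ricci/curvature factor is $\Ga_g$ (decaying like $r^{-1}$ faster) or $\Ga_b$; getting the single explicit $r^{-1}$ in $\err_{4\DD\hot}$, $\err_{4\ov{\DD}}$ exactly right, and confirming its absence in the $\nab_3$ versions, is the delicate point. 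Everything else is a routine, if lengthy, substitution of the Kerr values from section \ref{sec:recallbasicthingsinKerr} and the perturbation bounds from section \ref{perturbations-section}.
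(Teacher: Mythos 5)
Your proposal follows essentially the same route as the paper: complexify the exact real commutation identities of Lemma \ref{lemma:comm} and Corollary \ref{corr:comm} using $\dual F=-iF$, $\dual U=-iU$ and $\dual(\nab\hot f)=\nab\hot\dual f$ to assemble $\tr X$, $\tr\Xb$, $H$, $\Hb$, $Z$ in the main terms, then shunt the leftover $\xi,\xib,\chih,\chibh,\b,\bb$ contributions into the schematic errors using the $\Ga_g/\Ga_b$ and $\dk$ bookkeeping, with the $r^{-1}$ in the $\nab_4$-errors coming exactly from the weighting you describe. The only caveat is your step (ii): the main terms are obtained purely algebraically, not by substituting Kerr values or $\Hb=-Z$ — but you correct this yourself in the final paragraph, so the plan as a whole is sound.
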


\begin{proof}
We  derive the commutation formula for  $ [\nab_4, \DD \hot] f$. According to Corollary \ref{corr:comm}, from \eqref{last-statement-item1} we have
\beaa
\, [\nab_4, \nab \hot] f&=- \frac 1 2 \trch \left( \nab\hot f +\etab \hot f\right)- \frac 1 2 \atrch \dual  \left(\nab \hot  f+\etab \hot f\right)+ (\etab+\ze) \hot \nab_4 f+\err_{4\hot}[f].
\eeaa
Hence for $F= f+i\dual f$
\beaa
\, [\nab_4, \nab \hot] F&=&- \frac 1 2 \trch \left( \nab\hot F +\etab \hot F\right)- \frac 1 2 \atrch \dual  \left(\nab \hot  F+\etab \hot F\right)+ (\etab+\ze) \hot \nab_4 F+\err_{4\hot}[F].
\eeaa
 Recalling that  $ \dual F=-iF$  and $\dual (\nab \hot f)=\dual \nab\hot f=\nab\hot \dual f$ we deduce,
 \beaa
 \, [\nab_4, \nab \hot] F&=&- \frac 1 2 \trch \left( \nab\hot F +\etab \hot F\right)+ i \frac 1 2 \atrch  \left(\nab \hot  F+\etab \hot F\right)
 + (\etab+\ze) \hot \nab_4 F+\err_{4\hot}[F]\\
 &=&-\frac 1 2\big( \trch-i\atrch)   \left(\nab \hot  F+\etab \hot F\right)  + (\etab+\ze) \hot \nab_4 F+\err_{4\hot}[F]\\
 &=&-\frac 1 2 \tr X\left(\nab \hot  F+\etab \hot F\right)  + (\etab+\ze) \hot \nab_4 F+\err_{4\hot}[F].
 \eeaa
Taking the dual
\beaa
\, [\nab_4,  \dual \nab \hot] F &=&-\frac 1 2 \tr X\left( \dual \nab \hot  F+\dual \etab \hot F\right)  +\dual (\etab+\ze) \hot \nab_4 F+\dual \err_{4\hot}[ F].
\eeaa
Finally adding we derive
\beaa
 [\nab_4,  \DD \hot] F &=&-\frac 1 2 \tr X\left( \DD \hot  F+\Hb\hot F\right)+(\Hb+Z)\hot\nab_4 F+\err_{4 \DD\hot}[F]
\eeaa
as stated.

   Recall from \eqref{commutator-3-a-u-bc}, 
\beaa
\, [\nab_3, \nab_4] u&=-2 \om \nab_3 u +2\omb \nab_4 u + 2(\eta_c-\etab_c ) \nab_c u - 4 \eta \hot (\etab \c u)+4 \etab \hot (\eta \c u) +4 \dual \rho \dual u+\err_{43}[u].
\eeaa
Adding the corresponding formula for the dual we  derive for $U=u+i\dual u$,
\beaa
\, [\nab_3, \nab_4] U&=-2 \om \nab_3  U +2\omb \nab_4 U + 2(\eta_c-\etab_c )  \nab_c  U - 4  \eta \hot (\etab \c U)+4 \etab \hot (\eta \c U) +4 i  \rhod  U+\err_{43}[U].
\eeaa

To prove the last part of the Lemma we  recall from Corollary \ref{corr:comm}
\beaa
\,[\nab_4,  \div] u &=&-\frac 1 2 \trch \big(  \div u - 2\etab \c u\big) +\frac 1 2 \atrch\big(  \div\dual u -2 \etab\c \dual u\big) +(\etab+\ze)\c\nab_4 u+\err_{4\div}[u]. 
\eeaa
Similarly
\beaa
\,[\nab_4,  \div] \dual u&=& -\frac 1 2 \trch \big(  \div \dual u - 2\etab \c \dual u\big)- \frac 1 2 \atrch \big(  \div u -2 \etab\c  u\big) +(\etab+\ze)\c\nab_4 \dual u+\err_{4\div}[\dual u]. 
\eeaa
Thus, for $U=u+i\dual u$,
\beaa
\,[\nab_4,  \div] U &=&-\frac 1 2 \trch \big(  \div U - 2\etab \c U\big)-i \frac 1 2\atrch\big(  \div U - 2\etab \c U\big) +(\etab+\ze)\c\nab_4 U+\err_{4\ov{\DD}}[U]\\
&=&-\frac 1 2 (\trch+i\atrch) \big(  \div U - 2\etab \c U\big) +(\etab+\ze)\c\nab_4 U+\err_{4\ov{\DD}}[U]\\
&=&-\frac 1 2 \ov{\tr X}  \big(  \div U - 2\etab \c U\big) +(\etab+\ze)\c\nab_4 U+\err_{4\ov{\DD}}[U].
\eeaa
Hence, since  $\dual U=-i U$,
\beaa
\, [\nab_4, \ov{\DD}]\c  U&=& \,[\nab_4, \nab   ] \c U- i \,[\nab_4, \dual \nab ] \c U= \,[\nab_4, \nab   ] \c U+i\,[\nab_4, \nab ] \c\dual  U= 2 \,[\nab_4, \nab   ] \c U\\
&=&2 \,[\nab_4, \div  ] U =- \ov{\tr X}  \big(  \div U - 2\etab \c U\big)  +2(\etab+\ze)\c\nab_4 U+\err_{4\ov{\DD}}[U].
\eeaa
Also,
\beaa
  \ov{\DD}  U &=&\nab\c U-i \dual \nab \c U=\nab\c U+i  \nab\c \dual U= 2\nab\c U.
 \eeaa
 Hence,
 \beaa
 \, [\nab_4, \ov{\DD}]\c  U&=&2 \,[\nab_4, \div  ] U =- \frac 1 2  \ov{\tr X}  \big( \ov{\DD} U - 4\etab \c U\big)  +2(\etab+\ze)\c\nab_4 U+\err_{4\ov{\DD}}[U].
\eeaa
Note also that since $U=i \dual U$ and $\etab\c \dual U=-\dual \etab \c U$
\beaa
2 \etab\c U=\etab\c U+ \etab \c (i\dual U)= \etab\c U-i \dual  \etab \c  U=(\eta-i\dual \eta) \c U=\ov{\Hb} \c U.
\eeaa
Similarly
\beaa
2(\etab+\ze)\c\nab_4 U= (\ov{\Hb}+\ov{Z} )\c \nab_4 U.
\eeaa
Hence,
\beaa
\, [\nab_4, \ov{\DD}]\c  U&=&- \frac 1 2  \ov{\tr X}  \big( \ov{\DD} U-2 \ov{\Hb} \c U\big)+ (\ov{\Hb}+\ov{Z} )\c \nab_4 U+\err_{4\ov{\DD}}[U]
\eeaa
as stated.

Starting with \eqref{commutator-3-a-u-bc}
\beaa
\,  [\nab_3,\nab_a] u_{bc}    &=&-\frac 1 2 \trchb\, (\nab_a u_{bc}+\eta_bu_{ac}+\eta_c u_{ab}-\de_{a b}(\eta \c u)_c-\de_{a c}(\eta \c u)_b )\\
&&-\frac 1 2 \atrchb\, (\dual \nab_a u_{bc} +\eta_b\dual u_{ac}+\eta_c\dual u_{ab}- \in_{a b}(\eta \c u)_c- \in_{a c}(\eta \c u)_b )\\
&&+(\eta_a-\ze_a)\nab_3 u_{bc}+\err_{3abc}[u]
\eeaa
and adding the  same expression  for $u$ replaced with $i \dual u$ we derive
\beaa
\,  [\nab_3,\nab_a] U_{bc}    &=&-\frac  1 2   \trchb\, \Big(\nab_a U_{bc}+\eta_bU_{ac}+\eta_c U_{ab}-\de_{a b}(\eta \c U)_c-\de_{a c}(\eta \c U)_b \Big)\\
&-&\frac 1 2 \atrchb\, \Big(\dual \nab_a  U_{bc} +\eta_b \dual U_{ac}+\eta_c \dual U_{ab}- \in_{a b}(\eta \c  U)_c- \in_{a c}(\eta \c  U)_b \Big)\\
 &+&(\eta_a-\ze_a)\nab_3 U_{bc}+ \err_{3abc}[U]
\eeaa
as stated.
\end{proof}

    
\subsubsection{Commutation formulas for conformally invariant  derivatives}
    

     \begin{lemma}\label{commutator-nab-c-3-DD-c-hot}\label{commutator-nab-3-nab-4}\label{commtator-3-a}
   The following commutation formulas hold true.
   \begin{enumerate}
   \item
    Let $F=f+i\dual f \in \SS_1 (\mathbb{C})$.  Then
 \bea
 \bsplit
  \, [\nabc_4 , \DDc \hot ]F &=- \frac 1 2 \tr X\left( \DDc\hot F + (1-s)\Hb\hot F\right)+ \underline{H} \hot \nabc_4 F+ \err_{4\DD\hot}[F], \label{commutator-nabc-4-F-formula} \\
 \, [\nabc_3, \DDc \hot ]F &=- \frac 1 2 \tr \Xb \left( \DDc \hot F + (1+s)H \hot F \right)  + H \hot \nabc_3 F+ \err_{3\DD\hot}[F],
 \end{split}
 \eea
 where
\beaa
\err_{4 \DD\hot}[F]&=&  r^{-1} \Ga_g  \c \dk^{\leq 1} F+\lot, \qquad \err_{3\DD\hot}[F]= \Ga_g \c \dk^{\leq 1} F+ \lot
\eeaa

\item Let $U=u+i \dual u\in \SS_2(\mathbb{C})$. Then
\bea
\bsplit
 [\nabc_3, \nabc_4] U  &= 2((\eta-\etab ) \c \nabc) U  +((s-2)P+(s+2)\ov{P}  -2s\eta\c\etab)U\\
 & - 4  \eta \hot (\etab \c U)+4 \etab \hot (\eta \c U)+ \err_{34}[U]
 \end{split}
 \eea
 where
 \beaa
 \err_{34}[U]&=& (\Ga_g \cdot \Ga_g) U = \lot
 \eeaa

 \item  For $U\in \SS_2(\mathbb{C})$
 \bea
\, [\nabc_3, \ov{\DDc}\c] U&=- \frac 1 2\ov{\tr\Xb}\, ( \ov{\DDc} \c U +(s-  2) \ov{H} \c U)+\ov{H} \c \nab_3 U+\err_{3\ov{\DD}}[U] \label{commutator-nabc-3-ov-DDc-U}
\eea
where
\beaa
\err_{3\ov{\DD}}[U]&=&  \Ga_g \c  \dk^{\leq 1} U+\lot
\eeaa
Also,
\beaa
 [\nabc_3, \nabc_a] U_{bc}&=&\eta_a\nabc_3 U_{bc}\\
&&-\frac  1 2   \trchb\, \Big(\nabc_a U_{bc}+s(\eta_a) U_{bc}+\eta_bU_{ac}+\eta_c U_{ab}-\de_{a b}(\eta \c U)_c-\de_{a c}(\eta \c U)_b \Big)\\
&&-\frac 1 2 \atrchb\, \Big(\dual \nabc_a  U_{bc}+s (\dual\eta_a) U_{bc} +\eta_b \dual U_{ac}+\eta_c \dual U_{ab}\\
&&- \in_{a b}(\eta \c  U)_c- \in_{a c}(\eta \c  U)_b \Big)+ \err_{3abc}[U]
 \eeaa
where
\beaa
 \err_{3abc}[U]&=&\Ga_g \c  \dk^{\leq 1} U+\lot
\eeaa
\end{enumerate}
Here,  $\lot$ denotes error terms which are 
quadratic   in  the perturbation and  enjoy  better  decay  properties,  or are higher order  and decay at least as good.
\end{lemma}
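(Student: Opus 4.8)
The plan is to deduce all three groups of identities from the corresponding \emph{non-conformal} commutator formulas, already established in the preceding Lemma and in Corollary \ref{corr:comm}, by inserting the definitions of the conformally invariant operators and absorbing the resulting correction terms with the help of the null structure equations.

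\textbf{Step 1: reduction to the non-conformal formulas.} Recall that for an $s$-conformal horizontal tensor $f$ one has $\nabc_3 f=\nab_3 f-2s\omb f$, $\nabc_4 f=\nab_4 f+2s\om f$, $\nabc_a f=\nab_a f+s\ze_a f$, and hence, after complexification, $\DDc\hot F=\DD\hot F+sZ\hot F$ and $\ov{\DDc}\c U=\ov{\DD}\c U+s\,\ov{Z}\c U$. I would expand each left-hand commutator directly, for instance
\[
[\nabc_4,\DDc\hot]F=\nabc_4\big(\DD\hot F+sZ\hot F\big)-\DDc\hot\big(\nab_4F+2s\om F\big),
\]
being careful with the conformal weights of the composite tensors ($\DDc\hot F$ has signature $s$, $\nabc_4F$ has signature $s+1$, while $\nabc_3U$ and $\nabc_4U$ have signatures $s-1$ and $s+1$), and then subtract the known non-conformal commutator. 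By the Leibniz identities \eqref{DD-hot-hF}--\eqref{ov-DD-hu}, every leftover term is polynomial in $\om,\omb,\ze,Z$ and involves only the derivatives $\nab_4\om$, $\nab_4Z$ (for the $\nabc_4$ formulas), $\nab_3\omb$, $\nab_3Z$ (for the $\nabc_3$ ones), $\nab_a\om$, and, in item $(2)$, the scalar combination $\nab_3\om+\nab_4\omb$.

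\textbf{Step 2: using the structure equations.} Each such derivative is controlled by a null structure equation. For item $(2)$ I would substitute $\nab_3\om+\nab_4\omb-4\om\omb-\xi\c\xib-(\eta-\etab)\c\ze+\eta\c\etab=\rho$ from Proposition \ref{prop-nullstr}; combined with $\nabc_aU=\nab_aU+s\ze_aU$ this produces the factor $2s\rho$ together with the term $-2s\,\eta\c\etab\,U$, and, recombining with the $\rhod$-term of $[\nab_3,\nab_4]U$ via $P=\rho+i\rhod$, the scalar coefficient becomes exactly $(s-2)P+(s+2)\ov{P}-2s\,\eta\c\etab$. For items $(1)$ and $(3)$ I would instead use the $Z$-evolution equations of Proposition \ref{prop-nullstr:complex} (the $\nab_3Z$ and $\nab_4Z$ equations): the $\DD\om$-terms there cancel the $\DD\om$-terms generated by expanding $\DDc\hot(\om F)$, while the $-\tfrac12\tr X\,Z$ and $-\tfrac12\ov{\tr\Xb}\,\ov{Z}$ pieces are precisely what re-absorbs the $sZ\hot F$ (respectively $s\,\ov{Z}\c U$) correction back into $\DDc\hot F$ (respectively $\ov{\DDc}\c U$) and shifts the coefficient of $\Hb\hot F$ from $1$ to $1-s$ (respectively $H\hot F$ from $1$ to $1+s$, and $\ov{H}\c U$ from $-2$ to $s-2$). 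The commutator $[\nabc_3,\nabc_a]U_{bc}$ follows in the same way, directly from \eqref{commutator-nab-3-nab-a-U}.

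\textbf{Step 3: closing the error terms.} It remains to sort the quadratic contributions. The products $\xi\c\xib$, $\om Z$, $\om\omb$, $\ze\cdot\Xi$, together with the $B$-, $\Bb$-, $\Ab$-valued pieces inherited from $\err_{4\DD\hot}$, $\err_{3\DD\hot}$, $\err_{34}$, $\err_{3abc}$, are all $O(\ep^2)$ and hence belong to $\lot$; using the Kerr values $\om=0$, $\Hb=-Z$ of Section \ref{sec:recallbasicthingsinKerr} and the definitions in \eqref{definition-Gammas}, the surviving $O(\ep)$ remainders are exactly of the schematic form $r^{-1}\Ga_g\c\dk^{\le1}F$ (respectively $\Ga_g\c\dk^{\le1}U$). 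The part requiring the most care is precisely this last bookkeeping, together with the weight-tracking of Step 1; the fact that the surviving first-order terms assemble into the stated clean coefficients is then forced by the structure equations and needs no new idea beyond a careful computation.
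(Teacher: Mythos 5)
Your proposal is correct and follows essentially the same route as the paper's proof: expand the conformal operators, apply the non-conformal commutators, substitute the $\nab_3Z$, $\nab_4Z$, $\nab_3\ze$ and $\nab_3\om+\nab_4\omb$ structure equations, and track the conformal weights of $\nabc_3F$, $\nabc_4F$ to reassemble the stated coefficients. The cancellations you identify (the $\DD\om$-terms against the Leibniz expansion, and the $-\tfrac12\tr X\,Z$ pieces re-absorbing $sZ\hot F$ into $\DDc\hot F$) are exactly the ones the paper exploits.
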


\begin{proof}
 We have 
 \beaa
 \nabc_3 (\DDc \hot F )&=&\nabc_3 (\DD\hot F+s Z \hot F )\\
 &=&\nab_3 (\DD\hot F+s Z \hot F )-2s \omb (\DD\hot F+s Z \hot F ) \\
  &=&\nab_3 (\DD\hot F)+s \nab_3(Z) \hot F +s Z \hot \nab_3F -2s \omb (\DD\hot F+s Z \hot F ).
 \eeaa
 Using \eqref{commutator-nab-43-D-hot} and the null structure equation 
 \beaa
 \nab_3Z &=&-\frac{1}{2}\tr\Xb(Z+H)+2\omb(Z-H)  -2\DD\omb\\
 && -\frac{1}{2}\widehat{\Xb}\c(\ov{Z}+\ov{H})+\frac{1}{2}\tr X\Xib+2\om\Xib -\Bb+\frac{1}{2}\ov{\Xib}\c\Xh
 \eeaa
  we obtain
 \beaa
 \nabc_3 (\DDc \hot F )  &=&\mathcal{D}\hot(\nab_3F)- \frac 1 2 \tr \Xb( \mathcal{D}\hot F + H \hot F)+ (H-Z) \hot \nab_3 F\\
 &&+s \Big(-\frac{1}{2}\tr\Xb(Z+H)+2\omb(Z-H)  -2\DD\omb\Big) \hot F \\
 &&+s Z \hot \nab_3F -2s \omb (\DD\hot F+s Z \hot F )+\err_{3 \DDc\hot}[F]
 \eeaa
 where 
 \beaa
 \err_{3\DDc\hot}[F]&=& \err_{3 \DD\hot}[F]+ s (-\frac{1}{2}\widehat{\Xb}\c(\ov{Z}+\ov{H})+\frac{1}{2}\tr X\Xib+2\om\Xib -\Bb+\frac{1}{2}\ov{\Xib}\c\Xh) \hot F.
 \eeaa
 We can rewrite the above as
  \beaa
  \nabc_3 (\DDc \hot F )  &=&\mathcal{D}\hot(\nab_3F)+s Z \hot \nab_3F+ (H-Z) \hot \nab_3 F+2s\omb(Z-H)\hot F \\
 &&- \frac 1 2 \tr \Xb( \mathcal{D}\hot F + H \hot F+ s(Z+H) \hot F ) \\
 && -2s \omb \DD\hot F-2s^2\omb  Z \hot F -2s \DD \omb \hot F + \err_{3\DDc\hot}[F]
 \eeaa
 which gives
  \beaa
 \nabc_3 (\DDc \hot F )  &=&\mathcal{D}\hot(\nabc_3F)+s Z \hot \nabc_3F+ (H-Z) \hot \nabc_3 F\\
 &&- \frac 1 2 \tr \Xb( \mathcal{D}\hot F + H \hot F+ s(Z+H) \hot F )  + \err_{3\DDc\hot}[F].
 \eeaa
 Recall that $\nabc_3F$ is conformal of type $s-1$, therefore $$\DDc \hot(\nabc_3F)= \mathcal{D}\hot(\nabc_3F)+(s-1) Z \hot \nabc_3F.$$ 
 We finally obtain 
   \beaa
 \nabc_3 (\DDc \hot F )  &=&\DDc \hot(\nabc_3F)- \frac 1 2 \tr \Xb \left( \DDc \hot F + (s+1)H \hot F \right)  + H \hot \nabc_3 F+ \err_{3\DDc\hot}[F]
 \eeaa
 as stated.  
 
 Similarly,  
 \beaa
 \nabc_4 (\DDc \hot F )&=&\nabc_4 (\DD\hot F+s Z \hot F )\\
 &=&\nab_4 (\DD\hot F+s Z \hot F )+2s \om (\DD\hot F+s Z \hot F ) \\
  &=&\nab_4 (\DD\hot F)+s \nab_4(Z) \hot F +s Z \hot \nab_4F +2s \om (\DD\hot F+s Z \hot F ).
 \eeaa
 Using \eqref{commutator-nab-43-D-hot} and the null structure equation for $\nab_4 Z$,
  we obtain
 \beaa
 \nabc_4 (\DDc \hot F )  &=&\mathcal{D}\hot(\nab_4F)- \frac 1 2 \tr X( \mathcal{D}\hot F + \underline{H} \hot F)+ (\underline{H}+Z) \hot \nab_4 F\\
 &&+s (-\frac{1}{2}\tr X(Z-\Hb)+2\om(Z+\Hb)+ 2\DD\om ) \hot F \\
 &&+s Z \hot \nab_4F +2s \om (\DD\hot F+s Z \hot F )++\err_{4 \DDc\hot}[F].
 \eeaa
 We can rewrite the above as
 \beaa
 \nabc_4 (\DDc \hot F )  &=&\mathcal{D}\hot(\nab_4F)+s Z \hot \nab_4F+ (\underline{H}+Z) \hot \nab_4 F+2s\om(Z+\Hb)\hot F\\
 &&- \frac 1 2 \tr X( \mathcal{D}\hot F + \underline{H} \hot F+s (Z-\Hb)\hot F)\\
 && +2s \om \DD\hot F+2s^2\om Z \hot F +2s \DD\om  \hot F+\err_{4 \DDc\hot}[F]
 \eeaa
 which gives
  \beaa
 \nabc_4 (\DDc \hot F )  &=&\mathcal{D}\hot(\nabc_4F)+s Z \hot \nabc_4F+ (\underline{H}+Z) \hot \nabc_4 F\\
 &&- \frac 1 2 \tr X( \mathcal{D}\hot F + \underline{H} \hot F+s (Z-\Hb)\hot F) +\err_{4 \DDc\hot}[F].
 \eeaa
 Recall that $\nabc_4F$ is conformal of type $s+1$, therefore $$\DDc \hot(\nabc_4F)= \mathcal{D}\hot(\nabc_4F)+(s+1) Z \hot \nabc_4F.$$ 
 We finally obtain 
 \beaa
 \nabc_4 (\DDc \hot F )  &=&\DDc \hot(\nabc_4F)+ \underline{H} \hot \nabc_4 F- \frac 1 2 \tr X( \DDc\hot F + (1-s)\Hb\hot F) +\err_{4 \DDc\hot}[F]
 \eeaa
 as stated. 

 We have 
 \beaa
 [\nabc_3, \nabc_4] U&=&\nabc_3 \nabc_4 U -\nabc_4 \nabc_3 U\\
 &=&(\nab_3-2(s+1) \omb ) (\nab_4 U+2s \om U) -(\nab_4+2(s-1)\om) (\nab_3 U -2s \omb U)\\
 &=&\nab_3\nab_4 U+2s\nab_3(\om U)-2(s+1) \omb (\nab_4 U)-4s(s+1) \om  \omb U \\
 &&-\nab_4 \nab_3 U +2s \nab_4(\omb U)-2(s-1)\om\nab_3 U+4s(s+1) \om  \omb U\\
 &=&[\nab_3, \nab_4] U-2 \omb \nab_4 U+2\om \nab_3 U  +2s(\nab_3 \om+ \nab_4\omb -4\om\omb)U.
 \eeaa
 Using \eqref{commutator-nab-4-D-c}   and
 \beaa
 \nab_3\om+\nab_4\omb -4\om\omb &=&   \rho  +(\eta-\etab)\c\ze -\eta\c\etab+\xi\c \xib 
 \eeaa
 we obtain
  \beaa
 [\nabc_3, \nabc_4] U &=& -2 \om \nab_3  U +2\omb \nab_4 U + 2(\eta_c-\etab_c )  \nab_c  U - 4  \eta \hot (\etab \c U)+4 \etab \hot (\eta \c U)+ \err_{34}[U] \\
 &&- 2(P - \ov{P})  U-2 \omb \nab_4 U+2\om \nab_3 U  +2s(\rho  +(\eta-\etab)\c\ze -\eta\c\etab)U \\
 &=&  2(\eta_c-\etab_c )  \nabc_c  U  +((s-2)P+(s+2)\ov{P}  -2s\eta\c\etab)U \\
 &&- 4  \eta \hot (\etab \c U)+4 \etab \hot (\eta \c U) + \err_{34}[U]
 \eeaa
 as stated.

 We have 
\beaa
\nabc_3 \ov{\DDc}\c U&=& \nabc_3 ((\ov{\DD}+ s \ov{Z})\c U )\\
&=& \nab_3 ((\ov{\DD}+ s \ov{Z})\c U )-2s\omb((\ov{\DD}+ s \ov{Z})\c U )\\
&=& \nab_3(\ov{\DD}\c U)+ s \nab_3\ov{Z}\c U+ s \ov{Z}\c \nab_3U -2s\omb((\ov{\DD}+ s \ov{Z})\c U ).
\eeaa
Using the equation for $ \nab_3\ov{Z}$ and \eqref{commutator-nabc-3-ov-DDc-U}, 
we have
\beaa
\nabc_3 \ov{\DDc}\c U&=& \ov{\DD}\c \nab_3U- \frac 1 2\ov{\tr\Xb}\, ( \ov{\DD} \c U -  2 \ov{H} \c U)+\ov{(H-Z)} \c \nab_3 U +\err_{3\ov{\DD}}[U]\\
&& + s \left(-\frac{1}{2}\ov{\tr\Xb}(\ov{Z}+\ov{H})+2\omb(\ov{Z}-\ov{H})  -2\ov{\DD}\omb \right)\c U+ s \ov{Z}\c \nab_3U -2s\omb((\ov{\DD}+ s \ov{Z})\c U ).
\eeaa
We can rewrite the above as
\beaa
\nabc_3 \ov{\DDc}\c U&=& \ov{\DDc}\c (\nabc_3U)- \frac 1 2\ov{\tr\Xb}\, ( \ov{\DDc} \c U+(s -  2) \ov{H} \c U)+\ov{H} \c \nabc_3U +\err_{3\ov{\DD}}[U]
\eeaa
which gives the desired formula.

We have
\beaa
\nabc_3 \nabc_a U_{bc}&=&\nabc_3 (\nab_a U_{bc}+s \ze_a U_{bc})\\
&=&\nab_3 (\nab_a U_{bc}+s \ze_a U_{bc})-2s\omb (\nab_a U_{bc}+s \ze_a U_{bc})\\
&=&\nab_3 \nab_a U_{bc}+s \nab_3\ze_a U_{bc}+s \ze_a \nab_3U_{bc}-2s\omb (\nab_a U_{bc}+s \ze_a U_{bc}).
\eeaa
Using \eqref{commutator-nab-3-nab-a-U} and the null structure equation 
\beaa
\nab_3 \ze+2\nab\omb&=& -\frac{1}{2}\trchb(\ze+\eta)-\frac{1}{2}\atrchb(\dual\ze+\dual\eta)+ 2 \omb(\ze-\eta)\\
&&+\hch\c\xib+\frac{1}{2}\trch\,\xib+\frac{1}{2}\atrch\dual\xib +2 \om \xib-\chibh\c(\ze+\eta) -\bb
\eeaa
we obtain
\beaa
\nabc_3 \nabc_a U_{bc}&=&\nab_a \nab_3 U_{bc}-\frac  1 2   \trchb\, \Big(\nab_a U_{bc}+\eta_bU_{ac}+\eta_c U_{ab}-\de_{a b}(\eta \c U)_c-\de_{a c}(\eta \c U)_b \Big)\\
&&-\frac 1 2 \atrchb\, \Big(\dual \nab_a  U_{bc} +\eta_b \dual U_{ac}+\eta_c \dual U_{ab}- \in_{a b}(\eta \c  U)_c- \in_{a c}(\eta \c  U)_b \Big)\\
 &&+(\eta_a-\ze_a)\nab_3 U_{bc}\\
 &&+s (-2\nab_a\omb -\frac{1}{2}\trchb(\ze_a+\eta_a)-\frac{1}{2}\atrchb(\dual\ze_a+\dual\eta_a)+ 2 \omb(\ze_a-\eta_a)) U_{bc}\\
 &&+s \ze_a \nab_3U_{bc}-2s\omb (\nab_a U_{bc}+s \ze_a U_{bc})+\err_{3abc}[U].
\eeaa
We can rewrite the above as
\beaa
\nabc_3 \nabc_a U_{bc}&=&\nab_a \nab_3 U_{bc}+s \ze_a \nab_3U_{bc}+(\eta_a-\ze_a)\nab_3 U_{bc}+ 2s \omb(\ze_a-\eta_a) U_{bc}\\
&&-\frac  1 2   \trchb\, \Big(\nab_a U_{bc}+s(\ze_a+\eta_a) U_{bc}+\eta_bU_{ac}+\eta_c U_{ab}-\de_{a b}(\eta \c U)_c-\de_{a c}(\eta \c U)_b \Big)\\
&&-\frac 1 2 \atrchb\, \Big(\dual \nab_a  U_{bc}+s (\dual\ze_a+\dual\eta_a) U_{bc} +\eta_b \dual U_{ac}+\eta_c \dual U_{ab}\\
&&- \in_{a b}(\eta \c  U)_c- \in_{a c}(\eta \c  U)_b \Big)\\
 &&-2s\omb \nab_a U_{bc}-2s^2\omb  \ze_a U_{bc}-2s\nab_a\omb  U_{bc}+\err_{3abc}[U]
\eeaa
which gives
\beaa
\nabc_3 \nabc_a U_{bc}&=&\nab_a \nabc_3 U_{bc}+s \ze_a \nabc_3U_{bc}+(\eta_a-\ze_a)\nabc_3 U_{bc}\\
&&-\frac  1 2   \trchb\, \Big(\nabc_a U_{bc}+s(\eta_a) U_{bc}+\eta_bU_{ac}+\eta_c U_{ab}-\de_{a b}(\eta \c U)_c-\de_{a c}(\eta \c U)_b \Big)\\
&&-\frac 1 2 \atrchb\, \Big(\dual \nabc_a  U_{bc}+s (\dual\eta_a) U_{bc} +\eta_b \dual U_{ac}+\eta_c \dual U_{ab}\\
&&- \in_{a b}(\eta \c  U)_c- \in_{a c}(\eta \c  U)_b \Big)+\err_{3abc}[U].
\eeaa
Recall that $\nabc_3 U$ is of conformal type $s-1$, therefore
\beaa
\nabc_a \nabc_3 U_{bc}&=& \nab_a \nabc_3 U_{bc}+(s-1) \ze_a \nabc_3 U_{bc}.
\eeaa
We finally obtain
\beaa
\nabc_3 \nabc_a U_{bc}&=&\nabc_a \nabc_3 U_{bc}+\eta_a\nabc_3 U_{bc}\\
&&-\frac  1 2   \trchb\, \Big(\nabc_a U_{bc}+s(\eta_a) U_{bc}+\eta_bU_{ac}+\eta_c U_{ab}-\de_{a b}(\eta \c U)_c-\de_{a c}(\eta \c U)_b \Big)\\
&&-\frac 1 2 \atrchb\, \Big(\dual \nabc_a  U_{bc}+s (\dual\eta_a) U_{bc} +\eta_b \dual U_{ac}+\eta_c \dual U_{ab}\\
&&- \in_{a b}(\eta \c  U)_c- \in_{a c}(\eta \c  U)_b \Big)+\err_{3abc}[U]
\eeaa
as stated.
\end{proof}


\section{The wave operator for perturbations of Kerr}


In this section we derive the expression for the wave operator for anti-self dual tensors in perturbations of Kerr.


\subsection{The Gauss equation}


To derive the wave operator, 
we need to specialize the Gauss equation \eqref{Gauss-eq-horizontal} to a tensor $\Psi \in \SS_2(\mathbb{C})$ in a perturbation of Kerr. We will use this formula in Proposition \ref{Complexwave-decomp}. 
\begin{proposition}\label{Gauss-equation} 
We have for $\Psi \in \SS_2(\mathbb{C})$:
\bea\label{Gauss-eq-first-com}
\begin{split}
 \big( \nab_1 \nab_2- \nab_2 \nab_1\big)  \Psi &=\frac 1 2(\atrch\nab_3+\atrchb \nab_4) \Psi  + 2 i  \left( \frac 1 4\trch\trchb+ \frac 1 4 \atrch\atrchb+\rho\right) \Psi \\
 &+ \err_{12}[\Psi]
    \end{split}
    \eea
    or also 
    \bea\label{Gauss-eq-2-complex}
\begin{split}
 \big( \nab_1 \nab_2- \nab_2 \nab_1\big)  \Psi &=\frac 1 2(\atrch\nab_3+\atrchb \nab_4) \Psi  +  i  \left(  \frac 1 4 \tr X \ov{\tr \Xb}+ \frac 1 4 \tr \Xb  \ov{\tr X} + P + \ov{P}\right) \Psi\\
  &+ \err_{12}[\Psi]
    \end{split}
    \eea
    where
    \beaa
     \err_{12}[\Psi]&=& r^{-1}\Ga_g \c \Psi.
    \eeaa
      \end{proposition}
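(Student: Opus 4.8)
The plan is to start from the general Gauss-type identity \eqref{Gauss-eq-horizontal} of Proposition \ref{prop-nullstr:0}, applied to $\Psi \in \SS_2(\mathbb{C})$ with the two choices of indices $(a,b)=(1,2)$, and then to contract/interpret the purely algebraic right-hand side in terms of $\trch,\trchb,\atrch,\atrchb,\rho$. Concretely, for a $2$-tensor $\Psi_{cd}$ one has $(\nab_1\nab_2-\nab_2\nab_1)\Psi_{cd}=\R_{c}{}^{e}{}_{12}\Psi_{ed}+\R_{d}{}^{e}{}_{12}\Psi_{ce}$, and the proposition's formula specializes \eqref{Gauss-eq-horizontal} with $\in_{12}=1$: the term $\frac12\in_{12}(\atrch\nab_3+\atrchb\nab_4)\Psi$ is exactly the first term claimed. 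The remaining job is to rewrite the quadratic-in-$\chi,\chib$ contribution
\[
-\tfrac12\big(\chi_{1c}\chib_{2d}+\chib_{1c}\chi_{2d}-\chi_{2c}\chib_{1d}-\chib_{2c}\chi_{1d}\big)\Psi^{cd}
\]
plus the genuine curvature term $\R_{cd12}\Psi^{cd}$ in closed form.

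First I would handle the genuine curvature term. Since $\Psi\in\SS_2(\mathbb{C})$ is anti-self dual, $\Psi_{12}=-iU_{11}$ etc., so $\R_{cd12}\Psi^{cd}$ collapses to a multiple of $\R_{1212}$; using the curvature decomposition listed after \eqref{fo5'}, namely $W_{abcd}=-\in_{ab}\in_{cd}\rho$ for a Weyl field (here $\R$, vacuum), one gets $\R_{1212}=-\rho$, and tracking the anti-self-dual combinatorics produces a term $2i\rho\,\Psi$ (equivalently $i(P+\ov P)\Psi$ once one uses $\dual\R_{1212}$ to bring in $\rhod$; in a perturbation of Kerr this is exact up to $\lot$, but $\rho$ vs.\ $P+\ov P$ accounting should be done carefully against the dual curvature identities). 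Next I would handle the quadratic term: decompose $\chi_{ab}=\chih_{ab}+\frac12\de_{ab}\trch+\frac12\in_{ab}\atrch$ and likewise for $\chib$, substitute into the bracket above, contract against $\Psi^{cd}$, and use Lemma \ref{le:sym-product}/Corollary \ref{special.products} and Remark \ref{rmk:tracelesspartofproducttracelessis0} to kill all the $\chih,\chibh$–$\chih,\chibh$ cross terms (they are $O(\ep^2)$, hence absorbed in $r^{-1}\Ga_g\c\Psi$ anyway, but the point is that the \emph{leading} trace$\times$trace piece is what survives). The $\de\de$, $\de\in$ and $\in\in$ contractions against the anti-self-dual $\Psi$ will, after using $\in_{ac}\Psi_{cb}=\dual\Psi_{ab}=-i\Psi_{ab}$, combine into the scalar coefficient $2i\big(\frac14\trch\trchb+\frac14\atrch\atrchb\big)$ multiplying $\Psi$. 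Collecting the three pieces yields \eqref{Gauss-eq-first-com}.

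For \eqref{Gauss-eq-2-complex} I would simply re-express $\frac14\trch\trchb+\frac14\atrch\atrchb+\rho$ in complex notation: since $\tr X=\trch-i\atrch$, $\tr\Xb=\trchb-i\atrchb$ and $P=\rho+i\rhod$, one has $\Re\big(\tr X\,\ov{\tr\Xb}\big)=\trch\trchb+\atrch\atrchb$, so $\frac14\tr X\,\ov{\tr\Xb}+\frac14\tr\Xb\,\ov{\tr X}=\frac12(\trch\trchb+\atrch\atrchb)$, and $P+\ov P=2\rho$; substituting reproduces the second form. Throughout, every discarded term — products of two $\Ga$'s, terms with $\Xh,\Xbh$, terms where one of the two curvature/connection factors is itself $O(\ep)$ — is of the schematic type $r^{-1}\Ga_g\c\Psi$ (or better), which is the stated error $\err_{12}[\Psi]$; I would justify this by checking against the fall-off conventions in \eqref{definition-Gammas}.

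\textbf{Main obstacle.} The computational heart — and the step most likely to hide sign or factor-of-two errors — is the anti-self-dual contraction bookkeeping: correctly tracking how $\in_{ab}$, the Hodge duals, and the identities $\dual\Psi=-i\Psi$, $\in_{ac}\Psi_{cb}=-i\Psi_{ab}$ interact when one contracts the non-symmetric quadratic bracket in $\chi,\chib$ against $\Psi^{cd}$, and ensuring that the factor $2i$ and the precise coefficients $\frac14\trch\trchb$, $\frac14\atrch\atrchb$ (as opposed to, say, $\frac12$ or a relative sign between them) come out right. A secondary subtlety is making sure that the passage $\rho \leftrightarrow \tfrac12(P+\ov P)$ and the appearance of $\rhod$-free vs.\ $\rhod$-dependent terms is consistent — i.e.\ that the imaginary parts genuinely cancel in \eqref{Gauss-eq-first-com} and only reassemble into $P+\ov P$ after complexifying — which requires invoking the pairing/duality relations from Remark \ref{rem:pairing} rather than naive substitution.
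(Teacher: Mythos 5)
Your proposal is correct and follows essentially the same route as the paper: specialize \eqref{Gauss-eq-horizontal} to $(a,b)=(1,2)$, decompose $\chi,\chib$ into trace/anti-trace parts (sending the $\chih,\chibh$ contributions to $\err_{12}[\Psi]$), contract the quadratic bracket against the anti-self-dual $\Psi$ using $\Psi_{12}=-i\Psi_{11}$, and evaluate the curvature term via $\R_{1212}=-\rho$. The contraction bookkeeping you flag as the main obstacle is carried out explicitly in the paper by computing the components $E_{1112},E_{2212},E_{1212},E_{2112}$ and yields exactly the coefficient $2i\big(\tfrac14\trch\trchb+\tfrac14\atrch\atrchb\big)$ you predict, and the passage to \eqref{Gauss-eq-2-complex} is the direct algebraic identity $\tfrac14\tr X\,\ov{\tr\Xb}+\tfrac14\tr\Xb\,\ov{\tr X}=\tfrac12(\trch\trchb+\atrch\atrchb)$, $P+\ov P=2\rho$, with no need for the $\rhod$/dual-curvature detour you parenthetically worry about.
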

\begin{proof} 
From \eqref{Gauss-eq-horizontal}, we have
\beaa
\big( \nab_a \nab_b- \nab_b \nab_a\big)  \Psi_{st} &=&\frac 1 2 \in_{ab}(\atrch\nab_3+\atrchb \nab_4) \Psi_{st}
 -\frac  12 E_{sdab} \Psi_{dt}    -\frac 1 2 E_{tdab} \Psi_{sd} \\
    &+&\left(-\in_{sd}\in_{ab} \rho  \right)\Psi_{dt}+ \left(-\in_{td}\in_{ab} \rho \right) \Psi_{sd}
\eeaa
where 
\bea\label{definition-E}
E_{cdab}:&=&  \chi_{ac}\chib_{bd} + \chib_{ac}\chi_{bd} - \chi_{bc}\chib_{ad}- \chib_{bc}\chi_{ad}. 
\eea
We now compute the $E$ as given by \eqref{definition-E}. We have:
\beaa
\chi_{ac}\chib_{bd} &=&    \frac 1 4 \big( \trch \de_{ac}+ \atrch \in_{ac}\big) \big( \trchb \de_{bd}+ \atrchb \in_{bd}\big)+r^{-1}\Ga_g\\
&=& \frac 1 4\Big( \trch \trchb \de_{ac} \de_{bd}+  \atrch \atrchb \in_{ac} \in_{bd}    +\de_{ac}\in_{bd}  \trch \atrchb     +    \in_{ac}\de_{bd}  \atrch \trchb \Big)+r^{-1}\Ga_g,\\
\chib_{ac}\chi_{bd} &=& \frac 1 4\Big( \trchb \trch \de_{ac} \de_{bd}+  \atrchb \atrch \in_{ac} \in_{bd}    +\de_{ac}\in_{bd}  \trchb \atrch    +    \in_{ac}\de_{bd}  \atrchb \trch \Big)+r^{-1}\Ga_g,\\
 \chi_{bc}\chib_{ad}&=&  \frac 1 4\Big( \trch \trchb \de_{bc} \de_{ad}+  \atrch \atrchb \in_{bc} \in_{ad}    +\de_{bc}\in_{ad}  \trch \atrchb     +    \in_{bc}\de_{ad}  \atrch \trchb \Big)+r^{-1}\Ga_g,\\
 \chib_{bc}\chi_{ad}&=&  \frac 1 4\Big( \trchb \trch \de_{bc} \de_{ad}+  \atrchb \atrch \in_{bc} \in_{ad}    +\de_{bc}\in_{ad}  \trchb \atrch     +    \in_{bc}\de_{ad}  \atrchb \trch \Big)+r^{-1}\Ga_g.
\eeaa
Thus,
\beaa
E_{cdab}&=&  \chi_{ac}\chib_{bd} + \chib_{ac}\chi_{bd} - \chi_{bc}\chib_{ad}- \chib_{bc}\chi_{ad} \\
&=&\frac 12  \trch \trchb (\de_{ac} \de_{bd}-\de_{bc} \de_{ad})+\frac 1 2 \atrch \atrchb( \in_{ac} \in_{bd}- \in_{bc} \in_{ad}   ) \\
&&  +  \frac 1 4 (\de_{ac}\in_{bd}  +    \in_{ac}\de_{bd}- \de_{bc}\in_{ad} -  \in_{bc}\de_{ad} )  \trch \atrchb  \\
&&+  \frac 1 4 (  \in_{ac}\de_{bd}  +\de_{ac}\in_{bd}-\in_{bc}\de_{ad}- \de_{bc}\in_{ad}  )\atrch \trchb+r^{-1}\Ga_g.
\eeaa

 Observe that 
 \beaa
 E_{1112}&=&   \frac 1 4 (\de_{11}\in_{21}  -  \in_{21}\de_{11} )  \trch \atrchb  +  \frac 1 4 (   \de_{11}\in_{21}-\in_{21}\de_{11} )\atrch \trchb+r^{-1}\Ga_g=r^{-1}\Ga_g, \\
 E_{2212}&=&  \frac 1 4 (   \in_{12}\de_{22}- \de_{22}\in_{12} )  \trch \atrchb  +  \frac 1 4 (  \in_{12}\de_{22}  - \de_{22}\in_{12}  )\atrch \trchb+r^{-1}\Ga_g=r^{-1}\Ga_g,
 \eeaa
 and 
 \beaa
 E_{1212}&=& \frac 12  \trch \trchb (\de_{11} \de_{22})+\frac 1 2 \atrch \atrchb( - \in_{21} \in_{12}   ) +r^{-1}\Ga_g= \frac 12  \trch \trchb+\frac 1 2 \atrch \atrchb+r^{-1}\Ga_g, \\
 E_{2112}&=& \frac 12  \trch \trchb (-\de_{22} \de_{11})+\frac 1 2 \atrch \atrchb( \in_{12} \in_{21} )+r^{-1}\Ga_g =-\frac 12  \trch \trchb-\frac 1 2 \atrch \atrchb+r^{-1}\Ga_g.
 \eeaa

Define  $ Y_{st}=-\frac  12 E_{sd12} \Psi_{dt}    -\frac 1 2 E_{td12} \Psi_{sd}$. Since $\Psi \in \SS_2$ we have that $\Psi_{12}=-i\Psi_{11}$ and $\Psi_{11}=i \Psi_{12}$ and therefore evaluating $Y$ in coordinates:
\beaa
Y_{11}&=&-\frac  12 E_{1d12} \Psi_{d1}    -\frac 1 2 E_{1d12} \Psi_{1d}=-\frac  12 E_{1112} \Psi_{11} -\frac  12 E_{1212} \Psi_{21}    -\frac 1 2 E_{1112} \Psi_{11}   -\frac 1 2 E_{1212} \Psi_{12}\\
&=&\left(- E_{1112} +i  E_{1212} \right) \Psi_{11},      \\
Y_{12}&=&-\frac  12 E_{1112} \Psi_{12}  -\frac  12 E_{1212} \Psi_{22}    -\frac 1 2 E_{2112} \Psi_{11}  -\frac 1 2 E_{2212} \Psi_{12}=\left(- E_{1112}  + i E_{1212}\right) \Psi_{12}.  
\eeaa
This implies 
\beaa
-\frac  12 E_{sd12} \Psi_{dt}    -\frac 1 2 E_{td12} \Psi_{sd}&=&  \frac 1 2 i  \left( \trch\trchb+  \atrch\atrchb\right) \Psi_{st}+r^{-1}\Ga_g \Psi.    
\eeaa

Define $W_{st}= \R_{s d   12}\Psi_{dt}+\R_{td 12} \Psi_{sd}$. Evaluating $W$ in coordinates we have
\beaa
W_{11}&=& \R_{1 d   12}\Psi_{d1}+\R_{1d 12} \Psi_{1d}=2 \R_{1 2   12}\Psi_{12}=-2i \R_{1 2   12}\Psi_{11}=2i \rho \Psi_{11}.
\eeaa
We obtain
\beaa
\R_{s d   12}\Psi_{dt}+\R_{td 12} \Psi_{sd}=2i \rho \Psi_{st}.
\eeaa
Putting the above together we obtain the final formula. 
\end{proof}


\subsection{The wave equation using complex operators}


We now express the Laplacian appearing in the canonical form of the wave equation in terms of the complex derivative. We summarize the result in the following.
\begin{proposition}
\lab{Complexwave-decomp}
Given $\Psi \in \SS_2(\CCC) $ we have
\bea\label{expression-DD-laplacian-0}
\begin{split}
   \DD \hot (\DDb \c \Psi)&= 2\lap_2  \Psi - i (\atrch\nab_3+\atrchb \nab_4) \Psi  \\
   &+  \left(  \frac 1 2 \tr X \ov{\tr \Xb}+ \frac 1 2 \tr \Xb  \ov{\tr X} + 2P + 2\ov{P}\right) \Psi +\err_{ \DD \hot \DDb}[\Psi]
   \end{split}
\eea
where
\beaa
\err_{ \DD \hot \DDb}[\Psi]&=&r^{-1} \Ga_g \c \Psi.
\eeaa
\end{proposition}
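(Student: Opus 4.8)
The plan is to compute $\DD \hot (\DDb \c \Psi)$ directly in a local orthonormal horizontal frame $(e_1,e_2)$ and match the result against $2\lap_2\Psi$ plus lower-order terms, using the Gauss equation of Proposition \ref{Gauss-equation} to convert the antisymmetric part of $\nab_a\nab_b$ into curvature and $\atr$-terms. The key point is that, for $\Psi \in \SS_2(\CCC)$, the anti-self-duality $\dual\Psi = -i\Psi$, $\dual\DD = -i\DD$ collapses all the complexified angular operators to a single pair of real second derivatives. Concretely, from the computations in the proof of Lemma \ref{simplification-angular} one has $(\DDb\c\Psi)_a = 2(\nab_1 - i\nab_2)\Psi_{1a}$-type expressions, and $\DD\hot$ of a $\SS_1(\CCC)$ tensor is again a single complex combination of $\nab_1,\nab_2$ applied to the components. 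So the first step is to expand $\DD \hot (\DDb \c \Psi)$ into the scalar $\big(\nab_1 + i\nab_2\big)\big(\nab_1 - i\nab_2\big)$ acting on $\Psi_{11}$ (together with the forced component relations $\Psi_{12} = -i\Psi_{11}$, $\Psi_{22} = -\Psi_{11}$), which produces $(\nab_1\nab_1 + \nab_2\nab_2)\Psi_{11} + i(\nab_1\nab_2 - \nab_2\nab_1)\Psi_{11}$.

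\textbf{Key steps.} First I would set up the frame computation: write $\DDb\c\Psi = G$ with $G \in \SS_1(\CCC)$ and compute its components explicitly in terms of $\nab_a\Psi_{bc}$, being careful that $\DDb\c$ on $\SS_2$ tensors is $2\nab\c$ applied componentwise up to the anti-self-dual relations (as already done in Lemma \ref{simplification-angular}). Second, apply $\DD\hot$ to $G$; since $G$ is anti-self-dual, $\DD\hot G$ reduces to $\DD\c$-type combinations, and assembling everything gives $\DD\hot(\DDb\c\Psi) = 2\big(\nab^a\nab_a \Psi + \text{(commutator term)}\big)$ where $\nab^a\nab_a\Psi = \lap_2\Psi$ by definition. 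The commutator term is precisely $2\cdot\tfrac12(\nab_1\nab_2 - \nab_2\nab_1)\Psi$ packaged via the $\in_{ab}$ antisymmetrization. Third, invoke Proposition \ref{Gauss-equation}, in the form \eqref{Gauss-eq-2-complex}, to replace $(\nab_1\nab_2 - \nab_2\nab_1)\Psi$ by $\tfrac12(\atrch\nab_3 + \atrchb\nab_4)\Psi + i\big(\tfrac14 \tr X\ov{\tr\Xb} + \tfrac14 \tr\Xb\ov{\tr X} + P + \ov P\big)\Psi + \err_{12}[\Psi]$. Finally, collect coefficients: the $\atrch\nab_3 + \atrchb\nab_4$ piece acquires an overall $-i$ after the $i$ from the commutator packaging (hence the $-i(\atrch\nab_3 + \atrchb\nab_4)\Psi$ in the statement), the curvature/trace terms double to give $\big(\tfrac12\tr X\ov{\tr\Xb} + \tfrac12\tr\Xb\ov{\tr X} + 2P + 2\ov P\big)\Psi$, and the error $\err_{12}[\Psi] = r^{-1}\Ga_g\c\Psi$ propagates directly to $\err_{\DD\hot\DDb}[\Psi] = r^{-1}\Ga_g\c\Psi$.

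\textbf{Main obstacle.} The bookkeeping of factors of $2$ and of $i$ is where the real work lies: each of $\DD$, $\DDb$, $\hot$, and the anti-self-duality relations contributes numerical factors, and the sign of the $i$ multiplying the commutator must be tracked exactly so that it conjugates the $\atr$ term correctly and leaves the trace-curvature term real-doubled. I expect the cleanest route is to do the entire computation on the single component $\Psi_{11}$ (as in the proofs of Lemmas \ref{simplification-angular} and \ref{dot-hot-complex} and Proposition \ref{Gauss-equation}), verify the identity there, and then note it holds as a tensorial identity in $\SS_2(\CCC)$ because both sides are anti-self-dual $2$-tensors determined by their $(1,1)$ component. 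A secondary check is that $\lap_2\Psi$ as it appears in \eqref{first-equation-square} (the canonical wave form) is indeed $\g^{cd}\nab_c\nab_d\Psi = \nab^a\nab_a\Psi$ with the symmetric combination, so that no extra commutator is hidden in the definition of $\lap_2$ — the antisymmetric part is exactly the term we extract via Gauss.
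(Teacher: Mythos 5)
Your approach is essentially the paper's own proof: the paper likewise sets $Y_{ab}=2(\DD\hot(\DDb\c\Psi))_{ab}$, reduces everything to the $(1,1)$ component via $\Psi_{12}=-i\Psi_{11}$, $\Psi_{22}=-\Psi_{11}$, obtains $Y_{11}=4\lap\Psi_{11}-4i(\nab_1\nab_2-\nab_2\nab_1)\Psi_{11}$, and then substitutes the Gauss formula \eqref{Gauss-eq-2-complex}. The one slip in your sketch is the sign of the commutator: $(\nab_1+i\nab_2)(\nab_1-i\nab_2)=\lap-i(\nab_1\nab_2-\nab_2\nab_1)$, so the packaging coefficient is $-i$ (not $+i$ as you wrote); with $-2i$ multiplying the Gauss identity, the $\atr$ term comes out as $-i(\atrch\nab_3+\atrchb\nab_4)\Psi$ and the trace--curvature term doubles with the correct (positive) sign, exactly as in your stated conclusion.
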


\begin{proof} 
We define $Y_{ab}:=2\big(\DD\hot( \DDb \c \Psi)\big)_{ab}$, and express $Y$ in coordinates. 
We have
\beaa
Y_{ab}&=& \DD_a   \DDb^c \Psi_{cb} +\DD_b    \DDb^c \Psi_{c a } -\de_{ab} \DD^d    \DDb^c \Psi_{dc}.
\eeaa
By construction, $Y$ is symmetric and traceless. 
We calculate first $Y_{11}=-Y_{22}$. For $a=b=1$ we derive
\beaa
Y_{11}&=& \DD_1   \DDb^c \Psi_{c1} +\DD_1    \DDb^c \Psi_{c 1 } -\de_{11} \DD^d    \DDb^c \Psi_{dc}\\
&=& 2\DD_1   \DDb_c \Psi_{c1}  - \DD^d    \DDb^c \Psi_{dc}\\
&=& 2\DD_1   \DDb_1 \Psi_{11}+2\DD_1   \DDb_2 \Psi_{21}  - \DD^d    \DDb^1 \Psi_{d1}- \DD^d    \DDb^2 \Psi_{d2}\\
&=& 2\DD_1   \DDb_1 \Psi_{11}+2\DD_1   \DDb_2 \Psi_{21}  - \DD_1    \DDb_1 \Psi_{11}- \DD_2    \DDb_1 \Psi_{21}- \DD_1    \DDb_2 \Psi_{12}- \DD_2    \DDb_2 \Psi_{22}\\
&=& \DD_1   \DDb_1 \Psi_{11}+\DD_1   \DDb_2 \Psi_{21} - \DD_2    \DDb_1 \Psi_{21}- \DD_2    \DDb_2 \Psi_{22}.
\eeaa
Writing $\Psi_{22}=-\Psi_{11}$, we have
\beaa
Y_{11}&=& (\DD_1   \DDb_1 + \DD_2    \DDb_2) \Psi_{11}+(\DD_1   \DDb_2  - \DD_2    \DDb_1) \Psi_{12}.
\eeaa
We compute
\beaa
Y_{11}&=& (\big(\nab_1+i\dual\nab_1\big)   \big(\nab_1-i\dual\nab_1\big) + \big(\nab_2+i\dual\nab_2\big)   \big(\nab_2-i\dual\nab_2\big)) \Psi_{11}\\
&&+(\big(\nab_1+i\dual\nab_1\big)   \big(\nab_2-i\dual\nab_2\big)  - \big(\nab_2+i\dual\nab_2\big)   \big(\nab_1-i\dual\nab_1\big)) \Psi_{12}\\
&=& (\big(\nab_1+i\nab_2\big)   \big(\nab_1-i\nab_2\big) + \big(\nab_2-i\nab_1\big)   \big(\nab_2+i\nab_1\big)) \Psi_{11}\\
&&+(\big(\nab_1+i\nab_2\big)   \big(\nab_2+i\nab_1\big)  - \big(\nab_2-i\nab_1\big)   \big(\nab_1-i\nab_2\big)) \Psi_{12}\\
&=& 2\lap  \Psi_{11} -2 i (\nab_1\nab_2-\nab_2\nab_1) \Psi_{11}+2 \big(\nab_1\nab_2-\nab_2\nab_1\big) \Psi_{12} + 2 i \lap  \Psi_{12}.
\eeaa
Using that $\Psi_{12}=-i \Psi_{11}$, we obtain
\beaa
Y_{11}&=& 2\lap  \Psi_{11} -2 i (\nab_1\nab_2-\nab_2\nab_1) \Psi_{11}-2i \big(\nab_1\nab_2-\nab_2\nab_1\big)  \Psi_{11} + 2  \lap \Psi_{11}\\
&=& 4\lap  \Psi_{11} -4 i (\nab_1\nab_2-\nab_2\nab_1) \Psi_{11}.
\eeaa

We now compute $Y_{12}=Y_{21}$. For $a=1, b=2$ we derive
\beaa
Y_{12}&=& \DD_1   \DDb^c \Psi_{c2} +\DD_2    \DDb^c \Psi_{c 1 } -\de_{12} \DD^d    \DDb^c \Psi_{dc}\\
&=& \DD_1   \DDb_1 \Psi_{12}+\DD_1   \DDb_2 \Psi_{22} +\DD_2    \DDb_1 \Psi_{1 1 }+\DD_2    \DDb_2 \Psi_{2 1 }\\
&=& (\DD_1   \DDb_1 +\DD_2    \DDb_2) \Psi_{12 } +(\DD_2    \DDb_1 -\DD_1   \DDb_2) \Psi_{11}.
\eeaa
We therefore obtain
\beaa
Y_{12}&=& 2\lap  \Psi_{12} -2 i (\nab_1\nab_2-\nab_2\nab_1) \Psi_{12} -2 \big(\nab_1\nab_2-\nab_2\nab_1\big) \Psi_{11} - 2 i \lap  \Psi_{11}.
\eeaa
Using that $\Psi_{11}=i \Psi_{12}$, we obtain
\beaa
Y_{12}&=& 4\lap  \Psi_{12} -4 i (\nab_1\nab_2-\nab_2\nab_1) \Psi_{12}.
\eeaa

Putting the above together we have
\beaa
Y_{ab}&=& 4\lap  \Psi_{ab} -4 i (\nab_1\nab_2-\nab_2\nab_1) \Psi_{ab}.
\eeaa
Using the Gauss formula \eqref{Gauss-eq-2-complex}, we prove the desired formula. 
\end{proof}

By putting together the canonical expression for the wave equation and the above expression for the Laplacian, we obtain the following. 

\begin{corollary}\label{corollary-wave-complex} 
The wave equation for $\Psi \in \SS_2(\mathbb{C})$ can be written as 
\bea
\bsplit
\square_2 \Psi&=-\nab_4 \nab_3 \Psi + \frac 1 2  \DD \hot (\DDb \c \Psi)+\left(2\om -\frac 1 2 \tr X\right) \nab_3\Psi- \frac 1 2 \tr\Xb \nab_4\Psi \\
& +\frac 1 2 (\Hb \c \DDb) \Psi +\frac 12 (\ov{\Hb} \c \DD) \Psi+  \left( - \frac 1 4 \tr X \ov{\tr \Xb}- \frac 1 4 \tr \Xb  \ov{\tr X}  - 2P\right) \Psi + \err_{\square_2}[\Psi]
\end{split}
\eea
where
\beaa
 \err_{\square_2}[\Psi]&=& r^{-1}\Ga_g \c \Psi.
\eeaa
\end{corollary}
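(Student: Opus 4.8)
The strategy is to combine the third form of the canonical wave equation, namely \eqref{first-equation-square},
\[
\square_2 \Psi=-\nab_4 \nab_3 \Psi +\lap_2\Psi+\left(2\om -\tfrac 1 2 \trch\right) \nab_3\Psi- \tfrac 1 2 \trchb \nab_4\Psi+2 \etab \c \nab  \Psi-2\rhod \dual \Psi,
\]
with the identity of Proposition \ref{Complexwave-decomp} expressing $\lap_2\Psi$ in terms of $\DD\hot(\DDb\c\Psi)$. First I would solve the latter for $\lap_2\Psi$, obtaining
\[
\lap_2\Psi = \tfrac12\DD\hot(\DDb\c\Psi) + \tfrac i2(\atrch\nab_3+\atrchb\nab_4)\Psi - \tfrac14\big(\tr X\,\ov{\tr\Xb}+\tr\Xb\,\ov{\tr X}+2P+2\ov P\big)\Psi + r^{-1}\Ga_g\c\Psi,
\]
and substitute into \eqref{first-equation-square}. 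The terms that need care are then the three pieces $2\etab\c\nab\Psi$, $-2\rhod\dual\Psi$, and the imaginary coefficients $\tfrac i2\atrch\nab_3\Psi$, $\tfrac i2\atrchb\nab_4\Psi$ that come out of the Laplacian identity; these must be reorganized into the complex quantities $\tr X$, $\tr\Xb$, $\Hb$, $\ov{\Hb}$ appearing in the claimed formula.

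The key algebraic inputs are the complexification relations already recorded in the excerpt: $\tr X=\trch-i\atrch$, $\tr\Xb=\trchb-i\atrchb$, and $\dual\Psi=-i\Psi$ for $\Psi\in\SS_2(\CC)$. Using $\dual\Psi=-i\Psi$, the term $-2\rhod\dual\Psi = 2i\rhod\Psi = (P-\ov P)\Psi$ since $P=\rho+i\rhod$; combining with the $-\tfrac12(P+\ov P)\Psi$ contributed by the Laplacian substitution yields exactly the $-2P\Psi$ of the statement (up to the $\tr X\ov{\tr\Xb}$-type terms, which match directly). For the first-order derivative coefficients, $-\tfrac12\trchb\nab_4\Psi+\tfrac i2\atrchb\nab_4\Psi = -\tfrac12(\trchb-i\atrchb)\nab_4\Psi=-\tfrac12\tr\Xb\,\nab_4\Psi$, and similarly $\big(2\om-\tfrac12\trch\big)\nab_3\Psi+\tfrac i2\atrch\nab_3\Psi=\big(2\om-\tfrac12\tr X\big)\nab_3\Psi$. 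Finally the angular term: I would use the identity from Lemma \ref{simplification-angular}, $2\etab\c\nab\Psi = (\Hb\c\DDb)\Psi + \tfrac12\cdot\frac{1}{2}\cdots$ — more precisely, since $\etab\in\SS_1$ and $\Hb=\etab+i\dual\etab$, the relation \eqref{relation0angular=der} applied with $F=\Hb$ (using $2\etab\c\nab U = (\Hb+\ov\Hb)\c\nab U$ and $(F\c\DDb)U=2F\c\nab U$) gives $2\etab\c\nab\Psi = \tfrac12(\Hb\c\DDb)\Psi+\tfrac12(\ov\Hb\c\DD)\Psi$, which is exactly the cross term in the claimed identity.

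The main obstacle — really the only nontrivial bookkeeping — is correctly tracking the factors of $2$ and the split of real versus imaginary contributions when passing from $\lap_2$, $\nab$, $\atrch$, $\rhod$ to the complex operators $\DD\hot\DDb$, $\DDb$, $\tr X$, $P$; one must be careful that the $\SS_2(\CC)$ anti-self-duality $\dual\Psi=-i\Psi$ is used consistently (it is what converts $\dual\Psi$ into $\Psi$ and collapses the imaginary parts of $\tr X,\tr\Xb$ into the derivative coefficients). All error terms collected along the way are of the schematic form $r^{-1}\Ga_g\c\Psi$: the $\err_{12}[\Psi]$ from the Gauss equation, the $\err_{\DD\hot\DDb}[\Psi]$ from Proposition \ref{Complexwave-decomp}, and the fact that $\om,\omb,\trch,\trchb,H,\Hb$ differ from their Kerr values by $\Ga$-terms are all absorbed into $\err_{\square_2}[\Psi]=r^{-1}\Ga_g\c\Psi$. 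Once the substitution and the three simplifications above are carried out, the stated formula follows immediately.
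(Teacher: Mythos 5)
Your route is exactly the paper's: start from \eqref{first-equation-square}, solve Proposition \ref{Complexwave-decomp} for $\lap_2\Psi$ and substitute, absorb the $\frac{i}{2}\atrch\nab_3$ and $\frac{i}{2}\atrchb\nab_4$ terms into $\tr X$ and $\tr\Xb$, convert $-2\rhod\dual\Psi$ to $(P-\ov{P})\Psi$ via $\dual\Psi=-i\Psi$, and rewrite $2\etab\c\nab\Psi$ with Lemma \ref{simplification-angular}. All of those steps are the ones the paper performs, and the first-order and angular manipulations in your proposal are correct.

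One coefficient, however, is off. When you halve \eqref{expression-DD-laplacian-0} you write the zeroth-order contribution as $-\tfrac14\bigl(\tr X\,\ov{\tr\Xb}+\tr\Xb\,\ov{\tr X}+2P+2\ov{P}\bigr)\Psi$, i.e. $-\tfrac12(P+\ov{P})\Psi$; the correct contribution is $-\bigl(\tfrac14\tr X\,\ov{\tr\Xb}+\tfrac14\tr\Xb\,\ov{\tr X}+P+\ov{P}\bigr)\Psi$, since the whole bracket in \eqref{expression-DD-laplacian-0} is multiplied by $\tfrac12$, not by $\tfrac14$. With the correct coefficient the combination with $(P-\ov{P})\Psi$ gives $(P-\ov{P})-(P+\ov{P})=-2\ov{P}$, not the $-2P$ you claim to land on "exactly"; with your mis-stated coefficient it gives $\tfrac12 P-\tfrac32\ov{P}$, which is neither. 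Note that the paper's own proof body indeed arrives at $-2\ov{P}$, in tension with the $-2P$ printed in the corollary's display (and in \eqref{wave-equation-Psi}); this appears to be a typo in the paper. You should either carry the computation honestly to $-2\ov{P}$ and flag the discrepancy with the statement, or explain why $P$ and $\ov{P}$ may be interchanged up to admissible error terms (in Kerr $P=-2m/q^3$ so $P-\ov{P}=O(|a|/r^3)$, but that is a substantive claim that needs to be made explicitly, not silently).
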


\begin{proof} 
Consider \eqref{first-equation-square}:
\beaa
\square_2 \Psi&=&-\nab_4 \nab_3 \Psi +\lap_2\Psi+\left(2\om -\frac 1 2 \trch\right) \nab_3\Psi- \frac 1 2 \trchb \nab_4\Psi+2 \etab \c \nab  \Psi+(P - \ov{P}) \Psi
\eeaa
where $\lap_2$ is the Laplacian on the horizontal structure for a $2$-tensor. Using \eqref{expression-DD-laplacian-0} to write
\beaa
\lap_2  \Psi  &=& \frac 1 2  \DD \hot (\DDb \c \Psi)+ \frac 1 2 i (\atrch\nab_3+\atrchb \nab_4) \Psi  -  \left(  \frac 1 4 \tr X \ov{\tr \Xb}+ \frac 1 4 \tr \Xb  \ov{\tr X} + P + \ov{P}\right) \Psi + \Ga_g \Psi
   \eeaa
we obtain
\beaa
\square_2 \Psi&=&-\nab_4 \nab_3 \Psi + \frac 1 2  \DD \hot (\DDb \c \Psi)+\left(2\om -\frac 1 2 \tr X\right) \nab_3\Psi- \frac 1 2 \tr\Xb \nab_4\Psi  +\frac 1 2 (\Hb \c \DDb) \Psi +\frac 12 (\ov{\Hb} \c \DD) \Psi\\
&&+  \left( - \frac 1 4 \tr X \ov{\tr \Xb}- \frac 1 4 \tr \Xb  \ov{\tr X}  - 2\ov{P}\right) \Psi +\Ga_g \Psi
\eeaa
where we used $(\Hb \c \DDb) \Psi + (\ov{\Hb} \c \DD) \Psi= 4\etab  \c \nab \Psi $. 
\end{proof}

For $\Psi \in \SS_2(\mathbb{C})$ of conformal type $0$ we can write
\bea\label{wave-equation-Psi}
\begin{split}
\square_2 \Psi&=-\nabc_4 \nabc_3 \Psi +\frac 1 2  \DDc\hot( \DDbc \c \Psi) +  \left( - \frac 1 4 \tr X \ov{\tr \Xb}- \frac 1 4 \tr \Xb  \ov{\tr X} - 2P\right) \Psi\\
&-\frac 1 2 \tr \Xb \nabc_4\Psi- \frac 1 2 \tr X \nabc_3\Psi+\frac 1 2 (\Hb \c \DDbc) \Psi +\frac 12 (\ov{\Hb} \c \DDc) \Psi+ \err_{\square_2}[\Psi]
\end{split}
\eea
which is the invariant conformal wave operator for $\Psi \in \SS_2$ of conformal type $0$.


\subsection{The projection of the wave operator}
\lab{section:projwave}


In what follows, we consider the wave equation satisfied by symmetric traceless complex $2$-tensors, $A, Q(A), \qf \in \SS_{2}(\mathbb{C})$.  On the other hand, the equations governing linearized gravity around Kerr which are known in the literature, for instance the Teukolsky equation, are equations for complex scalars of spin $\pm2$. Such equations for complex scalars can be obtained by projecting the corresponding 2-tensor to its first components.

 In the following Lemma, we make the explicit connection between the wave operator applied to a 2-tensor and the wave equation verified by its projection, which is a scalar function of spin $\pm2$. 

\begin{proposition} 
Let $\Psi \in \SS_2(\mathbb{C})$ and let $\psi$ be its projection to its  first components, i.e. $\psi = \Psi(e_1, e_1)=\Psi_{11}$. Then
\bea\label{formula-projection-wave}
(\square_2 \Psi)_{11}&=&\square_{\g} \psi +i  \frac{4}{|q|^2}\frac{\cos\th}{\sin^2\th}  \Z (\psi) - \left( \frac{4}{|q|^2}\cot^2\th + a \tilde{V}\right) \psi + r^{-1} \Ga_g \psi
\eea
where $\square_2$ and $\square_{\g}$ are the wave operators for 2-tensors and scalars respectively, for perturbations of Kerr. 
\end{proposition}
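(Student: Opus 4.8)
The strategy is to start from the invariant form of the wave operator derived in Corollary \ref{corollary-wave-complex} (or its conformal version \eqref{wave-equation-Psi}) and project every term onto the $(e_1,e_1)$-component, carefully tracking how the horizontal covariant derivatives $\nab_3,\nab_4,\nab,\DD$ acting on a tensor $\Psi\in\SS_2(\CCC)$ relate to ordinary frame derivatives of the scalar $\psi=\Psi_{11}$. Since $\Psi$ is anti-self dual and symmetric traceless, it is determined by $\psi$ alone (recall $\Psi_{22}=-\Psi_{11}$, $\Psi_{12}=-i\Psi_{11}$), so all the tensorial operators collapse to scalar operators acting on $\psi$, at the price of picking up zeroth-order terms built from the connection coefficients of the frame. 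The key identity to establish first is the projection formula for the second-order operators: I would compute $(\nab_4\nab_3\Psi)_{11}$, $(\DD\hot(\DDb\c\Psi))_{11}$, and the first-order terms $(\nab_3\Psi)_{11}$, $(\nab_4\Psi)_{11}$, $((\Hb\c\DDb)\Psi)_{11}$, $((\ov{\Hb}\c\DD)\Psi)_{11}$, expressing each in terms of $e_3\psi$, $e_4\psi$, $e_a\psi$ and their second derivatives, plus lower-order corrections that in Kerr reproduce the stated coefficients and in a perturbation are $O(\ep)$, hence absorbable into $r^{-1}\Ga_g\psi$.

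The mechanism for converting $\nab_a\Psi_{11}$ into $e_a\psi$ is the connection-coefficient bookkeeping: $\nab_a\Psi_{bc}=e_a(\Psi_{bc})-(\text{connection})\cdot\Psi$, where the relevant connection terms are $(\La_a)_{bc}$ of the horizontal structure. For a $\SS_2$-tensor these are governed in Kerr by $H$, $\Hb$, $Z$ and the Ricci rotation coefficients of $(e_1,e_2)$, which in Boyer-Lindquist coordinates involve $\cos\th/\sin\th$ exactly through the azimuthal structure. This is precisely where the $\Z$-vectorfield \eqref{definition-Z-vectorfield} and the $\cot\th$ terms enter: the frame vector $e_2=\frac{a\sin\th}{|q|}\pr_t+\frac{1}{|q|\sin\th}\pr_\vphi$ has a derivative along itself that produces the angular-momentum operator $\Z=\pr_\vphi$ together with the $\cot^2\th$ potential. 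So the plan is: (i) evaluate the scalar wave operator $\square_\g\psi$ of Kerr in the null frame \eqref{null-frames}, identifying its $\pr_t$, $\pr_\vphi$, $\pr_r$, $\pr_\th$ pieces; (ii) evaluate $(\square_2\Psi)_{11}$ via Corollary \ref{corollary-wave-complex} in the same frame, using the explicit Kerr values of $\tr X$, $\tr\Xb$, $H$, $\Hb$, $Z$, $P$, $\omb$ from section \ref{sec:recallbasicthingsinKerr} and the Gauss-type commutator \eqref{Gauss-eq-2-complex}; (iii) subtract and match. Every discrepancy between the tensorial and scalar wave operators must be a lower-order term which in Kerr equals one of the three explicit terms on the right of \eqref{formula-projection-wave}, and off Kerr is $O(\ep)$ times a derivative of $\psi$, i.e. $r^{-1}\Ga_g\psi$ (or $\Ga_g\nab\psi$, which by the schematic conventions is included).

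The main obstacle will be the careful treatment of the second-order angular part: when one writes $(\DD\hot(\DDb\c\Psi))_{11}$ in the coordinate frame, the operator $\DDb\c$ lowers tensor rank and $\DD\hot$ raises it back, and the intertwining produces, besides $4\lap_2\Psi_{11}-4i(\nab_1\nab_2-\nab_2\nab_1)\Psi_{11}$ (which is already handled by Proposition \ref{Complexwave-decomp}), a Laplace–Beltrami operator on $\th$ and $\vphi$ acting on $\psi$ that must be shown to combine with the $\lap_2$ coming from $\square_2$ into exactly the spin-weighted spherical operator $\pr_\th^2+\cot\th\pr_\th-(\text{spin terms in }\cos\th/\sin^2\th)$ appearing inside $\square_\g$. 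Keeping the $e_2$-connection coefficients straight — distinguishing the genuine $\pr_\vphi$ contribution from the $a\sin\th\,\pr_t$ contribution hidden in $e_2$, so that the $\T$-vectorfield terms cancel and only the $\Z$-vectorfield survives in \eqref{formula-projection-wave} — is the delicate point; I expect it requires using the Kerr identities $H_1=-\ov{Z_1}$, $H_2=\ov{Z_2}$, $H_1=\ov{\Hb_1}$, $H_2=-\ov{\Hb_2}$ and Lemma \ref{derivatives-tr} to make the first-order pieces collapse cleanly. Once the Kerr computation is pinned down, the perturbative statement follows by the general principle that any term not matched is built from $\Ga_g$ or $\Ga_b$ and is thus $O(\ep)$, with the weights in $r$ arranged so that the remainder is of the asserted schematic type $r^{-1}\Ga_g\psi$.
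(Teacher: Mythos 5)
Your plan follows essentially the same route as the paper's proof: project each term of the complex wave operator from Corollary \ref{corollary-wave-complex} onto the $(e_1,e_1)$-component using the frame connection coefficients (the content of Lemma \ref{lemma-projection}), identify the scalar wave operator in its null-frame form, and show that the residual first-order piece $-i(\atrchb e_4+\atrch e_3-4\La e_2)$ collapses — after the $\T$-contributions cancel — to the single $\Z$-term with coefficient $4\cos\th/(|q|^2\sin^2\th)$. The approach and the key cancellation mechanism you identify are exactly those of the paper.
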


\begin{proof} 
Using Corollary \ref{corollary-wave-complex}  and Lemma \ref{lemma-projection}, the projection of the wave operator $\Box_2$ for a perturbation of Kerr applied to a $2$-tensor $\Psi$ can be computed:
\beaa
(\square_2 \Psi)_{11}&=&-(\nab_4 \nab_3 \Psi )_{11}+ \frac 1 2  \DD \hot (\DDb \c \Psi)_{11}+\left(2\om -\frac 1 2 \tr X\right) (\nab_3\Psi)_{11}- \frac 1 2 \tr\Xb (\nab_4\Psi )_{11} +(2\etab \c \nab \Psi)_{11}\\
&&+  \left( - \frac 1 4 \tr X \ov{\tr \Xb}- \frac 1 4 \tr \Xb  \ov{\tr X}  - 2\ov{P}\right) \Psi_{11} + r^{-1} \Ga_g \Psi_{11}\\
&=&-e_4 e_3 \psi-i  \atrchb  e_4 ( \psi)- i  \atrch  e_3 ( \psi)\\
&&+\lap \psi - \frac 1 2  i \atrch e_3( \psi) -\frac 1 2 i\atrchb e_4( \psi) +4 i \La  e_2(\psi)+\left( -4 \La^2 +\frac 1 2 \trch\trchb +2\rho \right)\psi \\
&& -\frac 1 2( \trch - i \atrch) e_3 ( \psi)- \frac 1 2 ( \trchb - i \atrchb)  e_4 ( \psi)\\
&& +2 \etab_1 e_1(\psi)+2 \etab_2 e_2(\psi )+  \left(-\frac 1 2 \trch\trchb  - 2\rho \right) \psi+ a \tilde{V}\psi+ r^{-1} \Ga_g \psi
\eeaa
where $a \tilde{V} \psi $ denotes zero-th order term in $\psi$.
This gives, using \eqref{wave-GKS}:
\beaa
(\square_2 \Psi)_{11}&=&-e_4 e_3 \psi-\frac 1 2 \trch e_3 ( \psi)- \frac 1 2  \trchb  e_4 ( \psi)+\lap \psi+2 \etab_1 e_1(\psi)+2 \etab_2 e_2(\psi ) \\
&&-i  \atrchb  e_4 ( \psi)- i  \atrch  e_3 ( \psi) +4 i \La  e_2(\psi) -4 \La^2 \psi + a \tilde{V} \psi \\
&=&\square_{\g} \psi -i  \left( \atrchb  e_4+  \atrch  e_3 -4  \La  e_2\right) (\psi) -4 \La^2 \psi + a \tilde{V}\psi + r^{-1} \Ga_g \psi.
\eeaa
Recall that $\La= \frac{r^2+a^2}{|q|^3}\cot\th$. Using \eqref{definition-T-vectorfield} and \eqref{definition-Z-vectorfield} we have
\beaa
&&\atrchb  e_4+  \atrch  e_3 -4  \La  e_2\\
&=&\frac{2a\Delta\cos\th}{|q|^4}  e_4+  \frac{2a\cos\th}{|q|^2}  e_3 -4  \frac{r^2+a^2}{|q|^3}\frac{\cos\th}{\sin\th}  e_2\\
&=&\frac{2\cos\th}{|q|^2} \left(\frac{a\Delta}{|q|^2}  e_4+  a  e_3 -2 \frac{r^2+a^2}{|q|}\frac{1}{\sin\th}  e_2\right) \\
&=&\frac{2\cos\th}{|q|^2} \left( a  \left(2\frac{r^2+a^2}{|q|^2}\T+2\frac{a}{|q|^2}\Z\right) -2 \frac{r^2+a^2}{|q|}\frac{1}{\sin\th}   \left(\frac{a\sin\th}{|q|}\T+\frac{1}{|q|\sin\th}\Z\right)\right) \\
&=&\frac{4\cos\th}{|q|^4} \left( a^2 - (r^2+a^2)\frac{1}{\sin^2\th} \right) \Z\\
&=&-\frac{4\cos\th}{\sin^2\th|q|^4} \left( r^2  +a^2\cos^2\th\right) \Z=-\frac{4}{|q|^2}\frac{\cos\th}{\sin^2\th}  \Z
\eeaa
which therefore gives \eqref{formula-projection-wave}.
\end{proof}

\begin{remark} 
More generally, for a $k$-tensor $\Psi \in \SS_k(\mathbb{C})$, the projection of the wave equation for $\Psi$ to its first component $\psi=\Psi_{1\dots1}$ satisfies
\beaa
(\square_k \Psi)_{11}&=&\square_{\g} \psi +i  \frac{2k}{|q|^2}\frac{\cos\th}{\sin^2\th}  \Z (\psi) - \left( \frac{k^2}{|q|^2}\cot^2\th + a \tilde{V}\right) \psi + r^{-1} \Ga_g \psi.
\eeaa
In the physics literature, this is referred to as the $k$-spin weighted wave equation, see for example  \cite{Kerr-lin1}.
\end{remark}


\section{The Teukolsky equation}


The curvature components $A$ and $\Ab$ are the only quantities which are invariant up to quadratic and higher order error terms to null frame transformations and vanish in Kerr.  In the language of the   previous  section they are   $O(\ep^2)$-invariant.

It is known that these curvature components satisfy  wave equations  which   decouple from  all other components at the linear level;  the celebrated Teukolsky equations. 
In this section we derive, using our formalism,  the corresponding   Teukolsky equation  for $A$ while keeping track of  the error terms generated  by the perturbation from Kerr.

 \begin{proposition}\label{Teukolsky-proposition} 
 The complex tensor $A$ satisfies the following equation:
 \bea\label{Teukolsky-equation-tens}
 \LL(A)&=& \err[\LL(A)]
 \eea
 where
\bea\label{Teukolsky-operator}
\begin{split}
\LL(A) &=-\nabc_4\nabc_3A+ \frac{1}{2}\DDc\hot (\DDbc \c A)+\left(- \frac 1 2 \tr X -2\ov{\tr X} \right)\nabc_3A-\frac{1}{2}\tr\Xb \nabc_4A\\
&+\left( 4H+\Hb +\ov{\Hb} \right)\c \nabc A+ \left(-\ov{\tr X} \tr \Xb +2\ov{P}\right) A+  2H   \hot (\ov{\Hb} \c A)
\end{split}
\eea
with error term  expressed schematically\footnote{The error terms  denoted  $\lot$ are 
quadratic   in  the perturbation and  enjoy  better  decay  properties,  or are higher order  and decay at least as good.}
\bea
\err[\LL(A)]&=& r^{-1}\frak{d}^{\leq 1} \left( \Ga_g B\right)+\Xi \nab_3 B+\lot
\eea
\end{proposition}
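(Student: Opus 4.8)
The strategy is to derive the Teukolsky equation for $A$ directly from the null Bianchi identities for $A$ and $B$ in their conformally invariant complex form (Proposition \ref{prop:bianchi:complex}, conformal version), by applying a suitable conformal derivative to the $A$-equation and substituting the $B$-equation. Concretely, I would start from
\beaa
\nabc_3A -\DDc\hot B &=& -\frac{1}{2}\tr\Xb A + 4 H   \hot B -3\ov{P}\Xh,
\eeaa
which expresses $\nabc_3 A$ in terms of $B$ up to terms that are $O(\ep)$ (since $\Xh\in\Ga_g$ and the term $3\ov P\Xh$ is of the schematic form $r^{-1}\frak d^{\leq 0}(\Ga_g B)$ after accounting for the Bianchi structure — more precisely, $\Xh$ itself is controlled by $\nabc B$-type quantities via the conformal Codazzi equation $\frac12\ov{\DDc}\c\Xh = \frac12\DDc\ov{\tr X}-i\Im(\tr X)H-i\Im(\tr\Xb)\Xi-B$). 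Apply $\nabc_4$ to this relation. On the left one gets $\nabc_4\nabc_3 A$; on the right one gets $\nabc_4(\DDc\hot B)$ plus lower-order contributions. Now use the commutation formula $[\nabc_4,\DDc\hot]$ from Lemma \ref{commutator-nab-c-3-DD-c-hot} to rewrite $\nabc_4(\DDc\hot B) = \DDc\hot(\nabc_4 B) - \frac12\tr X(\DDc\hot B + (1-s)\Hb\hot B) + \Hb\hot\nabc_4 B + \err_{4\DD\hot}[B]$, where $s$ is the signature of $B$ (namely $s=1$). At this point one substitutes the conformal Bianchi equation for $\nabc_4 B$,
\beaa
\nabc_4B -\frac{1}{2} \DDbc \c A &=& -2\ov{\tr X} B +\frac{1}{2}A\c \ov{\Hb}+3\ov{P} \,\Xi,
\eeaa
so that $\DDc\hot(\nabc_4 B)$ produces the key term $\frac12\DDc\hot(\DDbc\c A)$ together with terms involving $\DDc\hot(\ov{\tr X} B)$, $\DDc\hot(A\c\ov\Hb)$, and $\DDc\hot(\ov P\Xi)$; the first of these is $r^{-1}\frak d^{\leq1}(\Ga_g B)$ after a Leibniz expansion (Lemma on Leibniz formulas) since $\widecheck{\tr X}\in\Ga_g$ and the Kerr value of $\tr X$ is smooth, while the term $\DDc\hot(A\c\ov\Hb)$ must be simplified using Lemma \ref{dot-hot-complex} (the identity $E\hot(\ov F\c U)+F\hot(\ov E\c U)=(E\c\ov F+\ov E\c F)U$) and Lemma \ref{simplification-angular} to produce the $\nabc A$ and $A$ terms with the stated coefficients.

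\textbf{Key steps in order.} (1) Apply $\nabc_4$ to the conformal $A$-Bianchi identity and use $[\nabc_4,\DDc\hot]$ to move $\nabc_4$ past $\DDc\hot$. (2) Substitute the conformal $B$-Bianchi identity for $\nabc_4 B$ wherever it appears (both inside $\DDc\hot$ and in the $\Hb\hot\nabc_4 B$ term), generating $\frac12\DDc\hot(\DDbc\c A)$ as the principal term. (3) Collect all terms proportional to $\nabc_3 A$: these come from $-\frac12\tr\Xb$ times the original relation when $\nabc_3 A$ is re-expressed, plus contributions from differentiating $-\frac12\tr\Xb A$; similarly collect the $\nabc_4 A$ terms (these arise only at the $\ov{\tr X}$-level after the $B$-substitution introduces $-2\ov{\tr X}B$, which upon further manipulation using the $B$-equation again contributes $\nabc_4 A$ via $\DDbc\c A$). (4) Use the complexification lemmas (Lemma \ref{dot-hot-complex}, Lemma \ref{simplification-angular}, the $\hot$/$\c$ identities in Lemma \ref{lemma:usefulidentitiesforcomplexification}) to reduce all zeroth- and first-order-in-$A$ algebraic terms — in particular $H\hot(\ov\Hb\c A)$, $(4H+\Hb+\ov\Hb)\c\nabc A$, and the potential $(-\ov{\tr X}\tr\Xb + 2\ov P)A$ — to the exact form displayed in \eqref{Teukolsky-operator}, using the Kerr relations $q\Hb = -\ov q H$, $\Hb = -Z$, and the values of $\tr X,\tr\Xb, P$ only to identify which combinations are $O(\ep)$ and which are the genuine principal coefficients. (5) Bookkeep the error terms: every appearance of $\Xi$, $\Xh$, $\Xbh$, $\widecheck H$, $\widecheck\Hb$, $\widecheck Z$, $\widecheck{\tr X}$, $\widecheck{\tr\Xb}$, $\widecheck P$ multiplying $B$ or $\frak d^{\leq1}B$ gets absorbed into $r^{-1}\frak d^{\leq1}(\Ga_g B)$, except the term $\Xi\nabc_3 B$ (equivalently $\Xi\nab_3 B$, since the difference is lower order), which is displayed separately because $\Xi$ is $\Ga_g^{(0)}$ but $\nabc_3 B$ is not improved by the $\frak d$-weights; all genuinely quadratic or higher terms go into $\lot$.

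\textbf{Main obstacle.} The hard part will be step (4) combined with the careful identification in step (3) of exactly which first-order $\nabc_3 A$ and $\nabc_4 A$ terms survive with which coefficients. The commutation formula $[\nabc_4,\DDc\hot]$ and the $B$-Bianchi substitution each generate several terms with overlapping structure, and the coefficients $-\frac12\tr X - 2\ov{\tr X}$ in front of $\nabc_3 A$ and $-\frac12\tr\Xb$ in front of $\nabc_4 A$ arise only after cancellations between the commutator term $-\frac12\tr X(\DDc\hot B)$, the $\Hb\hot\nabc_4 B$ term, and the re-expansion of the original relation; getting these cancellations right (and correctly tracking the signature $s$ in the conformal commutators, since $B$ has $s=1$ but $\nabc_4 B$ has $s=2$ and $A$ has $s=2$) requires meticulous bookkeeping. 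A secondary subtlety is ensuring that the angular second-order operator truly reduces to $\frac12\DDc\hot(\DDbc\c A)$ and not to something involving $\lap_2 A$ with extra curvature corrections — here one must invoke Proposition \ref{Complexwave-decomp} in reverse, or equivalently verify directly via the complex-derivative Gauss identity that no additional potential term is hidden in the reduction; since the final equation \eqref{Teukolsky-operator} is stated with the $\DDc\hot\DDbc$ form rather than $\lap_2$, this is consistent, but the Kerr-value substitutions must be done only at the level of identifying error terms, never inside the principal part.
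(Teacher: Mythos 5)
Your overall route is the paper's: apply $\nabc_4$ to the Bianchi identity $\nabc_3A -\DDc\hot B = -\frac{1}{2}\tr\Xb A + 4H\hot B -3\ov{P}\Xh$, commute $\nabc_4$ past $\DDc\hot$ via Lemma \ref{commutator-nab-c-3-DD-c-hot} (and you correctly note that $(1-s)=0$ for $B$), substitute the Bianchi identity for $\nabc_4 B$ to produce $\frac12\DDc\hot(\DDbc\c A)$, and clean up the angular terms with the Leibniz/complexification lemmas. The problem is in how you propose to treat the remaining terms.

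The genuine gap is your classification of $3\ov{P}\Xh$ as an error term "of the schematic form $r^{-1}\frak{d}^{\leq 0}(\Ga_g B)$". In the paper's schematic conventions $\Ga_g B$ is a product of two $O(\ep)$ quantities, hence quadratic in the perturbation, whereas $\ov{P}\Xh=O(\ep/r^3)$ is \emph{linear} (the background value $\ov{P}=-2m/\ov{q}^3$ does not vanish) and is exactly comparable to $\ov{P}A$. This term must be kept and differentiated. Its $\nabc_4$-derivative, evaluated with the null structure equation $\nabc_4\Xh+\Re(\tr X)\Xh=\DDc\hot\Xi+\Xi\hot(\Hb+H)-A$, supplies the crucial contribution $+3\ov{P}A$; without it the potential comes out as $-\ov{\tr X}\tr\Xb-\ov{P}$ instead of the stated $-\ov{\tr X}\tr\Xb+2\ov{P}$. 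In addition, the surviving $\bigl(\tfrac32\tr X+6\ov{\tr X}\bigr)\ov{P}\Xh$ must be recombined with $\bigl(-\tfrac12\tr X-2\ov{\tr X}\bigr)\DDc\hot B$ and the $A$-Bianchi identity invoked a second time to reconstitute the coefficient of $\nabc_3A$; discarding $\ov{P}\Xh$ breaks that recombination. The Codazzi equation does not downgrade $\ov{P}\Xh$ — in the actual proof it serves only to show that the coefficient $-\DDc\ov{\tr X}+(\tr X-\ov{\tr X})H$ multiplying $B$ is absorbable into the quadratic errors.

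A related weakness is that your step (3) misattributes the first-order terms and omits the transport equations that are indispensable here. The term $-\frac12\tr\Xb\,\nabc_4A$ comes directly from $\nabc_4\!\left(-\frac12\tr\Xb\,A\right)$, not "at the $\ov{\tr X}$-level after the $B$-substitution"; and $\nabc_4$ of the coefficients $\tr\Xb$, $H$, $\ov{P}$, $\Xh$ must be evaluated using $\nabc_4\tr\Xb+\frac12\tr X\tr\Xb=\DDc\c\ov{\Hb}+\Hb\c\ov{\Hb}+2\ov{P}+\ldots$, $\nabc_4H-\nabc_3\Xi=\ldots-B$, $\nabc_4P-\frac12\DDc\c\ov{B}=-\frac32\tr X P+\ldots$, and the $\nabc_4\Xh$ equation above. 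These equations produce, respectively, part of the potential, the displayed borderline error involving $\Xi$ and $\nab_3 B$, and the $3\ov{P}A$ term; a plan that uses the Kerr values "only to identify which combinations are $O(\ep)$" cannot generate these coefficients.
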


\begin{proof}
According to Proposition \ref{prop:bianchi:complex}, we have the following Bianchi identity for $A$:
\bea\label{Bianchi-identity-A}
 \nabc_3A -\DDc\hot B &=& -\frac{1}{2}\tr\Xb A +  4H   \hot B -3\ov{P}\Xh.
\eea
Apply $\nabc_4$ to \eqref{Bianchi-identity-A}:
\beaa
\nabc_4\nabc_3A &=&\nabc_4(\DDc\hot B )-\frac{1}{2}\nabc_4\tr\Xb A-\frac{1}{2}\tr\Xb \nabc_4A \\
&&+ 4\nabc_4( H )  \hot B + 4 H   \hot\nabc_4( B )-3\nabc_4\ov{P}\Xh-3\ov{P}\nabc_4\Xh.
\eeaa
We compute the right hand side of the above equation.

According to Lemma \ref{commutator-nab-c-3-DD-c-hot}, we apply the commutation formula \eqref{commutator-nabc-4-F-formula} to $B$, which is a one form of conformal type $1$. Using the definition of $\Ga_b$ and $\Ga_g$ as in \eqref{definition-Gammas}, we have
 \beaa
 \, [\nabc_4 , \DDc \hot ]B &=&- \frac 1 2 \tr X( \DDc\hot B )+ \underline{H} \hot \nabc_4 B+ \err_{4\DDc\hot}[B]
 \eeaa
 where
 \beaa
  \err_{4\DDc\hot}[B]
 &=&\frac{1}{r} \Ga_g \left(\nab_3 +r \DD\right)B+ \left(B  +\frac{a}{r^2} \Ga_g\right) B.  
 \eeaa

We therefore obtain
 \beaa
&&  \nabc_4(\DDc\hot B )+ 4 H   \hot\nabc_4( B )\\
&=&\DDc\hot(\nabc_4 B )+ 4 H   \hot\nabc_4( B ) - \frac 1 2 \tr X \DDc\hot B + \underline{H} \hot \nabc_4 B+ \err_{4\DDc\hot}[B]\\
 &=&\DDc\hot(\nabc_4 B )  + \left(4 H  + \underline{H} \right)\hot \nabc_4 B- \frac 1 2 \tr X \DDc\hot B  +\err_{4\DDc\hot}[B].
 \eeaa
According to Proposition \ref{prop:bianchi:complex}, we have the following Bianchi identity for $B$: 
\beaa
\nabc_4B -\frac{1}{2} \DDbc \c A &=& -2\ov{\tr X} B +\frac{1}{2}A\c \ov{\Hb}+3 \ov{P} \  \Xi.
\eeaa
We therefore obtain
 \bea\label{term1}
 \begin{split}
& \nabc_4(\DDc\hot B )+ 4 H   \hot\nabc_4( B )\\
 &=\DDc\hot\left(\frac{1}{2} \DDbc \c A -2\ov{\tr X} B +\frac{1}{2}A\c \ov{\Hb}+3 \ov{P} \  \Xi\right)\\
 &  + \left(4 H  + \underline{H} \right)\hot \left(\frac{1}{2} \DDbc \c A -2\ov{\tr X} B +\frac{1}{2}A\c \ov{\Hb}+3 \ov{P} \  \Xi\right)- \frac 1 2 \tr X \DDc\hot B +\err_{4\DDc\hot}[B]\\
  &=\frac{1}{2}\DDc\hot (\DDbc \c A + \ov{\Hb} \c A) +\left(- \frac 1 2 \tr X -2\ov{\tr X} \right)\DDc \hot B \\
 &  + \left( 2H  +\frac 1 2 \underline{H} \right)\hot ( \DDbc \c A + \ov{\Hb} \c A)+2(-\DDc\ov{\tr X}-\ov{\tr X} \left(4 H  + \underline{H} \right))\hot  B \\
 &+3 \ov{P}\left( \DDc \hot \Xi+(4H+\underline{H})\hot\Xi\right)+\err_{4\DDc\hot}[B]+ 3 \DDc \ov{P} \hot \Xi.
 \end{split}
 \eea
Using the null structure equation 
\beaa
\nabc_4\tr\Xb +\frac{1}{2}\tr X\tr\Xb &=& \DDc\c\ov{\Hb}+\Hb\c\ov{\Hb}+2\ov{P}+\Ga_b \Ga_g
\eeaa
we obtain
\bea\label{term2}
\begin{split}
&-\frac{1}{2}\nabc_4\tr\Xb A-\frac{1}{2}\tr\Xb \nabc_4A\\
&=\left(\frac{1}{4}\tr X\tr\Xb -\frac 1 2 ( \DDc\c\ov{\Hb}+\Hb\c\ov{\Hb})-\ov{P}\right) A-\frac{1}{2}\tr\Xb \nabc_4A+\text{cubic terms}.
\end{split}
\eea
Using the null structure equation 
\beaa
\nabc_4H-\nabc_3 \Xi &=&  -\frac{1}{2}\ov{\tr X}(H-\Hb) -\frac{1}{2}\Xh\c(\ov{H}-\ov{\Hb}) -B
\eeaa
we obtain
\bea\label{term3}
4\nabc_4( H )  \hot B&=&  -2\ov{\tr X}(H-\Hb)  \hot B+\left(\nabc_3\Xi +\frac{a}{r^2} \Ga_g+B\right)B.
\eea
Finally using 
\beaa
\nabc_4\Xh+\Re(\tr X)\Xh&=& \DDc \hot \Xi + \Xi \hot (\Hb +H) -A, \\
\nabc_4P -\frac{1}{2}\DDc\c \ov{B} &=& -\frac{3}{2}\tr X P + \Hb \c\ov{B} + \left(\Xi \right) \Bb +\Ga_b  A,
\eeaa
we obtain
\bea\label{term4}
\begin{split}
&-3\nabc_4\ov{P}\Xh-3\ov{P}\nabc_4\Xh\\
&=-3(-\frac{3}{2}\ov{\tr X} \ \ov{ P})\Xh-3\ov{P}(-\frac 1 2 (\tr X+\ov{\tr X})\Xh -A+ \DDc \hot \Xi + \Xi \hot (\Hb +H))+\Ga_g \DDc B+\frac{a}{r^2} \Ga_g B \\
&= \left(\frac 3 2\tr X+6\ov{\tr X}\right)\ov{P}\Xh +3\ov{P}A-3\ov{P}( \DDc \hot \Xi + \Xi \hot (\Hb +H))+\Ga_g \DDc B+\frac{a}{r^2} \Ga_g B.
\end{split}
\eea
Summing \eqref{term1}, \eqref{term2}, \eqref{term3} and \eqref{term4}, we obtain
\beaa
\nabc_4\nabc_3A &=& \left(- \frac 1 2 \tr X -2\ov{\tr X} \right)\DDc \hot B +\left(\frac 3 2\tr X+6\ov{\tr X}\right)\ov{P}\Xh\\
 &&  +2(-\DDc\ov{\tr X}-5\ov{\tr X}  H  )\hot  B \\
&&+ \left(\frac{1}{4}\tr X\tr\Xb -\frac 1 2 ( \DDc\c\ov{\Hb}+\Hb\c\ov{\Hb})+2\ov{P}\right) A-\frac{1}{2}\tr\Xb \nabc_4A\\
&&+\frac{1}{2}\DDc\hot (\DDbc \c A + \ov{\Hb} \c A)+ \left( 2H  +\frac 1 2 \underline{H} \right)\hot ( \DDbc \c A + \ov{\Hb} \c A)\\
&&+\frac{1}{r} \Ga_g \left(\nab_3 +r \DD\right)B+\left(\nabc_3\Xi +\frac{a}{r^2} \Ga_g+B\right)B.
\eeaa
Using again \eqref{Bianchi-identity-A} we have 
\beaa
&&\left(- \frac 1 2 \tr X -2\ov{\tr X} \right)\DDc \hot B +\left(\frac 3 2\tr X+6\ov{\tr X}\right)\ov{P}\Xh\\
&=&\left(- \frac 1 2 \tr X -2\ov{\tr X} \right)( \DDc \hot B -3\ov{P}\Xh)\\
&=&\left(- \frac 1 2 \tr X -2\ov{\tr X} \right)\left(\nabc_3A+\frac{1}{2}\tr\Xb A-4 H   \hot B \right).
\eeaa
This gives
\beaa
\nabc_4\nabc_3A &=& \left(- \frac 1 2 \tr X -2\ov{\tr X} \right)\nabc_3A-\frac{1}{2}\tr\Xb \nabc_4A\\
 &&  +2\left(-\DDc\ov{\tr X}+(\tr X-\ov{\tr X} ) H  \right)\hot  B \\
&&+ \left(-\ov{\tr X} \tr \Xb -\frac 1 2 ( \DDc\c\ov{\Hb}+\Hb\c\ov{\Hb})+2\ov{P}\right) A\\
&&+\frac{1}{2}\DDc\hot (\DDbc \c A + \ov{\Hb} \c A)+ \left( 2H  +\frac 1 2 \underline{H} \right)\hot ( \DDbc \c A + \ov{\Hb} \c A)\\
&&+\frac{1}{r} \Ga_g \left(\nab_3 +r \DD\right)B+\left(\nabc_3\Xi +\frac{a}{r^2} \Ga_g+B\right)B.
\eeaa
By Codazzi equation
\beaa
\frac{1}{2}\DDbc\c\Xh  &=& \frac{1}{2}\DDc\ov{\tr X} -i\Im(\tr X)(H+\Xi)-B
\eeaa
therefore the second line is absorbed by the quadratic terms in the last line. We therefore have
\beaa
\nabc_4\nabc_3A &=& \left(- \frac 1 2 \tr X -2\ov{\tr X} \right)\nabc_3A-\frac{1}{2}\tr\Xb \nabc_4A\\
&&+ \left(-\ov{\tr X} \tr \Xb -\frac 1 2 ( \DDc\c\ov{\Hb}+\Hb\c\ov{\Hb})+2\ov{P}\right) A\\
&&+\frac{1}{2}\DDc\hot (\DDbc \c A + \ov{\Hb} \c A)+ \left( 2H  +\frac 1 2 \underline{H} \right)\hot ( \DDbc \c A + \ov{\Hb} \c A)+\err[\LL(A)]
\eeaa
where
\beaa
\err[\LL(A)]&=&\frac{1}{r} \Ga_g \left(\nab_3 +r \DD\right)B+\left(\nabc_3\Xi +\frac{a}{r^2} \Ga_g+B\right)B.
\eeaa

The last line of the above equation involving the angular derivatives of $A$ can be written as
\beaa
&&\frac{1}{2}\DDc\hot (\DDbc \c A + \ov{\Hb} \c A)+ \left( 2H  +\frac 1 2 \underline{H} \right)\hot ( \DDbc \c A + \ov{\Hb} \c A)\\
&=& \frac{1}{2}\DDc\hot (\DDbc \c A)+\frac{1}{2}\DDc\hot ( \ov{\Hb} \c A)+ \left( 2H  +\frac 1 2 \underline{H} \right)\hot ( \DDbc \c A )+ \left( 2H  +\frac 1 2 \underline{H} \right)\hot (\ov{\Hb} \c A).
\eeaa
Applying \eqref{simil-Leibniz} and \eqref{Leibniz-hot}, we write
\beaa
\DDc\hot ( \ov{\Hb} \c A)&=&  (\DDc\c \ov{\Hb} ) A + (\ov{\Hb}\c \DDc) A,\\
\Hb \hot ( \ov{\Hb} \c A)&=& ( \Hb \c \ov{\Hb}) \ A,
\eeaa
which implies
\beaa
\nabc_4\nabc_3A &=& \frac{1}{2}\DDc\hot (\DDbc \c A)+\left(- \frac 1 2 \tr X -2\ov{\tr X} \right)\nabc_3A-\frac{1}{2}\tr\Xb \nabc_4A\\
&&+ \left(-\ov{\tr X} \tr \Xb -\frac 1 2 ( \DDc\c\ov{\Hb}+\Hb\c\ov{\Hb})+2\ov{P}\right) A\\
&&+\frac{1}{2}\left( (\DDc\c \ov{\Hb} ) A + (\ov{\Hb}\c \DDc) A\right)\\
&&+ \left( 2H  +\frac 1 2 \underline{H} \right)\hot ( \DDbc \c A )+ \left( 2H   \right)\hot (\ov{\Hb} \c A)+  \frac 1 2( \Hb \c \ov{\Hb}) \ A+\err[\LL(A)]\\
&=& \frac{1}{2}\DDc\hot (\DDbc \c A)+\left(- \frac 1 2 \tr X -2\ov{\tr X} \right)\nabc_3A-\frac{1}{2}\tr\Xb \nabc_4A\\
&&+\frac{1}{2}  (\ov{\Hb}\c \DDc) A+ \left( 2H  +\frac 1 2 \underline{H} \right)\hot ( \DDbc \c A )+ \left(-\ov{\tr X} \tr \Xb +2\ov{P}\right) A\\
&&+ \left( 2H   \right)\hot (\ov{\Hb} \c A)+\err[\LL(A)].
\eeaa
Using Lemma \ref{simplification-angular}, we further simplify the angular part writing 
\beaa
\frac 12 (\ov{\Hb}\c \DDc )A+ \left( 2H  +\frac 1 2 \underline{H} \right)\hot ( \DDbc \c A )&=& \frac 12 (\ov{\Hb}\c \DDc )A+\left( \left( 2H  +\frac 1 2 \underline{H} \right)\c \DDbc \right) A \\
&=& \frac 12 (\ov{\Hb}\c \DDc+\Hb \c \DDbc )A+(2H \c \DDbc ) A \\
&=& \left( 2\etab +4H \right)\c \nabc A \\
&=& \left( 4H+\Hb +\ov{\Hb} \right)\c \nabc A.
\eeaa
This proves the proposition. 
\end{proof}

\begin{remark} 
The Teukolsky equation \eqref{Teukolsky-equation-tens} is a tensorial equation for  $A\in \SS_2(\mathbb{C})$, as defined in our formalism.   The standard derivation  of the equation, in linear theory,  is done instead with respect to  the Newmann-Penrose  formalism, see section \ref{section:NPformalism}.  To  relate the Teukolsky equation in our formalism to the classical one  in NP formalism we have to project it with respect to the horizontal frame $e_1, e_2$.  One can check in fact  that  the standard  Teukolsky variable, which we denote by 
$\a^{[+2]}$, is related  to $A$ via the formula 
\beaa
\a^{[+2]}&:=& -\frac{\ov{q}}{ q} A_{11}
\eeaa
where $A_{11}=A(e_1, e_1)$.   One can   check,  see  Appendix \ref{Teuk-literature}, that
in the particular case of the Kerr metric  we have   
\beaa
|q|^2\square_{{m,a}} \a^{[+2]}&=&-4(r-m) \pr_r \a^{[+2]}-4 \left(\frac{m(r^2-a^2)}{\Delta} - r -i a \cos\th \right)\pr_t \a^{[+2]} \\
&&- 4\left(\frac{a(r-m)}{\Delta} + i \frac{\cos\th}{\sin^2\th} \right) \pr_\vphi \a^{[+2]}+ (4 \cot^2\th-2)\a^{[+2]}
\eeaa
 where $\square_{{m,a}}  $ is the D'Alembertian  relative to the  Kerr metric.  This  is  the standard form of the Teukolsky equation in Boyer-Lindquist coordinates, see \cite{Teuk}. 
\end{remark}


\section{The Regge-Wheeler-type equation}


Just like in Schwarzschild, there are other $O(\ep^2)$ invariant quantities to be considered in Kerr. 
\begin{lemma}\label{lemma:basicinvarianceandconformalinvarianceofQofA} 
The expression 
\bea\label{definition-Q(A)}
 Q(A)&=& \nabc_3\nabc_3 A + \C \  \nabc_3A +\D \  A \in \SS_2
 \eea
 for any scalar functions $\C$ and $\D$ is $O(\ep^2)$ invariant. Moreover, it is conformally invariant of type $0$ provided $\C$ is conformally invariant of type $-1$ and $\D$ is conformally invariant of type $-2$. 
\end{lemma}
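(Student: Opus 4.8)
The plan is to verify the two claimed properties of $Q(A)$ separately, each being essentially a ``bookkeeping'' computation once the right facts are invoked.

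\emph{First property: $O(\ep^2)$ invariance.} The curvature component $A$ is $O(\ep^2)$ invariant and vanishes in Kerr, so under a general null frame transformation $A' = A + O(\ep^2)$. The operators $\nabc_3 = \nab_3 - 2s\omb$ and $\nabc = \nab + s\ze$ are built out of $\nab_3,\nab,\omb,\ze$, all of which change by $O(\ep)$ under a frame transformation, but since they act on $A$ which is itself $O(\ep^2)$-invariant and $O(\ep)$-small in any perturbative frame, the error terms generated are of the form (frame-change) $\times A \sim O(\ep)\cdot O(\ep) = O(\ep^2)$. More precisely, one writes $\nabc_3\nabc_3 A' + \C\nabc_3 A' + \D A' = \nabc_3\nabc_3 A + \C\nabc_3 A + \D A + O(\ep^2)$, because every term in the difference contains at least one factor that is $O(\ep)$ multiplying $A$ (itself $O(\ep)$), or contains the $O(\ep^2)$ correction $A'-A$. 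Since $\C,\D$ are fixed scalar functions of $(r,\th)$, they contribute no additional frame dependence beyond what is already accounted for. I would present this as a short paragraph invoking the definition of ``$O(\ep^2)$-invariant'' and the vanishing of $A$ in Kerr, without grinding through the explicit transformation formulas for each Ricci coefficient.

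\emph{Second property: conformal invariance of type $0$.} Here the key input is the lemma recalled just before this statement (the conformal-invariance rules for $\nabc_3$, $\nabc_4$, $\nabc_A$): if $f$ is $s$-conformally invariant, then $\nabc_3 f$ is $(s-1)$-conformally invariant. Now $A$ is $2$-conformally invariant (stated in the excerpt: ``$\a$ is $2$-conformal,'' hence so is its complexification $A$). Therefore $\nabc_3 A$ is $1$-conformally invariant, and $\nabc_3\nabc_3 A$ is $0$-conformally invariant. For the term $\C\, \nabc_3 A$ to be $0$-conformally invariant, we need $\C \cdot (\text{type } 1) = (\text{type } 0)$, i.e. $\C$ must be of type $-1$; and for $\D\, A$ to be of type $0$ we need $\D \cdot (\text{type }2) = (\text{type }0)$, i.e. $\D$ of type $-2$. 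Under these hypotheses all three summands of $Q(A)$ are $0$-conformally invariant, hence so is their sum, and since $A \in \SS_2$ and the operators preserve the symmetric-traceless structure, $Q(A) \in \SS_2$ as asserted. This is an immediate consequence of the recalled lemma plus the multiplicativity of conformal weights under products.

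\emph{Main obstacle.} There is no serious obstacle here: the statement is a consequence of definitions plus the previously established conformal-transformation lemma. The only point requiring mild care is making the $O(\ep^2)$-invariance argument precise --- one must be sure that the frame-dependence entering through $\omb$ and $\ze$ in the definitions of $\nabc_3,\nabc$, when composed twice, still only produces $O(\ep^2)$ errors after multiplying against $A$; this follows because $A$ and all of $\Xi,\Xib,\Xh,\Xbh,B,\Bb$ are $O(\ep)$ while the frame-transformation parameters are $O(\ep)$, so no cross term survives at order $O(\ep)$. I would state this cleanly and refer to the analogous discussion in \cite{KS} for the Schwarzschild case rather than reproduce the full transformation calculus.
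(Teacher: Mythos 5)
Your proposal is correct and follows essentially the same route as the paper, which simply asserts that $Q(A)$ vanishes in Kerr and is $O(\ep^2)$-invariant "clearly," and that the conformal invariance holds "by construction" under the stated conditions on $\C$ and $\D$. Your filling-in of the details — the $O(\ep)\cdot O(\ep)$ cross-term argument for the first claim and the weight count $2\to 1\to 0$ via the recalled conformal-derivative lemma for the second — is exactly the computation the paper leaves implicit.
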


\begin{proof} 
Clearly the quantity vanishes in Kerr and is an $O(\ep^2)$ invariant. By construction, it is also conformally invariant under the above conditions on $\C$ and $\D$. 
\end{proof}

We will also need the following rescaled version of $Q$.
\begin{definition} 
Given a general null frame $(e_3, e_4, e_1, e_2)$ and given scalar functions $r$ and $\th$ satisfying the assumptions in Section \ref{perturbations-section}, we define our main quantity $\qf \in \SS_2$ as 
\bea
\qf&=& q \ov{q}^{3} Q(A)=q \ov{q}^{3} \left( \nabc_3\nabc_3 A + \C \  \nabc_3A +\D \  A\right)
\eea
where $q=r+ i a \cos\th$. 
\end{definition}

The quantity $\qf$ can be seen as a second order differential operator in $A$. In particular, this is a physical space version of the Chandrasekhar transformation, which transforms the Teukolsky equation satisfied by $A$, as in Proposition \ref{Teukolsky-proposition}, into a Regge-Wheeler-type equation. We now state the main result of the paper  concerning  the wave equation satisfied by $\qf$. 
 
 \begin{theorem}\label{main-theorem} 
 There exists choices of complex scalar functions $\C$, $\D$,  see \eqref{finalchoicefordefinition-C} \eqref{finalchoicefordefinition-D},  such that  the invariant symmetric traceless $2$-tensor $\qf \in \SS_2$ verifies the equation
 \bea\label{wave-equation-qf}
 \square_2 \qf   - \frac{4i a\cos\th}{|q|^2} \T( \qf )   - V  \qf &=& a\,  L_{\qf}[A] + \err[\square_2 \qf]
 \eea
 where
 \begin{itemize}
 \item the potential $V$ is a complex scalar function, see also Remark \ref{rmk:propertiesofthepotentialV} below,
 
 \item $ L_{\qf}[A ]$ is  a  linear second order   operator  in $A$,   given by
  \beaa
L_{\qf}[A] &=& c_1\nabc_2 ( \nabc_3 A) + c_2\nabc_3 A + c_3 \nabc (A) + c_4 A
\eeaa
with $c_1,\ldots, c_4$  smooth  functions  of $(r, \th)$ which have the following fall-off in $r$ 
\beaa
c_1=O\left(r\right), \qquad c_2=O\left(1\right), \qquad c_3=O\left(1\right), \qquad c_4=O\left(\frac{1}{r}\right),
\eeaa

\item  The error term is given schematically by\footnote{The error terms  denoted  $\lot$ are 
quadratic   in  the perturbation and  enjoy  better  decay  properties,  or are higher order  and decay at least as good.} 
 \beaa
 \err[\square_2 \qf]&=& r^2 \frak{d}^{\leq 2} (\Ga_g \c (A, B)) + \nab_3 (r^3 \frak{d}^{\leq 2}( \Ga_g \c (A, B)))\\
 &&+\frak{d}^{\leq 1} (\Ga_g \qf)  + r^2 \widecheck{H} \nab^{\leq 1} \nab_3^{\leq 1} A+\lot
 \eeaa
 \end{itemize}
 \end{theorem}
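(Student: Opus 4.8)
The plan is to start from the Teukolsky equation for $A$ in Proposition \ref{Teukolsky-proposition} and apply the operator $Q(A)=\nabc_3\nabc_3 A+\C\,\nabc_3 A+\D\,A$ to it, commuting $\nabc_3$ (twice) past the Teukolsky operator $\LL$. The key algebraic input is the commutation machinery of Lemma \ref{commutator-nab-c-3-DD-c-hot}: we will repeatedly commute $\nabc_3$ with $\DDc\hot$, with $\DDbc\,\c$, with $\nabc_4$, and with the lower-order terms $H\c\nabc A$, etc. At each stage one uses the null structure and Bianchi equations in conformal complex form (Propositions \ref{prop-nullstr:complex-conf} and \ref{prop:bianchi:complex}) to rewrite every $\nabc_3$ hitting a Ricci or curvature coefficient as an algebraic expression, and one substitutes the Kerr values $\tr X=2/q$, $\tr\Xb=-2\Delta q/|q|^4$, $P=-2m/q^3$, $H,\Hb,Z$ as in section \ref{sec:recallbasicthingsinKerr} up to $\Ga_g,\Ga_b$ errors. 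After this, one multiplies by the conformal weight $q\ov q^{\,3}$ (using $\nab_4 q=\frac12\tr X q$, $\nab_3 q=\frac12\ov{\tr\Xb}q$, $\DD q=q\Hb$, $\DDb q=q\ov H$ from \eqref{equations:forq}) to convert the resulting transport-type equation for $Q(A)$ into a genuine wave equation for $\qf=q\ov q^{\,3}Q(A)$, invoking Corollary \ref{corollary-wave-complex} / equation \eqref{wave-equation-Psi} for the canonical form of $\square_2$ acting on a conformal-type-$0$ tensor.

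The concrete steps, in order: (1) Choose $\C$ and $\D$. Following the Schwarzschild template of \cite{KS}, $\C$ must be conformal of type $-1$ and $\D$ conformal of type $-2$ so that $Q(A)$ is conformal type $0$ (Lemma \ref{lemma:basicinvarianceandconformalinvarianceofQofA}); the precise values \eqref{finalchoicefordefinition-C}, \eqref{finalchoicefordefinition-D} will be dictated by requiring that (a) the first-order transport term in the $\nabc_3$-direction cancels and (b) the $\DDc$-angular structure closes into $\frac12\DDc\hot(\DDbc\c\qf)$ plus the allowed lower-order operator $aL_{\qf}[A]$. (2) Compute $\nabc_3\LL(A)$ and $\nabc_3\nabc_3\LL(A)$, organizing terms by type: the principal part $-\nabc_4\nabc_3 A$ commuted with $\nabc_3$ produces $-\nabc_4\nabc_3 Q(A)$ modulo $[\nabc_3,\nabc_4]$-corrections governed by Lemma \ref{commutator-nab-3-nab-4} and modulo $\nabc_3$-derivatives of $\C,\D$; the angular Laplacian piece $\frac12\DDc\hot(\DDbc\c A)$ commuted with $\nabc_3$ produces $\frac12\DDc\hot(\DDbc\c Q(A))$ plus commutator errors which are exactly where the $a$-dependent operator $L_{\qf}[A]$ and the $\T$-transport term $-\frac{4ia\cos\th}{|q|^2}\T(\qf)$ are generated. (3) Identify the $\T$-term: the vectorfield $\T$ defined in \eqref{definition-T-vectorfield} appears because in Kerr $\atrchb e_4+\atrch e_3-4\La e_2$ is proportional to $\Z$ (as in the projection computation in section \ref{section:projwave}) and the analogous combination of $\nab_3,\nab_4$ arising from the commutators collapses onto $\T$ up to $\Ga_g$ errors. (4) Collect the genuinely zeroth-order terms into the complex potential $V$; verify $\Re V=-\trch\trchb+O(|a|/r^3)$ by comparison with the Schwarzschild potential of \cite{KS}. (5) Track all quadratic-and-higher terms: every place a $\Ga_g$ or $\Ga_b$ was dropped, or a $B$ appeared (via the Bianchi identity $\nabc_3 A=\DDc\hot B-\frac12\tr\Xb A+4H\hot B-3\ov P\Xh$, which must be used to re-express $\nabc_3$ of $A$-terms), contributes to $\err[\square_2\qf]$; one checks these assemble into the schematic form $r^2\dk^{\le2}(\Ga_g\c(A,B))+\nab_3(r^3\dk^{\le2}(\Ga_g\c(A,B)))+\dk^{\le1}(\Ga_g\qf)+r^2\widecheck H\nab^{\le1}\nab_3^{\le1}A+\lot$ by inspecting $r$-weights (using the $\dk$ scaling $\{\nab_3,r\nab_4,r\DD\}$) and noting that $\Xi\nab_3 B$ type terms from $\err[\LL(A)]$ get one extra $\nabc_3$ and land in the $\nab_3(r^3\cdots)$ bucket.

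\textbf{The main obstacle.} The hard part will be step (2)–(4): controlling the exact coefficients after the double commutation. Commuting $\nabc_3$ twice past $\DDc\hot\DDbc$ generates not only the desired $\frac12\DDc\hot(\DDbc\c\qf)$ but a proliferation of terms of the form (Ricci)$\c\nabc_3\DDbc A$, (Ricci)$\c\DDbc\nabc_3 A$, $(\nabc_3 H)\hot(\ov{\Hb}\c A)$, and so on, and these only collapse into the claimed clean right-hand side after a delicate cancellation that pins down $\C$ and $\D$ uniquely. In Schwarzschild this cancellation is the content of Theorem 2.4.7 of \cite{KS}; here the novelty — and the real difficulty — is that the non-integrability defects $\atrch,\atrchb$ and the fact that $H\ne-Z$ (only $\Hb=-Z$ in Kerr) mean many previously-zero intermediate terms now survive and must be shown to either cancel against each other, get absorbed by the $\T$-term, or fit into the admissible operator $aL_{\qf}[A]$ with the stated $r$-fall-off $c_1=O(r),\ c_2=O(1),\ c_3=O(1),\ c_4=O(1/r)$. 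Verifying that no term worse than these (in particular no $\nabc_4$-derivative of $A$ outside the principal part, and no term with insufficient $r$-decay) appears is where essentially all the work lies; the identities for $q$ in Lemmas \ref{derivatives-tr}, the lemma giving \eqref{relation=tr-eta}, and the Codazzi equations will be used repeatedly to reduce $\nab$ of $\trchb,\atrchb$ to algebraic expressions and thereby close the computation.
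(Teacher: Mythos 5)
Your proposal follows essentially the same route as the paper: apply the Chandrasekhar operator $Q$ to the Teukolsky equation, compute the commutator $[Q,\LL]A$ via the conformal commutation lemmas, fix $\C$ and $\D$ by requiring the cancellation of the $\nabc_4\nabc_3 A$ and $\nabc_4 A$ terms (with the imaginary part of $\C$ then pinned down by the structure of the lower-order operator), rescale by $q\ov{q}^{\,3}$ to absorb the remaining first-order terms into $\square_2$, and recognize the surviving $i(\atrch\nab_3+\atrchb\nab_4)$ plus angular pieces as the $\T$-transport term. You also correctly locate the real work in controlling the coefficients after the double commutation, which is precisely the content of the paper's Appendix B and Lemma 7.5.
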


\begin{remark} 
The Regge-Wheeler type equation in Kerr was obtained by Ma \cite{Ma}  and  Dafermos,  Holzegel  and Rodnianski \cite{D-H-R-Kerr}  starting with  the NP  complex scalar curvature component $\Psi_0$, corresponding to $A_{11}=\a_{11}+i \dual \a_{11}$. Our  equation \eqref{wave-equation-qf} is instead a tensorial equation in curved background  with 
precise reference to the error terms, see also the similar equation derived in \cite{KS}. We note that, in the particular case of Kerr,   equations derived in \cite{Ma},  \cite{D-H-R-Kerr}    can be obtained      from \eqref{wave-equation-qf} by projection to the $1$-$1$ component. Such a projection modifies the equations  by    the appearance of Christoffel symbols of the horizontal distributions, see Section \ref{section-projection-equation} for the projection of the Regge-Wheeler type equation to its first component.  
\end{remark}

\begin{remark}\label{rmk:propertiesofthepotentialV}
The potential term $V$  coincides with the potential $-\trch\trchb$, appearing  in \cite{KS} in the context of perturbations of Schwarzschild, plus terms multiplied by the angular momentum $a$. More precisely,
\beaa
\Re(V)-V_0 = O\left(\frac{|a|}{r^4}\right), \qquad \Im(V)=O\left(\frac{|a|}{r^3}\right), \qquad V_0:=-\trch\trchb.
\eeaa
 In particular,  for small angular momentum, $\Re(V)-V_0$ and $\Im(V)$ can be treated as  lower order terms, and absorbed by the left hand side. 
\end{remark}


\subsection{Proof of Theorem \ref{main-theorem}}


 Recall the Teukolsky equation as in Proposition \ref{Teukolsky-proposition}, i.e.
\bea\label{Teuk-repeat}
 \LL(A)&=& \err[\LL(A)].
\eea
We want to apply the Chandrasekhar transformation, i.e. the operator $Q$, to the above. We compute the commutator $[Q,\LL ]A$ between $\LL$ and the second order differential operator $Q$, as defined in \eqref{definition-Q(A)}, for any scalar functions $\C$ and $\D$. In order to cancel out the highest order terms in the equation for $Q(A)$ we need to impose differential equations on the functions $\C$ and $\D$.  We obtain the following.
\begin{proposition}\label{first-intermediate-step-main-theorem} 
Let $Q(A)= \nabc_3 \nabc_3 A+ \C \nabc_3 A + \D \ A$ such that $\C$ and $\D$ satisfy
 \bea
 \nabc_3 \C+\frac {\C }{2} (\tr \Xb+\ov{\tr\Xb}) - \ov{\tr\Xb} \tr\Xb&=& \Ga_g, \label{equation-nab-3-c-1}\\
 \nabc_3 \D +\D \left(\tr \Xb+\ov{\tr \Xb}\right)-\frac \C 4\tr\Xb(\ov{\tr\Xb})&=& r^{-1}\Ga_g. \label{equation-nab-3-d-1}
 \eea
 Then the commutator between $Q$ and $\LL$ is given by 
 \bea\label{final-commutator}
 \begin{split}
 [Q, \LL](A) &= 4\etab  \c \nabc Q(A)-  \left(\tr \Xb+\ov{\tr \Xb}\right)  \nabc_4Q(A)+C_0(  Q(A))\\
 &+ a\, L_Q(A)+\err[ [Q, \LL]A]
 \end{split}
 \eea
 where $C_0(  Q(A))$ are zero-th order terms in $Q(A)$,  $ L_Q(A)$ are linear lower order terms in $Q(A)$ of the schematic form 
 \beaa
L_Q(A) &=& d_1 \nabc ( \nabc_3 A) + d_2 \nabc_3 A + d_3 \nabc(A) + d_4 A,
\eeaa
with $d_1,\ldots, d_4$  smooth  functions  of $(r, \th)$ which have the following fall-off in $r$ 
\beaa
d_1=O\left(\frac{1}{r^3}\right), \qquad d_2=O\left(\frac{1}{r^4}\right), \qquad d_3=O\left(\frac{1}{r^4}\right), \qquad d_4=O\left(\frac{1}{r^5}\right).
\eeaa
The error terms $\err[ [Q, \LL]A]$ are given by
\beaa
\err[ [Q, \LL]A]&=&  \nabc_3\left( \frac 1 r \Ga_g \c \frak{d}^{\leq 2} A\right)+\lot
\eeaa
 \end{proposition}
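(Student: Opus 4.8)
\textbf{Proof proposal for Proposition \ref{first-intermediate-step-main-theorem}.}

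The plan is to compute $\LL(Q(A))$ by commuting the second order operator $Q = \nabc_3\nabc_3 + \C\nabc_3 + \D$ with the Teukolsky operator $\LL$ term by term, and then to choose $\C$, $\D$ to kill the worst terms. Since $\LL(A) = \err[\LL(A)]$ by \eqref{Teuk-repeat}, we have $Q(\LL(A)) = Q(\err[\LL(A)])$, which contributes only error terms of the stated form (applying $\nabc_3^{\leq 2}$ to $r^{-1}\Ga_g\c\frak{d}^{\leq 1}B$-type expressions). Thus $\LL(Q(A)) = [\LL, Q](A) + (\text{error})$, and the task is to extract from $[\LL, Q](A)$ the transport part $4\etab\c\nabc\, Q(A) - (\tr\Xb+\ov{\tr\Xb})\nabc_4 Q(A) + C_0(Q(A))$, the $a$-weighted lower order operator $L_Q(A)$, and the error. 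First I would write $\LL(A)$ in the form \eqref{Teukolsky-operator} and apply $\nabc_3\nabc_3$, $\C\nabc_3$, $\D$ to it, keeping only the principal structure. The key commutators needed are the conformal versions from Lemma \ref{commutator-nab-c-3-DD-c-hot}: $[\nabc_3,\nabc_4]$ acting on $\SS_2(\mathbb{C})$, $[\nabc_3, \DDc\hot]$ on one-forms, $[\nabc_3, \ov{\DDc}\c]$ on two-forms, and $[\nabc_3, \nabc_a]$; these convert every $\nabc_3\nabc_4$, $\nabc_3\DDc$, $\nabc_3\ov{\DDc}$ into $\nabc_4\nabc_3$, $\DDc\nabc_3$, $\ov{\DDc}\nabc_3$ plus lower order terms carrying factors of $\tr\Xb$, $H$, $\etab$ and error terms $\Ga_g\c\frak{d}^{\leq 1}$.

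The main computational device is that each time $\nabc_3$ passes through a curvature or connection coefficient appearing in $\LL$, one must differentiate that coefficient using the null structure and Bianchi equations in complex conformal form (Propositions \ref{prop-nullstr:complex-conf} and \ref{prop:bianchi:complex}, specialized to Kerr values in section \ref{sec:recallbasicthingsinKerr}). For instance, $\nabc_3\nabc_3(\tr\Xb\,\nabc_4 A)$ produces a term $(\nabc_3^2\tr\Xb)\nabc_4 A$ plus cross terms; the leading behaviour of $\nabc_3^k\tr\Xb$ in Kerr is governed by $\nabc_3\tr\Xb = -\tfrac12(\tr\Xb)^2 + \ldots$. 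The point of the two ODEs \eqref{equation-nab-3-c-1} and \eqref{equation-nab-3-d-1} is precisely that, after collecting all contributions, the coefficient of $\nabc_3 A$ (at second order in angular/null derivatives, i.e. the genuinely dangerous, non-transport, non-$O(a/r)$ pieces) in $\LL(Q(A))$ vanishes, and similarly the zeroth order coefficient of $A$ at the dangerous order collapses into the $C_0(Q(A))$ transport structure. I would substitute the ansatz, collect the coefficient of $\nabc_3 A$, set it to $\Ga_g$ — this yields \eqref{equation-nab-3-c-1} — and then collect the coefficient of $A$ and set it to $r^{-1}\Ga_g$, yielding \eqref{equation-nab-3-d-1}. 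The remaining terms, after using these relations and the Codazzi equations to absorb $\frac12\DDbc\c\Xh$-type expressions, reorganize into the transport operator $4\etab\c\nabc - (\tr\Xb+\ov{\tr\Xb})\nabc_4$ acting on $Q(A)$ itself (this requires recognizing that the angular coefficient $4H+\Hb+\ov{\Hb}$ in $\LL$ equals $4\etab + \ldots$ up to complex rearrangement, as in the last display of the proof of Proposition \ref{Teukolsky-proposition}).

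The $a$-weighted terms $L_Q(A)$ arise from the non-symmetry of the horizontal structure: precisely the commutator terms involving $\atrch$, $\atrchb$, and the Kerr values $H$, $Z$, $\etab$, which all carry an explicit factor of $a$ (or $a\cos\th$) in Kerr. Tracking the $r$-fall-off of these through the three derivatives of $Q$ gives the claimed $d_1 = O(r^{-3})$, etc.; the leading $d_1\nabc(\nabc_3 A)$ comes from a single uncommuted angular derivative hitting $\nabc_3 A$ with a coefficient that in Kerr decays like $\tr\Xb \cdot H \sim r^{-2}\cdot r^{-1}$ times the $\ov{q}^{-?}$ weights, but since $Q$ has no overall $q$-weight here (that is inserted later to form $\qf$) one gets $O(r^{-3})$. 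The hard part will be the bookkeeping: ensuring that every term produced by the three nested $\nabc_3$ derivatives is correctly classified as transport, $C_0$, $a L_Q$, or error, and in particular verifying that the choices \eqref{equation-nab-3-c-1}–\eqref{equation-nab-3-d-1} are simultaneously consistent (i.e. that solving the first ODE for $\C$ does not obstruct solving the second for $\D$) and that no second-order-in-derivatives term in $A$ survives outside of $L_Q$. A secondary subtlety is keeping the error terms in the form $\nabc_3(r^{-1}\Ga_g\c\frak{d}^{\leq 2}A) + \lot$ rather than a worse structure — this requires that the $\Ga_g$-valued coefficients in the commutator lemmas never get hit by more than one bare $\nabc_3$ without being absorbed, which follows from the schematic rules $\dk = \{\nab_3, r\nab_4, r\DD\}$ and $\Ga_g^{(s)} = \dk^{\leq s}\Ga_g$.
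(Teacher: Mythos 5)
Your overall strategy --- commuting $Q$ through each term of $\LL$ with the conformal commutator lemmas, absorbing the top-order pieces into $-(\tr\Xb+\ov{\tr\Xb})\nabc_4 Q(A)$ and $4\etab\c\nabc Q(A)$, and using the freedom in $\C,\D$ to kill the dangerous residue --- is the same as the paper's, which splits $[Q,\LL]A$ into six pieces corresponding to the six terms of $\LL$ and computes each with the commutators of Lemma \ref{commutator-nab-c-3-DD-c-hot}. However, you misidentify what the two ODEs are for. In the paper, \eqref{equation-nab-3-c-1} and \eqref{equation-nab-3-d-1} are precisely the conditions that the residual coefficients of $\nabc_4\nabc_3 A$ and of $\nabc_4 A$ (after extracting the transport term $-(\tr\Xb+\ov{\tr\Xb})\nabc_4 Q(A)$ from $\nabc_4\nabc_3\nabc_3A$) reduce to $\Ga_g$ and $r^{-1}\Ga_g$; these are the genuinely fatal terms, since no $\nabc_4$-derivative of $A$ is permitted in $L_Q(A)$. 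If you literally ``collect the coefficient of $\nabc_3 A$ and set it to $\Ga_g$'', you obtain a different and incompatible condition: that coefficient ($C_3$ in the paper's notation) involves $\nabc_4\nabc_3\C$, $\nabc_4\D$, $Q\left(-\frac12\tr X-2\ov{\tr X}\right)$, etc., and in the final statement it does \emph{not} vanish --- it is the $O\left(|a|/r^4\right)$ coefficient $d_2$ of $L_Q(A)$. Deriving the ODEs the way you describe would therefore produce the wrong equations.

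Second, you omit the other half of the proof: showing that the coefficients of $\nabc_3 A$, $\nabc A$ and $A$ are $O(|a|)$. Your heuristic that the $a$-weighted terms ``arise from $\atrch$, $\atrchb$, $H$, $Z$, $\etab$'' is not sufficient, because $C_3$ and $C_0$ are sums of contributions (e.g.\ $\frac12\trch\trchb^2$, $-3\trchb\rho$, $-\frac{23}{4}\trch\trchb^2$, $-\frac54\trch\trchb^2$, \dots) which individually survive in Schwarzschild; only their sum cancels modulo $O(|a|)$, and the paper verifies this by explicit computation using the specific solutions $\C=2\trchb+i\,\frak{c}\,\atrchb$ and $\D=\frac12\trchb^2+\frak{f}\atrchb^2+\frac{i}{2}\frak{c}\,\trchb\atrchb$ of the two transport equations. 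Without that verification (or a rigorous reduction to the Schwarzschild computation of \cite{KS} within this formalism), the claimed structure and fall-off of $L_Q(A)$, and hence the statement of the proposition, remain unproven. The rest of your outline --- the bookkeeping of the error terms under $\dk$, the absorption of $\nabc\nabc_3\nabc_3A$ into $4\etab\c\nabc Q(A)$, and the observation that $Q$ applied to $\err[\LL(A)]$ yields acceptable errors --- is consistent with the paper.
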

 
 \begin{proof}
 See Appendix \ref{proof-appendix}. 
 \end{proof}

 Observe that the transport equation given by \eqref{equation-nab-3-c-1} only imposes conditions on the real part of the coefficient $\C$. Indeed, any function $\C$ of the form $\C= 2 \trchb + i \frak{c} \atrchb$
for any constant $\frak{c}$ satisfies \eqref{equation-nab-3-c-1}. Indeed
\beaa
&& \nabc_3 \C+\frac {\C }{2} (\tr \Xb+\ov{\tr\Xb}) - \ov{\tr\Xb} \tr\Xb\\
&=&  \nabc_3(2 \trchb + i \frak{c} \atrchb)+(2 \trchb + i \frak{c} \atrchb) \trchb  - (\trchb^2+\atrchb^2)\\
&=&  2\left(-\frac 1 2 \big( \trchb^2-\atrchb^2\big)+\Ga_g\right) + i \frak{c} (-\trchb \atrchb +\Ga_g)\\
&&+(2 \trchb + i \frak{c} \atrchb) \trchb  - (\trchb^2+ \atrchb^2)=\Ga_g.
\eeaa
Similarly, let $\D$ be a function of the form $\D= \frac 1 2 \trchb^2 + \frak{f} \atrchb^2+i \frak{e} \trchb \atrchb$. 
Then we have
\beaa
&& \nabc_3 \D +\D \left(\tr \Xb+\ov{\tr \Xb}\right)-\frac \C 4\tr\Xb(\ov{\tr\Xb})\\
&=& \nabc_3 \left( \frac 1 2 \trchb^2 + \frak{f} \atrchb^2+i \frak{e} \trchb \atrchb\right) +\left( \frac 1 2 \trchb^2 + \frak{f} \atrchb^2+i \frak{e} \trchb \atrchb \right) 2\trchb\\
&&-\frac 1 4( 2 \trchb + i \frak{c} \atrchb) (\trchb^2+ \atrchb^2)\\
&=&  \trchb  \left(-\frac 1 2 \big( \trchb^2-\atrchb^2\big)+\Ga_g\right) +2 \frak{f} \atrchb (-\trchb\atrchb+\Ga_g)\\
&& +i \frak{e} \left(-\frac 1 2 \big( \trchb^2-\atrchb^2\big)+\Ga_g\right) \atrchb+i \frak{e}  \trchb (-\trchb\atrchb+\Ga_g) \\
&&+\left( \frac 1 2 \trchb^2 + \frak{f} \atrchb^2+i \frak{e} \trchb \atrchb \right) 2\trchb-\frac 1 4( 2 \trchb + i \frak{c} \atrchb) (\trchb^2+ \atrchb^2)\\
&=&i\atrchb  \left( \trchb^2+ \atrchb^2\right)\left( \frac 1 2 \frak{e}-\frac 1 4 \frak{c} \right)+r^{-1} \Ga_g.
\eeaa
In particular, for any constants $\frak{f}$ and $\frak{c}$
\bea
\C&=& 2 \trchb + i \frak{c} \atrchb, \label{definition-c-general}\\
\D&=& \frac 1 2 \trchb^2 + \frak{f} \atrchb^2+i \frac 1 2\frak{c} \trchb \atrchb, \label{definition-d-general}
\eea
satisfy  \eqref{equation-nab-3-c-1} and \eqref{equation-nab-3-d-1}.

We now select a choice for the imaginary part of the function $\C$ so that the linear lower order terms can be simplified. 
We have the following lemma.

\begin{lemma}\label{lemma-lot} 
Let $\C$ be given by 
\bea\label{finalchoicefordefinition-C}
\C&=& 2 \trchb -4 i  \atrchb.
\eea
Then $\C$ satisfies the assumptions of Proposition \ref{first-intermediate-step-main-theorem} and the linear lower order terms have the following structure:
 \beaa
L_Q(A) &=& d_1 \nabc_2 ( \nabc_3 A) + d_2 \nabc_3 A + d_3 \nabc(A) + d_4 A,
\eeaa
with the fall-off in $r$ given by Proposition \ref{first-intermediate-step-main-theorem}. 
\end{lemma}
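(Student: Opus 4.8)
The plan is to combine the general structure established in Proposition \ref{first-intermediate-step-main-theorem} with the special-form solutions \eqref{definition-c-general}--\eqref{definition-d-general} for $\C$, $\D$, and then to choose the remaining constants so that the surviving linear lower order operator $L_Q(A)$ has exactly the claimed component structure. Since \eqref{definition-c-general} already gives the general solution of \eqref{equation-nab-3-c-1}, the choice $\C=2\trchb-4i\atrchb$ (i.e. $\frak{c}=-4$) automatically satisfies the hypotheses of Proposition \ref{first-intermediate-step-main-theorem}; this is the easy half of the lemma. The content is in tracking how the specific value $\frak{c}=-4$ interacts with the expression for $L_Q(A)$ coming from the commutator computation in Appendix \ref{proof-appendix}.

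First I would recall from Proposition \ref{first-intermediate-step-main-theorem} that $L_Q(A)$ is a priori of the schematic form $d_1\nabc(\nabc_3 A)+d_2\nabc_3 A+d_3\nabc(A)+d_4 A$, where the angular derivative $\nabc$ acts in both horizontal directions. The goal is to show that, for the special choice of $\C$, the coefficient multiplying $\nabc_1(\nabc_3 A)$ vanishes, leaving only $\nabc_2(\nabc_3 A)$; the mechanism for this is that the $e_1$-direction pieces of the lower order terms are proportional to combinations such as $\nab\trchb$, $\nab\atrchb$, $\etab$, $\eta$ that satisfy the Kerr identities of Lemmas \ref{derivatives-tr} and the relation \eqref{relation=tr-eta}. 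Concretely, I would substitute $\C=2\trchb-4i\atrchb$ into whatever closed-form expression for the $d_i$'s is produced in Appendix \ref{proof-appendix}, then use $\DD q=q\Hb$, $\nab\tr\Xb=-\tr\Xb(\tfrac12\Hb+\tfrac12\ov H+\ov{\Hb}+H)$ and the component formulas for $H,\Hb,\eta,\etab$ in Kerr from section \ref{sec:recallbasicthingsinKerr} to verify that the $e_1$-component of the leading term cancels. The constant $\frak{c}=-4$ is precisely the value that makes the imaginary part of $\C$ align with $\atrchb=\tfrac{2a\Delta\cos\th}{|q|^4}$ with the right weight for this cancellation, which is why it is singled out; the residual $d_1$ should then be $O(r^{-3})$ as already asserted, with only the $\nabc_2$ term surviving.

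The fall-off statements $d_1=O(r^{-3})$, $d_2=O(r^{-4})$, $d_3=O(r^{-4})$, $d_4=O(r^{-5})$ are inherited verbatim from Proposition \ref{first-intermediate-step-main-theorem} — they do not depend on the refinement of $\C$ — so that part of the lemma requires nothing beyond quoting the proposition. Thus the proof reduces to: (i) check $\C=2\trchb-4i\atrchb$ is an instance of \eqref{definition-c-general}, hence satisfies \eqref{equation-nab-3-c-1}; and (ii) verify the $\nabc_1(\nabc_3 A)$ coefficient is absorbed into the $\lot$ / lower-order structure for this value of $\frak{c}$, using the explicit Kerr Ricci coefficients and the identities in section \ref{section:Kerr}.

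The main obstacle I anticipate is step (ii): one must have the precise form of the coefficients $d_i$ from the Appendix \ref{proof-appendix} computation in hand, and then carry out a somewhat delicate bookkeeping of real versus imaginary parts — the $\nabc_1$ versus $\nabc_2$ distinction for anti-self-dual tensors is entangled with the Hodge dual, so the cancellation is really a statement about which complex combination of $\nabc_3 A$-derivatives appears. Getting the algebra of $q$, $\ov q$, $\Delta$ and $\cos\th$ to collapse correctly, and confirming that the choice $\frak{c}=-4$ (rather than some other constant) is forced, is where the real work lies; everything else is assembly from results already proved.
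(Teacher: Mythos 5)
Your proposal is correct and follows essentially the same route as the paper's proof in Appendix \ref{appendiz-s}: one checks that $\C=2\trchb-4i\atrchb$ is the instance $\frak{c}=-4$ of \eqref{definition-c-general}, and then, using Lemma \ref{derivatives-tr}, the relation \eqref{relation=tr-eta} and the Kerr identities $\eta_1=\etab_1$, $\eta_2=-\etab_2$ (so that $(\eta-\etab)\c\nabc$ and $(\dual\eta+\dual\etab)\c\nabc$ are pure $\nabc_2$ operators), one reduces $C_{a3}+C_{\dual a 3}$ modulo $O(\eta-\etab)+O(\dual\eta+\dual\etab)$ to $i(2\frak{c}+8)\bigl(\atrchb\,\etab+\trchb\,\dual\etab\bigr)$, which vanishes exactly for $\frak{c}=-4$. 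This is precisely the cancellation mechanism you describe, and the fall-off claims are indeed just quoted from Proposition \ref{first-intermediate-step-main-theorem}.
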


\begin{proof} 
See Appendix \ref{appendiz-s}. 
\end{proof}

\begin{remark}
The choice \eqref{finalchoicefordefinition-C} corresponds to fixing $\frak{c}=-4$ which yields for $\D$
\bea\label{finalchoicefordefinition-D}
\D&=& \frac 1 2 \trchb^2 + \frak{f} \atrchb^2 -2i \trchb \atrchb
\eea
for any constant $\frak{f}$. Note that with such choices, $\C$ is conformally invariant of type $-1$ and $\D$ is conformally invariant of type $-2$, so that $Q(A)$ is conformally invariant of type $0$ according to Lemma \ref{lemma:basicinvarianceandconformalinvarianceofQofA}. 
\end{remark}

We therefore consider $Q(A)$ defined using these functions $\C$ and $\D$, i.e.
\bea\label{definition-Q-A-C-D}
 Q(A)&=& \nabc_3\nabc_3 A + \left(2\trchb -4 i \atrchb \right) \  \nabc_3A +\left( \frac 1 2 \trchb^2  -2i \trchb \atrchb \right)A.
\eea
 
  Applying the operator $Q$ to \eqref{Teuk-repeat}, we obtain
 \bea\label{Teuk-repeat=2}
 \LL(Q(A))+[Q, \LL](A)&=& Q( \err[\LL(A)]).
 \eea
 Recall that 
 \beaa
 \LL(A) &=&-\nabc_4\nabc_3A+ \frac{1}{2}\DDc\hot (\DDbc \c A)+\left(- \frac 1 2 \tr X -2\ov{\tr X} \right)\nabc_3A-\frac{1}{2}\tr\Xb \nabc_4A\\
&&+\left( 4H+\Hb +\ov{\Hb} \right)\c \nabc A+ \left(-\ov{\tr X} \tr \Xb +2\ov{P}\right) A+  2H   \hot (\ov{\Hb} \c A)
 \eeaa
 which gives\footnote{Recall that $Q(A)$ is of conformal type $0$, therefore all conformal derivatives coincide with the non-conformal ones.} for $Q=Q(A)$
 \beaa
  \LL(Q) &=&-\nab_4\nab_3Q+ \frac{1}{2}\DD\hot (\DDb \c Q)+\left(- \frac 1 2 \tr X -2\ov{\tr X}-2\om \right)\nab_3Q-\frac{1}{2}\tr\Xb \nab_4Q\\
&&+\left( 4H+\Hb +\ov{\Hb} \right)\c \nab Q+ \left(-\ov{\tr X} \tr \Xb +2\ov{P}\right) Q+  2H   \hot (\ov{\Hb} \c Q).
 \eeaa
Using Proposition \ref{first-intermediate-step-main-theorem}, equation \eqref{Teuk-repeat=2} gives
 \beaa
&& -\nab_4\nab_3Q+ \frac{1}{2}\DD\hot (\DDb \c Q)+\left(- \frac 1 2 \tr X -2\ov{\tr X} \right)\nab_3Q-\left(\frac 3 2 \tr \Xb+\ov{\tr \Xb}\right)  \nab_4Q+2 \etab \c \nab Q\\
&&+\left( 4H+2\Hb +2\ov{\Hb} \right)\c \nab Q+\tilde{C}_0(  Q)\\
&=&a\, L_Q(A) + Q( \err[\LL(A)])+\err[ [Q, \LL]A]
 \eeaa
 where we wrote $4 \etab \c \nab Q= 2 \etab \c \nab Q+ (\Hb +\ov{\Hb}) \c \nab Q$ using \eqref{relation0angular=der}. 
 
  Recall \eqref{wave-equation-Psi} applied to $Q$: 
 \beaa
\square_2 Q&=&-\nabc_4 \nabc_3 Q + \frac 1 2 \DD\hot( \DDb \c Q) -\frac 1 2 \tr X \nabc_3Q- \frac 1 2 \tr \Xb \nabc_4Q+2 \etab \c \nab  Q\\
&& + \left( - \frac 1 4 \tr X \ov{\tr \Xb}- \frac 1 4 \tr \Xb  \ov{\tr X} - 2P\right)Q+ r^{-1} \Ga_g Q.
\eeaa
We can therefore write the equation for $Q$ as 
 \bea\label{wave-eq-Q}
 \begin{split}
\square_2 Q&=2\ov{\tr X} \nab_3Q+\left( \tr \Xb+\ov{\tr \Xb}\right)  \nab_4Q-\left( 4H+2\Hb +2\ov{\Hb} \right)\c \nab Q+\tilde{V}Q\\
&+a\c L_Q(A) + \err[\square_2 Q]
\end{split}
 \eea
 where $\tilde{V}Q$ collects the linear zero-th order terms in $Q$, and the error terms are given by $\err[\square_2 Q]=  Q( \err[\LL(A)])+\err[ [Q, \LL]A]$. 
 
  We now want to rescale $Q$ in order to absorb the first order terms in \eqref{wave-eq-Q} into the wave operator. Observe that for a scalar function $f$, we have 
 \bea\label{square-f-Q-1}
\square_2( f Q)&=& \square(f) Q+f \square_2(Q)- \nab_3 f \nab_4Q- \nab_4f \nab_3 Q +2\nab f \c \nab Q.
\eea
 Let $f$ be given by 
 \beaa
 f&=& q \ov{q}^{3}.
 \eeaa
Recalling that
\beaa
\nab_3 q&=& \frac 1 2 \ov{\tr \Xb} \,  q +r \Ga_b,\\
\nab_4 q&=& \frac 1 2 \tr X q+r\Ga_g,
\eeaa
we deduce
 \beaa
 \nab_3(f)&=& \left(\frac 1 2 \ov{\tr \Xb} +\frac 3 2   \tr \Xb \right) f+r^4 \Ga_b, \\ 
  \nab_4(f) &=& \left(\frac 1 2 \tr X +\frac 3 2   \ov{\tr X} \right) f+r^4 \Ga_g.
 \eeaa
 Recalling that 
 \beaa
   \DDco q &=&  q\ov{H}+ r \Ga_g, \\
    \DD q&=&  q  \Hb + r \Ga_g,
    \eeaa
    we deduce
    \beaa
    2\nab q &=& (\nab + i \dual \nab) q + (\nab- i \dual \nab) q =\DD q + \DDco q= q(  \Hb+\ov{H})+ r \Ga_g, \\
    2 \nab \ov{q}&=& \ov{q}(  \ov{\Hb}+H) + r \Ga_g,
    \eeaa
   and therefore 
    \beaa
    \nab f&=&  \frac 1 2 (  \Hb+\ov{H})  f+\frac 3 2 (  \ov{\Hb}+H)f+r^4 \Ga_g,\\
    &=&  \left(\frac{3}{2} H+\frac 12\ov{H}+\frac 12   \Hb+ \frac{3 }{2}  \ov{\Hb}  \right)f+r^4 \Ga_g.
    \eeaa
     Defining
 \beaa
 \frak{q}&=&fQ= q \ov{q}^{3} Q
 \eeaa
 we combine \eqref{wave-eq-Q} and \eqref{square-f-Q-1} and obtain
 \beaa
 \square_2 \qf &=&2\ov{\tr X} \nab_3\qf +\left( \tr \Xb+\ov{\tr \Xb}\right)  \nab_4\qf -\left( 4H+2\Hb +2\ov{\Hb} \right)\c \nab \qf+ V\qf \\
 &&- \left(\frac 1 2 \ov{\tr \Xb} +\frac 3 2   \tr \Xb \right) \nab_4\qf- \left(\frac 1 2 \tr \Xb +\frac 3 2   \ov{\tr \Xb} \right) \nab_3 \qf +2\left(\frac{3}{2} H+\frac 12\ov{H}+\frac 12   \Hb+ \frac{3 }{2}  \ov{\Hb}  \right)\c \nab \qf\\
&&+f a\, L_Q(A)+f \err[\square_2 Q]\\
&=&\frac 1 2   \left( \ov{\tr \Xb} -   \tr \Xb \right) \nab_4\qf+\frac 1 2  \left(  \ov{\tr X}- \tr X  \right) \nab_3 \qf +\left(- H+\ov{H}-   \Hb+   \ov{\Hb}  \right)\c \nab \qf+ V\qf\\
&&+a\, L_{\qf}(A)+ \err[\square_2 \qf]
 \eeaa
 where 
 \beaa
 L_{\qf}(A)=f  \left( L_Q(A)\right) = c_1 \nabc ( \nabc_3 A) + c_2 \nabc_3 A + c_3 \nabc(A) + c_4 A.
 \eeaa
with $c_1,\ldots, c_4$  smooth  functions  of $(r, \th)$ which have the following fall-off in $r$ 
\beaa
c_1=O\left(r \right), \qquad c_2=O\left(1\right), \qquad c_3=O\left(1\right), \qquad c_4=O\left(\frac{1}{r}\right),
\eeaa
  and $V\qf$ collects the zero-th order terms in $\qf$. 
    
     We are left to analyze the first order terms in the above equation for $\qf$. We have
    \beaa
  &&  \frac 1 2   \left( \ov{\tr \Xb} -   \tr \Xb \right) \nab_4+\frac 1 2  \left(  \ov{\tr X}- \tr X  \right) \nab_3 =i \left(  \atrch \nab_3+    \atrchb   \nab_4\right).
    \eeaa
 Observe that,    in view of our definition of $\T$ and $\Z$, see \eqref{definition-T-vectorfield} \eqref{definition-Z-vectorfield}, 
 \bea
\frac{|q|^2}{2}\left(e_3+\frac{\Delta}{|q|^2}e_4\right) &=& (r^2+a^2)\T+a\Z 
 \eea
  so that
 \beaa
   \atrch \nab_3+    \atrchb   \nab_4&=&  \frac{2a\cos\th}{|q|^2}e_3+   \frac{2a\Delta\cos\th}{|q|^4}e_4+\Ga_g\dk\\
  &=&  \frac{4a\cos\th(r^2+ a^2)}{|q|^4} \T +   \frac{4a^2\cos\th}{|q|^4}  \Z +\Ga_g\dk. 
  \eeaa
  This gives
  \beaa
  \frac 1 2   \left( \ov{\tr \Xb} -   \tr \Xb \right) \nab_4\qf+\frac 1 2  \left(  \ov{\tr X}- \tr X  \right) \nab_3\qf   &=&  \frac{4ia\cos\th(r^2+ a^2)}{|q|^4} \T(\qf) +   \frac{4ia^2\cos\th}{|q|^4} \Z(\qf)+\Ga_g\dk.
 \eeaa

 In view of our definition of $\T$ and $\Z$, see \eqref{definition-T-vectorfield} \eqref{definition-Z-vectorfield}, we have
 \bea
 e_2 &=& \frac{a\sin\th}{|q|}\T+\frac{1}{|q|\sin\th}\Z.
 \eea
 Also,   observe that in Kerr $H_1=\ov{\Hb_1}, \quad H_2=-\ov{\Hb_2}$. Therefore
 \beaa
 \left(- H+\ov{H}-   \Hb+    \ov{\Hb}  \right) \c \nab&=& \left(- H_1+\ov{H}_1-   \Hb_1+    \ov{\Hb}_1  \right)  \nab_1+ \left(- H_2+\ov{H}_2-   \Hb_2+    \ov{\Hb}_2  \right)  \nab_2\\
 &=&  2\left(\ov{\Hb}_2-   \Hb_2  \right)  \nab_2\\
  &=&  2\left(-\frac{a\sin\th(r+ia\cos\th)}{|q|^3}+\frac{a\sin\th(r-ia\cos\th)}{|q|^3}  \right)  \nab_2\\
   &=&  -4i\frac{a^2\sin\th \cos\th}{|q|^3} \left(\frac{a\sin\th}{|q|}\T+\frac{1}{|q|\sin\th}\Z\right)\\
    &=&  -4i\frac{a^3\sin^2\th \cos\th}{|q|^4} \T -4i\frac{a^2\cos\th}{|q|^4} \Z.
 \eeaa
 This gives
 \beaa
 \left(- H+\ov{H}-   \Hb+   \ov{\Hb}  \right)\c \nab \qf&=&  -4i\frac{a^3\sin^2\th \cos\th}{|q|^4} \T(\qf) -4i\frac{a^2\cos\th}{|q|^4} \Z(\qf)+\Ga_g\dk. 
 \eeaa

 We finally obtain
 \beaa
 \square_2 \qf &=&\frac{4ia\cos\th(r^2+ a^2)}{|q|^4} \T(\qf) +   \frac{4ia^2\cos\th}{|q|^4} \Z(\qf) -4i\frac{a^3\sin^2\th \cos\th}{|q|^4} \T(\qf) -4i\frac{a^2\cos\th}{|q|^4} \Z(\qf)+ V\qf\\
&&+aL_{\qf}(A)+ \err[\square_2 \qf]\\
&=&\frac{4ia\cos\th(r^2+ a^2-a^2 \sin^2\th)}{|q|^4} \T(\qf)   + V\qf+aL_{\qf}(A)+ \err[\square_2 \qf]
 \eeaa
 which proves Theorem \ref{main-theorem}.


\subsection{The projection of the Regge-Wheeler type equation}
\label{section-projection-equation}


Using formula \eqref{formula-projection-wave} for the projection of the wave operator for a $2$-tensor to its first components, and neglecting error terms, we obtain
\beaa
(\square_2 \qf)_{11}&=&\square_{\g} (\qf_{11}) +i  \frac{4}{|q|^2}\frac{\cos\th}{\sin^2\th}  \Z (\mathfrak{q}_{11}) - \frac{4}{|q|^2}\cot^2\th \qf_{11} + a \tilde{V}\qf_{11}.
\eeaa

By projecting \eqref{wave-equation-qf} to the first component we then obtain for $\psi=\qf_{11}$:
\beaa
\square_{\g} \psi+\frac{4 i}{|q|^2}   \left(\frac{\cos\th}{\sin^2\th} \Z - a\cos\th \T \right)\psi - \frac{4}{|q|^2}\left( \cot^2\th+1-\frac{2m}{r} \right) \psi&=&  F_{11}+ a \tilde{V}\psi
\eeaa
where $F_{11}$ is the projection of the right hand side of \eqref{wave-equation-qf}. In the particular case of Kerr, the above equation can be written as
\bea\label{scalar-equation-Ma}
\square_{\g_{m,a}} \psi+\frac{4 i}{|q|^2}   \left(\frac{\cos\th}{\sin^2\th} \partial_{\vphi} - a\cos\th \partial_{t} \right)\psi - \frac{4}{|q|^2}\left( \cot^2\th+1-\frac{2m}{r} \right) \psi=  \tilde{F}.
\eea

\begin{remark}
Observe that equation \eqref{scalar-equation-Ma} is exactly the one obtained by Ma in \cite{Ma}, see equation (1.27b) for $s=2$ in that paper. The estimates in \cite{Ma} are derived through energy estimates, Morawetz estimates obtained by decomposing in modes as in \cite{D-H-R-Kerr}, and transport estimates for the lower order terms. Nevertheless, the estimates obtained in the particular case of Kerr cannot be easily generalized to the general case of perturbations of Kerr, since the difference between the wave operators gives rise to error terms which are not of the acceptable form in the sense of $\err[\square_2\qf]$  in Theorem \ref{main-theorem}. A physical space analysis which makes use of approximated hidden symmetries for the perturbed metric as in \cite{A-Blue1} may be a better way to approach the analysis of the above equation. 
\end{remark}



\section{Additional useful identities involving $Q(A)$}


We collect here some relations involving $Q(A)$ and its derivative. We summarize them in the following propositions. 

\begin{proposition}
The symmetric traceless $2$ tensor $Q(A)$ with $\C=2\tr\Xb$ and $\D=\frac 1 2 (\tr\Xb)^2$ satisfies 
\bea\label{alternative-formula-Q}
\begin{split}
Q(A) &=\DDc\hot \DDc\ov{P} +\frac{3}{2}\ov{P} \left(\tr\Xb\, \widehat{X} +\ov{\tr X} \widehat{\Xb} \right)\\
&+ \left(-\DDc \tr\Xb+  4 \nabc_3H   -\frac{1}{2}\tr\Xb   H \right)\hot B+\left(8\DDc \ov{P}+12 \ov{P} H\right)\hot H+\err[Q(A)]
\end{split}
\eea
where
\beaa
\err[Q(A)]&=& \err_{3\DDc\hot}[B]+\DDc (\Ga_g \Bb+\Ga_b A)+H (\Ga_g \Bb+\Ga_b A)+r^{-2} \Ga_g\Ga_g.
\eeaa

In particular, we can write 
\beaa
Q(A) &=&\DDc\hot \DDc\ov{P} +\frac{3}{2}\ov{P} \left(\tr\Xb\, \widehat{X} +\ov{\tr X} \widehat{\Xb} \right)+O_r+\err[Q(A)]
\eeaa
where $O_r$ are terms overshooting in $r$ with respect to the other ones, according to the expected bootstrap assumptions. 
\end{proposition}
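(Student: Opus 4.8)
The idea is to express $Q(A)$ with the \emph{specific} choice $\C=2\tr\Xb$, $\D=\frac 12(\tr\Xb)^2$ in terms of curvature and Ricci quantities by repeatedly substituting the null Bianchi identities of Proposition \ref{prop:bianchi:complex} (conformal form), replacing each $\nabc_3$ derivative hitting a curvature component with a right-hand side involving lower-order curvature and connection terms. Concretely, $Q(A)=\nabc_3\nabc_3 A+2\tr\Xb\,\nabc_3 A+\frac 12(\tr\Xb)^2 A$. First I would use the Bianchi identity for $A$,
\[
\nabc_3 A=\DDc\hot B-\tfrac12\tr\Xb\, A+4H\hot B-3\ov P\,\Xh,
\]
to rewrite $\nabc_3 A$ everywhere. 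Substituting into $Q(A)$, the terms $2\tr\Xb\,\nabc_3 A$ and $\frac12(\tr\Xb)^2 A$ partially cancel against the $-\frac12\tr\Xb A$ coming from $\nabc_3(\nabc_3 A)$ via the product rule applied to $\tr\Xb\, A$; this is exactly the algebraic reason the choice $\C=2\tr\Xb$, $\D=\frac12(\tr\Xb)^2$ is natural. What survives, modulo lower-order and error terms, is governed by $\nabc_3(\DDc\hot B)$ and $\nabc_3(\ov P\,\Xh)$.

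The second step handles $\nabc_3(\DDc\hot B)$. I would commute $\nabc_3$ past $\DDc\hot$ using the commutation formula \eqref{commutator-nabc-4-F-formula} (the $\nabc_3$ version in Lemma \ref{commutator-nab-c-3-DD-c-hot}), producing $\DDc\hot(\nabc_3 B)$ plus a term $-\frac12\tr\Xb(\DDc\hot B+(1+s)H\hot B)$, the term $H\hot\nabc_3 B$, and the schematic error $\err_{3\DDc\hot}[B]=\Ga_g\c\dk^{\le1}B+\lot$. Then I replace $\nabc_3 B$ by its Bianchi identity $\nabc_3 B=\DDc\ov P-\tr\Xb B+\ov{\Bb}\c\Xh+3\ov P H+\frac12 A\c\ov{\Xib}$, so that $\DDc\hot(\nabc_3 B)$ contributes the leading term $\DDc\hot\DDc\ov P$ plus $-\tr\Xb\,\DDc\hot B$, $3\DDc\hot(\ov P H)$ (expanded via the Leibniz rule \eqref{DD-hot-hF} into $3\ov P\,\DDc\hot H+3\DDc\ov P\hot H$), and terms quadratic in $\Ga_b,\Ga_g$ times $A$ or $\Bb$, which go into $\err[Q(A)]$ as $\DDc(\Ga_g\Bb+\Ga_b A)$. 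For $\nabc_3(\ov P\,\Xh)$ I would use $\nabc_3\ov P=-\frac32\tr\Xb\,\ov P+\dots$ (from the conformal Bianchi identity for $P$, conjugated) and the null structure equation $\nabc_3\Xh=-\frac12\tr\Xb\Xh-\frac12\ov{\tr X}\,\Xbh+\DDc\hot H+H\hot H+\Xib\hot\Xi$; this produces the curvature term $\frac32\ov P(\tr\Xb\,\Xh+\ov{\tr X}\,\Xbh)$ stated in the Proposition, plus $\DDc\hot H$, $H\hot H$ terms multiplied by $\ov P$, and $\Xib\hot\Xi$-type quadratic terms that are $\lot$.

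Third, I would also need the $\nabc_3 H$ that appears in the coefficient of $B$ in \eqref{alternative-formula-Q}: this comes from the cross term where the product rule on $4H\hot B$ inside $\nabc_3(\nabc_3 A)$ puts a $\nabc_3$ on $H$, giving $4(\nabc_3 H)\hot B$, which stays as written since $\nabc_3 H$ is itself $\Ga_b$-order but not small (it equals $\nabc_4\Xi$ plus curvature via a null structure equation, so we keep it explicit rather than absorbing). Collecting all the leading terms — $\DDc\hot\DDc\ov P$, the $\frac32\ov P(\tr\Xb\Xh+\ov{\tr X}\Xbh)$ curvature term, the $B$-coefficient involving $-\DDc\tr\Xb$, $4\nabc_3 H$, $-\frac12\tr\Xb H$, and the $H$-coefficient $8\DDc\ov P+12\ov P H$ — and gathering everything else (the $\Ga_g\c\dk^{\le1}B$ from commutators, the $\DDc$ of quadratic Ricci$\times$curvature, the $H$ times the same, and $r^{-2}\Ga_g\Ga_g$) into $\err[Q(A)]$, yields the claimed identity. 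The final ``in particular'' statement is then immediate by defining $O_r$ to be the $B$- and $H$-coefficient terms, which decay slower in $r$ than $\DDc\hot\DDc\ov P$ and the $\ov P$-curvature term under the bootstrap hierarchy.

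\textbf{Main obstacle.} The bookkeeping is the real difficulty: one must track which quadratic terms are genuinely $\lot$ versus which must be displayed, ensure the coefficients of $B$ and $H$ match exactly (in particular the numerical factors $4$, $-\frac12$, $8$, $12$ and the sign of $\DDc\tr\Xb$), and verify that the Codazzi/null-structure substitutions for $\nabc_3\Xh$, $\nabc_3 H$, $\nabc_3\ov P$ are used consistently with the conformal normalization $s=0$ for $Q(A)$ but $s\neq0$ for $B$ and $H$ when commuting. The coefficient of $B$ is where errors are most likely: it receives contributions from $-\tr\Xb(\DDc\hot B)$ (via $\nabc_3\nabc_3 A$), from $-\frac12\tr\Xb H\hot B$ (commutator term with $s=1$ giving factor $1+s=2$, halved), from $4(\nabc_3 H)\hot B$, and from $-\DDc\tr\Xb\hot B$ arising when $\nabc_3$ hits the scalar $\tr\Xb$ multiplying $A$ and is then re-expressed using the Bianchi identity for $A$ again — so one must iterate the $A$-Bianchi substitution a second time to close the expression, exactly as done in the proof of Proposition \ref{Teukolsky-proposition}.
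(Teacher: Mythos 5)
Your plan follows essentially the same route as the paper's proof: apply $\nabc_3$ to the Bianchi identity for $A$, commute $\nabc_3$ past $\DDc\hot$, substitute the Bianchi identities for $\nabc_3 B$ and $\nabc_3 P$ and the null structure equation for $\nabc_3\Xh$, use the Leibniz rule \eqref{DD-hot-hF}, and re-substitute the $A$-Bianchi identity once more so that the surviving $\tr\Xb(\DDc\hot B-3\ov P\Xh)$ term recombines with the left-hand side into $Q(A)$ via $\nabc_3\tr\Xb+\frac12(\tr\Xb)^2=\frak{d}^{\leq1}\Ga_g$. The ingredients, the cancellation mechanism tied to $\C=2\tr\Xb$, $\D=\frac12(\tr\Xb)^2$, and the sources of each displayed coefficient all match the paper's argument, so the proposal is correct up to the coefficient bookkeeping you yourself flag.
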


\begin{proof} 
Recall that by Proposition \ref{prop:bianchi:complex}, we have,
 \beaa
 \nabc_3A +\frac{1}{2}\tr\Xb A &=&\DDc\hot B+  4 H   \hot B -3\ov{P}\Xh.
\eeaa
We infer
\beaa
\nabc_3 \left(\nabc_3A +\frac{1}{2}\tr\Xb A \right)&=&\DDc\hot \nabc_3B+[ \nabc_3, \DDc\hot ]B+  4 H   \hot \nabc_3B\\
&&+  4 \nabc_3H   \hot B -3\ov{P} \nabc_3\Xh -3\nabc_3\ov{P}\Xh.
\eeaa
By Lemma \ref{commutator-nab-c-3-DD-c-hot}, we have
 \beaa
 \, [\nabc_3, \DDc \hot ]B &=&- \frac 1 2 \tr \Xb \left( \DDc \hot B + 3H \hot B \right)  + H \hot \nabc_3 B+ \err_{3\DDc\hot}[B]
 \eeaa
 and hence
 \beaa
\nabc_3 \left(\nabc_3A +\frac{1}{2}\tr\Xb A \right)&=&\DDc\hot \nabc_3B+  5 H   \hot \nabc_3B- \frac 1 2 \tr \Xb \left( \DDc \hot B + 3H \hot B \right) \\
&&+  4 \nabc_3H   \hot B -3\ov{P} \nabc_3\Xh -3\nabc_3\ov{P}\Xh+ \err_{3\DDc\hot}[B].
\eeaa
Next, using Propositions \ref{prop-nullstr:complex-conf} and  \ref{prop:bianchi:complex},
 \beaa
\nabc_3B-\DDc\ov{P} &=& -\tr\Xb B+3\ov{P}H+\Ga_g \Bb+\Ga_b A,\\
\nabc_3P +\frac{1}{2}\DDbc \c\Bb &=& -\frac{3}{2}\ov{\tr\Xb} P - \ov{H} \c\Bb +\Ga_b B+\Ga_g \Ab,  \\
\nabc_3\widehat{X} +\frac{1}{2}\tr\Xb\, \widehat{X} &=& \DDc\hot H  +H\hot H -\frac{1}{2}\ov{\tr X} \widehat{\Xb}+\Ga_g \Ga_b,
\eeaa
we deduce
 \beaa
\nabc_3 \left(\nabc_3A +\frac{1}{2}\tr\Xb A \right)&=&\DDc\hot \left(\DDc\ov{P} -\tr\Xb B+3\ov{P}H\right)\\
&&+  5 H   \hot \left(\DDc\ov{P} -\tr\Xb B+3\ov{P}H\right)- \frac 1 2 \tr \Xb \left( \DDc \hot B + 3H \hot B \right) \\
&&+  4 \nabc_3H   \hot B -3\ov{P} \left(-\frac{1}{2}\tr\Xb\, \widehat{X} + \DDc\hot H  +H\hot H -\frac{1}{2}\ov{\tr X} \widehat{\Xb} \right)\\
&& -3\left(-\frac{3}{2}\tr\Xb \ov{P} \right)\Xh+\err
\eeaa
where
\beaa
\err&=& \err_{3\DDc\hot}[B]+\DDc (\Ga_g \Bb+\Ga_b A)+H (\Ga_g \Bb+\Ga_b A)+r^{-2} \Ga_g\Ga_g.
\eeaa
Hence, using \eqref{DD-hot-hF}, we have
 \beaa
\nabc_3 \left(\nabc_3A +\frac{1}{2}\tr\Xb A \right)&=&\DDc\hot \DDc\ov{P} +\frac{3}{2}\ov{P} \left(\tr\Xb\, \widehat{X} +\ov{\tr X} \widehat{\Xb} \right)\\
&& -\frac 3 2 \tr\Xb \left( \DDc\hot B -3 \ov{P}\Xh \right)\\
&&+ \left(-\DDc \tr\Xb+  4 \nabc_3H   -\frac{13}{2}\tr\Xb   H \right)\hot B\\
&&+\left(8\DDc \ov{P}+12 \ov{P} H\right)\hot H+\err.
\eeaa
Since 
 \beaa
\DDc\hot B -3\ov{P}\Xh&=& \nabc_3A +\frac{1}{2}\tr\Xb A- 4 H   \hot B 
\eeaa
this infer
 \beaa
&&\nabc_3 \left(\nabc_3A +\frac{1}{2}\tr\Xb A \right)+\frac 3 2 \tr\Xb \left( \nabc_3A +\frac{1}{2}\tr\Xb A\right)\\
&=&\DDc\hot \DDc\ov{P} +\frac{3}{2}\ov{P} \left(\tr\Xb\, \widehat{X} +\ov{\tr X} \widehat{\Xb} \right)+ \left(-\DDc \tr\Xb+  4 \nabc_3H   -\frac{1}{2}\tr\Xb   H \right)\hot B\\
&&+\left(8\DDc \ov{P}+12 \ov{P} H\right)\hot H+\err.
\eeaa
Using 
\beaa
\nabc_3\tr\Xb +\frac{1}{2}(\tr\Xb)^2 &=&\frak{d}^{\leq 1}\Ga_g 
\eeaa
the left hand side of the above is given by 
\beaa
&&\nabc_3 \nabc_3A +\frac{1}{2}\tr\Xb \nabc_3A+\frac{1}{2}\nabc_3\tr\Xb A +\frac 3 2 \tr\Xb \left( \nabc_3A +\frac{1}{2}\tr\Xb A\right)\\
&=&\nabc_3 \nabc_3A +2\tr\Xb \nabc_3A+\frac 1 2  (\tr\Xb)^2 A+\frak{d}^{\leq 1}\Ga_g A
\eeaa
which coincides with $Q(A)$. This concludes the proof. 
\end{proof}

\begin{proposition} 
The symmetric traceless $2$-tensor $Q(A)$ with $\C=2\tr\Xb$ and $\D=\frac 1 2 (\tr\Xb)^2$ satisfies
\beaa
 \nabc_3 \left(\ov{q}^5 Q(A) \right)&=& \ov{q}^5 \Bigg\{ -\frac{1}{2} \DDc \hot \DDc (\DDc \c \ov{\Bb} ) -\frac{3}{2} \ov{ P}( \DDc\hot\DDc \tr\Xb)\\
&& -\frac{3}{2}\ov{P} \ov{\tr X} \Ab +\frac{3}{2}\ov{P}\left(- \ov{\tr X}\ov{\tr\Xb} + \DDbc\c H+2\ov{P} \right)\widehat{\Xb} \Bigg\}\\
&&+\tilde{O}_r+\err[\nabc_3( \ov{q}^5 Q(A))]
\eeaa
where $\tilde{O}_r$ is overshooting in powers of $r$ and 
\beaa
\err[\nabc_3( \ov{q}^5 Q(A))]&=& r^{2}\dk^{\leq 2}( \Ga_g \Ga_b)+ \frac {5}{ 2} \tr \Xb \ov{  q}^5 \{\err[Q(A)] \}+ r^5\Ga_b Q(A)+\ov{q}^5\nabc_3\err[Q(A)].
\eeaa
\end{proposition}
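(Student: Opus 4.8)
The plan is to differentiate the alternative representation \eqref{alternative-formula-Q} of $Q(A)$ just obtained, and to bookkeep carefully which terms survive as the displayed principal terms, which are absorbed into the overshooting remainder $\tilde O_r$, and which land in $\err[\nabc_3(\ov q^5 Q(A))]$. First I would apply Leibniz. Since $Q(A)$ is of conformal type $0$ and $q$ carries no conformal weight, $\nabc_3$ acts as $\nab_3$ on $\ov q^5 Q(A)$, and, using $\nab_3\ov q=\frac12\tr\Xb\,\ov q+r\Ga_b$ from \eqref{equations:forq} so that $\nab_3(\ov q^5)=\frac52\tr\Xb\,\ov q^5+r^5\Ga_b$, one gets
\beaa
\nabc_3(\ov q^5 Q(A))&=&\ov q^5\,\nabc_3 Q(A)+\frac52\tr\Xb\,\ov q^5\,Q(A)+r^5\Ga_b\, Q(A).
\eeaa
Re-expanding $Q(A)$ in the middle term via \eqref{alternative-formula-Q}, the product of $\frac52\tr\Xb\,\ov q^5$ with the two displayed pieces of $Q(A)$ and with $O_r$ is overshooting, hence goes into $\tilde O_r$, while $\frac52\tr\Xb\,\ov q^5\{\err[Q(A)]\}$ and $r^5\Ga_b\,Q(A)$ are two of the listed error contributions.

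Second, I would compute $\nabc_3 Q(A)$ from \eqref{alternative-formula-Q} term by term. For the leading piece $\DDc\hot\DDc\ov P$ one commutes $\nabc_3$ through the two angular operators using the formula for $[\nabc_3,\DDc\hot\,]$ in Lemma~\ref{commutator-nab-c-3-DD-c-hot}; this produces the commuted principal term $\DDc\hot\DDc\,\nabc_3\ov P$ plus commutator remainders with a $\tr\Xb$, $\atrchb$ or $\eta$ weight in front of $\DDc\hot B$-type and $B$-type expressions, which are checked to be of the form $r^{2}\dk^{\le2}(\Ga_g\Ga_b)$ or to fall into $\tilde O_r$. Inserting the conjugate of the Bianchi identity for $\nabc_3 P$, that is $\nabc_3\ov P=-\frac12\DDc\c\ov{\Bb}-\frac32\tr\Xb\,\ov P-H\c\ov{\Bb}+\ldots$, turns $\DDc\hot\DDc\,\nabc_3\ov P$ into $-\frac12\DDc\hot\DDc(\DDc\c\ov{\Bb})-\frac32\DDc\hot\DDc(\tr\Xb\,\ov P)+\ldots$, and the Leibniz rules \eqref{DD-hot-hF}, \eqref{ov-DD-hu} applied to $\DDc\hot\DDc(\tr\Xb\,\ov P)$ extract precisely the displayed $-\frac32\ov P\,(\DDc\hot\DDc\tr\Xb)$, the remaining Leibniz pieces being overshooting or quadratic. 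For the piece $\frac32\ov P(\tr\Xb\,\widehat X+\ov{\tr X}\,\widehat{\Xb})$ one uses Leibniz again and inserts the conformal null structure equations of Proposition~\ref{prop-nullstr:complex-conf} for $\nabc_3\widehat X$, $\nabc_3\widehat{\Xb}$ (whose source is $-\Ab$), $\nabc_3\tr\Xb$, $\nabc_3\ov{\tr X}$ (the conjugate of the equation for $\nabc_3\tr X$), together with the above $\nabc_3\ov P$; the surviving principal terms are $-\frac32\ov P\,\ov{\tr X}\,\Ab$ and $\frac32\ov P\bigl(-\ov{\tr X}\,\ov{\tr\Xb}+\DDbc\c H+2\ov P\bigr)\widehat{\Xb}$, everything else being $O(\ep)$-lower-order of the declared type. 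Finally $\nabc_3(O_r)$ goes into $\tilde O_r$, and $\nabc_3\err[Q(A)]$ — after one Leibniz and one further commutation applied to each constituent $\err_{3\DDc\hot}[B]$, $\DDc(\Ga_g\Bb+\Ga_b A)$, $H(\Ga_g\Bb+\Ga_b A)$, $r^{-2}\Ga_g\Ga_g$ of $\err[Q(A)]$ — is carried along as $\ov q^5\nabc_3\err[Q(A)]$, with the commutator remainders from the $\DDc\hot\DDc\ov P$ step absorbed into the $r^2\dk^{\le2}(\Ga_g\Ga_b)$ contribution.

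Putting the two steps together — multiplying the displayed principal part of $\nabc_3 Q(A)$ by $\ov q^5$, adding the two first-step contributions, and collecting everything else into $\tilde O_r$ and $\err[\nabc_3(\ov q^5 Q(A))]$ — gives the claimed identity. The main obstacle will be the commutator step: each $[\nabc_3,\DDc\hot\,]$ application produces remainders with the \emph{same} total number of derivatives as the principal term, so one must verify, term by term with the schematic $\Ga_g,\Ga_b,\dk$-calculus of Section~\ref{perturbations-section} and the explicit error structure of Lemma~\ref{commutator-nab-c-3-DD-c-hot}, that each remainder is of type $r^{2}\dk^{\le2}(\Ga_g\Ga_b)$, $r^5\Ga_b\,Q(A)$, or $\frac52\tr\Xb\,\ov q^5\{\err[Q(A)]\}$, and that none of them contaminates the four displayed principal terms; a secondary difficulty is keeping the numerical coefficients and the combination $-\ov{\tr X}\,\ov{\tr\Xb}+\DDbc\c H+2\ov P$ straight through the repeated use of Leibniz and of the null structure and Bianchi identities in their conformal form.
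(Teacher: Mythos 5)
Your overall strategy --- Leibniz on $\ov q^{5}Q(A)$, then differentiating the representation \eqref{alternative-formula-Q} term by term via the commutator lemmas, the Bianchi identity for $\nabc_3\ov P$ and the conformal null structure equations --- is the same as the paper's. The genuine gap is in how you dispose of the cross terms from the first step. You claim that the products of $\frac52\tr\Xb\,\ov q^{5}$ with the two displayed pieces of $Q(A)$ are overshooting and may be dumped into $\tilde O_r$. They are not: $\frac52\tr\Xb\cdot\frac32\ov P\,\ov{\tr X}\,\widehat{\Xb}\sim r^{-5}\Ga_b$ has exactly the same order and structure as the retained principal term $\frac32\ov P\,(-\ov{\tr X}\,\ov{\tr\Xb})\,\widehat{\Xb}$, and $\frac52\tr\Xb\,\DDc\hot\DDc\ov P$ is comparable to the retained $-\frac32\ov P\,\DDc\hot\DDc\tr\Xb$. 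In the paper these cross terms are tracked and cancel \emph{exactly} against same-order terms produced by $\nabc_3Q(A)$: the $-\frac32\tr\Xb\,\DDc\hot\DDc\ov P$ arising from inserting the Bianchi identity into $\DDc\hot\DDc\nabc_3\ov P$, combined with the $-\tr\Xb\,\DDc\hot\DDc\ov P$ from $[\nabc_3,\DDc\hot\DDc]\ov P$, is killed by the cross term $+\frac52\tr\Xb\,\DDc\hot\DDc\ov P$; likewise the contributions $-\frac{15}{4}\ov P\,\tr\Xb^{2}\widehat X$ and $(-\frac34-3)\ov P\,\tr\Xb\,\ov{\tr X}\,\widehat{\Xb}$ coming from $\frac32\nabc_3(\ov P\tr\Xb\widehat X)$ and $\frac32\nabc_3(\ov P\,\ov{\tr X}\,\widehat{\Xb})$ are cancelled by the corresponding cross terms. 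This cancellation is the only way to obtain the absence of any $\widehat X$ or $\tr\Xb\,\DDc\hot\DDc\ov P$ term in the principal part and the precise coefficient $-\frac32\ov P\,\ov{\tr X}\,\ov{\tr\Xb}\,\widehat{\Xb}$. As written your bookkeeping is internally inconsistent: if the cross terms really went to $\tilde O_r$, the $\widehat{\Xb}$ coefficient produced by $\nabc_3Q(A)$ alone would be $\ov P\,\ov{\tr X}\,(-\frac{15}{4}\tr\Xb-\frac32\ov{\tr\Xb})$ rather than $-\frac32\ov P\,\ov{\tr X}\,\ov{\tr\Xb}$, contradicting the output you claim for your second step.

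The same over-hasty ``everything else is overshooting or quadratic'' hides a second set of required cancellations inside $\nabc_3Q(A)$ itself: the commutator $[\nabc_3,\DDc\hot\DDc]\ov P$ produces $-\frac32\ov P\,(\tr\Xb\,\DDc\hot H+\ov{\tr X}\,\DDc\hot\Xib)$, and since $\Xib\in\Ga_b$ the term $\ov P\,\ov{\tr X}\,\DDc\hot\Xib\sim r^{-5}\Ga_b$ is again of the same order as the retained principal terms; it is not overshooting but cancels exactly against the $\DDc\hot\Xib$ and $\DDc\hot H$ terms generated by the null structure equations for $\nabc_3\widehat{\Xb}$ and $\nabc_3\widehat X$. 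To repair the proof you must carry all of these same-order terms explicitly and exhibit the cancellations, rather than appeal to overshooting.
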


\begin{proof}  
We start by formula \eqref{alternative-formula-Q}:
\beaa
\ov{q}^5 Q(A) &=&\ov{q}^5 \left\{\DDc\hot \DDc\ov{P} +\frac{3}{2}\ov{P} \left(\tr\Xb\, \widehat{X} +\ov{\tr X} \widehat{\Xb} \right)+O_r+\err[Q(A)]\right\}.
\eeaa
 Taking $\nabc_3$ derivative we deduce
 \beaa
 \nabc_3 \left(\ov{q}^5 Q(A) \right)&=& 5\ov{q}^4 \nabc_3 (\ov{q}) Q(A) + \ov{q}^5 L+\ov{q}^5\nabc_3 O_r+\ov{q}^5\nabc_3\err[Q(A)],\\
L:&=& \nabc_3 \left\{ \DDc\hot \DDc\ov{P} +\frac{3}{2}\ov{P} \left(\tr\Xb\, \widehat{X} +\ov{\tr X} \widehat{\Xb} \right)\right\}.
\eeaa
We calculate $L$ as follows
\beaa
L&=&   \DDc\hot \DDc  \nabc_3\ov{P}+[ \nabc_3,   \DDc\hot \DDc] \ov{P} +\frac{3}{2}\nabc_3(\ov{P} \tr\Xb\, \widehat{X}) +\frac{3}{2} \nabc_3(\ov{P} \ov{\tr X} \widehat{\Xb} ).
\eeaa
Ignoring cubic and higher order terms, we have
\beaa
\nabc_3(\ov{P} \tr\Xb\, \widehat{X})&=& \ov{P} \tr\Xb\, \nabc_3(\widehat{X})+\ov{P} \nabc_3(\tr\Xb)\, \widehat{X}+\nabc_3(\ov{P}) \tr\Xb\, \widehat{X}\\
&=& \ov{P} \tr\Xb\,\left(-\frac{1}{2}\tr\Xb\, \widehat{X} +\DDc\hot H  +H\hot H -\frac{1}{2}\ov{\tr X} \widehat{\Xb} \right)\\
&&+\ov{P}\left(-\frac{1}{2}(\tr\Xb)^2  \right)\, \widehat{X}+\left( -\frac{3}{2}\tr\Xb \ov{ P}  \right) \tr\Xb\, \widehat{X}+ r^{-3} \dk^{\leq 1}(\Ga_g \Ga_b)\\
&=& \ov{P} \tr\Xb\,\left(-\frac{5}{2}\tr\Xb\, \widehat{X}+\DDc\hot H   -\frac{1}{2}\ov{\tr X} \widehat{\Xb} \right)+O_1+ r^{-3} \dk^{\leq 1}(\Ga_g \Ga_b)
\eeaa
where
\beaa
O_1&=& \ov{P} \tr\Xb \ H\hot H 
\eeaa
is overshooting in powers of $r$. 
Also,
\beaa
\nabc_3(\ov{P} \ov{\tr X} \widehat{\Xb} )&=& \ov{P} \ov{\tr X} \nabc_3(\widehat{\Xb})+\ov{P} \nabc_3(\ov{\tr X} )\widehat{\Xb}+\nabc_3(\ov{P}) \ov{\tr X} \widehat{\Xb}\\
&=& \ov{P} \ov{\tr X} \left(-\Re(\tr\Xb) \Xbh+ \DDc\hot \Xib+   \Xib\hot(H+\Hb)-\Ab \right)\\
&&+\ov{P}\left(-\frac{1}{2}\ov{\tr\Xb} \ \ov{\tr X} + \DDbc\c H+H\c\ov{H}+2\ov{P} \right)\widehat{\Xb}\\
&&+\left( -\frac{3}{2}\tr\Xb \ov{ P}  \right) \ov{\tr X} \widehat{\Xb}+ r^{-3} \dk^{\leq 1}(\Ga_g \Ga_b)\\
&=& \ov{P} \ov{\tr X} \left(-\ov{\tr\Xb} \Xbh -2\tr\Xb  \widehat{\Xb}+ \DDc\hot \Xib-\Ab \right)+\ov{P}\left( \DDbc\c H+2\ov{P} \right)\widehat{\Xb}\\
&&+O_2+ r^{-3} \dk^{\leq 1}(\Ga_g \Ga_b)
\eeaa
where
\beaa
O_2&=& \ov{P} \ov{\tr X} \left( \Xib\hot(H+\Hb) \right)+\ov{P}\left(H\c\ov{H}\right)\widehat{\Xb}
\eeaa
is overshooting in powers of $r$. 
Also we have
\beaa
 \DDc\hot \DDc  \nabc_3\ov{P}&=& \DDc\hot \DDc  \left( -\frac{3}{2}\tr\Xb \ov{ P}-\frac{1}{2}\DDc \c \ov{\Bb} - H \c \ov{\Bb} \right)\\
 &=& \DDc\hot \left(  -\frac{3}{2} \DDc \tr\Xb \ov{ P}-\frac{3}{2}\tr\Xb  \DDc \ov{ P}-\frac{1}{2} \DDc (\DDc \c \ov{\Bb} )-  \DDc (H \c \ov{\Bb} )\right)\\
  &=&  -\frac{1}{2} \DDc \hot \DDc (\DDc \c \ov{\Bb} ) -\frac{3}{2} \ov{ P}( \DDc\hot\DDc \tr\Xb)-\frac{3}{2}\tr\Xb (\DDc \hot \DDc \ov{ P} )+O_3
  \eeaa
  where
  \beaa
O_3  &=& -3 \DDc \tr\Xb \hot \DDc \ov{ P}- \DDc \hot \DDc (H \c \ov{\Bb} )
\eeaa
is overshooting in powers of $r$. 
Now in view of Lemma \ref{commutator-nab-c-3-DD-c-hot} applied to $F=\DDc \ov{P}$ which is of conformal type $0$, 
\beaa
 \, [\nabc_3, \DDc \hot ]\DDc \ov{P} &=&- \frac 1 2 \tr \Xb \left( \DDc \hot \DDc \ov{P} + H \hot \DDc \ov{P} \right)  + H \hot \nabc_3 \DDc \ov{P}+ \err_{3\DDc\hot}[\DDc \ov{P}]\\
 &=&- \frac 1 2 \tr \Xb  \DDc \hot \DDc \ov{P} +O_4 +\err_{3\DDc\hot}[\DDc \ov{P}]
\eeaa
where
\beaa
O_4&=&- \frac 1 2 \tr \Xb \left( H \hot \DDc \ov{P} \right)  + H \hot \nabc_3 \DDc \ov{P}
\eeaa
is overshooting in powers of $r$. 

From Lemma \ref{lemma:comm}, we deduce 
       \beaa
        \,[\nab_3, \nab_a] P &=&-\frac 1 2 \left(\trchb \nab_a P+\atrchb \dual \nab P\right)+(\eta_a-\ze_a) \nab_3 P-\chibh_{ab}\nab_b P  +\xib_a \nab_4 P.
        \eeaa
        Taking the dual 
            \beaa
        \,[\nab_3, \dual \nab_a] P &=&-\frac 1 2 \left(\trchb \dual \nab_a P-\atrchb  \nab P\right)+\dual (\eta_a-\ze_a) \nab_3 P-\dual \chibh_{ab}  \nab_b P  +\dual \xib_a \nab_4 P.
        \eeaa
        Summing them we derive
             \beaa
        \,[\nab_3, \DD] P &=&-\frac 1 2 \tr \Xb \  \DD P +(H- Z) \nab_3 P-\Xbh \c \nab P  +\Xib  \nab_4 P
        \eeaa
        and therefore
             \beaa
           \,[\nabc_3, \DDc] \ov{P} &=&-\frac 1 2 \tr \Xb \  \DDc \ov{P} +H \nabc_3 \ov{P}-\frac 1 2 \Xbh \c \DDc \ov{P}  +\Xib  \nabc_4 \ov{P}\\
            &=&-\frac 1 2 \tr \Xb \  \DDc \ov{P} +H \left(-\frac{1}{2}\DDc \c\ov{\Bb}  -\frac{3}{2}\tr\Xb  \ov{P} - H \c\ov{\Bb} +\ov{\Xib}\c B -\frac{1}{4}\Xh\c\ov{\Ab} \right)\\
            &&-\frac 1 2 \Xbh \c \DDc \ov{P}  +\Xib  \left(\frac{1}{2}\DDbc\c B -\frac{3}{2}\ov{\tr X} \ov{P} + \ov{\Hb} \c B  \right)\\
              &=&-\frac 1 2 \tr \Xb \  \DDc \ov{P} -\frac{3}{2}\ov{P}  \left( \tr\Xb H + \ov{\tr X}  \Xib   \right) +O_5  +r^{-3}\dk^{\leq 1}( \Ga_g \Ga_b)
        \eeaa
        where
        \beaa
        O_5&=&H \left(-\frac{1}{2}\DDc \c \ov{\Bb}  - H \c \ov{\Bb}  \right)-\frac 1 2 \Xbh \c \DDc \ov{P} 
        \eeaa
        is overshooting in powers of $r$. We deduce 
        \beaa
          \DDc \hot    \,[\nabc_3, \DDc] \ov{P}               &=&  \DDc \hot    \left(-\frac 1 2 \tr \Xb \  \DDc \ov{P}  -\frac{3}{2}\ov{P}  \left( \tr\Xb H + \ov{\tr X}  \Xib   \right)+O_5  +r^{-3}\dk^{\leq 1}( \Ga_g \Ga_b)\right)\\
         &=&-\frac 1 2 \tr \Xb \ \DDc \hot \DDc \ov{P}  -\frac{3}{2}\ov{P}  \left( \tr\Xb \DDc \hot H + \ov{\tr X}  \DDc \hot \Xib   \right)\\
           &&-\frac 1 2\DDc \tr \Xb \hot  \DDc \ov{P}  -\frac{3}{2}\DDc \ov{P} \hot \left( \tr\Xb H + \ov{\tr X}  \Xib   \right)\\
           &&-\frac{3}{2} \ov{P}  \left( \DDc \tr\Xb \hot H + \DDc\ov{\tr X}  \hot \Xib   \right)\\
       &&+\DDc \hot O_5  +r^{-3}\dk^{\leq 2}( \Ga_g \Ga_b)      
        \eeaa
        which gives
        \beaa
              \DDc \hot    \,[\nabc_3, \DDc] \ov{P}                        &=&-\frac 1 2 \tr \Xb \ \DDc \hot \DDc \ov{P}  -\frac{3}{2}\ov{P}  \left( \tr\Xb \DDc \hot H + \ov{\tr X}  \DDc \hot \Xib   \right)+O_6  +r^{-3}\dk^{\leq 2}( \Ga_g \Ga_b)
        \eeaa
        where
        \beaa
  O_6&=&      -\frac 1 2\DDc \tr \Xb \hot  \DDc \ov{P}  -\frac{3}{2}\DDc \ov{P} \hot \left( \tr\Xb H + \ov{\tr X}  \Xib   \right)\\
           &&-\frac{3}{2} \ov{P}  \left( \DDc \tr\Xb \hot H + \DDc\ov{\tr X}  \hot \Xib   \right)+\DDc \hot O_5
        \eeaa
        is overshooting in powers of $r$. 

Hence, since $[ \nabc_3,   \DDc\hot \DDc] \ov{P}=[ \nabc_3,   \DDc\hot] \DDc \ov{P}+\DDc \hot [ \nabc_3,  \DDc] \ov{P}$, we obtain
\beaa
[ \nabc_3,   \DDc\hot \DDc] \ov{P}&=&-  \tr \Xb  \DDc \hot \DDc \ov{P}-\frac{3}{2}\ov{P}  \left( \tr\Xb \DDc \hot H + \ov{\tr X}  \DDc \hot \Xib   \right)  \\
&&+O_7 +r^{-3}\dk^{\leq 2}( \Ga_g \Ga_b)
\eeaa
with $O_7=O_4+O_6$. 
We deduce 
\beaa
L&=&  \DDc\hot \DDc  \nabc_3\ov{P}+[ \nabc_3,   \DDc\hot \DDc] \ov{P} +\frac{3}{2}\nabc_3(\ov{P} \tr\Xb\, \widehat{X}) +\frac{3}{2} \nabc_3(\ov{P} \ov{\tr X} \widehat{\Xb} )\\
&=& -\frac{1}{2} \DDc \hot \DDc (\DDc \c \ov{\Bb} ) -\frac{3}{2} \ov{ P}( \DDc\hot\DDc \tr\Xb)-\frac{5}{2}\tr\Xb (\DDc \hot \DDc \ov{ P} )\\
&&+\frac{3}{2}\ov{P} \tr\Xb\,\left(-\frac{5}{2}\tr\Xb\, \widehat{X}  -\frac{1}{2}\ov{\tr X} \widehat{\Xb} \right)  +\frac{3}{2}\ov{P} \ov{\tr X} \left(-\ov{\tr\Xb} \Xbh -2\tr\Xb  \widehat{\Xb}-\Ab \right)+\frac{3}{2}\ov{P}\left( \DDbc\c H+2\ov{P} \right)\widehat{\Xb} \\
&&+O_L+r^{-3}\dk^{\leq 2}( \Ga_g \Ga_b)
\eeaa
where $O_L=O_3+O_7+\frac 3 2 O_1+\frac 3 2 O_2$.
On the other hand, writing $ \nab_3 \ov{q}=\frac {1}{ 2} \tr \Xb \ov{  q}+r\Ga_b$, we have
\beaa
5\ov{q}^4 \nabc_3 (\ov{q}) Q(A)&=& \frac {5}{ 2} \tr \Xb \ov{  q}^5 Q(A)+ r^5\Ga_b Q(A)\\
&=& \frac {5}{ 2} \tr \Xb \ov{  q}^5 \left\{\DDc\hot \DDc\ov{P} +\frac{3}{2}\ov{P} \left(\tr\Xb\, \widehat{X} +\ov{\tr X} \widehat{\Xb} \right)+O_r+\err[Q(A)] \right\}+ r^5\Ga_b Q(A).
\eeaa
Hence, we finally obtain
\beaa
 \nabc_3 \left(\ov{q}^5 Q(A) \right)&=& \ov{q}^5 \Bigg\{ -\frac{1}{2} \DDc \hot \DDc (\DDc \c \ov{\Bb} ) -\frac{3}{2} \ov{ P}( \DDc\hot\DDc \tr\Xb)\\
&& -\frac{3}{2}\ov{P} \ov{\tr X} \Ab +\frac{3}{2}\ov{P}\left(- \ov{\tr X}\ov{\tr\Xb} + \DDbc\c H+2\ov{P} \right)\widehat{\Xb} \Bigg\}\\
&&+\tilde{O}_r+\err[\nabc_3( \ov{q}^5 Q(A))]
\eeaa
where
\beaa
\tilde{O}_r&=& \ov{q}^5 O_L+ \frac {5}{ 2} \tr \Xb \ov{  q}^5 O_r+\ov{q}^5\nabc_3 O_r
\eeaa
is overshooting in powers of $r$, and the error term is given by 
\beaa
\err[\nabc_3( \ov{q}^5 Q(A))]&=& r^{2}\dk^{\leq 2}( \Ga_g \Ga_b)+ \frac {5}{ 2} \tr \Xb \ov{  q}^5 \{\err[Q(A)] \}+ r^5\Ga_b Q(A)+\ov{q}^5\nabc_3\err[Q(A)]
\eeaa
as desired. 
\end{proof}


\appendix



\section{Relation with the Teukolsky equation in the literature}
\label{Teuk-literature}
\lab{appendix:comparisonofTeuk}


The goal of this appendix is to relate the Teukolsky equation obtained here to the Teukolsky equation as appears in the literature for the scalar curvature component $\alpha^{[+2]}$ in Newman-Penrose formalism:
\bea\label{teukolsky-dhr-square}
\begin{split}
|q|^2\square_g \a^{[+2]}&=-4(r-m) \pr_r \a^{[+2]}-4 \left(\frac{m(r^2-a^2)}{\Delta} - r -i a \cos\th \right)\pr_t \a^{[+2]} \\
&- 4\left(\frac{a(r-m)}{\Delta} + i \frac{\cos\th}{\sin^2\th} \right) \pr_\vphi \a^{[+2]}+ (4 \cot^2\th-2)\a^{[+2]}.
\end{split}
\eea

We will show how the above Teukolsky equation \eqref{teukolsky-dhr-square} and the Teukolsky equation \eqref{Teukolsky-equation-tens} are consistent one with another.


\subsection{Relation between $\a^{[+2]}$ and projected $A$}


In the literature, the curvature component $\a^{[+2]}$ is defined in Newman-Penrose formalism as 
\beaa
\a^{[+2]}&=& -W(l, m, l, m)
\eeaa
where 
\beaa
l = e_4, \qquad m= \frac{|q|}{\sqrt{2} q}\left(e_1 + ie_2\right),
\eeaa
where the vectors $e_4$, $e_3$, $e_1$ and $e_2$ are given in Boyer-Lindquist coordinates by \eqref{null-frames}.
We therefore deduce
\beaa
\a^{[+2]}&=& -\frac{|q|^2}{2 q^2}W(e_4, \left(e_1 + ie_2\right), e_4, \left(e_1 + ie_2\right))\\
&=& -\frac{\ov{q}}{2 q}\left(W_{4141}+i W_{4142}+i W_{4241}-W_{4242}\right)\\
&=& -\frac{\ov{q}}{2 q}\left(W_{4141}-W_{4242}+2i W_{4142}\right)
\eeaa
which gives, using that $W_{4242}=-W_{4141}$, 
\beaa
\a^{[+2]}&=& -\frac{\ov{q}}{ q}\left(W_{4141}+i W_{4142}\right).
\eeaa
Observe that
\beaa
A_{11}&=& W_{4141}+ i W_{4142}
\eeaa
and therefore denoting by $\mathfrak{a}$ the projection of $A$ to the $11$ component:
\beaa
\mathfrak{a}&=& A_{11}
\eeaa
we obtain the following relation:
\bea\label{relation-a-A}
\a^{[+2]}&=& -\frac{\ov{q}}{ q}\mathfrak{a}.
\eea
In particular $\a^{[+2]}$ and $\mathfrak{a}$ differ by a scalar factor given by $ -\frac{\ov{q}}{ q}= -\frac{r-i a \cos\th }{ r+ i a \cos \th}$, which is constant (equal to -1) in the case of Schwarzschild.


\subsection{The equation for $\mathfrak{a}$ from the Teukolsky in the literature}


We deduce the equation for $\mathfrak{a}$ from the standard Teukolsky equation in the literature \eqref{teukolsky-dhr-square} and the relation between $\a^{[+2]}$ and $\mathfrak{a}$ given by \eqref{relation-a-A}. We have
\beaa
|q|^2\square_g\mathfrak{a}&=& -|q|^2\square_g\left(\frac{q}{ \ov{q}}\a^{[+2]}\right)\\
&=& -|q|^2\square_g\left(\frac{q}{ \ov{q}}\right) \a^{[+2]}-\frac{q}{ \ov{q}} |q|^2\square_g(\a^{[+2]})-2\Delta\pr_r\left(\frac{q}{ \ov{q}}\right) \pr_r\a^{[+2]}-2\pr_\th \left(\frac{q}{ \ov{q}}\right)\pr_\th \a^{[+2]}.
\eeaa
Using that $\partial_r q=1$ and $\partial_\th q=-i a \sin \th$, we compute
\beaa
\pr_r\left(\frac{q}{ \ov{q}}\right)&=&\frac{\ov{q}-q}{ \ov{q}^2}=\frac{-2ia \cos\th}{ \ov{q}^2}, \\
\pr_\th\left(\frac{q}{ \ov{q}}\right)&=&\frac{-i a\sin\th \ov{q}- q ia \sin\th}{ \ov{q}^2}=\frac{-2ir a\sin\th }{ \ov{q}^2}.
\eeaa
Using \eqref{teukolsky-dhr-square}, we obtain
\beaa
|q|^2\square_g\mathfrak{a}&=&\left( -|q|^2\square_g\left(\frac{q}{ \ov{q}}\right)\right) \a^{[+2]}+\frac{q}{ \ov{q}}4(r-m) \pr_r \a^{[+2]}+4\frac{q}{ \ov{q}} \left(\frac{m(r^2-a^2)}{\Delta} - r -i a \cos\th \right)\pr_t \a^{[+2]} \\
&&+ 4\frac{q}{ \ov{q}}\left(\frac{a(r-m)}{\Delta} + i \frac{\cos\th}{\sin^2\th} \right) \pr_\vphi \a^{[+2]}-\frac{q}{ \ov{q}} (4 \cot^2\th-2)\a^{[+2]}\\
&&-2\Delta\frac{-2ia \cos\th}{ \ov{q}^2}\pr_r\a^{[+2]}-2\frac{-2ir a\sin\th }{ \ov{q}^2}\pr_\th \a^{[+2]}
\eeaa
which gives
\beaa
|q|^2\square_g\mathfrak{a}&=& -4\left(r-m+\Delta\frac{ia \cos\th }{|q|^2} \right) \left(-\frac{q}{ \ov{q}}\pr_r \a^{[+2]}\right)-4 \left(\frac{m(r^2-a^2)}{\Delta} - r -i a \cos\th \right)\left(-\frac{q}{ \ov{q}}\pr_t \a^{[+2]} \right)\\
&&- 4\left(\frac{a(r-m)}{\Delta} + i \frac{\cos\th}{\sin^2\th} \right) \left(-\frac{q}{ \ov{q}}\pr_\vphi \a^{[+2]}\right)-4\frac{ir a\sin\th }{ |q|^2}\left(-\frac{q}{ \ov{q}}\pr_\th \a^{[+2]}\right)\\
&&\left(4 \cot^2\th-2+ \ov{q}^2\square_g\left(\frac{q}{ \ov{q}}\right)\right) \left(-\frac{q}{ \ov{q}}\a^{[+2]} \right).
\eeaa
Since
\beaa
\pr_r\mathfrak{a}&=& \pr_r\left(-\frac{q}{\ov{q}}\a^{[+2]}\right)= -\frac{2ia \cos\th }{ |q|^2}\mathfrak{a}-\frac{q}{\ov{q}}\pr_r\a^{[+2]},\\
\pr_\th \mathfrak{a}&=&  \pr_\th\left(-\frac{q}{\ov{q}}\a^{[+2]}\right)= -\frac{2ir a\sin\th }{ |q|^2}\mathfrak{a}-\frac{q}{\ov{q}}\pr_\th\a^{[+2]},
\eeaa
we can write
\beaa
-\frac{q}{\ov{q}}\pr_r\a^{[+2]}&=& \pr_r\mathfrak{a}+\frac{2ia \cos\th }{ |q|^2}\mathfrak{a}, \qquad -\frac{q}{\ov{q}}\pr_\th\a^{[+2]}=\pr_\th \mathfrak{a}+\frac{2ir a\sin\th }{ |q|^2}\mathfrak{a}, \\
 -\frac{q}{\ov{q}}\pr_t \a^{[+2]}&=&\pr_t \mathfrak{a}, \qquad -\frac{q}{\ov{q}}\pr_\vphi \a^{[+2]}=\pr_\vphi \mathfrak{a}.
\eeaa
We therefore obtain 
\beaa
|q|^2\square_g\mathfrak{a}&=& -4\left(r-m+\Delta\frac{ia \cos\th }{|q|^2} \right) \left( \pr_r\mathfrak{a}+\frac{2ia \cos\th }{ |q|^2}\mathfrak{a}\right)-4 \left(\frac{m(r^2-a^2)}{\Delta} - r -i a \cos\th \right)\pr_t \mathfrak{a}\\
&&- 4\left(\frac{a(r-m)}{\Delta} + i \frac{\cos\th}{\sin^2\th} \right) \pr_\vphi \mathfrak{a}-4\frac{ir a\sin\th }{ |q|^2}\left(\pr_\th \mathfrak{a}+\frac{2ir a\sin\th }{ |q|^2}\mathfrak{a}\right)\\
&&\left(4 \cot^2\th-2+\ov{q}^2\square_g\left(\frac{q}{ \ov{q}}\right)\right) \mathfrak{a} 
\eeaa
which gives the following wave equation for the scalar component $\mathfrak{a}$:
\bea\label{wave-a-partial}
\begin{split}
|q|^2\square_g\mathfrak{a}&= -4\left(r-m+\Delta\frac{ia \cos\th }{|q|^2} \right) \pr_r\mathfrak{a} -4 \left(\frac{m(r^2-a^2)}{\Delta} - r -i a \cos\th \right)\pr_t \mathfrak{a}\\
&- 4\left(\frac{a(r-m)}{\Delta} + i \frac{\cos\th}{\sin^2\th} \right) \pr_\vphi \mathfrak{a}-4\frac{ir a\sin\th }{ |q|^2} \pr_\th \mathfrak{a} \\
&+\left(4 \cot^2\th-2+ \ov{q}^2\square_g\left(\frac{q}{ \ov{q}}\right)-4\left(r-m+\Delta\frac{ia \cos\th }{|q|^2} \right) \frac{2ia \cos\th }{ |q|^2}+\frac{8r^2 a^2\sin^2\th }{ |q|^4}\right) \mathfrak{a}. 
\end{split}
\eea

We will now make use of the null frames to rewrite the above equation in a more convenient form. 

We substitute the first order terms given as partial derivatives $\pr_r$, $\pr_t$, $\pr_\th$, $\pr_\varphi$ with the derivatives expressed in terms of null frames. 
From \eqref{null-frames}, we can write
\beaa
\pr_r&=& -\frac{|q|^2}{2\Delta} e_3+\frac 1 2 e_4,  \\
\pr_t&=& \frac 1 2 e_3+\frac{\Delta}{2|q|^2} e_4-\frac{a \sin\th}{|q|} e_2, \\
\pr_\th&=&|q| e_1,\\
\pr_\varphi&=& \left(|q|\sin\th +\frac{a^2 \sin^3\th}{|q|}\right) e_2-\frac 1 2a\sin^2\th e_3-\frac{a\sin^2\th\Delta}{2|q|^2} e_4.
\eeaa 
This implies from \eqref{wave-a-partial}:
\beaa
|q|^2\square_g\mathfrak{a}&=& -4\left(r-m+\Delta\frac{ia \cos\th }{|q|^2} \right) \left(-\frac{|q|^2}{2\Delta} e_3\mathfrak{a}+\frac 1 2 e_4\mathfrak{a}\right)\\
&& -4 \left(\frac{m(r^2-a^2)}{\Delta} - r -i a \cos\th \right)\left(\frac 1 2 e_3 \mathfrak{a}+\frac{\Delta}{2|q|^2} e_4 \mathfrak{a}-\frac{a \sin\th}{|q|} e_2 \mathfrak{a}\right)\\
&&- 4\left(\frac{a(r-m)}{\Delta} + i \frac{\cos\th}{\sin^2\th} \right) \left( \left(|q|\sin\th +\frac{a^2 \sin^3\th}{|q|}\right) e_2\mathfrak{a}-\frac 1 2a\sin^2\th e_3\mathfrak{a}-\frac{a\sin^2\th\Delta}{2|q|^2} e_4\mathfrak{a}\right) \\
&&-4\frac{ir a\sin\th }{ |q|^2} |q| e_1 \mathfrak{a} + \tilde{W} \mathfrak{a} 
\eeaa
where the potential is given by 
\bea
\tilde{W}&=& 4 \cot^2\th-2+ \ov{q}^2\square_g\left(\frac{q}{ \ov{q}}\right)-4\left(r-m+\Delta\frac{ia \cos\th }{|q|^2} \right) \frac{2ia \cos\th }{ |q|^2}+\frac{8r^2 a^2\sin^2\th }{ |q|^4}.
\eea
The above equation simplifies to
\beaa
|q|^2\square_g\mathfrak{a}&=& 2\left(\frac{|q|^2}{\Delta}(r-m)-\frac{m(r^2-a^2)}{\Delta}+a\sin^2\th\frac{a(r-m)}{\Delta} + r+3ia \cos\th  \right)  e_3\mathfrak{a}\\
&&+2 \left(-r+m -\frac{m(r^2-a^2)}{|q|^2} + \frac{\Delta}{|q|^2}r +\frac{a\sin^2\th}{|q|^2}a(r-m) + i \frac{a\Delta}{|q|^2}\cos\th \right) e_4\mathfrak{a}\\
&&+\left[4 \frac{a \sin\th}{|q|}\left(\frac{m(r^2-a^2)}{\Delta} - r -i a \cos\th \right) - 4\left(\frac{a(r-m)}{\Delta} + i \frac{\cos\th}{\sin^2\th} \right)\left(|q|\sin\th +\frac{a^2 \sin^3\th}{|q|}\right)\right]e_2\mathfrak{a}\\
&&-4\frac{ir a\sin\th }{ |q|} e_1(\mathfrak{a})+ \tilde{W} \mathfrak{a}.
\eeaa

We now use the following values in Kerr to rewrite the coefficients of the first order terms in the equation. Recall: 
\beaa
 \trch=\frac{2r}{|q|^2},\quad \atrch&=&\frac{2a\cos\th}{|q|^2}, \quad \trchb=-\frac{2r\Delta}{|q|^4}, \quad \atrchb=\frac{2a\Delta\cos\th}{|q|^4},\\
 \omb&=&-\frac{-a^2\cos^2\th (r-m)-mr^2+a^2r}{|q|^4}, \\
 \etab_1&=& -\frac{a^2\sin\th \cos\th }{|q|^3}, \qquad \etab_2 =-\frac{a\sin\th r}{|q|^3}.
\eeaa
From $P=-\frac{2m}{q^3}$, we deduce
\beaa
P=-\frac{2m}{q^3}=-\frac{2m}{|q|^6} (r-i a \cos\th)^3=\frac{2m}{|q|^6} (-r^3+3r  a^2 \cos^2\th+ia\cos\th(3r^2- a^2 \cos^2\th))
\eeaa
and therefore
\beaa
\rho&=& \frac{2m}{|q|^6} (-r^3+3r  a^2 \cos^2\th), \qquad \dual \rho= \frac{2ma\cos\th }{|q|^6} (3r^2- a^2 \cos^2\th).
\eeaa
Define also
\beaa
\La&=& \frac{r^2+a^2}{|q|^3}\cot\th.
\eeaa

We rewrite the coefficients in the following way.
\begin{itemize}
\item The real part of the coefficient of $e_3 \mathfrak{a}$ is given by
\beaa
&&\frac{|q|^2}{\Delta}(r-m)-\frac{m(r^2-a^2)}{\Delta}+a\sin^2\th\frac{a(r-m)}{\Delta} + r\\
&=&\frac{1}{\Delta}\left((r^2+a^2\cos^2\th)(r-m)-m(r^2-a^2)+a^2(1-\cos^2\th) (r-m) + r(r^2-2mr+a^2)\right)\\
&=&\frac{1}{\Delta}\left(r^3-mr^2-mr^2+a^2r + r(r^2-2mr+a^2)\right)\\
&=&\frac{1}{\Delta}\left(2 r\Delta\right)=2r.
\eeaa
Therefore the coefficient of $e_3 \mathfrak{a}$ is given by
\beaa
4r+6 i a \cos\th=|q|^2\left(2\trch+3 i \atrch\right).
\eeaa

\item The real part of the coefficient of $e_4 \mathfrak{a}$ is given by
\beaa
&&-r+m -\frac{m(r^2-a^2)}{|q|^2} + \frac{\Delta}{|q|^2}r +\frac{a\sin^2\th}{|q|^2}a(r-m) \\
&=&\frac{1}{|q|^2}\left((r^2+a^2\cos^2\th)(-r+m) -m(r^2-a^2) + \Delta r +a^2(1-\cos^2\th)(r-m)\right) \\
&=&\frac{1}{|q|^2}\left(-r^3-2a^2\cos^2\th r+(2a^2\cos^2\th)(m)  + (r^2-2mr+a^2) r +a^2(r)\right) \\
&=&\frac{1}{|q|^2}\left(-2a^2\cos^2\th (r-m) -2m r^2 +2a^2r\right). 
\eeaa
 Therefore the coefficient of $e_4\mathfrak{a}$ is given by
 \beaa
\frac{2}{|q|^2}\left(-2a^2\cos^2\th (r-m) -2m r^2 +2a^2r\right) + 2i \frac{a\Delta}{|q|^2}\cos\th =|q|^2\left( -4\omb+ i \atrchb\right).
 \eeaa
 
 \item The coefficient of $e_2\mathfrak{a}$ is given by
 \beaa
&&4 \frac{a \sin\th}{|q|}\left(\frac{m(r^2-a^2)}{\Delta} - r -i a \cos\th \right) - 4\left(\frac{a(r-m)}{\Delta} + i \frac{\cos\th}{\sin^2\th} \right)\left(|q|\sin\th +\frac{a^2 \sin^3\th}{|q|}\right)\\
&=&4 \frac{a \sin\th}{|q|}\left(\frac{m(r^2-a^2)}{\Delta} - r  \right)- 4\left(\frac{a(r-m)}{\Delta} \right)\left(|q|\sin\th +\frac{a^2 \sin^3\th}{|q|}\right)\\
&&-i 4 \frac{a \sin\th}{|q|}\left(  a \cos\th \right) - 4i\left(  \frac{\cos\th}{\sin^2\th} \right)\left(|q|\sin\th +\frac{a^2 \sin^3\th}{|q|}\right).
\eeaa
Its real part simplifies to
\beaa
&&4 \frac{a \sin\th}{|q|}\left(\frac{m(r^2-a^2)}{\Delta} - r  \right)- 4\left(\frac{a(r-m)}{\Delta} \right)\left(|q|\sin\th +\frac{a^2 \sin^3\th}{|q|}\right)\\
&=&4 \frac{a \sin\th}{|q|}\left(\frac{m(r^2-a^2)-r(r^2-2mr+a^2)}{\Delta}   \right)- 4\left(\frac{a(r-m)}{\Delta} \right)\left(\frac{|q|^2\sin\th +a^2 \sin^3\th}{|q|}\right)\\
&=&4 \frac{a \sin\th}{|q|\Delta }\left(m(r^2-a^2)-r(r^2-2mr+a^2)  - (r-m) (|q|^2 +a^2 \sin^2\th) \right)\\
&=&4 \frac{a \sin\th}{|q|\Delta }\left(mr^2-a^2m-r^3+2mr^2-a^2r  - (r-m) (r^2+ a^2) \right)\\
&=&4 \frac{a \sin\th}{|q|\Delta }\left(-2r^3+4mr^2-2a^2r    \right)=4 \frac{a \sin\th}{|q|\Delta }\left(-2r \Delta     \right)=- \frac{8ra \sin\th}{|q|}.
\eeaa
 Therefore the coefficient of $e_2\mathfrak{a}$ is given by
\beaa
- \frac{8ra \sin\th}{|q|}-4i  \left(\frac{2a^2 \sin\th\cos\th}{|q|}+  \frac{\cos\th}{\sin\th} |q| \right) =|q|^2\left(8 \etab_2-4i(\La- \etab_1) \right).
\eeaa
Indeed,
\beaa
\La- \etab_1&=& \frac{r^2+a^2}{|q|^3}\cot\th+\frac{a^2\sin\th \cos\th }{|q|^3}=\left( \frac{r^2+a^2+a^2\sin^2\th  }{|q|^3\sin\th}\right)\cos\th\\
&=&\left( \frac{r^2+a^2\cos^2\th+2a^2\sin^2\th  }{|q|^3\sin\th}\right)\cos\th=\left( \frac{1  }{|q|\sin\th}+\frac{2a^2\sin\th  }{|q|^3}\right)\cos\th.
\eeaa

\item The coefficient of $e_1\mathfrak{a}$ is given by 
\beaa
-4\frac{ir a\sin\th }{ |q|}=|q|^2 \left( i4 \etab_2\right).
\eeaa
\end{itemize}
 
We can therefore write
 \bea\label{wave-as-implied}
 \begin{split}
\square_{g_{Kerr}}\mathfrak{a}&= \left(2\trch+3 i \atrch\right) e_3\mathfrak{a}+\left( -4\omb+ i \atrchb\right) e_4\mathfrak{a}\\
&+ i4 \etab_2 e_1(\mathfrak{a})+\left(8 \etab_2-4i(\La- \etab_1) \right)e_2\mathfrak{a}+W \mathfrak{a} 
\end{split}
\eea
where the potential is given by
\beaa
W&=& \frac{1}{|q|^2}\left(4 \cot^2\th-2+ \ov{q}^2\square_g\left(\frac{q}{ \ov{q}}\right)-4\left(r-m+\Delta\frac{ia \cos\th }{|q|^2} \right) \frac{2ia \cos\th }{ |q|^2}+\frac{8r^2 a^2\sin^2\th }{ |q|^4}\right).
\eeaa
This gives
\beaa
W&=& \frac{1}{|q|^2}\left(4 \cot^2\th-2+8\left(-2mr+a^2 \right) \frac{a^2 \cos^2\th }{ |q|^4}+\frac{8r^2 a^2 }{ |q|^4}-8\left(r-m\right) \frac{ia \cos\th }{ |q|^2}+ \ov{q}^2\square_g\left(\frac{q}{ \ov{q}}\right)\right).
\eeaa
Putting at common denominator we obtain
\beaa
W&=& \frac{1}{|q|^2}\Bigg(\frac{4 \cot^2\th-2}{|q|^4}(r^4+2r^2a^2\cos^2\th+a^4\cos^4\th)+8\left(-2mr+a^2 \right) \frac{a^2 \cos^2\th }{ |q|^4}+\frac{8r^2 a^2 }{ |q|^4}\\
&&-8\left(r-m\right) \frac{ia \cos\th }{ |q|^4}(r^2+a^2\cos^2\th)+ \ov{q}^2\square_g\left(\frac{q}{ \ov{q}}\right)\Bigg)
\eeaa
which gives
\beaa
W&=& \frac{1}{|q|^6}\Bigg((4 \cot^2\th-2) r^4+(8 \cot^2\th a^2\cos^2\th-4a^2\cos^2\th+8a^2) r^2-16m a^2 \cos^2\th r\\
&&+4  a^4\cos^4\th\cot^2\th-2a^4\cos^4\th+8 a^4 \cos^2\th  \\
&&-8i a \cos\th  (r^3-mr^2+a^2\cos^2\th r -m a^2\cos^2\th) +|q|^4 \ov{q}^2\square_g\left(\frac{q}{ \ov{q}}\right)\Bigg).
\eeaa
We now compute the term $|q|^4 \ov{q}^2\square_g\left(\frac{q}{ \ov{q}}\right)$. Recall that in Kerr:
\beaa
 |q|^2\square_g &=&\Delta \pr^2_r+\left(-\frac{(r^2+a^2)^2}{\Delta}+a^2 \sin^2 \th \right)\pr_t^2+ \pr^2_\th+\left(\frac{1}{\sin^2\th}-\frac{a^2}{\Delta} \right)\pr_\vphi^2\\
 &&-\frac{4m a r}{\Delta} \pr_t\pr_\vphi  +2(r-m)\pr_r +\frac{\cos\th}{\sin \th} \pr_\th.
\eeaa
We therefore have
\beaa
 |q|^2\square_g\left(\frac{q}{ \ov{q}}\right) &=&\Delta \pr^2_r\left(\frac{q}{ \ov{q}}\right)+ \pr^2_\th\left(\frac{q}{ \ov{q}}\right)  +2(r-m)\pr_r\left(\frac{q}{ \ov{q}}\right) +\frac{\cos\th}{\sin \th} \pr_\th\left(\frac{q}{ \ov{q}}\right)\\
 &=& \Delta \pr_r\left(\frac{-2ia \cos\th}{ \ov{q}^2}\right)+\pr_\th\left(\frac{-2ir a\sin\th }{ \ov{q}^2}\right)+2(r-m)\frac{-2ia \cos\th}{ \ov{q}^2}+\frac{\cos\th}{\sin\th}(\frac{-2ir a\sin\th }{ \ov{q}^2})\\
&=& (r^2-2mr+a^2) \frac{4ia \cos\th}{ \ov{q}^3}+\frac{-2r a(i\cos\th \ov{q}+2a\sin^2\th) }{ \ov{q}^3}\\
&&+2(r-m)\frac{-2ia \cos\th}{ \ov{q}^2}+\cos\th\left(\frac{-2ir a}{ \ov{q}^2}\right)\\
&=&  \frac{4ia r^2\cos\th}{ \ov{q}^3}-\frac{8mia r\cos\th}{ \ov{q}^3}+\frac{4ia^3 \cos\th}{ \ov{q}^3}-\frac{4r a^2\sin^2\th }{ \ov{q}^3}+\frac{4mia \cos\th}{ \ov{q}^2}+\frac{-8ir a\cos\th}{ \ov{q}^2}\\
&=&  \frac{4ia r^2\cos\th}{ \ov{q}^3}-\frac{8mia r\cos\th}{ \ov{q}^3}+\frac{4ia^3 \cos\th}{ \ov{q}^3}-\frac{4r a^2\sin^2\th }{ \ov{q}^3}+\frac{4mia \cos\th(r-ia\cos\th)}{ \ov{q}^3}\\
&&+\frac{-8ir a\cos\th(r-ia\cos\th)}{ \ov{q}^3}\\
&=&\frac{1}{\ov{q}^3}\Big(-4r a^2\cos^2\th-4r a^2 +4m a^2 \cos^2\th +4ia\cos\th(- r^2-m r+a^2)\Big).
\eeaa
Therefore
\beaa
|q|^4 \ov{q}^2\square_g\left(\frac{q}{ \ov{q}}\right)&=& q \Big(-4r a^2\cos^2\th-4r a^2 +4m a^2 \cos^2\th +4ia\cos\th(- r^2-m r+a^2)\Big)\\
&=& (r+i a \cos\th ) \Big(-4r a^2\cos^2\th-4r a^2 +4m a^2 \cos^2\th +4ia\cos\th(- r^2-m r+a^2)\Big)\\
&=&  -4r^2 a^2\cos^2\th-4r^2 a^2 +4m a^2 \cos^2\th r  +4ia\cos\th(- r^3-m r^2+a^2 r)\\
&&+ (i a \cos\th ) \Big(-4r a^2\cos^2\th-4r a^2 +4m a^2 \cos^2\th\Big)-  \Big(4a^2\cos^2\th(- r^2-m r+a^2)\Big)\\
&=& ( -4 a^2 )r^2+8m a^2 \cos^2\th r -4a^4\cos^2\th +4ia\cos\th(- r^3-m r^2-r a^2\cos^2\th +m a^2 \cos^2\th).
\eeaa
We therefore obtain
\beaa
W&=& \frac{1}{|q|^6}\Big((4 \cot^2\th-2) r^4+(8 \cot^2\th a^2\cos^2\th-4a^2\cos^2\th+8a^2) r^2-16m a^2 \cos^2\th r\\
&&+4  a^4\cos^4\th\cot^2\th-2a^4\cos^4\th+8 a^4 \cos^2\th  \\
&&-8i a \cos\th  (r^3-mr^2+a^2\cos^2\th r -m a^2\cos^2\th)\\
&& + ( -4 a^2 )r^2+8m a^2 \cos^2\th r -4a^4\cos^2\th +4ia\cos\th(- r^3-m r^2-r a^2\cos^2\th +m a^2 \cos^2\th)\Big)
\eeaa
which gives
\beaa
|q|^6W&=&(4 \cot^2\th-2) r^4+(8 \cot^2\th a^2\cos^2\th-4a^2\cos^2\th+4a^2) r^2-8m a^2 \cos^2\th r\\
&&+4  a^4\cos^4\th\cot^2\th-2a^4\cos^4\th+4 a^4 \cos^2\th  \\
&&+4i a \cos\th  (-3r^3+mr^2-3a^2\cos^2\th r +3m a^2\cos^2\th).
\eeaa
In particular writing 
\beaa
\cot^2\th \cos^2\th&=& \cot^2\th(1- \sin^2\th)= \cot^2\th- \cos^2\th
\eeaa
we obtain
\beaa
|q|^6\Re(W)&=&( 4 \cot^2\th-2)r^4+(8 \cot^2\th+4-12\cos^2\th)a^2r^2-8mr a^2 \cos^2\th\\
&&+4a^4 \cot^2\th-6a^4\cos^4\th, \\
|q|^6\Im(W)&=& a\cos\th \left(-12r^3+4m  r^2-12ra^2\cos^2\th+12m a^2\cos^2\th  \right).
\eeaa

From considering the case of Schwarzschild, we know that the real part of the potential should contain the following terms:
\beaa
&& 4 \La^2+\frac 1 2 \trch \trchb-10\omb \trch-8\rho\\
&=& 4  \frac{(r^2+a^2)^2}{|q|^6}\cot^2\th-\frac 1 2 \frac{2r}{|q|^2} \frac{2r\Delta}{|q|^4}+10 \frac{2r}{|q|^2}\frac{-a^2\cos^2\th (r-m)-mr^2+a^2r}{|q|^4} -8 \frac{2m}{|q|^6} (-r^3+3r  a^2 \cos^2\th)\\
&=&\frac{1}{|q|^6} \Big( 4 (r^4+2r^2a^2+a^4)\cot^2\th- 2r^2(r^2-2mr+a^2)+20r(-mr^2+a^2r-a^2\cos^2\th r+a^2\cos^2\th m)\\
&& -16m (-r^3+3r  a^2 \cos^2\th)\Big)\\
&=&\frac{1}{|q|^6} \Big( (4 \cot^2\th-2) r^4+(8 \cot^2\th+18-20\cos^2\th) a^2r^2-28m r a^2\cos^2\th+4 a^4\cot^2\th \Big).
\eeaa
It could also contain terms like
\beaa
 \atrch \atrchb, \quad  \etab_1 \etab_1, \quad \etab_2\etab_2, \quad  e_1(\etab_1), \quad \La  \etab_1,
\eeaa
which are given by
\beaa
 \atrch \atrchb&=& \frac{2a\cos\th}{|q|^2}\frac{2a\Delta\cos\th}{|q|^4}= \frac{4a^2\cos^2\th}{|q|^6} (r^2-2mr+a^2)\\
 &=&\frac{1}{|q|^6} ((4\cos^2\th) a^2r^2-8mr a^2\cos^2\th +4a^4\cos^2\th ),
\eeaa
\beaa
\etab_1 \etab_1&=&  \frac{a^4\sin^2\th \cos^2\th }{|q|^6}= \frac{1}{|q|^6} a^4\cos^2\th(1-\cos^2\th),  \\
 \etab_2\etab_2&=& \frac{a^2\sin^2\th r^2}{|q|^6}= \frac{1}{|q|^6}(1- \cos^2\th) a^2r^2, \\
 e_1(\etab_1)&=&-a^2 \frac{1}{|q|}\pr_\th\left(\frac{\sin\th \cos\th }{|q|^3}\right)= \frac{1}{|q|^6}((-2\cos^2\th+1)a^2r^2+a^4\cos^2\th(-2+\cos^2\th)), \\
 \La  \etab_1&=&-  \frac{r^2+a^2}{|q|^3}\cot\th   \frac{a^2\sin\th \cos\th }{|q|^3}=\frac{1}{|q|^6}((- \cos^2\th )a^2r^2-a^4 \cos^2\th ).  
\eeaa
From the above it is clear that we need to have a term $-\frac 5 2 \atrch \atrchb$. We have
\beaa
&& 4 \La^2+\frac 1 2 \trch \trchb-10\omb \trch-8\rho-\frac 5 2 \atrch \atrchb\\
&=&\frac{1}{|q|^6} \Big( (4 \cot^2\th-2) r^4+(8 \cot^2\th+18-20\cos^2\th) a^2r^2-28m r a^2\cos^2\th+4 a^4\cot^2\th \Big)\\
&& +   \frac{1}{|q|^6} (-(10\cos^2\th) a^2r^2+20mr a^2\cos^2\th -10a^4\cos^2\th )\\
&=&\frac{1}{|q|^6} \Big( (4 \cot^2\th-2) r^4+(8 \cot^2\th+18-30\cos^2\th) a^2r^2-8m r a^2\cos^2\th+4 a^4\cot^2\th -10a^4\cos^2\th\Big).
\eeaa

Observe that 
\beaa
&&|q|^6\left(\Re(W)-(4 \La^2+\frac 1 2 \trch \trchb-10\omb \trch-8\rho-\frac 5 2 \atrch \atrchb)\right)\\
&=&|q|^6\Big((-14+18\cos^2\th)a^2r^2+10a^4\cos^2\th-6a^4\cos^4\th\Big).
\eeaa
We find a combination of $\etab_1\etab_1$,  $\etab_2\etab_2$, $e_1(\etab_1)$, $\La \etab_1$ which gives the above:
\beaa
&&|q|^6( n \etab_1\etab_1+ m \etab_2\etab_2+ p e_1(\etab_1)+ q\La \etab_1)\\
&=&|q|^6( n a^4\cos^2\th(1-\cos^2\th)+ m (1- \cos^2\th) a^2r^2+ p ((-2\cos^2\th+1)a^2r^2+a^4\cos^2\th(-2+\cos^2\th))\\
&& + q((- \cos^2\th )a^2r^2-a^4 \cos^2\th ) )\\
&=&|q|^6( (m+p+(- m-2p-q)\cos^2\th) a^2r^2+ (n-2p-q) a^4\cos^2\th+(-n+p) a^4\cos^4\th) ).
\eeaa
For every $n$, if $m=-n-8$, $p=n-6$, $q=2-n$ gives the desired expression, i.e.
\beaa
&&|q|^6( n \etab_1\etab_1+ (-n-8) \etab_2\etab_2+ (n-6) e_1(\etab_1)+ (2-n)\La \etab_1)\\
&=&|q|^6\Big((-14+18\cos^2\th)a^2r^2+10a^4\cos^2\th-6a^4\cos^4\th\Big)
\eeaa
which finally gives for every $n$
\beaa
|q|^6\Re(W)&=&|q|^6\Big(4 \La^2+\frac 1 2 \trch \trchb-10\omb \trch-8\rho-\frac 5 2 \atrch \atrchb\\
&&+ n \etab_1\etab_1+ (-n-8) \etab_2\etab_2+ (n-6) e_1(\etab_1)+ (2-n)\La \etab_1\Big).
\eeaa

In particular taking $n=8$ we can write
\bea\label{Re-W}
\begin{split}
\Re(W)&=4 \La^2+\frac 1 2 \trch \trchb-10\omb \trch-8\rho-\frac 5 2 \atrch \atrchb\\
&+ 2 e_1(\etab_1)-6\La \etab_1+ 8 \etab_1\etab_1-16 \etab_2\etab_2.
\end{split}
\eea

The imaginary part of the potential could contain terms like
\beaa
\atrch \trchb, \quad \atrchb \trch, \quad \omb \atrch, \quad \dual \rho, \quad \etab_1 \etab_2, \quad  e_1(\etab_2), \quad  \La \etab_2,
\eeaa
which are given by
\beaa
 \atrch\trchb&=&  \frac{a\cos\th}{|q|^6} (-4r^3+8mr^2-4a^2r), \\
 \atrchb \trch&=& \frac{a\cos\th}{|q|^6} (4r^3-8mr^2+4a^2r), \\
 \omb \atrch&=&\frac{a\cos\th}{|q|^6} (2mr^2-a^2(2-2\cos^2\th)r-2a^2\cos^2\th m),\\
 \dual \rho&=& \frac{a\cos\th }{|q|^6} (6mr^2- 2ma^2 \cos^2\th),\\
 \etab_1 \etab_2&=& \frac{a \cos\th}{|q|^6}(a^2(1-\cos^2\th) r), \\
 \Lambda \etab_2&=&\frac{a\cos\th }{|q|^6}( - r^3-a^2r), \\
 e_1(\etab_2)&=& \frac{1}{|q|} \pr_\th\left(-\frac{a\sin\th r}{|q|^3}\right)= -\frac{ar}{|q|} \left(\frac{\cos\th |q|^3+3a^2|q| \cos\th\sin^2\th }{|q|^6}\right)\\
 &=& \frac{a\cos\th}{|q|^6}(- r^3+2a^2\cos^2\th r-3a^2 r ).
\eeaa
We find a combination of the above which gives $\Im(W)$. We have
\beaa
&&n \atrch\trchb+w  \atrchb \trch+c \omb \atrch+b \dual \rho +p  \etab_1 \etab_2+q \Lambda \etab_2+ t  e_1(\etab_2)\\
&=& \frac{a\cos\th}{|q|^6}  \Big(n (-4r^3+8mr^2-4a^2r)+w((4r^3-8mr^2+4a^2r)\\
&&+ c(2mr^2-a^2(2-2\cos^2\th)r-2a^2\cos^2\th m)+b (6mr^2- 2ma^2 \cos^2\th)\\
&&+p a^2(1-\cos^2\th) r+q ( - r^3-a^2r)+ t  (- r^3+2a^2\cos^2\th r-3a^2 r )\Big)\\
&=& \frac{a\cos\th}{|q|^6}  \Big((-4n+4w-q-t)r^3+ (8n-8w+2c+6b)mr^2+(-4 n +4w+p-q-3t-2c)a^2r \\
&&+(-p+2t+2c)a^2\cos^2\th r+(-2c-2b)a^2\cos^2\th m\Big).
\eeaa
From the last term we obtain $c=-b-6$. From the $mr^2$ term we deduce $2n-2w=4-b$. By comparing each term we obtain for every $w$, $b$ and $t$:
\beaa
\Im(W)&=& (w-\frac 1 2 b +2) \atrch\trchb+w  \atrchb \trch+(-b-6) \omb \atrch+b \dual \rho +(2t-2b)  \etab_1 \etab_2\\
&&+(2b-t+4) \Lambda \etab_2+ t  e_1(\etab_2).
\eeaa
In particular for $b=4$, $t=0$ and $w=1$, we obtain
\bea\label{Im-W}
\Im(W)&=&  \atrch\trchb+  \atrchb \trch-10 \omb \atrch+4 \dual \rho +12 \Lambda \etab_2-8  \etab_1 \etab_2.
\eea


\subsection{The projection of the Teukolsky equation}


The linear Teukolsky equation as obtained in our formalism is given by
\beaa
 &&-\nabc_4\nabc_3A+ \frac{1}{2}\DDc\hot (\DDbc \c A)+\left(- \frac 1 2 \tr X -2\ov{\tr X} \right)\nabc_3A-\frac{1}{2}\tr\Xb \nabc_4A\\
&&+\frac{1}{2}  \ov{\Hb}\c \DDc A+ \left( 2H  +\frac 1 2 \underline{H} \right)\c \DDbc  A + \left(-\ov{\tr X} \tr \Xb +2\ov{P}\right) A+  2H  \hot (\ov{\Hb} \c A)=0.
\eeaa
Recall that in Kerr 
 \beaa
&& \om=0, \qquad  Z=-\Hb.
\eeaa 
We can therefore write out the conformal derivatives as
\beaa
\nabc_3A&=& \nab_3 A-4\omb A, \qquad \nabc_4 A=\nab_4A, \\
-\nabc_4 \nabc_3 A&=& -\nab_4\nab_3 A +4\omb \nab_4A+\left(4\nab_4\omb\right) A,\\
\DDbc \c A&=& \DDb \c A+ 2 \ov{Z} \c A= \DDb \c A- 2 \ov{\Hb} \c A, \\
\frac{1}{2}\DDc\hot (\DDbc \c A)&=& \frac{1}{2}(\DD+2 Z)\hot (\DDb \c A- 2 \ov{\Hb} \c A)=\frac{1}{2}(\DD-2 \Hb)\hot (\DDb \c A- 2 \ov{\Hb} \c A)\\
&=&\frac{1}{2}\DD\hot (\DDb \c A)- \Hb\c  \DDb  A-  \ov{\Hb} \c \DD A+\left(-\DD\c  \ov{\Hb} +2  \Hb \c  \ov{\Hb} \right)   A, \\
\frac{1}{2}  \ov{\Hb}\c \DDc A&=& \frac{1}{2}  \ov{\Hb}\c (\DD+2Z) A=\frac{1}{2}  \ov{\Hb}\c  \DD A- ( \ov{\Hb}\c \Hb ) A,\\
\left( 2H  +\frac 1 2 \underline{H} \right)\c \DDbc  A &=& \left( 2H  +\frac 1 2 \underline{H} \right)\c  (\DDb+2\ov{Z})   A = \left( 2H  +\frac 1 2 \underline{H} \right)\c  \DDb   A - \left( 4H \c \ov{\Hb} + \underline{H} \c \ov{\Hb}\right)   A .
\eeaa
This implies
\beaa
 && -\nab_4\nab_3 A+\frac{1}{2}\DD\hot (\DDb \c A)+\left(- \frac 1 2 \tr X -2\ov{\tr X} \right)\nab_3 A+\left(-\frac{1}{2}\tr\Xb+4\omb\right) \nab_4A\\
&&-\frac{1}{2}  \ov{\Hb}\c  \DD A+\left( 2H  -\frac 1 2 \underline{H} \right)\c  \DDb   A  \\
&&+ \left(-\ov{\tr X} \tr \Xb +2\ov{P}+2\omb \tr X  +8 \omb \ov{\tr X} + 4\nab_4\omb-\DD\c  \ov{\Hb}-4H \c \ov{\Hb} \right) A+  2H  \hot (\ov{\Hb} \c A)=0.
\eeaa
Also writing
\beaa
-\frac{1}{2}\tr\Xb +4\omb&=& -\frac{1}{2}(\trchb - i \atrchb ) +4\omb=\left(-\frac{1}{2}\trchb +4\omb\right)+ i \left(\frac 1 2 \atrchb \right),\\
- \frac 1 2 \tr X -2\ov{\tr X} &=& - \frac 1 2 (\trch - i \atrch) -2(\trch + i \atrch)=\left(-\frac 5 2 \trch\right) +i \left(- \frac 3 2 \atrch\right),
\eeaa
and writing $-\frac{1}{2}  \ov{\Hb}\c  \DD A-\frac{1}{2}  \Hb\c  \DDb A=-2 \etab \c \nab$, we obtain
\bea\label{Teuk-intermediate}
\begin{split}
 & -\nab_4\nab_3 A+\frac{1}{2}\DD\hot (\DDb \c A)+\left(-\frac 5 2 \trch- i \frac 3 2 \atrch \right)\nab_3 A+\left(-\frac{1}{2}\trchb +4\omb+ i \frac 1 2 \atrchb \right) \nab_4A\\
&-2\etab \c \nab A + 2H \c  \DDb   A  \\
&+ \left(-\ov{\tr X} \tr \Xb +2\ov{P}+2\omb \tr X  +8 \omb \ov{\tr X} + 4\nab_4\omb-\DD\c  \ov{\Hb}-4H \c \ov{\Hb} \right) A+  2H  \hot (\ov{\Hb} \c A)=0.
\end{split}
\eea

We collect in the following Lemma the calculations for the projection to the $11$ component. 
\begin{lemma}\label{lemma-projection} 
The following projections to the $11$ component hold true:
\beaa
(\nab_3 A)_{11} &=&  e_3( \mathfrak{a})+i  \atrchb \mathfrak{a},\\
(\nab_4 A)_{11} &=&  e_4( \mathfrak{a})+i  \atrch \mathfrak{a}, \\
(\nab_4\nab_3  A)_{11}&=&e_4 e_3 ( \mathfrak{a})+i  \atrchb  e_4 ( \mathfrak{a})+ i  \atrch  e_3 ( \mathfrak{a}) +\left( -  \atrch   \atrchb+i e_4\atrchb\right)  \mathfrak{a},\\
\frac 1 2 \DD\hot( \DDb \c A)_{11}&= & \lap \mathfrak{a}- \frac 1 2  i \atrch e_3( \mathfrak{a}) -\frac 1 2 i\atrchb e_4( \mathfrak{a}) +4 i \La  e_2(\mathfrak{a}) \\
&&+\left( -4 \La^2 +\atrch \atrchb +G_0\right) \mathfrak{a}, \\
(2 \etab \c \nab A)_{11}&=& 2 \etab_1 e_1\mathfrak{a}+2 \etab_2 e_2(\mathfrak{a} )+4i \La \etab_2  \mathfrak{a}, \\
(2H \c \DDb A)_{11}&=&4 \left(  \etab_1 - i \etab_2\right)e_1(\mathfrak{a})-4\left( \etab_2+i \etab_1\right)  e_2(\mathfrak{a} )+8 (\La  \etab_1-i \La \etab_2)    \mathfrak{a},
\eeaa
where $G_0=  \frac 1 2 \trch\trchb+  \frac 1 2 \atrch\atrchb+2\rho$ is given by the Gauss equation.
\end{lemma}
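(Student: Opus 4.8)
The proof is a direct, if lengthy, computation: for each of the six tensorial objects appearing on the left-hand side of the Teukolsky equation \eqref{Teuk-intermediate} — namely $\nab_3 A$, $\nab_4 A$, $\nab_4\nab_3 A$, $\tfrac12\DD\hot(\DDb\c A)$, $2\etab\c\nab A$ and $2H\c\DDb A$ — I will extract the $(1,1)$-component and rewrite it in terms of $\mathfrak{a}=A_{11}$ and the Kerr Ricci coefficients. The systematic tool is the anti-self-duality of $A\in\SS_2(\CCC)$, i.e. $A_{12}=A_{21}=-i A_{11}$, $A_{22}=-A_{11}$, together with the Ricci formulae \eqref{ricci} (specialized to Kerr, where $\xi=\xib=\chih=\chibh=0$ and $\om=0$) which express $\D_\mu e_a$ in terms of the connection coefficients. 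The first two identities are essentially bookkeeping: from the definition of the horizontal covariant derivative, $(\nab_3 A)_{11}=e_3(A_{11})-2(\D_3 e_1,e_1)^{\perp}$-type corrections, and using $\D_3 e_1 = \nab_3 e_1 + \eta_1 e_3 + \xib_1 e_4$ (so in Kerr only the $\nab_3 e_1$ part contributes to horizontal projection) together with the fact that $\nab_3 e_1$ is a multiple of $e_2$ controlled by $\atrchb$ — this produces the $i\atrchb\mathfrak{a}$ term. The $\nab_4$ case is identical with $\atrchb\to\atrch$.

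The second-order term $(\nab_4\nab_3 A)_{11}$ follows by applying the Leibniz rule to the already-computed $(\nab_3 A)_{11}=e_3(\mathfrak{a})+i\atrchb\mathfrak{a}$: one gets $e_4 e_3(\mathfrak{a}) + i\,e_4(\atrchb)\,\mathfrak{a} + i\atrchb e_4(\mathfrak{a})$, plus the correction from $\nab_4$ acting on the tensorial structure of $\nab_3 A$, which contributes $i\atrch e_3(\mathfrak{a})$ and the $-\atrch\atrchb\,\mathfrak{a}$ cross term. The Laplacian term is the most involved: here I will use Proposition \ref{Complexwave-decomp} / the proof of Proposition \ref{Gauss-equation}, i.e. $\tfrac12\DD\hot(\DDb\c\Psi)_{11}=2\lap\Psi_{11}-2i(\nab_1\nab_2-\nab_2\nab_1)\Psi_{11}+\dots$, and then rewrite $\lap\Psi_{11}$ on $\SS_2$ in terms of the scalar Laplacian $\lap$ acting on $\mathfrak{a}$; the discrepancy between $\lap_2$ and the scalar $\lap$ is exactly a curvature term, handled via the Gauss equation \eqref{Gauss-eq-2-complex}, producing $-4\La^2 + \atrch\atrchb + G_0$ with $G_0=\tfrac12\trch\trchb+\tfrac12\atrch\atrchb+2\rho$, while the first-order pieces $-\tfrac12 i\atrch e_3(\mathfrak{a})-\tfrac12 i\atrchb e_4(\mathfrak{a})+4i\La e_2(\mathfrak{a})$ come from commuting $\nab_3,\nab_4$ past the angular derivatives inside $\DD,\DDb$ (the term $4i\La e_2$ is where the integrability defect $\atrch$, equivalently $\La=\tfrac{r^2+a^2}{|q|^3}\cot\th$, enters through $\nab_a e_b$). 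Finally the two transport-type angular terms $(2\etab\c\nab A)_{11}$ and $(2H\c\DDb A)_{11}$ are computed directly: expand the dot products in components, use $A_{11}=i A_{12}$, and recall the Kerr values $H=\eta+i\dual\eta$ with $\eta_1=-\tfrac{a^2\sin\th\cos\th}{|q|^3}$, $\eta_2=\tfrac{a\sin\th r}{|q|^3}$; the factor $4i\La\etab_2$ in the $\etab$ term and $8(\La\etab_1 - i\La\etab_2)$ in the $H$ term again arise from the non-integrability correction when $\nab$ acts on $\SS_2$-tensors versus scalars.

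\textbf{Main obstacle.} The genuinely delicate step is the Laplacian projection: keeping track of \emph{all} first-order corrections generated when one commutes the horizontal connection through the complexified operators $\DD\hot$ and $\DDb\c$, and correctly accounting for the difference between $\lap_2$ (on symmetric traceless $2$-tensors) and the scalar wave-operator piece $\lap$ acting on $\mathfrak{a}$ — this is precisely where the vectorfield $\Z$ (via $\La$) and the factor $\cot^2\th$ enter, and sign errors there propagate into the potential $W$. I expect to lean on the computation already carried out in the proof of Proposition \ref{Gauss-equation} and Proposition \ref{Complexwave-decomp} (which give $4\lap\Psi_{11}-4i(\nab_1\nab_2-\nab_2\nab_1)\Psi_{11}$ and then the Gauss substitution), reducing the new content to identifying $\nab_1\nab_2-\nab_2\nab_1$ acting on the \emph{scalar} $\mathfrak{a}$ — which by \eqref{Gauss-eq-horizontal} with $X_c\to$ scalar is just $\tfrac12(\atrch\nab_3+\atrchb\nab_4)\mathfrak{a}$ — against its action on the $2$-tensor, the difference being the $-4\La^2\mathfrak{a}$ and the $4i\La e_2(\mathfrak{a})$ pieces coming from the spin connection $(\La_\mu)_{ab}$. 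Everything else is routine component algebra using anti-self-duality and the tabulated Kerr values from Section \ref{sec:recallbasicthingsinKerr}.
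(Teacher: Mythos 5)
Your plan is correct and follows essentially the same route as the paper's proof: project componentwise using anti-self-duality $A_{12}=-iA_{11}$, the Kerr relations $\nab_3 e_1=\tfrac12\atrchb\, e_2$, $\nab_4 e_1=\tfrac12\atrch\, e_2$, $\nab_{e_2}e_1=\La e_2$, and the identity $\tfrac12\DD\hot(\DDb\c A)=\lap_2 A-i(\nab_1\nab_2-\nab_2\nab_1)A$ combined with the Gauss equation, exactly as the paper does. One small caution: $\La=\tfrac{r^2+a^2}{|q|^3}\cot\th$ is the spin-connection coefficient of the chosen frame $(e_1,e_2)$ and not the integrability defect $\atrch$ (it is already nonzero in Schwarzschild), but this mislabeling does not affect your computation since you correctly source the $4i\La e_2(\mathfrak{a})$ and $-4\La^2\mathfrak{a}$ terms from $\nab_a e_b$.
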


\begin{proof} 
Recall that in Kerr
\beaa
\g(\D_4e_1, e_2) &=& \chi_{12}=\frac 1 2 \atrch , \qquad \g(\D_3e_1, e_2) = \chib_{12}=\frac 1 2 \atrchb,
\eeaa
which gives
\beaa
\nab_4 e_1&=&\frac 1 2 \atrch e_2, \qquad \nab_3 e_1=\frac 1 2 \atrchb e_2.
\eeaa
We therefore have 
\beaa
(\nab_3 A)_{11} &=&  e_3( A_{11})- 2A_{\nab_31 1}=e_3( A_{11})- \atrchb\in_{12}  A_{21} =  e_3( A_{11})+i  \atrchb A_{11},\\
(\nab_4 A)_{11} &=&  e_4( A_{11})- 2 A_{\nab_41 1}=e_3( A_{11})- \atrch A_{21}=e_4( A_{11})+i  \atrch A_{11},
\eeaa
since $A_{12}=-i A_{11}$. This proves the first two identities.

We also have
\beaa
(\nab_4\nab_3  A)_{11} &=& e_4( \nab_3 A_{11})- 2\nab_3  A_{\nab_41 1}\\
&=&e_4\big( e_3(  \mathfrak{a})+i  \atrchb  \mathfrak{a}\big)-  \in_{12} \atrch  \nab_3 A_{21} \\
&=&  e_4 e_3 ( \mathfrak{a})+i (e_4\atrchb)  \mathfrak{a}+i  \atrchb  e_4 ( \mathfrak{a})+i   \atrch  \nab_3 A_{11}\\
 &=&  e_4 e_3 ( \mathfrak{a})+i (e_4\atrchb)  \mathfrak{a}+i  \atrchb  e_4 ( \mathfrak{a})+i   \atrch  ( e_3(  \mathfrak{a})+i  \atrchb  \mathfrak{a})
 \\
  &=&  e_4 e_3 ( \mathfrak{a})+i  \atrchb  e_4 ( \mathfrak{a})+ i  \atrch  e_3 ( \mathfrak{a}) +\left( -  \atrch   \atrchb+i e_4\atrchb\right)  \mathfrak{a}
\eeaa
which gives the third expression. 

Recall the relation:
\beaa
\frac 1 2 \DD\hot( \DDb \c A)&=& \lap_2 A - i (\nab_1\nab_2-\nab_2\nab_1) A
\eeaa
as derived in Proposition 4.10. 
Using the Gauss formula \eqref{Gauss-eq-first-com}:
\beaa
\big( \nab_1 \nab_2- \nab_2 \nab_1\big)  A &=&\frac 1 2 \left(\atrch\nab_3+\atrchb \nab_4\right) A+ i\left( \frac 1 2  \trch\trchb+ \frac 1 2 \atrch\atrchb+2\rho  \right) A 
\eeaa
one obtains
\beaa
\frac 1 2 \DD\hot( \DDb \c A)&=& \lap_2 A - \frac 1 2 i \atrch\nab_3A -\frac 1 2 i \atrchb \nab_4 A+  \left(  \frac 1 2 \trch\trchb+  \frac 1 2 \atrch\atrchb+2\rho \right) A 
\eeaa
i.e.
\bea\label{expression-DD-laplacian}
\frac 1 2 \DD\hot( \DDb \c A)&= & \lap_2 A- \frac 1 2  i \atrch \nab_3A -\frac 1 2i \atrchb \nab_4A+ G_0 A
\eea
with $G_0=  \frac 1 2 \trch\trchb+  \frac 1 2 \atrch\atrchb+2\rho$, given by the Gauss equation. 

Projecting to the $11$ component, and using the previous relations we obtain
\beaa
\frac 1 2 \DD\hot( \DDb \c A)_{11}&= & (\lap_2 A)_{11}- \frac 1 2  i \atrch \nab_3A_{11} -\frac 1 2i \atrchb \nab_4A_{11}+ (G_0+i G_1) A_{11}\\
&= & (\lap_2 A)_{11}- \frac 1 2  i \atrch \left(e_3( \mathfrak{a})+i  \atrchb \mathfrak{a}\right) -\frac 1 2 i\atrchb \left(e_4( \mathfrak{a})+i  \atrch \mathfrak{a}\right)+\left(G_0+i G_1\right) \mathfrak{a}\\
&= & (\lap_2 A)_{11}- \frac 1 2  i \atrch e_3( \mathfrak{a}) -\frac 1 2 i\atrchb e_4( \mathfrak{a})+\left(\atrch \atrchb + G_0+i G_1\right) \mathfrak{a}.
\eeaa

We now compute $(\lap_2 A)_{11}$. 
Recall that in Kerr
\beaa
(\La_1)_{21}:=\g(\D_1e_1, e_2) &=& 0,\\
(\La_2)_{21}:=\g(\D_2e_1, e_2) &=& \frac{r^2+a^2}{|q|^3}\cot\th:=\La ,\\
(\La_1)_{12}:=\g(\D_1e_2, e_1) &=& 0,\\
(\La_2)_{12}:=\g(\D_2e_2, e_1) &=& - \frac{r^2+a^2}{|q|^3}\cot\th=-\La,
\eeaa
i.e.,
\bea
\bsplit
\nab_{e_1} e_1&= \nab_{e_1} e_2=0, \qquad \nab_{e_2} e_1 =\La e_2, \qquad \nab_{e_2} e_2=-\La e_1.
\end{split}
\eea
Therefore
\beaa
\nab_1 A_{11}&=& e_1 A_{11} -2 A_{\nab_1 1 1} = e_1(\mathfrak{a}),\\
\nab_2 A_{11}&=& e_2(A_{11} ) - 2 A_{\nab_2 1 1} = e_2(A_{11} ) - 2 \La A_{21} = e_2(\mathfrak{a} )+2 i \La  \mathfrak{a}.
\eeaa
Moreover,
\beaa
\nab_1 \nab_1 A_{11}&=& e_1\big(\nab_1  A_{11}\big) - \nab_{\nab_1 1} A_{11}-2\nab_1 A_{\nab_1 1 1 }= e_1( e_1\mathfrak{a}),\\
\nab_2 \nab_2 A_{11}&=& e_2\big( \nab_2 A_{11}\big)-\nab_{\nab_2 2}A_{11} - 2 \nab_2A_{\nab_2 1 1}\\
&=& e_2\Big( e_2(\mathfrak{a} )+2 i \La  \mathfrak{a}\Big)+\La \nab_1A_{11} -2 \La \nab_2 A_{21}\\
&=& e_2 e_2(\mathfrak{a} )+2 i \La  e_2(\mathfrak{a})+\La e_1(\mathfrak{a}) +2i  \La \nab_2 A_{11}\\
&=& e_2 e_2(\mathfrak{a} )+2 i \La  e_2(\mathfrak{a})+\La e_1(\mathfrak{a}) +2i  \La ( e_2(\mathfrak{a} )+2 i \La  \mathfrak{a})\\
&=& e_2 e_2(\mathfrak{a} )+4 i \La  e_2(\mathfrak{a})+\La e_1(\mathfrak{a}) -4 \La^2  \mathfrak{a}.
\eeaa

We define the laplacian of a scalar $\mathfrak{a}$ as
\beaa
\lap\mathfrak{a}&=& \g^{ab} \nab_a\nab_b\mathfrak{a}= \nab_1\nab_1\mathfrak{a}+\nab_2\nab_2\mathfrak{a}\\
&=& e_1e_1\mathfrak{a}-\D_{e_1} e_1 \mathfrak{a}+e_2e_2\mathfrak{a}-\D_{e_2}e_2\mathfrak{a}\\
&=& e_1e_1\mathfrak{a}+e_2e_2\mathfrak{a}+\La e_1\mathfrak{a}.
\eeaa
We deduce,
\beaa
(\lap_2 A)_{11}&=& \nab_1 \nab_1 A_{11}+\nab_2 \nab_2 A_{11}\\
&=&e_1( e_1\mathfrak{a})+ e_2 e_2(\mathfrak{a} )+4 i \La  e_2(\mathfrak{a})+\La e_1(\mathfrak{a}) -4 \La^2  \mathfrak{a}\\
&=&\lap \mathfrak{a} +4 i \La  e_2(\mathfrak{a}) -4 \La^2  \mathfrak{a}.
\eeaa
From \eqref{expression-DD-laplacian}, we obtain
\beaa
\frac 1 2 \DD\hot( \DDb \c A)_{11}&= & \lap \mathfrak{a}- \frac 1 2  i \atrch e_3( \mathfrak{a}) -\frac 1 2 i\atrchb e_4( \mathfrak{a}) +4 i \La  e_2(\mathfrak{a}) \\
&&+\left( -4 \La^2 +\atrch \atrchb + G_0\right) \mathfrak{a}
\eeaa
as desired.

We have
\beaa
(2 \etab \c \nab A)_{11}&=& 2 \etab_1 \nab_1 A_{11}+2 \etab_2  \nab_2 A_{11}= 2 \etab_1 e_1\mathfrak{a}+2 \etab_2 e_2(\mathfrak{a} )+4i \La \etab_2  \mathfrak{a}.
\eeaa
Finally, we compute 
\beaa
(2H \c \DDb A)_{11}&=& 2H_1 \DDb_1 A_{11}+2H_2  \DDb_2 A_{11}\\
&=& 2H_1 (\nab_1- i \dual \nab_1) A_{11}+2H_2  (\nab_2- i \dual \nab_2) A_{11}\\
&=& 2H_1 (\nab_1- i \nab_2) A_{11}+2H_2  (\nab_2+i  \nab_1) A_{11}\\
&=& 2(H_1+ i H_2) \nab_1A_{11}+2(H_2-i H_1)  \nab_2 A_{11}.
\eeaa
We have
\beaa
H_1+ i H_2&=& \eta_1+ i \dual \eta_1+ i (\eta_2+ i \dual \eta_2)\\
&=& \eta_1+ i  \eta_2+ i (\eta_2- i  \eta_1)\\
&=& 2\eta_1+ 2i  \eta_2
\eeaa
and 
\beaa
H_2-i H_1&=& -i (H_1+ i H_2)=2  \eta_2-2i\eta_1.
\eeaa
This gives
\beaa
(2H \c \DDb A)_{11}&=& 4(\eta_1+ i  \eta_2) \nab_1A_{11}+4(  \eta_2-i\eta_1)  \nab_2 A_{11}\\
&=& 4(\eta_1+ i  \eta_2) e_1(\mathfrak{a})+4(  \eta_2-i\eta_1) (e_2(\mathfrak{a} )+2 i \La  \mathfrak{a})\\
&=& 4(\eta_1+ i  \eta_2) e_1(\mathfrak{a})+4(  \eta_2-i\eta_1) e_2(\mathfrak{a} )+8(\La \eta_1+ i \La\eta_2)   \mathfrak{a}.
\eeaa
Using that $\eta_1=\etab_1$, and $\eta_2=-\etab_2$, we have
\beaa
(2H \c \DDb A)_{11}&=& 4(\etab_1- i  \etab_2) e_1(\mathfrak{a})+4( - \etab_2-i\etab_1) e_2(\mathfrak{a} )+8(\La \etab_1- i \La\etab_2)   \mathfrak{a}
\eeaa
which gives the final identity.
\end{proof}

We can now use the above Lemma to project the linear Teukolsky equation \eqref{Teuk-intermediate} to the component $11$, to obtain a wave equation for $\mathfrak{a}$. We summarize the computation in the following proposition.

\begin{proposition} 
The complex scalar $\mathfrak{a}$ verifies the following wave equation:
\bea\label{wave-a}
\begin{split}
\square_{g_{Kerr}} \mathfrak{a}&=\left( -4\omb+ i  \atrchb \right) e_4( \mathfrak{a})+\left(2 \trch +3i  \atrch \right)e_3( \mathfrak{a}) \\
  &+   i4 \etab_2 e_1(\mathfrak{a})+\left(8 \etab_2-4i(\La- \etab_1)\right)  e_2(\mathfrak{a} )+ V \mathfrak{a}
  \end{split}
\eea
where 
\beaa
V&=& \left(4 \La^2+\frac 1 2 \trch \trchb-\frac 5 2 \atrch \atrchb-10\omb \trch-8\rho+2 e_1(\etab_1)-6\La \etab_1+8\etab_1\etab_1-16\etab_2 \etab_2\right)\\
&& + i\left(\atrch\trchb + \atrchb \trch-10\omb \atrch+  3e_1(\etab_2)+15\La\etab_2+8 \eta_1 \eta_2 \right) 
\eeaa
is the potential. 
\end{proposition}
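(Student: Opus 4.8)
The plan is to obtain \eqref{wave-a} by projecting, onto its $(1,1)$ component, the linear Teukolsky equation in our formalism specialized to Kerr. The starting point is equation \eqref{Teuk-intermediate}, which is Proposition \ref{Teukolsky-proposition} rewritten in Kerr after expanding all conformal derivatives with $\om=0$ and $Z=-\Hb$, and with the error term dropped since we work in exact Kerr at the linear level. First I would apply Lemma \ref{lemma-projection} term by term to \eqref{Teuk-intermediate}: this converts $(\nab_3A)_{11}$, $(\nab_4A)_{11}$, $(\nab_4\nab_3A)_{11}$, $\tfrac12(\DD\hot(\DDb\c A))_{11}$, $(2\etab\c\nab A)_{11}$ and $(2H\c\DDb A)_{11}$ into explicit first- and zeroth-order differential expressions in $\mathfrak{a}$ whose coefficients are built from the Kerr Ricci coefficients. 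The zeroth-order coefficient of $A$ in \eqref{Teuk-intermediate}, namely $-\ov{\tr X}\tr\Xb+2\ov P+2\omb\tr X+8\omb\ov{\tr X}+4\nab_4\omb-\DD\c\ov{\Hb}-4H\c\ov{\Hb}$, together with the term $2H\hot(\ov{\Hb}\c A)$, must be evaluated at the Kerr values and likewise produces scalar multiples of $\mathfrak{a}$.

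Next I would reassemble the second- and first-order pieces. The combination $-e_4e_3+\lap$ together with the accompanying $-\tfrac12\trch\,e_3-\tfrac12\trchb\,e_4+2\etab_1 e_1+2\etab_2 e_2$ reproduces $\square_{g_{Kerr}}\mathfrak{a}$ via the GKS form of the scalar wave operator in Kerr, equation \eqref{wave-GKS}. Moving the residual first-order terms to the right-hand side should leave exactly
\[
\left(-4\omb+i\atrchb\right)e_4(\mathfrak{a})+\left(2\trch+3i\atrch\right)e_3(\mathfrak{a})+4i\etab_2\,e_1(\mathfrak{a})+\left(8\etab_2-4i(\La-\etab_1)\right)e_2(\mathfrak{a}),
\]
which is identical to the combination already found on the literature side in the passage leading to \eqref{wave-as-implied}; matching these is bookkeeping, not a new computation. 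All remaining terms are gathered into $V\mathfrak{a}$.

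The substantive step is then identifying $V$. Its real part should be organized into the Schwarzschild-type combination $4\La^2+\tfrac12\trch\trchb-\tfrac52\atrch\atrchb-10\omb\trch-8\rho$ plus corrections $2e_1(\etab_1)-6\La\etab_1+8\etab_1\etab_1-16\etab_2\etab_2$, exactly as in \eqref{Re-W}, using the Gauss-equation term $G_0=\tfrac12\trch\trchb+\tfrac12\atrch\atrchb+2\rho$ produced by the projection of $\tfrac12\DD\hot(\DDb\c A)$ (see \eqref{expression-DD-laplacian}) and the Kerr value of $e_4\atrchb$. Its imaginary part should reduce to $\atrch\trchb+\atrchb\trch-10\omb\atrch+3e_1(\etab_2)+15\La\etab_2+8\eta_1\eta_2$; here one uses, exactly as in the derivation of \eqref{Im-W}, that on Kerr the quantities $\dual\rho$, $e_1(\etab_2)$ and $\La\etab_2$ are linearly dependent as explicit functions of $(r,\th)$, so the representation with $4\dual\rho+12\La\etab_2-8\etab_1\etab_2$ from \eqref{Im-W} and the one displayed in the statement agree (recall $\eta_1=\etab_1$, $\eta_2=-\etab_2$). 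The final consistency check, already carried out in this appendix, is that plugging the Kerr values of all Ricci coefficients and of $G_0$ into $V$ recovers the potential $W$ of \eqref{wave-as-implied}, so the equation coincides with the one obtained from the standard Teukolsky equation \eqref{teukolsky-dhr-square} through the change of unknown \eqref{relation-a-A}.

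The main obstacle I expect is the zeroth-order algebra: the potential collects contributions from $\nab_4\omb$, $e_4\atrchb$, $\DD\c\ov{\Hb}$, $H\c\ov{\Hb}$, the $H\hot(\ov{\Hb}\c A)$ term, the Gauss term $G_0$, and the cross-terms generated by the projections in Lemma \ref{lemma-projection}, all of which must be expressed through the Kerr formulas and combined carefully. Because there is a genuine ambiguity in how the angular-momentum-dependent part of $V$ is written --- reflected in the one-parameter families appearing in \eqref{Re-W} and \eqref{Im-W} --- the bookkeeping has to be anchored to one fixed choice of representative and then shown to agree as a function of $(r,\th)$, rather than matched term by term.
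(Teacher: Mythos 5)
Your proposal is correct and follows essentially the same route as the paper: project \eqref{Teuk-intermediate} to the $(1,1)$ component via Lemma \ref{lemma-projection}, reassemble the scalar wave operator through \eqref{wave-GKS}, and identify the potential by evaluating $\nab_4\omb$, $G_0$, $e_4\atrchb$ and the $H$-terms at their Kerr values, matching $\Re(V)$ and $\Im(V)$ against \eqref{Re-W} and \eqref{Im-W}. Your remark about fixing a representative for $V$ within the linearly dependent family of $(r,\th)$-functions is exactly the point the paper handles by its choice of parameters in those two identities.
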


\begin{proof} 
From projecting to the $11$ component the linear Teukolsky equation \eqref{Teuk-intermediate}, we obtain
\beaa
 &&-\nab_4 \nab_3A_{11}- 2\etab \c \nab A_{11}+\left(\left(-\frac{1}{2}\trchb +4\omb\right)+ i \left(\frac 1 2 \atrchb \right)\right) \nab_4A_{11}\\
 && +\left(\left(-\frac 5 2 \trch\right) +i \left(- \frac 3 2 \atrch\right) \right)\nab_3A_{11}+\frac{1}{2} \mathcal{D}\hot(\ov{\DD}\c A  )_{11} + 2H \c \DDb A_{11} \\
 & &+ \left(-\ov{\tr X} \tr \Xb +2\ov{P}+2\omb \tr X  +8 \omb \ov{\tr X} + 4\nab_4\omb-\DD\c  \ov{\Hb}-4H \c \ov{\Hb} \right) A_{11}+  2H  \hot (\ov{\Hb} \c A)_{11}=0.
\eeaa
Using Lemma \ref{lemma-projection}, the first line of the above reduces to
\beaa
&&-\nab_4 \nab_3A_{11}- 2\etab \c \nab A_{11}+\left(\left(-\frac{1}{2}\trchb +4\omb\right)+ i \left(\frac 1 2 \atrchb \right)\right) \nab_4A_{11}\\
&=&-e_4 e_3 ( \mathfrak{a})-i  \atrchb  e_4 ( \mathfrak{a})- i  \atrch  e_3 ( \mathfrak{a}) +\left(  \atrch   \atrchb-i e_4\atrchb\right)  \mathfrak{a}\\
&&+\left(\left(-\frac{1}{2}\trchb +4\omb\right)+ i \left(\frac 1 2 \atrchb \right)\right) ( e_4( \mathfrak{a})+i  \atrch \mathfrak{a} )-2 \etab_1 e_1\mathfrak{a}-2 \etab_2 e_2(\mathfrak{a} )-4i \La \etab_2  \mathfrak{a}\\
&=&-e_4 e_3 ( \mathfrak{a})- i  \atrch  e_3 ( \mathfrak{a}) +\left(\left(-\frac{1}{2}\trchb +4\omb\right)- i \left(\frac 1 2 \atrchb \right)\right) e_4( \mathfrak{a})-2 \etab_1 e_1\mathfrak{a}-2 \etab_2 e_2(\mathfrak{a} ) \\
&&+\left( \frac 1 2 \atrchb\atrch+i\left(-e_4\atrchb-\frac{1}{2}\atrch\trchb +4\omb \atrch-4 \Lambda \etab_2\right) \right)   \mathfrak{a}. 
\eeaa
The second line reduces to
\beaa
 && \left(\left(-\frac 5 2 \trch\right) +i \left(- \frac 3 2 \atrch\right) \right)\nab_3A_{11}+\frac{1}{2} \mathcal{D}\hot(\ov{\DD}\c A  )_{11} + 2H \c \DDb A_{11}\\
  &=& \left(\left(-\frac 5 2 \trch\right) +i \left(- \frac 3 2 \atrch\right) \right)(e_3( \mathfrak{a})+i  \atrchb \mathfrak{a})\\
  &&+ \lap \mathfrak{a}- \frac 1 2  i \atrch e_3( \mathfrak{a}) -\frac 1 2 i\atrchb e_4( \mathfrak{a}) +4 i \La  e_2(\mathfrak{a}) +\left( -4 \La^2 +\atrch \atrchb + G_0+i G_1\right) \mathfrak{a} \\
&&+ 4 \left(  \etab_1 - i \etab_2\right)e_1(\mathfrak{a})-4\left( \etab_2+i \etab_1\right)  e_2(\mathfrak{a} )+8 (\La  \etab_1-i \La \etab_2)    \mathfrak{a}\\
  &=&  \lap \mathfrak{a}+\left(\left(-\frac 5 2 \trch\right) +i \left(- 2 \atrch\right) \right)e_3( \mathfrak{a})-\frac 1 2 i\atrchb e_4( \mathfrak{a})\\
  &&+  \left( 4 \etab_1 - i4 \etab_2\right)e_1(\mathfrak{a})+4\left(- \etab_2+i(\La- \etab_1)\right)  e_2(\mathfrak{a} )\\
  &&+\left(\left( \frac 5 2  \atrchb \atrch-4 \La^2+8\La  \etab_1+G_0\right)+i \left(-\frac 5 2  \atrchb\trch- 8\La \etab_2\right)  \right) \mathfrak{a}.
\eeaa
We compute
\beaa
(2H   \hot (\ov{\Hb} \c A))_{11}&=& 2H_1 (\ov{\Hb} \c A)_1 - H \c (\ov{\Hb} \c A)\\
&=& 2H_1 (\ov{\Hb} \c A)_1 - H_1 (\ov{\Hb} \c A)_1 - H_2 (\ov{\Hb} \c A)_2\\
&=& H_1 (\ov{\Hb} \c A)_1  - H_2 (\ov{\Hb} \c A)_2.
\eeaa
We have, using that $A_{12}=-i A_{11}$ and $A_{22}=-A_{11}$:
\beaa
(\ov{\Hb} \c A)_1&=& \ov{\Hb}_1 A_{11}+ \ov{\Hb}_2 A_{12}=(\ov{\Hb}_1 -i \ov{\Hb}_2) A_{11},\\
(\ov{\Hb} \c A)_2&=& \ov{\Hb}_1 A_{12}+ \ov{\Hb}_2 A_{22}=-(i\ov{\Hb}_1 + \ov{\Hb}_2) A_{11}.
\eeaa
This gives
\beaa
(2H   \hot (\ov{\Hb} \c A))_{11}&=& H_1 (\ov{\Hb}_1 -i \ov{\Hb}_2) A_{11}  + H_2 (i\ov{\Hb}_1 + \ov{\Hb}_2) A_{11}\\
&=&\left( H_1 \ov{\Hb}_1+ H_2  \ov{\Hb}_2 - i (H_1  \ov{\Hb}_2 - H_2\ov{\Hb}_1) \right)A_{11}.
\eeaa
We also have 
\beaa
H \c \ov{\Hb} &=& H_1 \ov{\Hb}_1+H_2  \ov{\Hb}_2.
\eeaa
Therefore 
\beaa
-4H \c \ov{\Hb} A_{11}+  2H  \hot (\ov{\Hb} \c A)_{11}&=&\left( -3H_1 \ov{\Hb}_1-3 H_2  \ov{\Hb}_2 - i (H_1  \ov{\Hb}_2 - H_2\ov{\Hb}_1) \right)\mathfrak{a}.
\eeaa
Observe that in Kerr $H_1=\ov{\Hb_1}, \quad H_2=-\ov{\Hb_2}$, and therefore
\beaa
-4H \c \ov{\Hb} A_{11}+  2H  \hot (\ov{\Hb} \c A)_{11}&=&\left( -3\ov{\Hb_1} \ov{\Hb}_1+3 \ov{\Hb_2}  \ov{\Hb}_2 -2 i (\ov{\Hb_1}  \ov{\Hb}_2 ) \right)\mathfrak{a}.
\eeaa
Using that $\ov{\Hb}_1=-i \ov{\Hb}_2$, we have
\beaa
-4H \c \ov{\Hb} A_{11}+  2H  \hot (\ov{\Hb} \c A)_{11}&=& 4 \ov{\Hb}_2  \ov{\Hb}_2 \mathfrak{a}\\
&=& 4 (\etab_2+ i \etab_1) (\etab_2+ i \etab_1)  \mathfrak{a}\\
&=& 4(-\etab_1\etab_1+\etab_2 \etab_2 +2i(\etab_1 \etab_2)) \mathfrak{a}.
\eeaa 
We also compute
\beaa
 \DD\c\ov{\Hb}&=&  \DD_1\ov{\Hb}_1+ \DD_2\ov{\Hb}_2\\
 &=&  (\nab_1 + i \dual \nab_1)\ov{\Hb}_1+ (\nab_2+i \dual \nab_2)\ov{\Hb}_2\\
 &=&  (\nab_1 + i  \nab_2)\ov{\Hb}_1+ (\nab_2-i \nab_1)\ov{\Hb}_2\\
 &=&  \nab_1\ov{\Hb}_1 + i  \nab_2\ov{\Hb}_1+ \nab_2\ov{\Hb}_2-i \nab_1\ov{\Hb}_2.
\eeaa
We therefore obtain
\beaa
 \DD\c\ov{\Hb} &=&  e_1(\ov{\Hb}_1) + e_2(\ov{\Hb}_2)-\ov{\Hb}_{\nab_2 2} + i  (e_2(\ov{\Hb}_1)-\ov{\Hb}_{\nab_2 1}- e_1(\ov{\Hb}_2))\\
 &=&  e_1(\etab_1- i \etab_2) +\La \ov{\Hb}_{1} + i  (-\La \ov{\Hb}_{2}- e_1(\etab_2+ i \etab_1))\\
 &=& 2 e_1(\etab_1)- 2i e_1( \etab_2) +\La(\etab_1- i \etab_2)- i  \La (\etab_2 + i \etab_1)\\
  &=& 2 e_1(\etab_1)- 2i e_1( \etab_2) +2\La(\etab_1- i \etab_2).
\eeaa

The third line reduces to
\beaa
&& \left(-\ov{\tr X} \tr \Xb +2\ov{P}+2\omb \tr X  +8 \omb \ov{\tr X} + 4\nab_4\omb-\DD\c  \ov{\Hb}-4H \c \ov{\Hb} \right) A+  2H  \hot (\ov{\Hb} \c A)\\
&=& \Big(-(\trch + i \atrch) (\trchb - i \atrchb)+2(\rho -i \dual \rho)+2\omb (\trch - i \atrch)+8\omb (\trch + i \atrch)\\
&&+ 4\nab_4\omb-(2 e_1(\etab_1)- 2i e_1( \etab_2) +2\La(\etab_1- i \etab_2))+ 4(-\etab_1\etab_1+\etab_2 \etab_2 +2i(\etab_1 \etab_2))  \Big) \mathfrak{a}\\
&=& \Big(-\trch \trchb-\atrch \atrchb+10\omb \trch+2\rho+ 4\nab_4 \omb-2 e_1(\etab_1)-2\La \etab_1-4\etab_1\etab_1+4\etab_2 \etab_2\\
&& + i\left(- \atrch\trchb+\atrchb \trch+6\omb \atrch-2\dual \rho +2 e_1( \etab_2) +2\La  \etab_2+8 \etab_1 \etab_2\right)  \Big) \mathfrak{a}.
\eeaa

Putting the three lines together we obtain
\beaa
&&-e_4 e_3 ( \mathfrak{a})+ \lap \mathfrak{a}+\left(\left(-\frac{1}{2}\trchb +4\omb\right)- i \left( \atrchb \right)\right) e_4( \mathfrak{a})+\left(\left(-\frac 5 2 \trch\right) +i \left(- 3 \atrch\right) \right)e_3( \mathfrak{a}) \\
  &&+  \left( 2 \etab_1 - i4 \etab_2\right)e_1(\mathfrak{a})+\left(- 6\etab_2+4i(\La- \etab_1)\right)  e_2(\mathfrak{a} )\\
 &&  \Big(-4 \La^2-\trch \trchb+2\atrch \atrchb+10\omb \trch+2\rho+G_0+ 4\nab_4 \omb-2 e_1(\etab_1)+6\La \etab_1-4\etab_1\etab_1+4\etab_2 \etab_2\\
&& + i\left(-e_4\atrchb-\frac 32  \atrch\trchb-\frac 3 2 \atrchb \trch+10\omb \atrch-2\dual \rho +2 e_1( \etab_2) -10\La  \etab_2+8 \etab_1 \etab_2\right)  \Big) \mathfrak{a}=0.
\eeaa
Recall that for a scalar, we can write
\bea\label{wave-GKS}
\bsplit
\square_{g_{Kerr}} \mathfrak{a} &=- e_4e_3  \mathfrak{a}  -\frac 1 2 \trchb e_4 \mathfrak{a} -\frac 1 2 \trch e_3 \mathfrak{a} +\lap  \mathfrak{a}+ 2\etab_1  e_1(\mathfrak{a})+ 2\etab_2 e_2( \mathfrak{a}).
\end{split}
\eea
We finally obtain
\beaa
\square_{g_{Kerr}} \mathfrak{a}&=&\left( -4\omb+ i  \atrchb \right) e_4( \mathfrak{a})+\left(2 \trch +3i  \atrch \right)e_3( \mathfrak{a}) \\
  &&+   i4 \etab_2 e_1(\mathfrak{a})+\left(8 \etab_2-4i(\La- \etab_1)\right)  e_2(\mathfrak{a} )+ V \mathfrak{a}
\eeaa
where 
\beaa
V&=& 4 \La^2+\trch \trchb-2\atrch \atrchb-10\omb \trch-2\rho-G_0- 4\nab_4 \omb+2 e_1(\etab_1)-6\La \etab_1+4\etab_1\etab_1-4\etab_2 \etab_2\\
&& + i\left(e_4\atrchb+\frac 32  \atrch\trchb+\frac 3 2 \atrchb \trch-10\omb \atrch+2\dual \rho -2 e_1( \etab_2) +10\La  \etab_2-8 \etab_1 \etab_2\right)
\eeaa
is the potential.

Writing 
\beaa
\nab_4 \omb&=&\rho+(\eta-\etab) \cdot \ze-\eta \c \etab=\rho - (\eta-\etab) \cdot \etab-\eta\c\etab=\rho+\etab\c \etab-2\eta \c \etab \\
&=&\rho+\etab_1 \etab_1+\etab_2\etab_2-2\eta_1 \etab_1-2\eta_2\etab_2\\
&=&\rho+\etab_1 \etab_1+\etab_2\etab_2-2\etab_1 \etab_1+2\etab_2\etab_2\\
&=&\rho-\etab_1 \etab_1+3\etab_2\etab_2
\eeaa
and $G_0= \frac 1 2 \trch\trchb+  \frac 1 2 \atrch\atrchb+2\rho$, we obtain for the real part of the potential:
\beaa
\Re(V)&=& 4 \La^2+\frac 1 2 \trch \trchb-\frac 5 2 \atrch \atrchb-10\omb \trch-8\rho+2 e_1(\etab_1)-6\La \etab_1+8\etab_1\etab_1-16\etab_2 \etab_2
\eeaa
which coincides with \eqref{Re-W}.

For the imaginary part of the potential, we write
\beaa
e_4\atrchb&=&-\frac 1 2(\atrch \trchb+\trch\atrchb)+ 2 \curl \etab  +2 \dual \rho.
\eeaa
Observe that
\beaa
\curl \etab&=& \nab_1 \etab_2 - \nab_2 \etab_1=e_1 (\etab_2)-\etab_{\nab_1 2} -( e_2 \etab_1-\etab_{\nab_2 1})\\
&=&e_1 (\etab_2) +\Lambda \etab_2
\eeaa
which gives
\beaa
e_4\atrchb&=&-\frac 1 2(\atrch \trchb+\trch\atrchb)+ 2 e_1 (\etab_2) +2\Lambda \etab_2 +2 \dual \rho.
\eeaa
We therefore obtain
\beaa
\Im(V)&=& \atrch \trchb+ \trch\atrchb  -10\omb \atrch+4 \dual \rho+12\La  \etab_2-8 \etab_1 \etab_2
\eeaa
which coincides with \eqref{Im-W}. 
\end{proof}


\section{Proof of Proposition \ref{first-intermediate-step-main-theorem}}
\label{proof-appendix}


We write here the proof of Proposition \ref{first-intermediate-step-main-theorem}, which consists in computing the commutator $[Q,\LL ]A$.


\subsection{The commutators for $Q$}


We first compute the following commutators for $Q$, where
\beaa
 Q(U)&=& \nabc_3\nabc_3 U + \C \  \nabc_3U +\D \  U.
 \eeaa

\begin{proposition}\label{commutation-general-Q} 
Let $U$ be a symmetric traceless two tensor of conformal type $s$. Then we have:
\begin{itemize}
\item the following commutator with $\nabc_3$ and $\nabc_4$:
\beaa
\, [Q, \nabc_3] U&=& (- \nab^{(c)}_3 \C) \  \nab^{(c)}_3U+( - \nab^{(c)}_3\D ) \  U,
\eeaa
 \beaa
 [Q, \nabc_4] U  &=&4(\eta-\etab ) \c \nabc\nabc_3U \\
  &&+2\left(\nabc_3(\eta-\etab ) +\left(-\frac 1 2 \trchb +\C\right)(\eta-\etab)\right)\c \nabc U - \atrchb(\eta-\etab ) \c  \dual \nabc  U\\
  &&+\nabc_3(\CC^0_{3, 4} (U))+\CC^0_{3, 4} (\nabc_3U)+\left(2(\eta-\etab ) \c \eta -\nabc_4( \C )\right)\nabc_3 U \\
    &&+2(\eta-\etab ) \c \CC^0_{3,a}(U)+ \C \ \CC^0_{3, 4} (U)  -\nabc_4(\D) \  U +\frac{a}{r^2} \Ga_g \frak{d}^{\leq 1} U,
 \eeaa
 
 \item the following commutator with $\nabc_a$:
\beaa
\, [Q, \nabc_a] U_{bc}&=&2\eta_a \nabc_3\nabc_3 U_{bc} -   \trchb\, \nabc_a \nabc_3 U_{bc}- \atrchb\, \dual \nabc_a \nabc_3 U_{bc}\\
&&+\left(-\nabc_a ( \C )+\nabc_3\eta_a+(\C-\frac  1 2   \trchb)\,\eta_a-\frac 1 2 \atrchb\, \dual \eta_a\right) \nabc_3 U_{bc}\\
&&+\nabc_3C^0_{3,a}(U)+C^0_{3,a}(\nabc_3U)\\
&&+  \left(-\frac  \C 2   \trchb+\frac 1 2 \trchb^2-\frac 1 2 \atrchb^2\right)\, \nabc_a U_{bc}\\
&&+\left(-\frac \C 2 \atrchb+ \trchb\atrchb \right)\, \dual \nabc_a  U_{bc}\\
&&-\frac  1 2   \trchb\,C^0_{3,a}(U)-\frac 1 2 \atrchb\, \dual C^0_{3,a}(U)  + \C \  C^0_{3,a}(U)  -\nabc_a (\D)  U_{bc}\\
&&+\nabc_3 \left( \frac{a}{r} \Ga_g U+ \Ga_g \frak{d}^{\leq 1} U\right),
  \eeaa
  
 \item the following commutator with $\DDbc \c$:
   \beaa
\, [Q, \DDbc \c] U  &=&2\ov{H} \c \nabc_3\nabc_3 U- \ov{\tr\Xb}\,\ov{\DDc} \c  \nabc_3U+\frac 1 2\ov{\tr\Xb} (\ov{\tr\Xb}-\C)\,  \ov{\DDc} \c U \\
&&+\left(\nabc_3\ov{H} +(- (s-2)\ov{\tr\Xb}+\C)\,  \ov{H}-\DDbc ( \C )\right)\c \nabc_3U\\
&&+\left(\frac 1 2 (s-2) \ov{\tr\Xb}\left(-  \nabc_3\ov{H}+ (\ov{\tr\Xb}-\C)\,  \ov{H}\right)-\DDbc (\D) \right)\c U\\
&&+\nabc_3 \left( \frac{a}{r} \Ga_g U+ \Ga_g \frak{d}^{\leq 1} U\right).
 \eeaa
 \end{itemize}

Let $F$ be a one form of conformal type $s$. Then we have
\begin{itemize}
\item the following commutator with $\DDc \hot$:
\beaa
 \, [Q, \DDc \hot] F  &=&2H \hot \nabc_3\nabc_3 F-  \tr \Xb  \DDc\hot \nabc_3F+\frac 1 2 (\tr \Xb)(\tr\Xb-\C)  \DDc\hot F \\
 &&+\left(\nabc_3H+(- (s+1) \tr \Xb +\C )H -\DDc ( \C )\right) \hot \nabc_3 F\\
 &&+\left( \frac 1 2 (s+1) \tr \Xb \left(-\nabc_3H+(\tr\Xb-\C)H\right)  -\DDc (\D) \right)\hot F \\
 &&+\nabc_3 \left( \frac{a}{r} \Ga_g F+ \Ga_g \frak{d}^{\leq 1} F\right).
 \eeaa
 \end{itemize}
\end{proposition}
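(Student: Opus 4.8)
\textbf{Proof plan for Proposition \ref{commutation-general-Q}.}

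The plan is to treat each commutator separately and to reduce everything to the elementary commutation identities already available in the excerpt. The operator $Q(U) = \nabc_3\nabc_3 U + \C\,\nabc_3 U + \D\,U$ is a polynomial in the single transport operator $\nabc_3$ with scalar coefficients, so for any first-order operator $\mathcal{O}$ one has the algebraic identity
\bea\label{eq:Q-commutator-expansion}
[Q,\mathcal{O}]U &=& \nabc_3\big([\nabc_3,\mathcal{O}]U\big) + [\nabc_3,\mathcal{O}](\nabc_3 U) + \C\,[\nabc_3,\mathcal{O}]U \nn\\
&& + \big([\nabc_3,\mathcal{O}]\nabc_3\big)U\text{-type terms from coefficients} - (\mathcal{O}\C)\,\nabc_3 U - (\mathcal{O}\D)\,U,
\eea
where the precise bookkeeping of the coefficient terms depends on whether $\mathcal{O}$ differentiates the scalars $\C,\D$. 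So the first step is to write out this expansion cleanly for each of $\mathcal{O}=\nabc_3$ (trivial, only the coefficient terms survive), $\mathcal{O}=\nabc_4$, $\mathcal{O}=\nabc_a$, $\mathcal{O}=\DDbc\c$ (acting on $\SS_2$), and $\mathcal{O}=\DDc\hot$ (acting on $\SS_1$). The key input is then the family of conformally-invariant commutators $[\nabc_3,\nabc_4]$, $[\nabc_3,\nabc_a]$, $[\nabc_3,\ov{\DDc}\c]$, $[\nabc_3,\DDc\hot]$ from Lemma \ref{commutator-nab-c-3-DD-c-hot}, whose principal parts I abbreviate by the symbols $\CC^0_{3,4}$, $C^0_{3,a}$ appearing in the statement.

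The second step is the substitution and regrouping. For $[Q,\nabc_3]$ this is immediate. For $[Q,\nabc_4]$ I would insert the formula for $[\nabc_3,\nabc_4]U = 2(\eta-\etab)\c\nabc U + (\text{zeroth order}) + \err_{34}[U]$ into \eqref{eq:Q-commutator-expansion}; applying $\nabc_3$ to the angular term produces $4(\eta-\etab)\c\nabc\nabc_3 U$ plus terms where $\nabc_3$ lands on $\eta-\etab$, which via the null structure equations $\nabc_3\etab - \nabc_4\xib = \dots$ and $\nabc_4\eta - \nabc_3\xi = \dots$ (Proposition \ref{prop-nullstr-conformal}) gives the coefficient $\nabc_3(\eta-\etab) + (-\tfrac12\trchb+\C)(\eta-\etab)$ claimed, modulo $\Ga_b,\Ga_g$-quadratic error. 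The appearance of the Hodge-dual angular term $-\atrchb(\eta-\etab)\c\dual\nabc U$ comes from commuting $\nabc_3$ past the complexified angular derivative, i.e. from the $\atrchb$-terms in $[\nabc_3,\nabc_a]$. Identical bookkeeping applies to $[Q,\nabc_a]$, $[Q,\DDbc\c]$ and $[Q,\DDc\hot]$: in each case the highest-order piece is $2\,(\text{connection 1-form})\hot/\c\,\nabc_3\nabc_3 U$, the next piece carries $\tr\Xb$ or $\tr\Xb^2$ weights determined by the conformal type $s$, and then coefficient-differentiation produces the $\DDc\C$, $\DDc\D$ terms. One must be careful with conformal types: $\nabc_3 U$ has type $s-1$, which changes the numerical coefficients $(s-1\pm1)$ versus $(s\pm1)$ in the $[\nabc_3,\DDc\hot]$ formula, and this is exactly where the factors $(s+1)$ versus $s$ in the $\DDc\hot$ commutator come from.

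The third and final step is to collect all quadratic-in-perturbation remainders. Each application of a null structure or Bianchi equation to rewrite $\nabc_3$ of a connection coefficient introduces terms of schematic type $\Ga_g\c\Ga_g$, $\Ga_b\c\Ga_g$, or $r^{-1}\Ga_g\c\frak{d}^{\leq1}U$; applying the outer $\nabc_3$ then produces $\nabc_3(\tfrac{1}{r}\Ga_g\frak{d}^{\leq 1}U)$ and $\frac{a}{r^2}\Ga_g\frak{d}^{\leq 1}U$ contributions, which are collected into the stated error expressions, all $\lesssim \lot$ or of the listed form. The main obstacle I anticipate is not any single identity but the sheer combinatorial control of the middle-order (first-order in $U$) coefficients: one must verify that the genuinely-angular pieces assemble into $\nabc U$ and $\dual\nabc U$ with precisely the coefficients shown — in particular that the $\xi,\xib$ contributions (which are $O(\ep)$ in perturbations of Kerr and hence absorbed into $\Ga_g$) and the lower-order pieces of $[\nabc_3,\nabc_4]$ do not contaminate the principal structure — and that the conformal-type weights are tracked consistently through two successive $\nabc_3$ differentiations. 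This is a long but mechanical verification once the expansion \eqref{eq:Q-commutator-expansion} is in place; I would organize it by first doing the $\nabc_4$ case fully as a template and then adapting the angular cases, which share the same skeleton.
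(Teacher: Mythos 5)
Your proposal is correct and follows essentially the same route as the paper: the algebraic expansion $[Q,\mathcal{O}]U=\nabc_3([\nabc_3,\mathcal{O}]U)+[\nabc_3,\mathcal{O}]\nabc_3U+\C\,[\nabc_3,\mathcal{O}]U-(\mathcal{O}\C)\,\nabc_3U-(\mathcal{O}\D)\,U$, followed by substitution of the conformal commutators of Lemma \ref{commutator-nab-c-3-DD-c-hot} and careful tracking of the conformal-type shift $s\to s-1$ under $\nabc_3$ (which is exactly where the $(s+1)$ versus $s$ weights arise). The only minor inaccuracy is that the coefficient $\nabc_3(\eta-\etab)+(-\tfrac12\trchb+\C)(\eta-\etab)$ does not require the null structure equations: the $\nabc_3(\eta-\etab)$ term is simply left undifferentiated, the $-\tfrac12\trchb$ piece comes from commuting $\nabc_3$ past $\nabc_a$ via $[\nabc_3,\nabc_a]$, and the $\C$ piece from $\C\,[\nabc_3,\nabc_4]U$ — but this does not affect the argument.
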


\begin{proof} 
We compute 
\beaa
\, [Q, \nabc_3] U&=&(\nabc_3\nabc_3  + \C \  \nabc_3 +\D \  )\nabc_3U - \nabc_3(\nabc_3\nabc_3U  + \C \  \nabc_3U +\D \  U)\\
&=& (- \nabc_3 \C) \  \nabc_3U+( - \nabc_3\D ) \  U
\eeaa
as stated.
 Similarly, we compute
 \beaa
 [Q, \nabc_4] U  &=&\nabc_3([\nabc_3, \nabc_4]U)+ [\nabc_3,\nabc_4]\nabc_3 U \\
 &&+ \C \  [\nabc_3, \nabc_4]U  -\nabc_4( \C )\  \nabc_3U  -\nabc_4(\D) \  U.
 \eeaa
Recall from Lemma \ref{commutator-nab-3-nab-4}, 
\beaa
 [\nabc_3, \nabc_4] U   &=& 2((\eta-\etab ) \c \nabc) U +\CC^0_{3, 4} (U)
 \eeaa   
 where 
 \bea\label{expression-C-0-3-4}
 \CC^0_{3, 4} (U)=((s-2)P+(s+2)\ov{P}  -2s\eta\c\etab)U - 4  \eta \hot (\etab \c U)+4 \etab \hot (\eta \c U)+\lot
 \eea
  is a zero-th order term in $U$. 
In particular, since $\nabc_3U$ is conformal of type $s-1$, we have 
\beaa
[\nab^{(c)}_3, \nab^{(c)}_4] \nabc_3U  &=& 2((\eta-\etab ) \c \nabc) \nabc_3U +\CC^0_{3, 4} (\nabc_3U).
\eeaa
 On the other hand we compute
 \beaa
  \nab^{(c)}_3([\nab^{(c)}_3, \nab^{(c)}_4]U) &=&2\left((\eta-\etab ) \c \nabc_3\nabc\right) U+2\left(\nabc_3(\eta-\etab ) \c \nabc\right) U +\nabc_3(\CC^0_{3, 4} (U))+\lot
 \eeaa
 Using Lemma \ref{commtator-3-a}, we have 
 \beaa
 [\nabc_3, \nabc_a] U_{bc} &=& -\frac  1 2   \trchb\, \nabc_a U_{bc}-\frac 1 2 \atrchb\, \dual \nabc_a  U_{bc}+\eta_a \nabc_3 U_{bc}+C^0_{3,a}(U)
\eeaa
where
\bea\label{expression-C-0-3-a}
\begin{split}
C^0_{3,a}(U)&= -\frac  1 2   \trchb\, \Big(s(\eta_a) U_{bc}+\eta_bU_{ac}+\eta_c U_{ab}-\de_{a b}(\eta \c U)_c-\de_{a c}(\eta \c U)_b \Big)\\
&-\frac 1 2 \atrchb\, \Big(s (\dual\eta_a) U_{bc} +\eta_b \dual U_{ac}+\eta_c \dual U_{ab}- \in_{a b}(\eta \c  U)_c- \in_{a c}(\eta \c  U)_b \Big)\\
&+ \frac{a}{r} \Ga_g U+ \Ga_g \frak{d}^{\leq 1} U
\end{split}
\eea
We can therefore write
 \beaa
  \nab^{(c)}_3([\nab^{(c)}_3, \nab^{(c)}_4]U) &=&2(\eta-\etab ) \c \left(\nabc\nabc_3U-\frac  1 2   \trchb\, \nabc U-\frac 1 2 \atrchb\, \dual \nabc  U+\eta \nabc_3 U+C^0_{3,a}(U)\right) \\
  &&+2\left(\nabc_3(\eta-\etab ) \c \nabc\right) U +\nabc_3(\CC^0_{3, 4} (U))+ \frac{a}{r^2}\left(\Bb+\frac{a}{r} \Ga_g\right) U+\frac{a}{r^2} \Ga_g \frak{d}^{\leq 1} U.
 \eeaa
 Putting the above together, we obtain the desired formula.

We compute 
\beaa
\, [Q, \nabc_a] U_{bc}&=& \nabc_3([\nabc_3, \nabc_a]U_{bc})+ [\nabc_3,\nabc_a]\nabc_3 U_{bc} + \C \  [\nabc_3, \nabc_a]U_{bc} \\
  && -\nabc_a ( \C )  \nabc_3U_{bc}  -\nabc_a (\D)  U_{bc}.
  \eeaa
  Using 
  \beaa
 [\nabc_3, \nabc_a] U_{bc} &=& -\frac  1 2   \trchb\, \nabc_a U_{bc}-\frac 1 2 \atrchb\, \dual \nabc_a  U_{bc}+\eta_a \nabc_3 U_{bc}+C^0_{3,a}(U)+ \frac{a}{r} \Ga_g U+ \Ga_g \frak{d}^{\leq 1} U
\eeaa 
we have 
\beaa
[\nabc_3,\nabc_a]\nabc_3 U_{bc} &=&-\frac  1 2   \trchb\, \nabc_a \nabc_3U_{bc}-\frac 1 2 \atrchb\, \dual \nabc_a  \nabc_3U_{bc}+\eta_a \nabc_3 \nabc_3 U_{bc}\\
&&+C^0_{3,a}(\nabc_3U) +\frac{a}{r} \Ga_g \nabc_3U+ \Ga_g \frak{d}^{\leq 1} \nabc_3U
\eeaa
and 
\beaa
&&\nabc_3([\nabc_3, \nabc_a]U_{bc})\\
&=& -\frac  1 2   \trchb\, \nabc_3\nabc_a U_{bc}-\frac  1 2   \nabc_3\trchb\, \nabc_a U_{bc}\\
&&-\frac 1 2 \atrchb\, \nabc_3\dual \nabc_a  U_{bc}-\frac 1 2 \nabc_3\atrchb\, \dual \nabc_a  U_{bc}\\
&&+\eta_a \nabc_3\nabc_3 U_{bc}+\nabc_3\eta_a \nabc_3 U_{bc}+\nabc_3C^0_{3,a}(U)+\nabc_3 \left( \frac{a}{r} \Ga_g U+ \Ga_g \frak{d}^{\leq 1} U\right)\\
&=& -\frac  1 2   \trchb\, (\nabc_a \nabc_3 U_{bc}-\frac  1 2   \trchb\, \nabc_a U_{bc}-\frac 1 2 \atrchb\, \dual \nabc_a  U_{bc}+\eta_a \nabc_3 U_{bc}+C^0_{3,a}(U))\\
&&-\frac  1 2   \left(-\frac 1 2 \trchb^2+\frac 1 2 \atrchb^2\right)\, \nabc_a U_{bc}\\
&&-\frac 1 2 \atrchb\, \dual (\nabc_a \nabc_3 U_{bc}-\frac  1 2   \trchb\, \nabc_a U_{bc}-\frac 1 2 \atrchb\, \dual \nabc_a  U_{bc}+\eta_a \nabc_3 U_{bc}+C^0_{3,a}(U))\\
&&-\frac 1 2 (-\trchb\atrchb)\, \dual \nabc_a  U_{bc}\\
&&+\eta_a \nabc_3\nabc_3 U_{bc}+\nabc_3\eta_a \nabc_3 U_{bc}+\nabc_3C^0_{3,a}(U)+\nabc_3 \left( \frac{a}{r} \Ga_g U+ \Ga_g \frak{d}^{\leq 1} U\right).
\eeaa
Putting the above together, we obtain the stated expression.

We compute  
\beaa
\, [Q, \DDbc \c] U  &=&\nabc_3([\nabc_3, \DDbc \c]U)+ [\nabc_3,\DDbc \c]\nabc_3 U + \C \  [\nabc_3, \DDbc \c]U \\
  && -\DDbc ( \C )\c  \nabc_3U  -\DDbc (\D) \c  U.
 \eeaa
 Recall that for a two tensor $U$ of conformal type $s$:
  \beaa
 [\nabc_3 , \ov{\DDc} \c] U &=&- \frac 1 2\ov{\tr\Xb}\, \left( \ov{\DDc} \c U + (s-2) \ov{H}\c U\right) +\ov{H} \c \nabc_3 U+ \frac{a}{r} \Ga_g U+ \Ga_g \frak{d}^{\leq 1} U.
  \eeaa
  In particular, since $\nabc_3U$ is conformal of type $s-1$, we have 
  \beaa
 [\nabc_3 , \ov{\DDc} \c] \nabc_3 U &=&- \frac 1 2\ov{\tr\Xb}\, \left( \ov{\DDc} \c \nabc_3 U + (s-3) \ov{H}\c \nabc_3 U\right) +\ov{H} \c \nabc_3 \nabc_3 U\\
 &&+ \frac{a}{r} \Ga_g \nabc_3U+ \Ga_g \frak{d}^{\leq 1} \nabc_3U.
  \eeaa
  On the other hand we compute 
  \beaa
 \nabc_3( [\nabc_3 , \ov{\DDc} \c] U )&=&- \frac 1 2\nabc_3\ov{\tr\Xb}\, \left( \ov{\DDc} \c U + (s-2) \ov{H}\c U\right)\\
 &&- \frac 1 2\ov{\tr\Xb}\, \left( \nabc_3\ov{\DDc} \c U + (s-2) \nabc_3\ov{H}\c U+ (s-2) \ov{H}\c \nabc_3U\right)\\
 && +\nabc_3\ov{H} \c \nabc_3 U +\ov{H} \c \nabc_3\nabc_3 U+\nabc_3( \frac{a}{r} \Ga_g U+ \Ga_g \frak{d}^{\leq 1} U)\\
 &=& \frac 1 2(\ov{\tr\Xb})^2\, \left( \ov{\DDc} \c U + (s-2) \ov{H}\c U\right)\\
 &&- \frac 1 2\ov{\tr\Xb}\, \left(\ov{\DDc} \c  \nabc_3U  + (s-2) \nabc_3\ov{H}\c U+ (s-1) \ov{H}\c \nabc_3U\right)\\
 && +\nabc_3\ov{H} \c \nabc_3 U +\ov{H} \c \nabc_3\nabc_3 U+\nabc_3\left( \frac{a}{r} \Ga_g U+ \Ga_g \frak{d}^{\leq 1} U\right).
  \eeaa
  Putting the above together we obtain the desired formula.

  We compute  
\beaa
\, [Q, \DDc \hot] F  &=&\nabc_3([\nabc_3, \DDc \hot]F)+ [\nabc_3,\DDc \hot]\nabc_3 F + \C \  [\nabc_3, \DDc \hot]F \\
  && -\DDc ( \C )\hot  \nabc_3F  -\DDc (\D) \hot  F.
 \eeaa
 Recall that for a one form of conformal type $s$:
  \beaa
 \, [\nabc_3 , \DDc \hot ]F &=&- \frac 1 2 \tr \Xb \left( \DDc\hot F + (s+1)H \hot F \right)  + H \hot \nabc_3 F.
 \eeaa
In particular, since $\nabc_3F$ is conformal of type $s-1$, we have 
  \beaa
 \, [\nabc_3 , \DDc \hot ]\nabc_3F &=&- \frac 1 2 \tr \Xb \left( \DDc\hot \nabc_3F + (s)H \hot \nabc_3F \right)  + H \hot \nabc_3 \nabc_3F.
 \eeaa
 On the other hand we compute 
 \beaa
 \nabc_3(\, [\nabc_3 , \DDc \hot ]F )&=&- \frac 1 2 \nabc_3\tr \Xb \left( \DDc\hot F + (s+1)H \hot F \right)\\
 &&- \frac 1 2 \tr \Xb \left( \nabc_3\DDc\hot F + (s+1)\nabc_3H \hot F+ (s+1)H \hot \nabc_3F \right)  \\
 &&+ \nabc_3H \hot \nabc_3 F+ H \hot \nabc_3\nabc_3 F\\
 &=& \frac 1 2 (\tr \Xb)^2 \left( \DDc\hot F + (s+1)H \hot F \right)\\
 &&- \frac 1 2 \tr \Xb \left( \DDc\hot \nabc_3F   + (s+1)\nabc_3H \hot F+ (s+2)H \hot \nabc_3F \right)  \\
 &&+ \nabc_3H \hot \nabc_3 F+ H \hot \nabc_3\nabc_3 F.
 \eeaa
 Putting the above together we obtain the desired formula. 
\end{proof}

 We now want to compute the commutator between $Q$ and $\LL$, and prove equation \eqref{final-commutator}.  
Using \eqref{Teukolsky-operator}, we separate the commutator into 
\beaa
[Q, \LL]A&=& I + J+K+L+M+N
\eeaa
where 
\beaa
I&=& - [Q, \nabc_4\nabc_3]A, \qquad\qquad\qquad\quad\,\,\, J= \frac{1}{2}[Q, \DDc\hot\DDbc \c ]A,\\
K&=&  \left[Q, \left(- \frac 1 2 \tr X -2\ov{\tr X} \right)\nabc_3\right]A, \qquad L= [Q,-\frac{1}{2}\tr\Xb \nabc_4]A,\\
M&=&[Q, \left( 4H+\Hb +\ov{\Hb} \right)\c \nabc] A, \qquad\quad\,\, N= [Q, \left(-\ov{\tr X} \tr \Xb +2\ov{P}\right)] A+ 2[Q, H   \hot \ov{\Hb} \c ]A.
\eeaa


\subsubsection{Expression for $I$}


We have
\beaa
 I&=& -[Q, \nabc_4] \nabc_3A-\nabc_4( [Q,  \nabc_3]A).
\eeaa
From Proposition \ref{commutation-general-Q}, we deduce
 \beaa
 [Q, \nabc_4] \nabc_3A  &=&4(\eta-\etab ) \c \nabc\nabc_3\nabc_3A \\
  &&+2\left(\nabc_3(\eta-\etab ) +\left(\frac 1 2 \trchb +\C\right)(\eta-\etab)\right)\c \nabc \nabc_3A \\
  &&- \atrchb(\eta-\etab ) \c  \dual \nabc  \nabc_3A\\
  &&+\nabc_3(\CC^0_{3, 4} (\nabc_3A))+\CC^0_{3, 4} (\nabc_3\nabc_3A)\\
  &&+\left(2(\eta-\etab ) \c \eta -\nabc_4( \C )\right)\nabc_3 \nabc_3A \\
    &&+2(\eta-\etab ) \c \CC^0_{3,a}(\nabc_3A)+ \C \ \CC^0_{3, 4} (\nabc_3A)  -\nabc_4(\D) \  \nabc_3A\\
    &&+ \frac{a}{r^2} \Ga_g \frak{d}^{\leq 1} \nabc_3 A.
 \eeaa
We also deduce
 \beaa
 \nabc_4( [Q,  \nabc_3]A)&=& \nabc_4( (- \nabc_3 \C) \  \nabc_3A+( - \nabc_3\D ) \  A)\\
 &=& (- \nabc_3 \C) \  \nabc_4\nabc_3A+ (- \nabc_4\nabc_3 \C) \  \nabc_3A+( - \nabc_3\D ) \  \nabc_4A\\
 &&+( - \nabc_4\nabc_3\D ) \  A.
 \eeaa
 
 We therefore obtain
\bea
\begin{split}
 I&= -4(\eta-\etab ) \c \nabc\nabc_3\nabc_3A +I_{43} \  \nabc_4\nabc_3A\\
 &+\tilde{I}_{33}(A) +I_{a3}\c \nabc \nabc_3A +I_{\dual a 3} \c  \dual \nabc  \nabc_3A+I_4 \  \nabc_4A+\tilde{I}_3(A)+I_0 \  A\\
 &+ \frac{a}{r^2} \Ga_g \frak{d}^{\leq 1} \nabc_3 A
 \end{split}
\eea
where 
\beaa
I_{43}&=& \nabc_3 \C, \\
\tilde{I}_{33}(A)&=& -\nabc_3(\CC^0_{3, 4} (\nabc_3A))-\CC^0_{3, 4} (\nabc_3\nabc_3A)-\left(2(\eta-\etab ) \c \eta -\nabc_4( \C )\right)\nabc_3 \nabc_3A, \\
I_{a3}&=& -2\left(\nabc_3(\eta-\etab ) +\left(\frac 1 2 \trchb +\C\right)(\eta-\etab)\right), \\
I_{\dual a 3}&=&  \atrchb(\eta-\etab ), \\
I_4&=&  \nabc_3\D, \\
\tilde{I}_3(A)&=& -2(\eta-\etab ) \c \CC^0_{3,a}(\nabc_3A)- \C \ \CC^0_{3, 4} (\nabc_3A)  +\left(\nabc_4(\D)+ \nabc_4\nabc_3 \C \right)\  \nabc_3A, \\
I_0&=&  \nabc_4\nabc_3\D.
\eeaa

We now compute $\tilde{I}_{33}(A)$ and $\tilde{I}_{3}(A)$. Using \eqref{expression-C-0-3-4} we write
\beaa
 \CC^0_{3, 4} (\nabc_3A)&=&(-P+3\ov{P}  -2\eta\c\etab)\nabc_3A - 4  \eta \hot (\etab \c \nabc_3A)+4 \etab \hot (\eta \c \nabc_3A)\\
  \CC^0_{3, 4} (\nabc_3\nabc_3A)&=&(-2P+2\ov{P} )\nabc_3\nabc_3A - 4  \eta \hot (\etab \c \nabc_3\nabc_3A)+4 \etab \hot (\eta \c \nabc_3\nabc_3A).
 \eeaa
We therefore have
\beaa
\tilde{I}_{33}(A)&=& -\nabc_3(-P+3\ov{P}  -2\eta\c\etab)\nabc_3A-(-P+3\ov{P}  -2\eta\c\etab)\nabc_3\nabc_3A + 4  \nabc_3\eta \hot (\etab \c \nabc_3A)\\
&&+ 4  \eta \hot (\nabc_3\etab \c \nabc_3A)+ 4  \eta \hot (\etab \c \nabc_3\nabc_3A)\\
&&-4 \nabc_3\etab \hot (\eta \c \nabc_3A)-4 \etab \hot (\nabc_3\eta \c \nabc_3A)-4 \etab \hot (\eta \c \nabc_3\nabc_3A)\\
&&+(2P-2\ov{P} )\nabc_3\nabc_3A + 4  \eta \hot (\etab \c \nabc_3\nabc_3A)-4 \etab \hot (\eta \c \nabc_3\nabc_3A)\\
&&-\left(2(\eta-\etab ) \c \eta -\nabc_4( \C )\right)\nabc_3 \nabc_3A. 
\eeaa

To simplify $\tilde{I}_3(A)$ we make use of the following lemma.
\begin{lemma} 
For a one form $\xi$ and a two tensor $U$, we have 
\beaa
\xi \c C^0_{3,a}(U)&=&-\frac  s 2   \left((\trchb \xi -\atrchb \dual \xi )\c \eta\right)U  -\frac  1 2   \trchb\, \Big(\eta \hot (\xi \c U)-\xi \hot (\eta \c U) \Big)\\
&&-\frac 1 2 \atrchb\, \Big(-\eta \hot (\dual \xi \c U)+\dual  \xi \hot(\eta \c  U) \Big)+r^{-1} (\Ga_g)^2 \frak{d}^{\leq 1} U.
\eeaa
\end{lemma}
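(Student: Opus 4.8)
The plan is to prove the identity by a straightforward index computation starting from the explicit expression \eqref{expression-C-0-3-a} for $C^0_{3,a}(U)_{bc}$: contract the free index $a$ against $\xi^a$, and then reassemble the result into the horizontal products $\hot$ and $\c$. First I would contract $\xi^a$ with the algebraic (non-error) part of $C^0_{3,a}(U)$ term by term, using the elementary rules $\xi^a\delta_{ab}=\xi_b$, $\xi^a\in_{ab}=-\dual\xi_b$, and — since $U\in\SS_2$ is symmetric — $\xi^aU_{ac}=(\xi\c U)_c$ and $\xi^a\dual U_{ac}=(\xi\c\dual U)_c=-(\dual\xi\c U)_c$, the last equality being one of the duality relations collected in Lemma \ref{lemma:usefulidentitiesforcomplexification}. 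This turns the $-\frac12\trchb(\cdots)$ block into $s(\xi\c\eta)U_{bc}$ plus the combination $\eta_b(\xi\c U)_c+\eta_c(\xi\c U)_b-\xi_b(\eta\c U)_c-\xi_c(\eta\c U)_b$, and the $-\frac12\atrchb(\cdots)$ block into $s(\xi\c\dual\eta)U_{bc}$ plus an analogous combination built from $\xi\c\dual U$, $\dual\xi\c U$ and $\eta\c U$.

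Next I would recognise those symmetric combinations as $\hot$-products. From the definition $(\mu\hot\nu)_{bc}=\frac12(\mu_b\nu_c+\mu_c\nu_b-\delta_{bc}\,\mu\c\nu)$ one has $2\bigl(\eta\hot(\xi\c U)-\xi\hot(\eta\c U)\bigr)_{bc}=\eta_b(\xi\c U)_c+\eta_c(\xi\c U)_b-\xi_b(\eta\c U)_c-\xi_c(\eta\c U)_b$, the two trace terms cancelling because $\eta\c(\xi\c U)=\xi\c(\eta\c U)$ by symmetry of $U$ — precisely the kind of manipulation underlying Lemma \ref{dot-hot} and Lemma \ref{le:sym-product}. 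The $\atrchb$-block is handled the same way, this time checking $\eta\c(\xi\c\dual U)+\dual\xi\c(\eta\c U)=0$ and using $\dual\xi\c U=-\xi\c\dual U$. I would then collect the remaining $U$-proportional pieces $s(\xi\c\eta)$ and $s(\xi\c\dual\eta)=-s(\dual\xi\c\eta)$ into the single term $-\frac s2\bigl((\trchb\xi-\atrchb\dual\xi)\c\eta\bigr)U$, and observe that contracting $\xi$ with the schematic remainder $\frac ar\Ga_g U+\Ga_g\frak{d}^{\leq1}U$ of \eqref{expression-C-0-3-a} produces a term of the stated type $r^{-1}(\Ga_g)^2\frak{d}^{\leq1}U$, since in the application $\xi$ is itself one of the $\Ga_g$-quantities.

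The computation is essentially routine; the only delicate points — and the place I expect to pick up or lose stray factors — are keeping the paper's $\hot$ convention straight (it differs by $\frac12$ from \cite{Ch-Kl}) and tracking the signs produced by $\in_{ab}$ and by the interaction of $\dual$ with $\c$ on two-tensors. A useful internal consistency check at each stage is that both sides are symmetric and traceless in $b,c$, and that specialising $\xi=\eta$ (so $\dual\xi\c\eta=0$) collapses the formula to the expected diagonal expression.
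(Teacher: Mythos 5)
Your proposal is correct and follows essentially the same route as the paper: contract $\xi^a$ into \eqref{expression-C-0-3-a}, use $\xi^a U_{ac}=(\xi\c U)_c$ and $\xi^a\dual U_{ac}=-(\dual\xi\c U)_c$, and reassemble the resulting symmetric combinations into $\hot$-products, the trace terms cancelling because $\eta\c(\xi\c U)=\xi\c(\eta\c U)$ for symmetric $U$. One remark: your identity $2\bigl(\eta\hot(\xi\c U)-\xi\hot(\eta\c U)\bigr)_{bc}=\eta_b(\xi\c U)_c+\eta_c(\xi\c U)_b-\xi_b(\eta\c U)_c-\xi_c(\eta\c U)_b$ is the correct one for the paper's $\hot$ convention, whereas the paper's own proof asserts the same reassembly without the factor of $2$; carried through faithfully your computation yields coefficients $-\trchb$ and $-\atrchb$ on the two $\hot$-blocks rather than the stated $-\frac12\trchb$ and $-\frac12\atrchb$, so the convention issue you flagged as the delicate point is precisely where the literal computation and the displayed statement part ways.
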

\begin{proof} 
Using \eqref{expression-C-0-3-a}, we have
\beaa
(\xi \c C^0_{3,a}(U))_{bc}&=&\xi_a\Bigg[ -\frac  1 2   \trchb\, \Big(s(\eta_a) U_{bc}+\eta_bU_{ac}+\eta_c U_{ab}-\de_{a b}(\eta \c U)_c-\de_{a c}(\eta \c U)_b \Big)\\
&&-\frac 1 2 \atrchb\, \Big(s (\dual\eta_a) U_{bc} +\eta_b \dual U_{ac}+\eta_c \dual U_{ab}- \in_{a b}(\eta \c  U)_c- \in_{a c}(\eta \c  U)_b \Big)\\
&&+ \frac{a}{r} \Ga_g U+ \Ga_g \frak{d}^{\leq 1} U\Bigg]\\
&=&-\frac  s 2   \left(\trchb (\xi \c \eta)+\atrchb (\xi \c \dual \eta) \right)U_{bc} \\
&& -\frac  1 2   \trchb\, \Big(\xi_a\eta_bU_{ac}+\xi_a\eta_c U_{ab}-\xi_b(\eta \c U)_c-\xi_c(\eta \c U)_b \Big)\\
&&-\frac 1 2 \atrchb\, \Big(\xi_a\eta_b \dual U_{ac}+\xi_a\eta_c \dual U_{ab}+\dual  \xi_b(\eta \c  U)_c+\dual \xi_c(\eta \c  U)_b \Big)+r^{-1} (\Ga_g)^2 \frak{d}^{\leq 1} U.
\eeaa
Observe that
\beaa
\xi_a\eta_bU_{ac}+\xi_a\eta_c U_{ab}-\xi_b(\eta \c U)_c-\xi_c(\eta \c U)_b&=& \eta_b(\xi \c U)_{c}+\eta_c (\xi \c U)_{b}-\xi_b(\eta \c U)_c-\xi_c(\eta \c U)_b\\
&=& \eta \hot (\xi \c U)_{bc}-\xi \hot (\eta \c U)_{bc}
\eeaa
and
\beaa
&&\xi_a\eta_b \dual U_{ac}+\xi_a\eta_c \dual U_{ab}+\dual  \xi_b(\eta \c  U)_c+\dual \xi_c(\eta \c  U)_b\\
&=& \eta_b(\xi \c \dual U)_{c}+\eta_c (\xi \c \dual U)_{b}+\dual  \xi_b(\eta \c  U)_c+\dual \xi_c(\eta \c  U)_b\\
&=& -\eta_b(\dual \xi \c U)_{c}-\eta_c (\dual \xi \c U)_{b}+\dual  \xi_b(\eta \c  U)_c+\dual \xi_c(\eta \c  U)_b\\
&=& -\eta \hot (\dual \xi \c U)_{bc}+\dual  \xi \hot(\eta \c  U)_{bc}.
\eeaa
The lemma easily follows. 
\end{proof}

Using the lemma, we have
\beaa
\tilde{I}_3(A)&=&    \left((\trchb (\eta-\etab ) -\atrchb \dual (\eta-\etab ) )\c \eta\right)\nabc_3A \\
&& +   \trchb\, \Big(\eta \hot ((\eta-\etab ) \c \nabc_3A)-(\eta-\etab ) \hot (\eta \c \nabc_3A) \Big)\\
&&+ \atrchb\, \Big(-\eta \hot (\dual (\eta-\etab ) \c \nabc_3A)+\dual  (\eta-\etab ) \hot(\eta \c  \nabc_3A) \Big)\\
&&- \C \ ((-P+3\ov{P}  -2\eta\c\etab)\nabc_3A - 4  \eta \hot (\etab \c \nabc_3A)+4 \etab \hot (\eta \c \nabc_3A)) \\
&& +\left(\nabc_4(\D)+ \nabc_4\nabc_3 \C \right)\  \nabc_3A.
\eeaa

Therefore we have
\beaa
\tilde{I}_{33}(A)+\tilde{I}_3(A)&=& I_{33} \nabc_3 \nabc_3A+ I^a_{33}(A)+ I_3 \nabc_3A+I^a_3(A)
\eeaa
where
\beaa
I_{33}&=& 3P-5\ov{P} +4\eta\c\etab-2|\eta|^2  +\nabc_4( \C ),\\
I_{3}&=&-\nabc_3(-P+3\ov{P}  -2\eta\c\etab)+(\trchb (\eta-\etab ) -\atrchb \dual (\eta-\etab ) )\c \eta- \C \ (-P+3\ov{P}  -2\eta\c\etab)\\
&&+\nabc_4(\D)+ \nabc_4\nabc_3 \C,
\eeaa
and
\beaa
I^a_{33}(A)&=& 8  \eta \hot (\etab \c \nabc_3\nabc_3A)-8 \etab \hot (\eta \c \nabc_3\nabc_3A),\\
I^a_3(A)&=& 4  \nabc_3\eta \hot (\etab \c \nabc_3A)+ 4  \eta \hot (\nabc_3\etab \c \nabc_3A)\\
&&-4 \nabc_3\etab \hot (\eta \c \nabc_3A)-4 \etab \hot (\nabc_3\eta \c \nabc_3A)\\
&&  +\trchb\, \Big(\eta \hot ((\eta-\etab ) \c \nabc_3A)-(\eta-\etab ) \hot (\eta \c \nabc_3A) \Big)\\
&&+ \atrchb\, \Big(-\eta \hot (\dual (\eta-\etab ) \c \nabc_3A)+\dual  (\eta-\etab ) \hot(\eta \c  \nabc_3A) \Big)\\
&& - \C \ ( - 4  \eta \hot (\etab \c \nabc_3A)+4 \etab \hot (\eta \c \nabc_3A)).
\eeaa
We finally write 
\bea
\begin{split}
 I&= -4(\eta-\etab ) \c \nabc\nabc_3\nabc_3A +I_{43} \  \nabc_4\nabc_3A\\
 &+I_{33} \nabc_3 \nabc_3A+ I^a_{33}(A) +I_{a3}\c \nabc \nabc_3A +I_{\dual a 3} \c  \dual \nabc  \nabc_3A\\
 &+I_4 \  \nabc_4A+ I_3 \nabc_3A+I^a_3(A)+I_0 \  A + \frac{a}{r^2} \Ga_g \frak{d}^{\leq 1} \nabc_3 A+\lot
 \end{split}
\eea
with the above expressions.


\subsubsection{Expression for $J$}


We have
\beaa
J&=& \frac 1 2 [Q, \DDc\hot] (\DDbc \c A)+\frac 1 2 \DDc\hot ([ Q, \DDbc \c ]A).
\eeaa
From Proposition \ref{commutation-general-Q}, we deduce, recalling that $\DDbc \c A$ is a one form of conformal type 2:
\beaa
[Q, \DDc\hot] (\DDbc \c A)&=&2H \hot \nabc_3\nabc_3 \DDbc \c A-  \tr \Xb  \DDc\hot \nabc_3\DDbc \c A\\
&&+\frac 1 2 (\tr \Xb)(\tr\Xb-\C)  \DDc\hot \DDbc \c A \\
 &&+\left(\nabc_3H+(- 3 \tr \Xb +\C )H -\DDc ( \C )\right) \hot \nabc_3 \DDbc \c A\\
 &&+\left( \frac 3 2  \tr \Xb \left(-\nabc_3H+(\tr\Xb-\C)H\right)  -\DDc (\D) \right)\hot \DDbc \c A.
\eeaa
We write
  \beaa
 \nabc_3  \ov{\DDc} \c A &=&\ov{\DDc} \c \nabc_3 A- \frac 1 2\ov{\tr\Xb}\, \left( \ov{\DDc} \c A \right) +\ov{H} \c \nabc_3 A, 
 \eeaa
 \beaa
 \DDc\hot \nabc_3\DDbc \c A&=& \nabc_3\DDc\hot \DDbc \c A-[\nabc_3, \DDc\hot]\DDbc \c A\\
 &=& \nabc_3\DDc\hot \DDbc \c A+ \frac 1 2 \tr \Xb \DDc\hot \DDbc \c A \\
 &&- H \hot \nabc_3 \DDbc \c A+  \frac 3 2 \tr \Xb H \hot \DDbc \c A  \\
  &=& \nabc_3\DDc\hot \DDbc \c A+ \frac 1 2 \tr \Xb \DDc\hot \DDbc \c A - H \hot (\ov{\DDc} \c \nabc_3 A)\\
  &&- H \hot (\ov{H} \c \nabc_3 A)+  \left(\frac 3 2 \tr \Xb +\frac 1 2\ov{\tr\Xb}\right)H \hot \DDbc \c A,   
  \eeaa
  and 
 \beaa
 \nabc_3\nabc_3 \DDbc \c A&=& \nabc_3 \DDbc \c\nabc_3 A+\nabc_3([\nabc_3,  \DDbc \c] A)\\
 &=& \DDbc \c\nabc_3 \nabc_3 A+[\nabc_3 ,\DDbc \c]\nabc_3 A+\nabc_3([\nabc_3,  \DDbc \c] A)\\
  &=& \DDbc \c\nabc_3 \nabc_3 A\\
  &&- \frac 1 2\ov{\tr\Xb}\, \left( \ov{\DDc} \c \nabc_3 A - \ov{H}\c \nabc_3 A\right) +\ov{H} \c \nabc_3 \nabc_3 A\\
  &&+\frac 1 2(\ov{\tr\Xb})^2\, \left( \ov{\DDc} \c A \right)- \frac 1 2\ov{\tr\Xb}\, \left(\ov{\DDc} \c  \nabc_3A  +  \ov{H}\c \nabc_3A\right)\\
 && +\nabc_3\ov{H} \c \nabc_3 A +\ov{H} \c \nabc_3\nabc_3 A\\
 &=& \DDbc \c\nabc_3 \nabc_3 A +2\ov{H} \c \nabc_3 \nabc_3 A- \ov{\tr\Xb}\,  \ov{\DDc} \c \nabc_3 A \\
  &&+\frac 1 2(\ov{\tr\Xb})^2\,  \ov{\DDc} \c A  +\nabc_3\ov{H} \c \nabc_3 A, 
 \eeaa
where we used the intermediate computations in Proposition \ref{commutation-general-Q}. Putting the above together we obtain 
 \beaa
[Q, \DDc\hot] (\DDbc \c A)&=&-  \tr \Xb  \nabc_3\DDc\hot \DDbc \c A+2H \hot \DDbc \c\nabc_3 \nabc_3 A \\
&&-\frac \C 2 (\tr \Xb)  \DDc\hot \DDbc \c A+4H \hot(\ov{H} \c \nabc_3 \nabc_3 A)\\
&&+\left(\nabc_3H+(-2\tr\Xb- 2\ov{\tr\Xb}+\C)\, H-\DDc ( \C )\right) \hot (\ov{\DDc} \c \nabc_3 A) \\
  &&+2H \hot (\nabc_3\ov{H} \c \nabc_3 A) \\
 &&+\left(\nabc_3H+(- 2 \tr \Xb +\C )H -\DDc ( \C )\right) \hot (\ov{H} \c \nabc_3 A )\\
 &&+\Bigg( \left(\frac 3 2  \tr \Xb +\frac 1 2 \ov{\tr\Xb}\right)\left(-\nabc_3H-\C H\right)+\left(\tr\Xb \ov{\tr\Xb}+(\ov{\tr\Xb})^2\right)\,  H\\
 &&+\frac 1 2\ov{\tr\Xb}\DDc ( \C )  -\DDc (\D) \Bigg)\hot \DDbc \c A.
\eeaa
Using that
\beaa
F \hot (\DDbc \c U)&=& (F\c \DDbc) U=2 F  \c\nabc U, \\
E \hot ( \ov{E} \c U)&=& ( E \c \ov{E}) \ U,
\eeaa
the above becomes
 \beaa
[Q, \DDc\hot] (\DDbc \c A)&=&-  \tr \Xb  \nabc_3\DDc\hot \DDbc \c A+4H \c \nabc \nabc_3 \nabc_3 A \\
&&-\frac \C 2 (\tr \Xb)  \DDc\hot \DDbc \c A+4 (H \c \ov{H} ) \nabc_3 \nabc_3 A\\
&&+2\left(\nabc_3H+(-2\tr\Xb- 2\ov{\tr\Xb}+\C)\, H-\DDc ( \C )\right) \c \nabc  \nabc_3 A \\
  &&+2H \hot (\nabc_3\ov{H} \c \nabc_3 A) \\
 &&+\left(\nabc_3H+(- 2 \tr \Xb +\C )H -\DDc ( \C )\right) \hot (\ov{H} \c \nabc_3 A )\\
 &&+2\Bigg( \left(\frac 3 2  \tr \Xb +\frac 1 2 \ov{\tr\Xb}\right)\left(-\nabc_3H-\C H\right)+\left(\tr\Xb \ov{\tr\Xb}+(\ov{\tr\Xb})^2\right)\,  H\\
 &&+\frac 1 2\ov{\tr\Xb}\DDc ( \C )  -\DDc (\D) \Bigg)\c \nabc A.
\eeaa

From Proposition \ref{commutation-general-Q}, we deduce
\beaa
\DDc\hot ([ Q, \DDbc \c ]A)&=& \DDc\hot \Bigg(2\ov{H} \c \nabc_3\nabc_3 A- \ov{\tr\Xb}\,\ov{\DDc} \c  \nabc_3A+\frac 1 2\ov{\tr\Xb} (\ov{\tr\Xb}-\C)\,  \ov{\DDc} \c A \\
&&+\left(\nabc_3\ov{H} +\C\,  \ov{H}-\DDbc ( \C )\right)\c \nabc_3A+\left(-\DDbc (\D) \right)\c A\Bigg)
\eeaa
which gives
\beaa
\DDc\hot ([ Q, \DDbc \c ]A)&=& 2\DDc\hot (\ov{H} \c \nabc_3\nabc_3 A)- \ov{\tr\Xb}\,\DDc\hot\ov{\DDc} \c  \nabc_3A- \DDc\ov{\tr\Xb}\,\hot \ov{\DDc} \c  \nabc_3A\\
&&+\frac 1 2\ov{\tr\Xb} (\ov{\tr\Xb}-\C)\, \DDc\hot \ov{\DDc} \c A+\DDc(\frac 1 2\ov{\tr\Xb} (\ov{\tr\Xb}-\C))\hot\,  \ov{\DDc} \c A \\
&&+\DDc\hot(\left(\nabc_3\ov{H} +\C\,  \ov{H}-\DDbc ( \C )\right)\c \nabc_3A)+\DDc\hot\left(\left(-\DDbc (\D) \right)\c A\right).
\eeaa
Writing
\beaa
\DDc\hot\ov{\DDc} \c  \nabc_3A&=&\DDc\hot\nabc_3\ov{\DDc} \c  A-\DDc\hot[\nabc_3, \ov{\DDc} \c ]A \\
&=&\nabc_3\DDc\hot \DDbc \c A+ \frac 1 2 (\tr \Xb +\ov{\tr\Xb})\DDc\hot \DDbc \c A - H \hot (\ov{\DDc} \c \nabc_3 A)\\
  &&- H \hot (\ov{H} \c \nabc_3 A)+  \left(\frac 3 2 \tr \Xb +\frac 1 2\ov{\tr\Xb}\right)H \hot \DDbc \c A \\
  && +\frac 1 2\DDc (\ov{\tr\Xb})\hot(  \ov{\DDc} \c A ) -\DDc \hot (\ov{H} \c \nabc_3 A)
\eeaa
we have 
\beaa
\DDc\hot ([ Q, \DDbc \c ]A)&=&- \ov{\tr\Xb}\,\nabc_3\DDc\hot \DDbc \c A +\frac 1 2\ov{\tr\Xb} (-\tr\Xb-\C)\, \DDc\hot \ov{\DDc} \c A\\
&& + 2\DDc\hot (\ov{H} \c \nabc_3\nabc_3 A)+\ov{\tr\Xb}\DDc \hot (\ov{H} \c \nabc_3 A)+\ov{\tr\Xb} H \hot (\ov{H} \c \nabc_3 A)\\
&&+\left(\ov{\tr\Xb} H - \DDc\ov{\tr\Xb}\right)\,\hot (\ov{\DDc} \c  \nabc_3A)\\
&+ & \left(\left(-\frac 3 2 \tr \Xb \ov{\tr\Xb}-\frac 1 2\ov{\tr\Xb}^2\right)H +\frac 1 2\ov{\tr\Xb}\DDc (\ov{\tr\Xb})-\frac 1 2 \DDc(\ov{\tr\Xb} \C)\right)\hot \DDbc \c A \\
&&+\DDc\hot((\nabc_3\ov{H} +\C\,  \ov{H}-\DDbc ( \C ))\c \nabc_3A)-\DDc\hot(\DDbc (\D) \c A).
\eeaa
Using Leibniz rules, the above simplifies to
\beaa
&&\DDc\hot ([ Q, \DDbc \c ]A)\\
&=&- \ov{\tr\Xb}\,\nabc_3\DDc\hot \DDbc \c A +  4 \ov{H} \c \nabc \nabc_3\nabc_3 A\\
&&+\frac 1 2\ov{\tr\Xb} (-\tr\Xb-\C)\, \DDc\hot \ov{\DDc} \c A+ 2(\DDc\c\ov{H} )\nabc_3\nabc_3 A\\
&&+\left(4\ov{\tr\Xb}\eta  - 2\DDc\ov{\tr\Xb}+2(\nabc_3\ov{H} +\C\,  \ov{H}-\DDbc ( \C ))\right)\c \nabc\nabc_3A\\
&&+\left(\ov{\tr\Xb}(\DDc \c\ov{H}+ H \c\ov{H}+\DDc\c(\nabc_3\ov{H} +\C\,  \ov{H}-\DDbc ( \C )) \right)\nabc_3 A\\
&&+ 2\left(\left(-\frac 3 2 \tr \Xb \ov{\tr\Xb}-\frac 1 2\ov{\tr\Xb}^2\right)H +\frac 1 2\ov{\tr\Xb}\DDc (\ov{\tr\Xb})-\frac 1 2 \DDc(\ov{\tr\Xb} \C)-\DDbc (\D) \right)\c \nabc  A\\
&&-(\DDc \c\DDbc (\D))  A.
\eeaa
Putting the above together we finally obtain 
\beaa
J&=&-  \left(\tr \Xb+\ov{\tr \Xb}\right)  \nabc_3\left(\frac 1 2\DDc\hot \DDbc \c A\right)\\
&&+\left(- \frac 1 2\ov{\tr\Xb} \tr\Xb-\frac \C 2 (\tr \Xb+\ov{\tr\Xb})\right)\, \left(\frac 1 2 \DDc\hot \ov{\DDc} \c A\right)\\
&&+4 \eta \c \nabc \nabc_3 \nabc_3 A +\tilde{J}_{33} \nabc_3 \nabc_3 A+ \tilde{J}_{a3} \nabc \nabc_3 A\\
&&+\tilde{J}_{3} \nabc_3 A + \tilde{J}^a_3(A)+\tilde{J}_a \nabc A+\tilde{J}_0  A\\
&& + \nabc_3\left(\frac{a}{r^2} \Ga_g \frak{d}^{\leq 1}A + \frac 1 r \Ga_g \frak{d}^{\leq 2} A\right)
\eeaa
where
\beaa
\tilde{J}_{33}&=&\DDc\c\ov{H}+2H \c\ov{H}, \\
\tilde{J}_{a3}&=&\nabc_3H+(-2\tr\Xb- 2\ov{\tr\Xb}+\C)\, H-\DDc ( \C )+2\ov{\tr\Xb}\eta  - \DDc\ov{\tr\Xb}+\nabc_3\ov{H} +\C\,  \ov{H}-\DDbc ( \C ),\\
\tilde{J}_3&=&\frac 1 2 \ov{\tr\Xb}(\DDc \c\ov{H}+ H \c\ov{H})+\frac 1 2 \DDc\c(\nabc_3\ov{H} +\C\,  \ov{H}-\DDbc ( \C )),\\
\tilde{J}^a_3(A)&=&H \hot (\nabc_3\ov{H} \c \nabc_3 A) \\
 &&+\frac 1 2 \left(\nabc_3H+(- 2 \tr \Xb +\C )H -\DDc ( \C )\right) \hot (\ov{H} \c \nabc_3 A ),\\
 \tilde{J}_a&=& \left(\frac 3 2  \tr \Xb +\frac 1 2 \ov{\tr\Xb}\right)\left(-\nabc_3H-\C H\right)+\left(\tr\Xb \ov{\tr\Xb}+(\ov{\tr\Xb})^2\right)\,  H\\
 &&+\frac 1 2\ov{\tr\Xb}\DDc ( \C )  -\DDc (\D)+\left(-\frac 3 2 \tr \Xb \ov{\tr\Xb}-\frac 1 2\ov{\tr\Xb}^2\right)H \\
 &&+\frac 1 2\ov{\tr\Xb}\DDc (\ov{\tr\Xb})-\frac 1 2 \DDc(\ov{\tr\Xb} \C)-\DDbc (\D), \\
\tilde{J}_0&=&-\frac 1 2 (\DDc \c\DDbc (\D)).
  \eeaa
Recalling the definition \eqref{Teukolsky-operator} of $\LL(A)$, we write 
\beaa
 \frac{1}{2}\DDc\hot (\DDbc \c A) &=&\nabc_4\nabc_3A+\left( \frac 1 2 \tr X +2\ov{\tr X} \right)\nabc_3A+\frac{1}{2}\tr\Xb \nabc_4A\\
&&-\left( 4H+\Hb +\ov{\Hb} \right)\c \nabc A+ \left(\ov{\tr X} \tr \Xb -2\ov{P}\right) A-  2H   \hot (\ov{\Hb} \c A)
\eeaa
which gives
\beaa
&&\nabc_3\left( \frac{1}{2}\DDc\hot (\DDbc \c A) \right)\\
&=&\nabc_3\nabc_4\nabc_3A+\left( \frac 1 2 \tr X +2\ov{\tr X} \right)\nabc_3\nabc_3A\\
&&+\nabc_3\left( \frac 1 2 \tr X +2\ov{\tr X} \right)\nabc_3A+\frac{1}{2}\tr\Xb \nabc_3\nabc_4A+\frac{1}{2}\nabc_3\tr\Xb \nabc_4A\\
&&-\left( 4H+\Hb +\ov{\Hb} \right)\c \nabc_3\nabc A-\nabc_3\left( 4H+\Hb +\ov{\Hb} \right)\c \nabc A\\
&&+ \left(\ov{\tr X} \tr \Xb -2\ov{P}\right) \nabc_3A+ \nabc_3\left(\ov{\tr X} \tr \Xb -2\ov{P}\right) A\\
&&-  2\nabc_3H   \hot (\ov{\Hb} \c A)-  2H   \hot (\nabc_3 \ov{\Hb} \c A)-  2 H   \hot (\ov{\Hb} \c \nabc_3A).
\eeaa
Writing 
\beaa
\nabc_3 \nabc_4 A&=& \nabc_4 \nabc_3 A+2(\eta-\etab ) \c \nabc A +\CC^0_{3, 4} (A),\\
\nabc_3 \nabc_4 \nabc_3 A&=& \nabc_4 \nabc_3 \nabc_3 A+2(\eta-\etab ) \c \nabc \nabc_3A +\CC^0_{3, 4} (\nabc_3 A),\\
\,  \nabc_3\nabc A    &=&\nabc \nabc_3 A- \frac  1 2   \trchb\, \nabc A-\frac 1 2 \atrchb\, \dual \nabc  A+\eta \nabc_3 A+C^0_{3,a}(A),
\eeaa
we have 
\beaa
\nabc_3\left( \frac{1}{2}\DDc\hot (\DDbc \c A) \right)&=&\nabc_4 \nabc_3 \nabc_3 A+\frac{1}{2}\tr\Xb \nabc_4 \nabc_3 A-\frac{1}{4}\tr\Xb^2 \nabc_4A\\
&&+\hat{J}_{a3}\c \nabc \nabc_3A+\hat{J}_{33}\nabc_3\nabc_3A+\hat{J}_3 \nabc_3A+\hat{J}^a_3(A)\\
&&+\hat{J}_a\c \nabc A  +\hat{J}_{\dual a }\c  \dual \nabc  A+ \hat{J}_0A+\hat{J}^a_0(A)
\eeaa
where
\beaa
\hat{J}_{a3}&=& 2(\eta-\etab ) -\left( 4H+\Hb +\ov{\Hb} \right),\\
\hat{J}_{33}&=& \frac 1 2 \tr X +2\ov{\tr X}, \\
\hat{J}_3&=&\nabc_3\left( \frac 1 2 \tr X +2\ov{\tr X} \right)+\ov{\tr X} \tr \Xb -P+\ov{P}  -2\eta\c\etab-\left( 4H+\Hb +\ov{\Hb} \right)\c \eta, \\
\hat{J}^a_3(A)&=&  - 4  \eta \hot (\etab \c \nabc_3A)+4 \etab \hot (\eta \c \nabc_3A)-  2 H   \hot (\ov{\Hb} \c \nabc_3A),\\
\hat{J}_a&=&\tr\Xb (\eta-\etab )-\nabc_3\left( 4H+\Hb +\ov{\Hb} \right) +\frac  1 2   \trchb\left( 4H+\Hb +\ov{\Hb} \right),\\
\hat{J}_{\dual a}&=&\frac 1 2 \atrchb\left( 4H+\Hb +\ov{\Hb} \right),\\
\hat{J}_0&=&\nabc_3\left(\ov{\tr X} \tr \Xb -2\ov{P}\right) +\frac{1}{2}\tr\Xb(4\ov{P}  -4\eta\c\etab)\\
&&+(\trchb \left( 4H+\Hb +\ov{\Hb} \right) -\atrchb \dual \left( 4H+\Hb +\ov{\Hb} \right) )\c \eta,\\
\hat{J}^a_0(A)&=&\frac  1 2   \trchb\, \Big(\eta \hot (\left( 4H+\Hb +\ov{\Hb} \right) \c A)-\left( 4H+\Hb +\ov{\Hb} \right) \hot (\eta \c A) \Big)\\
&&+\frac 1 2 \atrchb\, \Big(-\eta \hot (\dual \left( 4H+\Hb +\ov{\Hb} \right) \c A)+\dual  \left( 4H+\Hb +\ov{\Hb} \right) \hot(\eta \c  A) \Big)\\
&&-  2\nabc_3H   \hot (\ov{\Hb} \c A)-  2H   \hot (\nabc_3 \ov{\Hb} \c A)+\frac{1}{2}\tr\Xb ( - 4  \eta \hot (\etab \c A)+4 \etab \hot (\eta \c A)).
\eeaa
We therefore obtain
\bea
\begin{split}
J&=-  \left(\tr \Xb+\ov{\tr \Xb}\right)  \nabc_4 \nabc_3 \nabc_3 A+4 \eta \c \nabc \nabc_3 \nabc_3 A\\
&+J_{43} \nabc_4 \nabc_3 A +J_4 \nabc_4 A +J_{a3}\c \nabc \nabc_3A+J_{33} \nabc_3 \nabc_3 A\\
&+J_3 \nabc_3A+J^a_3(A)+J_a\c \nabc A +J_{\dual a }\c  \dual \nabc  A+J_0 A+J^a_0(A)\\
&+ \nabc_3\left(\frac{a}{r^2} \Ga_g \frak{d}^{\leq 1} + \frac 1 r \Ga_g \frak{d}^{\leq 2} A\right)
\end{split}
\eea
where
\beaa
J_{43}&=& - \frac 1 2\ov{\tr\Xb} \tr\Xb-\frac {\C +\tr\Xb}{2} (\tr \Xb+\ov{\tr\Xb}),\\
J_4&=&\frac{1}{4}(\tr\Xb)^2 \left(\tr \Xb\right) -\frac \C 4\tr\Xb(\tr \Xb+\ov{\tr\Xb}),
\eeaa
and
\beaa
J_{a3}&=&-  \left(\tr \Xb+\ov{\tr \Xb}\right)  \hat{J}_{a3} +\tilde{J}_{a3},\\
J_{33}&=& -  \left(\tr \Xb+\ov{\tr \Xb}\right)  \hat{J}_{33}+\tilde{J}_{33},\\
J_3&=& -  \left(\tr \Xb+\ov{\tr \Xb}\right)  \hat{J}_3 +\left(- \frac 1 2\ov{\tr\Xb} \tr\Xb-\frac \C 2 (\tr \Xb+\ov{\tr\Xb})\right)\, \left( \frac 1 2 \tr X +2\ov{\tr X} \right)+\tilde{J}_{3},\\
J^a_3(A)&=&-  \left(\tr \Xb+\ov{\tr \Xb}\right)  (\hat{J}^a_3(A)+ \tilde{J}^a_3(A),\\
J_a&=&-  \left(\tr \Xb+\ov{\tr \Xb}\right)  \hat{J}_a -\left(- \frac 1 2\ov{\tr\Xb} \tr\Xb-\frac \C 2 (\tr \Xb+\ov{\tr\Xb})\right)\, \left( 4H+\Hb +\ov{\Hb} \right)+\tilde{J}_a, \\
J_{\dual a}&=& -  \left(\tr \Xb+\ov{\tr \Xb}\right)  \hat{J}_{\dual a },\\
J_0&=& -  \left(\tr \Xb+\ov{\tr \Xb}\right)  \hat{J}_0+\left(- \frac 1 2\ov{\tr\Xb} \tr\Xb-\frac \C 2 (\tr \Xb+\ov{\tr\Xb})\right)\,  \left(\ov{\tr X} \tr \Xb -2\ov{P}\right)+\tilde{J}_0,\\
J^a_0(A)&=&-  \left(\tr \Xb+\ov{\tr \Xb}\right)  \hat{J}^a_0(A)-\left(- \frac 1 2\ov{\tr\Xb} \tr\Xb-\frac \C 2 (\tr \Xb+\ov{\tr\Xb})\right)\, (   2H   \hot (\ov{\Hb} \c A)).
\eeaa


\subsubsection{Expression for $K$}

Observe that
 \bea\label{Q-f-g}
 Q(fg) &=& Q(f) g+ fQ(g)+2\nabc_3f \nabc_3g  -\D \ fg.
 \eea 
We therefore obtain, for a scalar $\FF$:
 \beaa
[Q, \FF \ \nabc_3]A&=& (Q(\FF)-\D \ \FF ) \nabc_3A+ \FF [Q, \nabc_3]A+2\nabc_3\FF  \nabc_3\nabc_3A \\
&=& \left(2\nabc_3\FF  \right)\nabc_3\nabc_3A +(Q(\FF)-\D \ \FF - \FF \nabc_3 \C) \nabc_3A+  ( - \FF \nabc_3\D ) \  A. 
\eeaa
In particular, 
\beaa
K&=& [Q, \FF \ \nab_3^{(c)}]A \qquad \text{for} \qquad \FF= - \frac 1 2 \tr X -2\ov{\tr X}.
\eeaa
We therefore obtain\footnote{The expression for $K$ does not have error terms. }
\beaa
K&=& K_{33} \nabc_3 \nabc_3 A+K_3 \nabc_3 A +K_0 A
\eeaa
where 
\beaa
K_{33}&=& 2\nabc_3\left(- \frac 1 2 \tr X -2\ov{\tr X} \right), \\
K_3&=& Q\left(- \frac 1 2 \tr X -2\ov{\tr X} \right)-\D \ \left(- \frac 1 2 \tr X -2\ov{\tr X} \right) - \left(- \frac 1 2 \tr X -2\ov{\tr X} \right) \nabc_3 \C, \\
K_0&=& -\left(- \frac 1 2 \tr X -2\ov{\tr X} \right)\nabc_3\D.
\eeaa


\subsubsection{Expression for $L$}


Using \eqref{Q-f-g}, we obtain for a scalar $\EE$, 
 \beaa
[Q, \EE \ \nabc_4]A&=& ( Q(\EE)-\D \ \EE )\nabc_4A+ \EE [Q, \nabc_4]A+2\nabc_3\EE  \nabc_3\nabc_4A\\
&=& ( Q(\EE)-\D \ \EE )\nabc_4A+ \EE [Q, \nabc_4]A+2\nabc_3\EE  \nabc_4\nabc_3A\\
&&+2\nabc_3\EE[\nabc_3, \nabc_4]A
\eeaa
which gives
 \beaa
[Q, \EE \ \nabc_4]A&=&\left(2\nabc_3\EE  \right)\nabc_4\nabc_3A+4 \EE(\eta-\etab ) \c \nabc\nabc_3A\\
&&+ \left( Q(\EE)-\D \ \EE \right)\nab_4^{(c)}A\\
  &&+\left[2 \EE\left(\nabc_3(\eta-\etab ) +\left(\frac 1 2 \trchb +\C\right)(\eta-\etab)\right)+4\nab^{(c)}_3\EE (\eta-\etab ) \right]\c \nabc A \\
  &&-  \EE\atrchb(\eta-\etab ) \c  \dual \nabc  A\\
  &&+ \EE\nabc_3(\CC^0_{3, 4} (A))+ \EE\CC^0_{3, 4} (\nabc_3A)+ \EE\left(2(\eta-\etab ) \c \eta -\nabc_4( \C )\right)\nabc_3 A \\
    &&+2 \EE(\eta-\etab ) \c \CC^0_{3,a}(A)+ \left(2\nab^{(c)}_3\EE+ \EE\C \right)\ \CC^0_{3, 4} (A)  - \EE\nabc_4(\D) \  A + \EE  \frac{a}{r^2} \Ga_g \frak{d}^{\leq 1} A.
\eeaa
In particular, 
\beaa
L&=& [Q, \EE \ \nabc_4]A \qquad \text{for} \qquad \EE= -\frac{1}{2}\tr\Xb.
\eeaa
We therefore obtain
\bea
\begin{split}
L&= L_{43} \nabc_4\nabc_3A+L_{a3} \c \nabc\nabc_3A+ L_4 \nab_4^{(c)}A+L_a \c \nabc A+L_{\dual a} \c \dual \nabc A\\
&+ L_3 \nabc_3 A + L_3^a(A) +L_0 A +L^a_0(A) + \frac{a}{r^3} \Ga_g \frak{d}^{\leq 1} A
\end{split}
\eea
where
\beaa
L_{43}&=& 2\nabc_3\EE=\frac 1 2 (\tr\Xb)^2, \\
L_{a3}&=& 4 \EE(\eta-\etab )=-2\tr\Xb(\eta-\etab ),\\
L_4&=& Q(\EE)-\D \ \EE =-\frac 1 4 (\tr\Xb)^3+\frac \C 4 (\tr\Xb)^2,
\eeaa
and 
\beaa
L_a&=&  -\tr\Xb\left(\nabc_3(\eta-\etab ) +\left(\frac 1 2 \trchb +\C\right)(\eta-\etab)\right)+\tr\Xb^2 (\eta-\etab ),  \\
  L_{\dual a}&=&\frac{1}{2}\tr\Xb\atrchb(\eta-\etab ),  \\
  L_3&=& -\frac{1}{2}\tr\Xb\left(-P+7\ov{P}  -6\eta\c\etab  +2(\eta-\etab ) \c \eta -\nabc_4( \C )\right),\\
  L_3^a(A)&=&-\tr\Xb(- 4  \eta \hot (\etab \c \nabc_3A)+4 \etab \hot (\eta \c \nabc_3A)), \\
  L_0&=&  \tr\Xb \left((\trchb (\eta-\etab ) -\atrchb \dual (\eta-\etab ) )\c \eta\right)+ \left(\frac 1 2 \tr\Xb^2-\frac{1}{2}\tr\Xb\C \right)\ (4\ov{P}  -4\eta\c\etab)\\
  && +\frac{1}{2}\tr\Xb\nabc_4(\D) \   -\frac{1}{2}\tr\Xb\nabc_3(4\ov{P}  -4\eta\c\etab),\\
L^a_0(A)&=& -\tr\Xb\Big[ -\frac  1 2   \trchb\, \Big(\eta \hot ((\eta-\etab ) \c A)-(\eta-\etab ) \hot (\eta \c A) \Big)\\
&&-\frac 1 2 \atrchb\, \Big(-\eta \hot (\dual (\eta-\etab ) \c A)+\dual  (\eta-\etab )\hot(\eta \c  A) \Big) \Big]\\
  &&+ \left(\frac 1 2 \tr\Xb^2-\frac{1}{2}\tr\Xb\C \right)\ (- 4  \eta \hot (\etab \c A)+4 \etab \hot (\eta \c A))  \\
  && -\frac{1}{2}\tr\Xb(- 4  \nabc_3\eta \hot (\etab \c A)- 4  \eta \hot (\nabc_3\etab \c A)+4 \nabc_3\etab \hot (\eta \c A))+4 \etab \hot (\nabc_3\eta \c A)).
  \eeaa


\subsubsection{Expression for $M$}


Observe that
\bea\label{Q-F-G}
 Q(F \c G) &=& Q(F) \c G+ F \c Q(G)+2\nabc_3F \c \nabc_3G  -\D \ F \c G.
 \eea 
 We therefore obtain
 \beaa
 M&=& Q \left( 4H+\Hb +\ov{\Hb} \right)\c \nabc A+\left( 4H+\Hb +\ov{\Hb} \right) \c [Q, \nabc ]A\\
 &&+ 2 \nabc_3 \left( 4H+\Hb +\ov{\Hb} \right) \c \nabc_3 \nabc A - \D \left( 4H+\Hb +\ov{\Hb} \right) \c \nabc A 
 \eeaa
 which gives
 \bea
 \begin{split}
 M&= M_{33} \nabc_3 \nabc_3 A +M_{a3} \c \nabc \nabc_3 A+ M_{\dual a 3} \c \dual \nabc \nabc_3 A+M^a_3(A)\\
 & +M_a\c \nabc A+M_{\dual a} \c \dual \nabc A+M^a_0(A)+ r^{-2}\nabc_3 \left( \frac{a}{r} \Ga_g A+ \Ga_g \frak{d}^{\leq 1} A\right)
 \end{split}
 \eea
  where
  \beaa
  M_{33}&=& 2\left( 4H+\Hb +\ov{\Hb} \right) \c \eta, \\
  M_{a3}&=& 2 \nabc_3 \left( 4H+\Hb +\ov{\Hb} \right)-   \trchb \left( 4H+\Hb +\ov{\Hb} \right), \\
  M_{\dual a 3}&=& - \atrchb\left( 4H+\Hb +\ov{\Hb} \right), \\
  M^a_{3}(A)&=& \left( 4H+\Hb +\ov{\Hb} \right) \c \left(-\nabc ( \C )+\nabc_3\eta+(\C-\frac  1 2   \trchb)\,\eta-\frac 1 2 \atrchb\, \dual \eta\right) \nabc_3 A\\
  &&+\left( 4H+\Hb +\ov{\Hb} \right) \c \Big(\nabc_3C^0_{3,a}(A)+C^0_{3,a}(\nabc_3A)\Big)+ 2 \nabc_3 \left( 4H+\Hb +\ov{\Hb} \right) \c \eta \nabc_3 A, \\
  M_a&=& Q \left( 4H+\Hb +\ov{\Hb} \right) +\left(-\frac  \C 2   \trchb+\frac 1 2 \trchb^2-\frac 1 2 \atrchb^2\right)\left( 4H+\Hb +\ov{\Hb} \right)\\
  &&-  \trchb \nabc_3 \left( 4H+\Hb +\ov{\Hb} \right)- \D \left( 4H+\Hb +\ov{\Hb} \right),\\
  M_{\dual a}&=& \left(-\frac \C 2 \atrchb+ \trchb\atrchb \right)\left( 4H+\Hb +\ov{\Hb} \right)- \atrchb \nabc_3 \left( 4H+\Hb +\ov{\Hb} \right),\\
  M^a_0(A)&=& \left( 4H+\Hb +\ov{\Hb} \right) \c \left( -\frac  1 2   \trchb\,C^0_{3,a}(A)-\frac 1 2 \atrchb\, \dual C^0_{3,a}(A)  + \C \  C^0_{3,a}(A)  -\nabc_a (\D)  A_{bc} \right)\\
  &&+ 2 \nabc_3 \left( 4H+\Hb +\ov{\Hb} \right) \c C^0_{3,a}(A).
  \eeaa

  
 \subsubsection{Expression  for $N$}
 
 
 Recall that 
 \beaa
 N&=& [Q, \left(-\ov{\tr X} \tr \Xb +2\ov{P}\right)] A+ 2[Q, H   \hot \ov{\Hb} \c ]A.
 \eeaa
 Using \eqref{Q-f-g} and the fact that
 \beaa
 Q(E \hot (F \c U))&=& Q(E) \hot (F \c U)+ E \hot (Q(F) \c U)+E \hot (F \c Q(U))\\
 &&+ 2\nabc_3 E \hot (\nabc_3 F \c U) + 2 \nabc_3 E \hot (F \c \nabc_3 U)\\
 &&+2 E \hot (\nabc_3 F \c \nabc_3 U) -2 \D E \hot (F \c U)
 \eeaa
  we obtain
 \beaa
  N&=& Q\left(-\ov{\tr X} \tr \Xb +2\ov{P}\right) A + 2 \nabc_3 \left(-\ov{\tr X} \tr \Xb +2\ov{P}\right) \nabc_3 A -\D \left(-\ov{\tr X} \tr \Xb +2\ov{P}\right) A \\
  &&+ 2Q(H) \hot (\ov{\Hb} \c A)+ 2H \hot (Q(\ov{\Hb}) \c A)+ 4\nabc_3 H \hot (\nabc_3 \ov{\Hb} \c A) + 4 \nabc_3 H \hot (\ov{\Hb} \c \nabc_3 A)\\
  &&+4 H \hot (\nabc_3 \ov{\Hb} \c \nabc_3 A) -4 \D H \hot (\ov{\Hb} \c A).
 \eeaa
 Therefore\footnote{The expression for $N$ does not have error terms.}
 \beaa
   N&=& N_3 \nabc_3 A + N_3^a(A)+N_0 A +N_0^a(A) 
 \eeaa
 where
 \beaa
 N_3&=& 2 \nabc_3 \left(-\ov{\tr X} \tr \Xb +2\ov{P}\right),  \\
 N_3^a(A)&=& 4 \nabc_3 H \hot (\ov{\Hb} \c \nabc_3 A)+4 H \hot (\nabc_3 \ov{\Hb} \c \nabc_3 A), \\
 N_0&=& Q\left(-\ov{\tr X} \tr \Xb +2\ov{P}\right)-\D \left(-\ov{\tr X} \tr \Xb +2\ov{P}\right),\\
 N_0^a(A)&=& 2Q(H) \hot (\ov{\Hb} \c A)+ 2H \hot (Q(\ov{\Hb}) \c A)\\
 &&+ 4\nabc_3 H \hot (\nabc_3 \ov{\Hb} \c A)-4 \D H \hot (\ov{\Hb} \c A).
 \eeaa

  
\subsubsection{The commutator}


 Putting the above expressions together we obtain
 \beaa
[Q, \LL]A&=& I + J+K+L+M+N\\
&=&  -4(\eta-\etab ) \c \nabc\nabc_3\nabc_3A+4 \eta \c \nabc \nabc_3 \nabc_3 A -  \left(\tr \Xb+\ov{\tr \Xb}\right)  \nabc_4 \nabc_3 \nabc_3 A\\
&&+\left( I_{43}+J_{43}+L_{43}\right) \  \nabc_4\nabc_3A+\left(I_4+J_4+L_4 \right) \  \nabc_4A\\
 &&+\left(I_{33}+J_{33}+K_{33}+M_{33}\right) \nabc_3 \nabc_3A+ I^a_{33}(A)\\
 && +\left(I_{a3}+J_{a3}+L_{a3}+M_{a3}\right)\c \nabc \nabc_3A +\left(I_{\dual a 3}+ M_{\dual a 3}\right) \c  \dual \nabc  \nabc_3A\\
 &&+ \left(I_3+J_3 +K_3+L_3+N_3\right)\nabc_3A+\left(I^a_3(A)+J^a_3(A)+ L_3^a(A)+M^a_3(A)+N_3^a(A)\right)\\
 &&+\left(J_a+L_a+M_a\right)\c \nabc A +\left(J_{\dual a }+L_{\dual a}+M_{\dual a} \right)\c  \dual \nabc  A\\
 &&+\left(I_0+J_0+K_0+L_0+N_0\right) \  A+J^a_0(A)+L^a_0(A) +M^a_0(A) +N_0^a(A) \\
 &&+ \nabc_3\left( \frac 1 r \Ga_g \frak{d}^{\leq 2} A\right)+\lot
 \eeaa
 which gives
 \beaa
 [Q, \LL]A&=&  4\etab  \c \nabc\nabc_3\nabc_3A -  \left(\tr \Xb+\ov{\tr \Xb}\right)  \nabc_4 \nabc_3 \nabc_3 A\\
&&+\left( I_{43}+J_{43}+L_{43}\right) \  \nabc_4\nabc_3A+\left(I_4+J_4+L_4 \right) \  \nabc_4A\\
 &&+\left(I_{33}+J_{33}+K_{33}+M_{33}\right) \nabc_3 \nabc_3A+ I^a_{33}(A)\\
 && +\left(I_{a3}+J_{a3}+L_{a3}+M_{a3}\right)\c \nabc \nabc_3A +\left(I_{\dual a 3}+ M_{\dual a 3}\right) \c  \dual \nabc  \nabc_3A\\
 &&+ \left(I_3+J_3 +K_3+L_3+N_3\right)\nabc_3A+\left(I^a_3(A)+J^a_3(A)+ L_3^a(A)+M^a_3(A)+N_3^a(A)\right)\\
 &&+\left(J_a+L_a+M_a\right)\c \nabc A +\left(J_{\dual a }+L_{\dual a}+M_{\dual a} \right)\c  \dual \nabc  A\\
 &&+\left(I_0+J_0+K_0+L_0+N_0\right) \  A+J^a_0(A)+L^a_0(A) +M^a_0(A) +N_0^a(A)+\err[ [Q, \LL]A].
\eeaa
Recalling the definition of $Q(A)$:
  \beaa
\nabc_3\nabc_3 A &=& Q(A)- \C \  \nabc_3A -\D \  A
 \eeaa
 we write
 \beaa
 \nabc \nabc_3 \nabc_3 A &=& \nabc Q(A)- \C \  \nabc \nabc_3A- \nabc\C \  \nabc_3A -\D \ \nabc A-\nabc \D \  A,\\
 \nabc_4\nabc_3\nabc_3A&=& \nabc_4Q(A)- \C \  \nabc_4\nabc_3A -\D \  \nabc_4A- \nabc_4\C \  \nabc_3A -\nabc_4\D \  A.
 \eeaa
 Hence,
  \beaa
[Q, \LL]A &=&  4\etab  \c \nabc Q(A)  -  \left(\tr \Xb+\ov{\tr \Xb}\right) \nabc_4Q(A)\\
&&+\left( I_{43}+J_{43}+L_{43}+\C \left(\tr \Xb+\ov{\tr \Xb}\right)\right) \  \nabc_4\nabc_3A\\
&&+\left(I_4+J_4+L_4 +\D \left(\tr \Xb+\ov{\tr \Xb}\right)\right) \  \nabc_4A\\
 &&+C_Q \ Q(A)+ 8  \eta \hot (\etab \c Q(A))-8 \etab \hot (\eta \c Q(A))\\
 && +C_{a3} \c \nabc \nabc_3A +C_{\dual a 3} \c  \dual \nabc  \nabc_3A+ C_3\nabc_3A+C_3^a(A)\\
 &&+C_a\c \nabc A +C_{\dual a } \c  \dual \nabc  A+C_0 \  A+C_0^a(A)+\err[ [Q, \LL]A]
  \eeaa
  where
  \beaa
  C_{Q}&=&I_{33}+J_{33}+K_{33}+M_{33}, \\
  C_{a3}&=&I_{a3}+J_{a3}+L_{a3}+M_{a3}-4\C \etab, \\
  C_{\dual a 3}&=& I_{\dual a 3}+ M_{\dual a 3},\\
  C_{3}&=&I_3+J_3 +K_3+L_3+N_3-4\etab \nabc \C+\nabc_4\C \left(\tr \Xb+\ov{\tr \Xb}\right)-\C \left(I_{33}+J_{33}+K_{33}+M_{33}\right),\\
  C_3^a(A)&=&I^a_3(A)+J^a_3(A)+ L_3^a(A)+M^a_3(A)+N_3^a(A)- 8\C  \eta \hot (\etab \c  \  \nabc_3A)+8\C \etab \hot (\eta \c  \nabc_3A ), \\
  C_a&=& J_a+L_a+M_a-4\D \etab, \\
  C_{\dual a}&=&J_{\dual a }+L_{\dual a}+M_{\dual a},\\
   C_0&=& I_0+J_0+K_0+L_0+N_0-4\etab \c \nabc \D+\nabc_4\D \left(\tr \Xb+\ov{\tr \Xb}\right)-\D \left(I_{33}+J_{33}+K_{33}+M_{33}\right),\\
   C_0^a(A)&=& J^a_0(A)+L^a_0(A) +M^a_0(A) +N_0^a(A)-8\D  \eta \hot (\etab \c   A)+ 8  \eta \hot (\etab \c Q(A))+8\D \etab \hot (\eta \c  A).
               \eeaa

Observe that the coefficients of $\nabc_4\nabc_3A$ and $\nabc_4A$ are respectively given by 
\beaa
I_{43} +J_{43}+L_{43}+\C \left(\tr \Xb+\ov{\tr \Xb}\right)&=& \nabc_3 \C+\frac {\C }{2} (\tr \Xb+\ov{\tr\Xb}) - \ov{\tr\Xb} \tr\Xb, \\
I_4 +J_4  + L_4 +\D \left(\tr \Xb+\ov{\tr \Xb}\right)&=& \nabc_3 \D +\D \left(\tr \Xb+\ov{\tr \Xb}\right)-\frac \C 4\tr\Xb(\ov{\tr\Xb}).
\eeaa
Therefore if the above are $\Ga_g$ and $r^{-1} \Ga_g$ respectively, as in the assumption of Proposition \ref{first-intermediate-step-main-theorem}, i.e. in \eqref{equation-nab-3-c-1} and \eqref{equation-nab-3-d-1}, the commutator becomes
 \beaa
[Q, \LL]A &=&  4\etab  \c \nabc Q(A)  -  \left(\tr \Xb+\ov{\tr \Xb}\right) \nabc_4Q(A)+C_0( Q(A))\\
 && +C_{a3} \c \nabc \nabc_3A +C_{\dual a 3} \c  \dual \nabc  \nabc_3A+ C_3\nabc_3A+C_3^a(A)\\
 &&+C_a\c \nabc A +C_{\dual a } \c  \dual \nabc  A+C_0 \  A+C_0^a(A)+ \err[[Q, \LL]A]
  \eeaa
  which proves the highest order terms of \eqref{final-commutator}.

  
  \subsubsection{The linear lower order terms}
  
  
  We now show that the coefficients of $\nabc_3A$ and $A$ are $O(|a|)$. We denote by $O\left(\frac{|a|}{r^c}\right)$ any function which vanishes in Schwarzschild, as multiples of $\eta$, $\etab$, $\atrchb$, $\dual \rho$, and has a $r^{-c}$ fall-off in $r$. In particular observe that, according to \eqref{definition-c-general} and \eqref{definition-d-general}, we can write
  \beaa
  \C&=& 2 \trchb +O\left(\frac{|a|}{r^2}\right), \qquad \D=\frac 1 2 \trchb^2 + O\left(\frac{|a|}{r^3}\right).
  \eeaa
  We therefore have
  \beaa
  \nabc_4 \C&=& - \trch\trchb+4\rho +O\left(\frac{|a|}{r^3}\right), \\
  \nabc_3 \C&=& -\trchb^2 +O\left(\frac{|a|}{r^3}\right), \\
  \nabc_4 \nabc_3 \C&=&  \trch\trchb^2 -4\trchb \rho+O\left(\frac{|a|}{r^4}\right), \\
  \nabc_4 \D&=& -\frac 1 2 \trch\trchb^2+2\trchb \rho +O\left(\frac{|a|}{r^4}\right), \\
  \nabc_3 \D&=& -\frac 1 2 \trchb^3 +O\left(\frac{|a|}{r^4}\right),\\
  \nabc_4 \nabc_3 \D&=& \frac 3 4  \trch\trchb^3- 3  \trchb^2 \rho  +O\left(\frac{|a|}{r^5}\right).
  \eeaa
  In what follows we omit to write the error terms since they can all be included in the above expression for $ \err[[Q, \LL]A]$. 
  
  We compute $C_3$. We compute
\beaa
I_{3}&=&-\nabc_3(-P+3\ov{P}  -2\eta\c\etab)+(\trchb (\eta-\etab ) -\atrchb \dual (\eta-\etab ) )\c \eta- \C \ (-P+3\ov{P}  -2\eta\c\etab)\\
&&+\nabc_4(\D)+ \nabc_4\nabc_3 \C \\
&=&-\nabc_3(2\rho)- 2\trchb \ (2\rho) -\frac 1 2 \trch\trchb^2+2\trchb \rho+  \trch\trchb^2 -4\trchb \rho +O\left(\frac{|a|}{r^4}\right)\\
&=&\frac 1 2 \trch\trchb^2-3\trchb \rho+O\left(\frac{|a|}{r^4}\right), 
\eeaa
\beaa
J_3&=& -  \left(\tr \Xb+\ov{\tr \Xb}\right)  \hat{J}_3 +\left(- \frac 1 2\ov{\tr\Xb} \tr\Xb-\frac \C 2 (\tr \Xb+\ov{\tr\Xb})\right)\, \left( \frac 1 2 \tr X +2\ov{\tr X} \right)+\tilde{J}_{3}\\
&=& -  2\trchb \left(\nabc_3\left(\frac 5 2 \trch\right)+(\trch) (\trchb ) \right) +\left(- \frac 52\trchb^2\right)\, \left( \frac 5 2 \trch\right)+O\left(\frac{|a|}{r^4}\right)\\
&=&-\frac{23}{4} \trch\trchb^2 -  10\trchb  \rho+O\left(\frac{|a|}{r^4}\right),
\eeaa
\beaa
K_3&=& Q\left(- \frac 1 2 \tr X -2\ov{\tr X} \right)-\D \ \left(- \frac 1 2 \tr X -2\ov{\tr X} \right) - \left(- \frac 1 2 \tr X -2\ov{\tr X} \right) \nabc_3 \C\\
&=& \nabc_3\nabc_3\left(-\frac 5 2 \trch \right)+2\trchb \nabc_3\left(-\frac 5 2 \trch \right)+\frac 5 2 \trch(-\trchb^2) +O\left(\frac{|a|}{r^4}\right)\\
&=& -\frac 5 2\nabc_3\left(-\frac 1 2 \trchb\trch+2\rho\right)-5\trchb \left(-\frac 1 2 \trchb\trch+2\rho\right)+\frac 5 2 \trch(-\trchb^2) +O\left(\frac{|a|}{r^4}\right)\\
&=& -\frac 5 2\left(\frac 1 4 \trchb^2\trch-\frac 1 2 \trchb\left(-\frac 1 2 \trchb\trch+2\rho\right)-3\trchb\rho\right)-10\trchb \rho+O\left(\frac{|a|}{r^4}\right)\\
&=& -\frac 5 4 \trchb^2\trch +O\left(\frac{|a|}{r^4}\right),
\eeaa
\beaa
 L_3&=& -\frac{1}{2}\tr\Xb\left(-P+7\ov{P}  -6\eta\c\etab  +2(\eta-\etab ) \c \eta -\nabc_4( \C )\right)\\
 &=& -\frac{1}{2}\trchb\left(6\rho  -(- \trch\trchb+4\rho)\right)+O\left(\frac{|a|}{r^4}\right)\\
  &=& -\frac{1}{2}\trch\trchb^2-\trchb \rho+O\left(\frac{|a|}{r^4}\right),
\eeaa
\beaa
N_3&=& 2 \nabc_3 \left(-\ov{\tr X} \tr \Xb +2\ov{P}\right) \\
&=& 2\left(-\nabc_3\trch\trchb-\trch\nabc_3\trchb+2\nabc_3\rho\right) +O\left(\frac{|a|}{r^4}\right)\\
&=& 2 \trch\trchb^2-10\rho\trchb+O\left(\frac{|a|}{r^4}\right).
\eeaa
We compute 
\beaa
I_{33}&=& 3P-5\ov{P} +4\eta\c\etab-2|\eta|^2  +\nabc_4( \C )=  - \trch\trchb+2\rho +O\left(\frac{|a|}{r^3}\right),\\
J_{33}&=& -  \left(\tr \Xb+\ov{\tr \Xb}\right) \left(\frac 1 2 \tr X +2\ov{\tr X}\right)+\DDc\c\ov{H}+2H \c\ov{H}= - 5\trchb  \trch+O\left(\frac{|a|}{r^3}\right),\\
K_{33}&=& 2\nabc_3\left(- \frac 1 2 \tr X -2\ov{\tr X} \right)= \frac 5 2 \trch\trch-10\rho +O\left(\frac{|a|}{r^3}\right),\\
 M_{33}&=& 2\left( 4H+\Hb +\ov{\Hb} \right) \c \eta=O\left(\frac{|a|}{r^4}\right).
\eeaa
We finally obtain
\beaa
 C_{3}&=&I_3+J_3 +K_3+L_3+N_3-4\etab \c \nabc \C+\nabc_4\C \left(\tr \Xb+\ov{\tr \Xb}\right)-\C \left(I_{33}+J_{33}+K_{33}+M_{33}\right)\\
 &=&\frac 1 2 \trch\trchb^2-3\trchb \rho-\frac{23}{4} \trch\trchb^2 -  10\trchb  \rho -\frac 5 4 \trchb^2\trch-\frac{1}{2}\trch\trchb^2-\trchb \rho\\
 &&+2 \trch\trchb^2-10\rho\trchb+ 2\trchb(- \trch\trchb+4\rho) \\
 &&-2\trchb \left(- \trch\trchb+2\rho - 5\trchb  \trch+\frac 5 2 \trch\trch-10\rho\right)+O\left(\frac{|a|}{r^4}\right)\\
 &=&O\left(\frac{|a|}{r^4}\right).
 \eeaa

We now compute $C_0$. We compute
\beaa
I_0&=&  \nabc_4\nabc_3\D= \frac 3 4  \trch\trchb^3- 3  \trchb^2 \rho  +O\left(\frac{|a|}{r^5}\right),
\eeaa
\beaa
J_0&=& -  \left(\tr \Xb+\ov{\tr \Xb}\right) \Big(\nabc_3\left(\ov{\tr X} \tr \Xb -2\ov{P}\right) +\frac{1}{2}\tr\Xb(4\ov{P}  -4\eta\c\etab)\\
&&+(\trchb \left( 4H+\Hb +\ov{\Hb} \right) -\atrchb \dual \left( 4H+\Hb +\ov{\Hb} \right) )\c \eta \Big)\\
&&+\left(- \frac 1 2\ov{\tr\Xb} \tr\Xb-\frac \C 2 (\tr \Xb+\ov{\tr\Xb})\right)\,  \left(\ov{\tr X} \tr \Xb -2\ov{P}\right)-\frac 1 2 (\DDc \c\DDbc (\D))\\
&=& -  2\trchb \Big(\nabc_3\left(\trch\trchb -2\rho\right) +\frac{1}{2}\trchb(4\rho  ) \Big)+\left(-\frac 5 2\trchb^2  \right)\,  \left(\trch\trchb -2\rho\right)+O\left(\frac{|a|}{r^5}\right)\\
&=& -\frac 1 2  \trch\trchb^3-9\rho\trchb^2+O\left(\frac{|a|}{r^5}\right),
\eeaa
\beaa
K_0&=& -\left(- \frac 1 2 \tr X -2\ov{\tr X} \right)\nabc_3\D= -\frac 5 4 \trch  \trchb^3+O\left(\frac{|a|}{r^5}\right),
\eeaa
\beaa
 L_0&=&  \tr\Xb \left((\trchb (\eta-\etab ) -\atrchb \dual (\eta-\etab ) )\c \eta\right)+ \left(\frac 1 2 \tr\Xb^2-\frac{1}{2}\tr\Xb\C \right)\ (4\ov{P}  -4\eta\c\etab)\\
  && +\frac{1}{2}\tr\Xb\nabc_4(\D) \   -\frac{1}{2}\tr\Xb\nabc_3(4\ov{P}  -4\eta\c\etab)\\
  &=& -2 \trchb^2  \rho +\frac{1}{2}\trchb\left(-\frac 1 2 \trch\trchb^2+2\trchb \rho\right)\   -2\trchb \nabc_3(\rho)+O\left(\frac{|a|}{r^5}\right)\\
   &=&-\frac{1}{4} \trch\trchb^3+ 2 \trchb^2  \rho \  +O\left(\frac{|a|}{r^5}\right),
\eeaa
\beaa
N_0&=& Q\left(-\ov{\tr X} \tr \Xb +2\ov{P}\right)-\D \left(-\ov{\tr X} \tr \Xb +2\ov{P}\right)\\
&=& \nabc_3 \nabc_3\left(-\trch\trchb + 2\rho \right)+2\trchb  \nabc_3\left(-\trch\trchb + 2\rho\right)\\
&=& \nabc_3 ( \trch\trchb^2-5\rho\trchb)+2\trchb  \left( \trch\trchb^2-5\rho\trchb\right)+O\left(\frac{|a|}{r^5}\right)\\
&=& \left( -\frac 1 2 \trch\trchb+2\rho\right)\trchb^2+ 2\trch\trchb \left(-\frac 1 2 \trchb^2\right)-5\left(-\frac 3 2 \trchb\rho\right)\trchb-5\rho\left(-\frac 1 2 \trchb^2\right)\\
&&+2\trchb  \left( \trch\trchb^2-5\rho\trchb\right)+O\left(\frac{|a|}{r^5}\right)\\
&=& \frac 1 2 \trch \trchb^3+ 2\rho\trchb^2+O\left(\frac{|a|}{r^5}\right).
\eeaa
We finally obtain
\beaa
 C_0&=& I_0+J_0+K_0+L_0+N_0-4\etab \c \nabc \D+\nabc_4\D \left(\tr \Xb+\ov{\tr \Xb}\right)-\D \left(I_{33}+J_{33}+K_{33}+M_{33}\right)\\
 &=&  \frac 3 4  \trch\trchb^3- 3  \trchb^2 \rho -\frac 1 2  \trch\trchb^3-9\rho\trchb^2-\frac 5 4 \trch  \trchb^3+2 \trchb^2  \rho -\frac{1}{4} \trch\trchb^3\\
 &&+2\rho\trchb^2+ \frac 1 2 \trch \trchb^3+2\trchb \left(-\frac 1 2 \trch\trchb^2+2\trchb \rho\right) \\
 &&-\frac 1 2 \trchb^2 \left(- \trch\trchb+2\rho - 5\trchb  \trch+\frac 5 2 \trch\trch-10\rho\right)+O\left(\frac{|a|}{r^5}\right)\\
 &=& O\left(\frac{|a|}{r^5}\right).
\eeaa
This implies that all the terms involving $\nabc_3A$ and $A$ are $O(|a|)$. We can write
\beaa
C_3\nabc_3A+C_3^a(A)+C_0 \  A+C_0^a(A)&=&a \left( d_2 \nabc_3 A +d_4 A\right)
\eeaa
with 
\beaa
d_2= O\left(\frac{1}{r^4}\right), \qquad d_4=O\left(\frac{1}{r^5}\right). 
\eeaa

  
  \subsection{The structure of the linear lower order terms: proof of Lemma \ref{lemma-lot}}\label{appendiz-s}


 Observe that in Kerr
\beaa
\eta_1&=& \Re(H_1)=-\Re(\ov{Z_1})=-\Re(Z_1)=\etab_1,\\
\eta_2&=& \Re(H_2)=\Re(\ov{Z_2})=\Re(Z_2)=-\etab_2,
\eeaa
therefore
\beaa
\left(\eta - \etab \right) \c \nabc&=& \left(\eta_1 - \etab_1 \right)  \nabc_1+\left(\eta_2 - \etab_2 \right)  \nabc_2=2\eta_2   \nabc_2=a\, d(r, \th) \nabc_2,
\eeaa
with $d(r, \th)$ is $O\left(\frac{1}{r^2}\right)$. Similarly, 
\beaa
(\dual \eta + \dual \etab) \c \nabc&=&a \a\, d(r, \th) \nabc_2.
\eeaa

We hereby show that there is a choice of $\C$, which satisfies the assumptions of Proposition \ref{first-intermediate-step-main-theorem}, such that the coefficient of the term $\dual \nabc \nabc_3 A$ are multiple of $\left(\eta - \etab \right)$ or $(\dual \eta + \dual \etab)$. This implies that 
\beaa
&&C_{a3} \c \nabc \nabc_3A +C_{\dual a 3} \c  \dual \nabc  \nabc_3A\\
&=&a\, d(r, \th) \left(\eta - \etab \right) \c \nabc\nabc_3 A +a\,  d(r, \th) \left(\dual \eta +\dual  \etab \right) \c \nabc\nabc_3 A\\
&=&a\, d(r, \th)   \nabc_2\nabc_3 A 
\eeaa
where $d(r, \th)$ are generic functions of $r$ and $\th$. In particular, there is no term of the form $\nabc_1 \nabc_3 A$.   In what follows we omit to write the error terms since they can all be included in the above expression for $ \err[[Q, \LL]A]$.


We denote $O(\eta+\etab)$ and $O(\dual \eta - \dual \etab)$ any expression which is linear combination of $\eta+\etab$ and $\dual \eta - \dual \etab$. 

We have 
\beaa
I_{\dual a 3}&=&  \atrchb(\eta-\etab )= \atrchb(\eta+\etab-2\etab )=-2\atrchb \  \etab+ O(\eta+\etab), \\
M_{\dual a 3}&=&  - \atrchb\left( 4H+\Hb +\ov{\Hb} \right)\\
&=&- \atrchb\left( 4\eta+4 i \dual \eta+2\etab  \right)\\
&=&- \atrchb\left( 4\eta+ 4\etab +4 i \dual \eta-4 i \dual \etab-2\etab +4i \dual \etab \right)\\
&=& \atrchb\left( 2\etab -4i \dual \etab \right)+O(\eta+ \etab) +O(\dual \eta - \dual \etab).
\eeaa
This gives
\beaa
C_{\dual a 3}&=& I_{\dual a 3}+ M_{\dual a 3}\\
&=&-2\atrchb \  \etab+ \atrchb\left( 2\etab -4i \dual \etab \right)+O(\eta+ \etab) +O(\dual \eta - \dual \etab)\\
&=&-4i \atrchb  \dual \etab +O(\eta+ \etab) +O(\dual \eta - \dual \etab).
\eeaa
Therefore
  \beaa
  C_{\dual a 3} \c  \dual \nabc  \nabc_3A  &=&\left(-4i \atrchb  \dual \etab +O(\eta+ \etab) +O(\dual \eta - \dual \etab)\right) \c \dual \nabc \nabc_3 A\\
  &=&\left(4i \atrchb  \ \etab +O(\dual\eta+ \dual\etab) +O(\eta -  \etab)\right) \c \nabc \nabc_3 A.
  \eeaa

To compute $C_{a3}$, we recall
\beaa
I_{a3}   &=& -2\left(\nabc_3(\eta-\etab ) +\left(\frac 1 2 \trchb +2\tr\Xb\right)(\eta-\etab)\right) =O(\eta- \etab),
\eeaa
\beaa
J_{a3}&=&-  \left(\tr \Xb+\ov{\tr \Xb}\right) (2(\eta-\etab ) -\left( 4H+\Hb +\ov{\Hb} \right)) +\nabc_3H+(-2\tr\Xb- 2\ov{\tr\Xb}+\C)\, H\\
&&-\DDc ( \C )+2\ov{\tr\Xb}\eta  - \DDc\ov{\tr\Xb}+\nabc_3\ov{H} +\C\,  \ov{H}-\DDbc ( \C )\\
&=&  2\trchb \left( 4\eta+ 4i \dual \eta+2\etab  \right) +2\nabc_3\eta+(-4 \trchb )\, (\eta+i \dual \eta)\\
&&+2(\trchb+ i \atrchb)\eta  - \DDc\ov{\tr\Xb}+2\C\,  \eta-2\nabc ( \C )+O(\eta-\etab).
\eeaa
Using Codazzi equation 
\beaa
 \DDc \ov{\tr\Xb}&=& (\tr\Xb-\ov{\tr\Xb})\Hb+O(\ep)= -2 i \atrchb (\etab + i \dual \etab )
\eeaa
and writing 
\beaa
\nabc_3 \eta&=& \nabc_3(\eta-\etab)+\nabc_3 \etab\\
&=&\frac{1}{2}\atrchb(\dual\etab-\dual\eta)+O(\eta-\etab)\\
&=&\atrchb \dual \etab +O(\eta-\etab) +O( \dual\eta+ \dual\etab)
\eeaa
we have
\beaa
J_{a3}&=&  2\trchb \left( 3\eta+ 2i \dual \eta+2\etab  \right) +2 i \atrchb\left( \eta+\etab  \right) +2\C\,  \eta-2\nabc ( \C )+O(\eta-\etab)+O( \dual\eta+ \dual\etab)\\
&=&  2\trchb \left( 5\etab  -2i \dual\etab   \right) +4 i \atrchb\, \etab  +2\C\,  \etab -2\nabc ( \C )+O(\eta-\etab) +O( \dual\eta+ \dual\etab).
\eeaa
We also have
\beaa
L_{a3}&=& -2\tr\Xb(\eta-\etab )+O(\eta-\etab)
\eeaa
and since $4H+\Hb +\ov{\Hb}=6\etab-4i\dual\etab+O(\eta-\etab)+O( \dual\eta+ \dual\etab)$
\beaa
 M_{a3}&=& 2 \nabc_3 \left( 4H+\Hb +\ov{\Hb} \right)-   \trchb \left( 4H+\Hb +\ov{\Hb} \right)\\
&=& 2 \nabc_3 \left( 6\etab-4i\dual\etab\right)-   \trchb \left( 6\etab-4i\dual\etab\right)+O(\eta-\etab, \dual\eta+ \dual\etab). 
 \eeaa
 Using that
 \beaa
 \nabc_3 \dual \etab  &=& -\frac{1}{2}\trchb(\dual \etab-\dual \eta)+\frac{1}{2}\atrchb(-\etab+\eta)\\
 &=& -\trchb \dual \etab+O(\eta-\etab)+O( \dual\eta+ \dual\etab)
 \eeaa
 we have
 \beaa
 M_{a3}&=& 12\atrchb \dual \etab+8i (\trchb \dual \etab)-   \trchb \left( 6\etab-4i\dual\etab\right)+O(\eta-\etab, \dual\eta+ \dual\etab)\\
 &=& 12\atrchb \dual \etab+12i \trchb \dual \etab-   6\trchb\, \etab+O(\eta-\etab)+O( \dual\eta+ \dual\etab).
 \eeaa
 Therefore, the coefficient of $\nabc \nabc_3 A$ is given by 
\beaa
C_{a3}&=&I_{a3}+J_{a3}+L_{a3}+M_{a3}-4\C \etab\\
&=& 2\trchb \left( 2\etab  +4i \dual\etab   \right) + 12\atrchb \dual \etab+4 i \atrchb\, \etab  -2\C\,  \etab -2\nabc ( \C )+O(\eta-\etab)+O( \dual\eta+ \dual\etab).
\eeaa
Putting together with the $C_{\dual a 3}$ we obtain
\beaa
C_{a3}+C_{\dual a 3}&=& 2\trchb \left( 2\etab  +4i \dual\etab   \right) + 12\atrchb \dual \etab +8 i \atrchb\, \etab -2\C\,  \etab -2\nabc ( \C )\\
&&+O(\eta-\etab)+O( \dual\eta+ \dual\etab).
\eeaa

We now use Lemma \ref{derivatives-tr} and the form of $\C=2 \trchb + i \frak{c} \atrchb$ to compute the above coefficient. We have
\beaa
\nabc \trchb &=& \nab\trchb-\trchb\ze=\nab\trchb+\trchb\, \etab= -\frac 1 2\trchb\,   \etab-\frac 3 2\trchb \eta -\frac 1 2\atrchb  (\dual \eta-  \dual \etab ), \\
\nabc \atrchb&=& \nab\atrchb-\atrchb \ze= \nab\atrchb+\atrchb\, \etab= -\frac 1 2  \atrchb\, \etab-\frac 3 2 \atrchb  \eta +\frac 1 2\trchb  (\dual \eta-  \dual \etab ).
\eeaa
This gives
\beaa
\nabc \C &=&  2 \nabc \trchb + i \frak{c} \nabc \atrchb\\
&=&  2 \left(-\frac 1 2\trchb   \etab-\frac 3 2\trchb \eta -\frac 1 2\atrchb  (\dual \eta-  \dual \etab )\right) + i \frak{c} \left(-\frac 1 2  \atrchb\, \etab-\frac 3 2 \atrchb  \eta +\frac 1 2\trchb  (\dual \eta-  \dual \etab )\right)\\
&=&   (-\trchb   \etab-3\trchb \eta -\atrchb  (\dual \eta-  \dual \etab )) + i \left(-\frac {\frak{c}}{  2}  \atrchb\, \etab-\frac {3\frak{c} }{ 2} \atrchb  \eta +\frac{ \frak{c}}{  2}\trchb  (\dual \eta-  \dual \etab )\right).
\eeaa
Going back to 
\beaa
C_{a3}+C_{\dual a 3}&=& 2\trchb \left( 2\etab  +4i \dual\etab   \right) + 12\atrchb \dual \etab +8 i \atrchb\, \etab -2\C\,  \etab -2\nabc ( \C )\\
&&+O(\eta-\etab) +O( \dual\eta+ \dual\etab)\\
&=&2\trchb\, \etab+6\trchb \eta+2\atrchb  \dual \eta  + 10\atrchb \dual \etab  \\
&&  + i ((-\frak{c} +8)   \atrchb\, \etab+3\frak{c}  \atrchb  \eta - \frak{c}  \trchb  \dual \eta+(\frak{c} +8) \trchb \dual \etab )\\
&&+O(\eta-\etab) +O( \dual\eta+ \dual\etab)
\eeaa
which can again be simplified to
\beaa
C_{a3}+C_{\dual a 3}&=&8\trchb\, \etab  + 8\atrchb \dual \etab   + i ((2\frak{c}+8) \atrchb\, \etab +(2\frak{c}+8) \trchb \dual \etab )\\
&&+O(\eta-\etab) +O( \dual\eta+ \dual\etab).
\eeaa
From \eqref{relation=tr-eta}, we deduce 
\beaa
0= \frac 1 2\trchb  (  \eta +\etab)-\frac 1 2   \atrchb  ( \dual \eta-\dual\etab )= \trchb\,    \etab+  \atrchb   \dual \etab+O(\eta-\etab)+O( \dual\eta+ \dual\etab).
\eeaa
Therefore we can write $  \atrchb   \dual \etab+\trchb\,    \etab=O(\eta-\etab)+O( \dual\eta+ \dual\etab)$, and obtain
\beaa
C_{a3}+C_{\dual a 3}&=& i ((2\frak{c}+8) \atrchb\,  \etab +(2\frak{c}+8) \trchb \dual \etab )+O(\eta-\etab) +O(\dual\eta+ \dual\etab).
\eeaa
Choosing $\frak{c}=-4$, we obtain
\beaa
C_{a3}+C_{\dual a 3}&=&O(\eta-\etab) +O(\dual\eta+ \dual\etab)
\eeaa
and therefore 
\beaa
C_{a3} \c \nabc \nabc_3A +C_{\dual a 3} \c  \dual \nabc  \nabc_3A= a\c d_1   \nabc_2\nabc_3 A \qquad \text{for $d_1=O\left(\frac{1}{r^3}\right)$.}
\eeaa

By performing similar computations we obtain
\beaa
C_a\c \nabc A +C_{\dual a } \c  \dual \nabc  A=a\c  d_3 \nabc A  \qquad \text{for $d_3=O\left(\frac{1}{r^4}\right)$.}
\eeaa



\begin{thebibliography}{99}


\bibitem{A-Blue1}   L. Andersson, P. Blue, \textit{ Hidden symmetries and decay for the wave equation on the Kerr spacetime}, Ann. of Math. \textbf{182} (2015),  787--853. 




\bibitem{Kerr-lin1} L. Andersson, T. B\"ackdahl, P. Blue, S. Ma, \textit{Stability for linearized gravity on the Kerr spacetime}, arXiv:1903.03859.

           
\bibitem{Chand} S. Chandrasekhar, \textit{The mathematical theory of black holes}, 1983, Oxford Classic Texts in the Physical Sciences.
     
\bibitem{Chand2} S. Chandrasekhar, \textit{On the equations governing the perturbations of the Schwarzschild black hole}, P. Roy. Soc. Lond. A Mat. \textbf{343} (1975), 289--298.


\bibitem{Ch-Kl} D. Christodoulou, S. Klainerman, \textit{The global nonlinear stability of the Minkowski space}, Princeton University Press (1993).



\bibitem{D-H-R} M. Dafermos, G. Holzegel, I. Rodnianski, \textit{Linear stability of the Schwarzschild solution to gravitational perturbations}, Acta. Math. \textbf{222} (2019), 1--214.


\bibitem{D-H-R-Kerr} M. Dafermos, G. Holzegel, I. Rodnianski, \textit{Boundedness and decay  for the Teukolsky equation on Kerr spacetimes I: The case $|a|\ll M$ },  Ann. PDE (2019), 118 pp.






\bibitem{D-R-Schlap} M.  Dafermos, I. Rodnianski, Y. Shlapentokh-Rothman, {\it{Decay for solutions of the wave equation on Kerr exterior spacetimes   $iii$: The  full subextremal case $|a|<m$}}, Ann. of Math. \textbf{183} (2016),  787--913. 




\bibitem{Kerr-lin2} D. H\"afner, P. Hintz, A. Vasy, \textit{Linear stability of slowly rotating Kerr black holes}, arXiv:1906.00860.





\bibitem{I-Kl}  A. Ionescu, S. Klainerman,   {\it On the uniqueness of smooth, stationary black holes 
in vacuum},   Invent. Math. {\bf{175}} (2009), 35--102.

\bibitem{KS} S. Klainerman, J. Szeftel, \textit{Global nonlinear stability of Schwarzschild spacetime under polarized perturbations}, arXiv:1711.07597.

\bibitem{KS-Kerr1}  S. Klainerman, J. Szeftel, \textit{Construction of GCM spheres in perturbations of Kerr}, arXiv:1911.00697.

\bibitem{KS-Kerr2}  S. Klainerman, J. Szeftel, \textit{Effective  results on uniformization and intrinsic GCM spheres in perturbations of Kerr}, arXiv:1912.12195.


\bibitem{Ma} S. Ma, \textit{Uniform energy bound and Morawetz estimate for extreme components of spin fields in the exterior of a slowly rotating Kerr black hole II: linearized gravity}, arXiv:1708.07385.

\bibitem{NP} E.  Newman,  R. Penrose, \textit{An approach to gravitational radiation by a method of spin coefficients},  J. Math. Phys. \textbf{3} (1962), 566--578.


\bibitem{Teuk} S. A. Teukolsky, \textit{Perturbations of a rotating black hole. I. Fundamental equations for gravitational, electromagnetic, and neutrino-field perturbations}, Astrophys. J. \textbf{185} (1973), 635--648.


\bibitem{P-T} W. Press, S. A. Teukolsky, \textit{Perturbations of a rotating black hole. II. Dynamical stability of the Kerr metric}, Astrophys. J. {\bf{185}} (1973), 649--673.



\bibitem{Whit}  B. Whiting, \textit{Mode stability of the Kerr black hole},  J. Math. Phys. \textbf{30} (1989), 1301--1305.

\end{thebibliography}
\end{document}